\newcommand{\cL}{\mathcal L}
\DeclareMathOperator{\Tr}{Tr}
\DeclareMathOperator{\Hess}{Hess}
\DeclareMathOperator{\diag}{diag}
\DeclareMathOperator{\Span}{span}
\DeclareMathOperator{\im}{im}
\DeclareMathOperator{\R}{\mathbb{R}}
\DeclareMathOperator{\gen}{\mathcal{L}}
\newtheorem{assumption}{Assumption}
\newtheorem{algorithm}{Algorithm}
\newcommand{\inner}[2]{\left\langle #1, #2 \right\rangle}
\newcommand{\norm}[1]{\left|{#1}\right|} 
\newcommand{\Norm}[1]{\left\lVert{#1}\right\rVert} 
\newcommand{\notate}[1]{\textcolor{blue}{\textbf{[#1]}}}
\begin{document}

\title{Using Perturbed Underdamped Langevin Dynamics to Efficiently Sample
from Probability Distributions}

\titlerunning{Perturbed Underdamped Langevin Dynamics}        

\author{A. B. Duncan         \and
        N. N{\"u}sken \and  
        G. A. Pavliotis
}

\authorrunning{Duncan, N{\"u}sken, Pavliotis} 

\begingroup
\institute{A. B. Duncan \at
              School of Mathematical and Physical Sciences, University of Sussex, Falmer, Brighton, BN1 9RH United Kingdom
              \email{Andrew.Duncan@sussex.ac.uk}           
           \and
           N. N{\"u}sken \at
 Imperial College London, Department of Mathematics, South Kensington Campus, London SW7 2AZ,England \email{n.nusken14@imperial.ac.uk}
            \and
           G. A. Pavliotis \at
             Imperial College London, Department of Mathematics, South Kensington Campus, London SW7 2AZ,England \email{g.pavliotis@imperial.ac.uk}}

\date{Received: date / Accepted: date}

\maketitle
\begin{abstract}
In this paper we introduce and analyse Langevin samplers that consist of perturbations of the standard underdamped Langevin dynamics. The perturbed dynamics is such that its invariant measure is the same as that of the unperturbed dynamics. We show that appropriate choices of the perturbations can lead to samplers that have improved properties, at least in terms of reducing the asymptotic variance. We present a detailed analysis of the new Langevin sampler for Gaussian target distributions. Our theoretical results are supported by numerical experiments with non-Gaussian target measures.
\end{abstract}

\section{Introduction and Motivation}
\label{sec:introduction}

Sampling from probability measures in high-dimensional spaces is a problem that appears frequently in applications, e.g. in computational statistical mechanics and in Bayesian statistics. In particular, we are faced with the problem of computing expectations with respect to a probability measure $\pi$ on $\mathbb{R}^{d}$, i.e. we wish to evaluate integrals of the form:
\begin{equation}
\label{eq:expectation}
\pi(f):=\int_{\mathbb{R}^{d}}f(x)\pi(\mathrm{d}x).
\end{equation}
As is typical in many applications, particularly in molecular dynamics
and Bayesian inference, the density (for convenience denoted by the same symbol $\pi$) is known only up to a normalization constant; furthermore, the dimension of the underlying space is quite often large enough to render deterministic quadrature schemes computationally
infeasible. 

A standard approach to approximating such integrals is
Markov Chain Monte Carlo (MCMC) techniques \cite{GelmanCaStDuVeRu2014,liu2008monte,robert2013monte}, where a Markov process $(X_{t})_{t\geq0}$ is constructed which is ergodic with respect to the probability measure $\pi$. Then, defining the long-time average
\begin{equation}
\label{eq:estimator}
\pi_{T}(f):=\frac{1}{T}\int_{0}^{T}f(X_{s})\mathrm{d}s
\end{equation}
for $f\in L^{1}(\pi)$, the ergodic theorem guarantees almost sure
convergence of the long-time average $\pi_{T}(f)$ to $\pi(f)$.\\

There are infinitely many Markov, and, for the purposes of this paper diffusion, processes that can be constructed in such a way that they are ergodic with respect to the target distribution. A natural question is then how to choose the ergodic diffusion process $(X_{t})_{t\geq0}$. Naturally the choice should be dictated by the requirement that the computational cost of (approximately) calculating~\eqref{eq:expectation} is minimized. A standard example is given by the \emph{overdamped Langevin dynamics} defined to be
the unique (strong) solution $(X_{t})_{t\ge0}$ of the following stochastic differential equation
(SDE):
\begin{equation}
\mathrm{d}X_{t}=-\nabla V(X_{t})\mathrm{d}t+\sqrt{2}\mathrm{d}W_{t},\label{eq:overdamped}
\end{equation}
where $V=-\log\pi$ is the potential associated with the smooth positive
density $\pi$. Under appropriate assumptions on $V$, i.e. on the measure $\pi(\mathrm{d}x)$, the process $(X_{t})_{t\ge0}$ is ergodic and in fact reversible with respect to the target distribution.
\\
Another well-known example is the \emph{underdamped Langevin
dynamics} given by $(X_t)_{t\ge0} = (q_t, p_t)_{t\ge0}$ defined on the extended space (phase space) $\mathbb{R}^{d}\times\mathbb{\mathbb{R}}^{d}$ by the following pair of coupled SDEs:
\begin{equation}
\begin{aligned}\mathrm{d}q_{t} & =M^{-1}p_{t}\mathrm{d}t,\\
\mathrm{d}p_{t} & =-\nabla V(q_{t})\mathrm{d}t-\Gamma M^{-1}p_{t}\mathrm{d}t+\sqrt{2\Gamma}\mathrm{d}W_{t},
\end{aligned}
\label{eq:langevin}
\end{equation}
where mass and friction tensors $M$ and $\Gamma$, respectively, are assumed to be symmetric positive definite matrices. It is well-known~\cite{pavliotis2014stochastic,LS2016} that $(q_{t},p_{t})_{t\ge0}$ is ergodic with respect to the measure $\widehat{\pi}:=\pi\otimes\mathcal{N}(0,M)$, having density with respect to the Lebesgue measure on $\mathbb{R}^{2d}$ given by
\begin{equation}
\label{eq:augmented target}
\widehat{\pi}(q,p)=\frac{1}{\widehat{Z}}\exp\left(-V(q)-\frac{1}{2}p\cdot M^{-1}p\right),
\end{equation}
where $\widehat{Z}$ is a normalization constant. Note that $\widehat{\pi}$ has marginal $\pi$ with respect to $p$ and thus for functions $f\in L^{1}(\pi)$, we have that $\frac{1}{t}\int_0^t f(q_{t})\,\mathrm{d}t\rightarrow\pi(f)$ almost surely. Notice also that the dynamics restricted to the $q$-variables is no longer Markovian. The $p$-variables can thus be interpreted as giving some instantaneous memory to the system, facilitating efficient exploration of the state space. Higher order Markovian models, based on a finite dimensional (Markovian) approximation of the generalized Langevin equation can also be used~\cite{ceriotti_al09}.
\\\\
As there is a lot of freedom in choosing the dynamics in~\eqref{eq:estimator}, see the discussion in Section~\ref{sec:background}, it is desirable to choose the diffusion process $(X_t)_{t\ge0}$ in such a way that $\pi_T(f)$ can provide a good estimation of $\pi(f)$. The performance of the estimator~\eqref{eq:estimator} can be quantified in various manners. The ultimate goal, of course, is to choose the dynamics as well as the numerical discretization in such a way that the computational cost of the longtime-average estimator is minimized, for a given tolerance. The minimization of the computational cost consists of three steps: bias correction, variance reduction and choice of an appropriate discretization scheme. For the latter step see Section~\ref{sec:numerics} and~\cite[Sec. 6]{duncan2016variance}. 

Under appropriate conditions on the potential $V$ it can be shown that both \eqref{eq:overdamped} and \eqref{eq:langevin} converge to equilibrium exponentially fast, e.g. in relative entropy.  One performance objective would then be to choose the process $(X_t)_{t\ge0}$ so that this rate of convergence is maximised.
Conditions on the potential $V$ which guarantee exponential convergence to equilibrium, both in $L^{2}(\pi)$ and in relative entropy can be found in \cite{markowich2000trend,bakry2013analysis}. A powerful technique for proving exponentially fast convergence to equilibrium that will be used in this paper is C. Villani's theory of hypocoercivity~\cite{villani2009hypocoercivity}. In the case when the target measure $\pi$ is Gaussian, both the overdamped ~\eqref{eq:overdamped} and the underdamped~\eqref{eq:langevin} dynamics become generalized Ornstein-Uhlenbeck processes. For such processes the entire spectrum of the generator -- or, equivalently, the Fokker-Planck operator -- can be computed analytically and, in particular, an explicit formula for the $L^2$-spectral gap can be obtained~\cite{Metafune_formula,OPP12,OPP2015}.  A detailed analysis of the convergence to equilibrium in relative entropy for stochastic differential equations with linear drift, i.e. generalized Ornstein-Uhlenbeck processes, has been carried out in \cite{Arnold2014}. 
\\\\
In addition to speeding up convergence to equilibrium, i.e. reducing the bias of the estimator~\eqref{eq:estimator}, one is naturally also interested in reducing the asymptotic variance. Under appropriate conditions on the target measure $\pi$ and the observable $f$, the estimator $\pi_T(f)$ satisfies a central limit theorem (CLT)~\cite{KomorowskiLandimOlla2012}, that is,
$$
	\frac{1}{\sqrt{T}}\left(\pi_T(f) - \pi(f)\right) \xrightarrow[T\rightarrow\infty]{d} \mathcal{N}(0, 2\,\sigma^2_f),
$$
where $\sigma^2_f < \infty$ is the \emph{asymptotic variance} of the estimator $\pi_T(f)$. The asymptotic variance characterises how quickly fluctuations of $\pi_T(f)$ around $\pi(f)$ contract to $0$. Consequently, another natural objective is to choose the process $(X_t)_{t\ge0}$ such that $\sigma^2_f$ is as small as possible. It is well known that the asymptotic variance can be expressed in terms of the solution to an appropriate Poisson equation for the generator of the dynamics~\cite{KomorowskiLandimOlla2012}
\begin{equation}\label{e:poisson}
- \cL \phi = f - \pi (f), \quad \sigma^2_f = \int_{\mathbb{R}^d} \phi (- \cL \phi) \, \pi(\mathrm{d}x).
\end{equation}
Techniques from the theory of partial differential equations can then be used in order to study the problem of minimizing the asymptotic variance. This is the approach that was taken in~\cite{duncan2016variance}, see also~\cite{asvar_Hwang}, and it will also be used in this paper.  

Other measures of performance have also been considered.  For example, in~\cite{LDgraphs,LargeDeviations}, performance of the estimator is quantified in terms of the rate functional of the ensemble measure $\frac{1}{t}\int_0^t \delta_{X(t)}(dx)$. See also~\cite{JoulinOllivier2010} for a study of the nonasymptotic behaviour of MCMC techniques, including the case of overdamped Langevin dynamics.  

Similar analyses have been carried out for various modifications of \eqref{eq:overdamped}. Of particular interest to us are the {\it Riemannian manifold MCMC}~\cite{GirolamiCalderhead2011} (see the discussion in Section~\ref{sec:background}) and the {\it nonreversible Langevin samplers}~\cite{Hwang1993,Hwang2005}. As a particular example of the general framework that was introduced in~\cite{GirolamiCalderhead2011}, we mention the preconditioned overdamped Langevin dynamics that was presented in~\cite{alrachid2016some} 
\begin{equation}\label{e:precond-mala}
	dX_t = -P \nabla V(X_t)\,dt + \sqrt{2P}\,dW_t.
\end{equation}
In this paper, the long-time behaviour of as well as the asymptotic variance of the corresponding estimator $\pi_T(f)$ are studied and applied to equilibrium sampling in molecular dynamics. A variant of the standard underdamped Langevin dynamics that can be thought of as a form of preconditioning and that has been used by practitioners is the {\it mass-tensor molecular dynamics}~\cite{Bennett1975267}.

The nonreversible overdamped Langevin dynamics 
\begin{equation}
\label{eq:nonreversible_overdamped}
	dX_t = -\left(\nabla V(X_t) - \gamma(X_t)\right)\,dt + \sqrt{2}\,dW_t,
\end{equation}
where the vector field $\gamma$ satisfies $\nabla\cdot(\pi \gamma) = 0$ is ergodic (but not reversible) with respect to the target measure $\pi$ for all choices of the divergence-free vector field $\gamma$.  The asymptotic behaviour of this process was considered for Gaussian diffusions in~\cite{Hwang1993}, where the rate of convergence of the covariance to equilibrium was quantified in terms of the choice of $\gamma$. This work was extended to the case of non-Gaussian target densities, and consequently for nonlinear SDEs of the form~\eqref{eq:nonreversible_overdamped} in~\cite{Hwang2005}. The problem of constructing the optimal nonreversible perturbation, in terms of the $L^2(\pi)$ spectral gap for Gaussian target densities was studied in~ \cite{LelievreNierPavliotis2013} see also~\cite{hwang2014}. Optimal nonreversible perturbations with respect to miniziming the asymptotic variance were studied in~\cite{duncan2016variance,asvar_Hwang}. In all these works it was shown that, in theory (i.e. without taking into account the computational cost of the discretization of the dynamics~\eqref{eq:nonreversible_overdamped}), the nonreversible Langevin sampler~\eqref{eq:nonreversible_overdamped} always outperforms the reversible one~\eqref{eq:overdamped}, both in terms of converging faster to the target distribution as well as in terms of having a lower asymptotic variance. It should be emphasized that the two optimality criteria, maximizing the spectral gap and minimizing the asymptotic variance, lead to different choices for the nonreversible drift $\gamma(x)$.


The goal of this paper is to extend the analysis presented in~\cite{duncan2016variance,LelievreNierPavliotis2013} by introducing the following modification of the standard underdamped Langevin dynamics:
\begin{equation}
\label{eq:perturbed_underdamped}
\begin{aligned}
\mathrm{d}q_{t} & =M^{-1}p_{t}\mathrm{d}t-\mu J_{1}\nabla V(q_{t})\mathrm{d}t, \\
\mathrm{d}p_{t} & =-\nabla V(q_{t})\mathrm{d}t-\nu J_{2}M^{-1}p_{t}\mathrm{d}t-\Gamma M^{-1}p_{t}\mathrm{d}t+\sqrt{2\Gamma}\mathrm{d}W_{t},
\end{aligned}
\end{equation}
where $M,\Gamma\in\mathbb{R}^{d\times d}$ are constant strictly positive definite matrices, $\mu$ and $\nu$ are scalar constants and $J_1, J_2 \in \mathbb{R}^{d\times d}$ are constant skew-symmetric matrices. 
As demonstrated in Section~\ref{sec:perturbed_langevin}, the process defined by \eqref{eq:perturbed_underdamped} will be ergodic with respect to the Gibbs measure $\widehat{\pi}$ defined in \eqref{eq:augmented target}.  
\\\\
Our objective is to investigate the use of these dynamics for computing ergodic averages of the form \eqref{eq:estimator}. To this end, we study the long time behaviour of \eqref{eq:perturbed_underdamped} and, using hypocoercivity techniques, prove that the process converges exponentially fast to equilibrium.  This perturbed underdamped Langevin process introduces a number of parameters in addition to the mass and friction tensors which must be tuned to ensure that the process is an efficient sampler.  For Gaussian target densities, we derive estimates for the spectral gap and the asymptotic variance, valid in certain parameter regimes.  Moreover, for certain classes of observables, we are able to identify the choices of parameters which lead to the optimal performance in terms of asymptotic variance. While these results are valid for Gaussian target densities, we advocate these particular parameter choices also for more complex target densities.  To demonstrate their efficacy, we perform a number of numerical experiments on more complex, multimodal distributions. In particular, we use the Langevin sampler~\eqref{eq:perturbed_underdamped} in order to study the problem of diffusion bridge sampling. 
\\\\
The rest of the paper is organized as follows.  In Section \ref{sec:background} we present some background material on Langevin dynamics, we construct general classes of Langevin samplers and we introduce criteria for assessing the performance of the samplers.  In Section \ref{sec:perturbed_langevin} we study qualitative properties of the perturbed underdamped Langevin dynamics \eqref{eq:perturbed_underdamped} including exponentially fast convergence to equilibrium and the overdamped limit. In Section~\ref{sec:Gaussian} we study in detail the performance of the Langevin sampler~\eqref{eq:perturbed_underdamped} for the case of Gaussian target distributions. In Section~\ref{sec:numerics} we introduce a numerical scheme for simulating the perturbed dynamics~\eqref{eq:perturbed_underdamped} and we present numerical experiments on the implementation of the proposed samplers for the problem of diffusion bridge sampling. Section~\ref{sec:outlook} is reserved for conclusions and suggestions for further work. Finally, the appendices contain the proofs of the main results presented in this paper and of several technical results.

\section{Construction of General Langevin Samplers}
\label{sec:background}

\subsection{Background and Preliminaries}

In this section we consider estimators of the form \eqref{eq:estimator} where $(X_t)_{t\ge0}$ is a diffusion process given by the solution of the following It\^{o} SDE:
\begin{equation}
\label{eq:sde_general}
	\mathrm{d}X_t = a(X_t)\,\mathrm{d}t + \sqrt{2}b(X_t)\,\mathrm{d}W_t,
\end{equation}
with drift coefficient $a: \R^d \rightarrow \R^d$ and  diffusion coefficient $b:\R^d \rightarrow \R^{d\times m}$ both having smooth components, and where $(W_t)_{t\ge0}$ is a standard $\R^m$--valued Brownian motion.  Associated with (\ref{eq:sde_general}) is the infinitesimal generator $\gen$ formally given by
\begin{equation}
\label{eq:generator_general}
\gen f = a\cdot\nabla f  + \Sigma : D^2 f, \quad f \in C^2_c(\R^d)
\end{equation}
where $\Sigma = bb^\top$, $D^2 f$ denotes the Hessian of the function $f$ and $\, : \,$ denotes the Frobenius inner product.  In general, $\Sigma$ is nonnegative definite, and could possibly be degenerate.   In particular, the infinitesimal generator \eqref{eq:generator_general} need not be uniformly elliptic.  To ensure that the corresponding semigroup exhibits sufficient smoothing behaviour,  we shall require that  the process \eqref{eq:sde_general} is hypoelliptic in the sense of H{\"o}rmander.   If this condition holds, then irreducibility of the process $(X_t)_{t\ge0}$ will be an immediate consequence of the existence of a strictly positive invariant distribution $\pi(x)\mathrm{d}x$, see \cite{kliemann1987recurrence}.
\\\\
Suppose that $(X_t)_{t\geq 0}$ is nonexplosive.  It follows from the hypoellipticity assumption that the process $(X_t)_{t\geq 0}$ possesses a smooth transition density $p(t,x, y)$ which is defined for all $t \geq 0$ and $x, y \in \R^d$, \cite[Theorem VII.5.6]{bass1998diffusions}.   The associated strongly continuous Markov semigroup $(P_t)_{t\geq 0}$ is defined by 
\begin{equation}
	P_t f(x) = \int_{\R^d} p(t, x, y)f(y)\,\mathrm{d}y,\quad t \geq 0.
\end{equation}  
Suppose that $(P_t)_{t\ge0}$ is invariant with respect to the target distribution $\pi(x)\,\mathrm{d}x$, i.e.
\begin{equation*}
	\int_{\mathbb{R}^d} P_t f(x)\pi(x)\,\mathrm{d}x = \int_{\mathbb{R}^d} f(x)\pi(x)\,\mathrm{d}x,\quad t \geq 0,
\end{equation*}
for all bounded continuous functions $f$. Then $(P_t)_{t\ge0}$ can be extended to a positivity preserving contraction semigroup on $L^2(\pi)$ which is strongly continuous.   Moreover, the infinitesimal generator corresponding to $(P_t)_{t\ge0}$ is given by an extension of $(\gen, C^{2}_c(\R^d))$, also denoted by $\gen$.
\\\\
Due to hypoellipticity, the probability measure $\pi$ on $\mathbb{R}^d$ has a smooth and positive density with respect to the Lebesgue measure, and (slightly abusing the notation) we will denote this density also by $\pi$. Let $L^2(\pi)$ be the Hilbert space of $\pi$-square integrable functions equipped with inner product $\inner{\cdot}{\cdot}_{L^2(\pi)}$ and norm $\Norm{\cdot}_{L^2(\pi)}$. We will also make use of the \emph{Sobolev space} 
\begin{equation}
H^1(\pi)=\{f \in L^2(\pi):\quad \Vert\nabla f\Vert^2_{L^2(\pi)}<\infty\}
\end{equation} of $L^2(\pi)$-functions with weak derivatives in $L^2(\pi)$, equipped with norm
\begin{equation*}
\Vert f \Vert ^2_{H^1(\pi)} = \Vert f \Vert ^2_{L^2(\pi)} + \Vert \nabla f \Vert ^2_{L^2(\pi).}
\end{equation*}

\subsection{A General Characterisation of Ergodic Diffusions}
\label{sec:characterisation}

A natural question is what conditions on the coefficients $a$ and $b$ of \eqref{eq:sde_general} are required to ensure that $(X_t)_{t\ge0}$ is invariant with respect to the distribution $\pi(x)\,\mathrm{d}x$.    The following result provides a necessary and sufficient condition for a diffusion process to be invariant with respect to a given target distribution.

\begin{theorem}
\label{theorem:invariance_theorem}
Consider a diffusion process $(X_t)_{t\ge0}$ on $\mathbb{R}^{d}$ defined by the unique, non-explosive solution to the It\^{o} SDE \eqref{eq:sde_general} with drift $a \in C^1(\mathbb{R}^{d}; \mathbb{R}^d)$ and  diffusion coefficient $b\in C^1(\mathbb{R}^d; \mathbb{R}^{d\times m})$.  Then $(X_t)_{t\ge0}$ is invariant with respect to ${\pi}$ if and only if  
\begin{equation}
\label{eq:invariant_drift}
a = \Sigma \nabla \log \pi + \nabla\cdot \Sigma + \gamma,
\end{equation} 
where $\Sigma = bb^\top$ and $\gamma: \mathbb{R}^D\rightarrow \mathbb{R}^D$  is a continuously differentiable vector field satisfying 
\begin{equation}
\label{eq:invariance_condition}
\nabla\cdot\left(\pi \gamma \right) = 0.
\end{equation}  
If additionally $\gamma \in L^1({\pi})$, then there exists a skew-symmetric matrix function $C:\mathbb{R}^d \rightarrow \mathbb{R}^{d\times d}$ such that
$$
  \gamma = \frac{1}{{\pi}} \nabla\cdot\left({\pi} C \right).
$$
In this case the infinitesimal generator can be written as an $L^2(\pi)$-extension of
$$
\mathcal{L}f = \frac{1}{{\pi}}\nabla\cdot\left((\Sigma + C){\pi}\nabla f\right),\quad f\in C^2_c(\mathbb{R}^d).
$$ 
\end{theorem}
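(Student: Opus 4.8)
The plan is to reduce invariance of $\pi$ to the stationary Fokker--Planck equation $\mathcal{L}^{*}\pi=0$, to rearrange that identity into the form \eqref{eq:invariant_drift}--\eqref{eq:invariance_condition}, and then to obtain the skew-symmetric representation by solving auxiliary Poisson equations. \emph{Step 1 (reduction to the adjoint equation).} Since $(X_t)_{t\ge0}$ is nonexplosive and, by the H\"ormander hypoellipticity hypothesis, admits a smooth positive transition density, the law $\rho_t$ of $X_t$ issued from $\pi$ solves the Fokker--Planck equation $\partial_t\rho_t=\mathcal{L}^{*}\rho_t$, $\rho_0=\pi$, where $\mathcal{L}^{*}\rho=-\nabla\cdot(a\rho)+\sum_{i,j}\partial_i\partial_j(\Sigma_{ij}\rho)$ is the formal $L^2(\mathrm dx)$-adjoint of $\mathcal{L}$. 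By well-posedness of the martingale problem associated with \eqref{eq:sde_general} (equivalently, by the Echeverr\'ia--Weiss characterisation of invariant measures), $\pi$ is invariant if and only if $\int_{\R^d}\mathcal{L}f\,\pi\,\mathrm dx=0$ for all $f\in C^\infty_c(\R^d)$, i.e. if and only if $\mathcal{L}^{*}\pi=0$ pointwise.

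\emph{Step 2 (algebraic rearrangement).} Using $\Sigma=\Sigma^{\top}$ together with the smoothness and strict positivity of $\pi$, a direct computation gives the pointwise identity $\nabla\cdot(\Sigma\pi)=\pi\,\nabla\cdot\Sigma+\pi\,\Sigma\nabla\log\pi$ (the $j$-th component of $\nabla\cdot(\Sigma\pi)$ being $\sum_i\partial_i(\Sigma_{ij}\pi)$), and hence
\begin{equation*}
\mathcal{L}^{*}\pi=-\nabla\cdot\bigl(a\pi-\nabla\cdot(\Sigma\pi)\bigr)=-\nabla\cdot(\pi\gamma),\qquad \gamma:=a-\Sigma\nabla\log\pi-\nabla\cdot\Sigma .
\end{equation*}
Thus $\mathcal{L}^{*}\pi=0$ is equivalent to $\nabla\cdot(\pi\gamma)=0$, which is exactly \eqref{eq:invariant_drift}--\eqref{eq:invariance_condition}; combined with Step~1 this proves the claimed equivalence.

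\emph{Step 3 (skew-symmetric representation).} Assume in addition $\gamma\in L^1(\pi)$, so that $h:=\pi\gamma$ is a $C^1$, $L^1(\mathrm dx)$, divergence-free vector field. For $d\ge3$ set $u:=N\ast h$ (componentwise Newtonian potential), so $\Delta u=-h$ and $\nabla u$ decays at infinity; then $W_{ij}:=\partial_j u_i-\partial_i u_j$ defines a skew-symmetric $C^1$ matrix field with $\sum_i\partial_i W_{ij}=\partial_j(\nabla\cdot u)+h_j$. Since $\Delta(\nabla\cdot u)=-\nabla\cdot h=0$ and $\nabla\cdot u\to0$, Liouville's theorem forces $\nabla\cdot u\equiv0$, so $\nabla\cdot W=h$; the cases $d=1,2$ are handled by the same scheme, the integrability assumption forcing $\gamma\equiv0$ when $d=1$. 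Setting $C:=W/\pi$, which is skew-symmetric and $C^1$ because $\pi>0$ is smooth, yields $\gamma=\tfrac1\pi\nabla\cdot(\pi C)$.

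\emph{Step 4 (divergence form of the generator).} Expanding $\tfrac1\pi\nabla\cdot\bigl((\Sigma+C)\pi\nabla f\bigr)$ and using $\Sigma=\Sigma^{\top}$ and $C=-C^{\top}$ — the latter annihilating the term $\sum_{i,j}C_{ij}\partial_i\partial_j f$ — one recovers $\Sigma:D^2f$ together with $(\Sigma\nabla\log\pi+\nabla\cdot\Sigma)\cdot\nabla f$ from the $\Sigma$-part and $\gamma\cdot\nabla f=\tfrac1\pi(\nabla\cdot(\pi C))\cdot\nabla f$ from the $C$-part, so the sum equals $a\cdot\nabla f+\Sigma:D^2f=\mathcal{L}f$ on $C^2_c(\R^d)$. \emph{Main obstacle.} Steps~2 and~4 are routine. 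The delicate points are Step~1, namely that $\mathcal{L}^{*}\pi=0$ is \emph{sufficient} (not only necessary) for invariance, which requires uniqueness for the martingale problem / Fokker--Planck equation and is where nonexplosiveness and hypoellipticity enter; and Step~3, where one must control the decay of the potential $u$ in order to kill the harmonic ambiguity and conclude $\nabla\cdot W=h$ — this is precisely the role of the hypothesis $\gamma\in L^1(\pi)$.
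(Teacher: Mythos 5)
The paper does not prove Theorem~\ref{theorem:invariance_theorem} itself but defers to \cite[Ch. 4]{pavliotis2014stochastic}; your argument is exactly the standard proof given there: reduce invariance to the stationary Fokker--Planck equation, rewrite $\mathcal{L}^{*}\pi$ as $-\nabla\cdot(\pi\gamma)$ with $\gamma=a-\Sigma\nabla\log\pi-\nabla\cdot\Sigma$, and construct the skew-symmetric stream matrix for the divergence-free field $\pi\gamma$ via a Newtonian potential. The two points you flag yourself --- that infinitesimal invariance is \emph{sufficient} (which rests on nonexplosiveness and uniqueness of the martingale problem) and the removal of the harmonic ambiguity $\nabla\cdot u$ in the potential construction --- are indeed the only delicate steps, and your treatment of them is adequate.
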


The proof of this result can be found in~\cite[Ch. 4]{pavliotis2014stochastic}; similar versions of this characterisation can be found in~\cite{villani2009hypocoercivity} and \cite{Hwang2005}. See also~\cite{ma2015complete}.

\begin{remark}
If \eqref{eq:invariant_drift} holds and $\mathcal{L}$ is hypoelliptic it follows immediately that $(X_t)_{t\ge0}$ is ergodic with unique invariant distribution $\pi(x)\,\mathrm{d}x$.
\end{remark}

More generally, we can consider It\^{o} diffusions in an extended phase space:

\begin{equation}\label{e:SDE}
\mathrm{d} Z_{t} = b(Z_t) \, \mathrm{d}t + \sqrt{2}\sigma(Z_{t}) \, \mathrm{d}W_{t}, 
\end{equation}
where $(W_{t})_{t\ge0}$ is a standard Brownian motion in $\R^{N}$, $N \geq d$. This is a Markov process with generator

\begin{equation}\label{e:gen}
\mathcal{L} = b(z) \cdot \nabla_z +  \Sigma(z) : D^2_z , 
\end{equation}
where $\Sigma(z) = \big( \sigma \sigma^{T} \big)(z)$. We will consider dynamics $(Z_t)_{t\ge0}$ that is ergodic with respect to $\pi_z(z) \, \mathrm{d}z$ such that 
\begin{equation}\label{e:marginal}
\int_{\R^{m}} \pi_z (x, \, y) \, \mathrm{d}y = \pi(x).
\end{equation}
where $z = (x, \, y), \; x \in \R^d, \, y \in \R^m, \; d+m = N$.

There are various well-known choices of dynamics which are invariant (and indeed ergodic) with respect to the target distribution $\pi(x)\mathrm{d}x$.
\begin{enumerate}
  \item Choosing  $b = I$ and $\gamma = 0$ we immediately recover the overdamped Langevin dynamics (\ref{eq:overdamped}).
  \item Choosing $b = I$, and $\gamma \neq 0$ such that \eqref{eq:invariance_condition} holds gives rise to the nonreversible overdamped equation defined by \eqref{eq:nonreversible_overdamped}.  As it  satisfies the conditions of Theorem \ref{theorem:invariance_theorem}, it is ergodic with respect to $\pi$.  In particular choosing $\gamma(x) = J\nabla V(x)$ for a constant skew-symmetric matrix $J$ we obtain
  \begin{equation}
  \label{eq:nonrev_overdamped_J}
    \mathrm{d}X_t = -(I + J)\nabla V(X_t)\,\mathrm{d}t + \sqrt{2}\,\mathrm{d}W_t,
  \end{equation}
  which has been studied in previous works.  

  \item Given a target density $\pi > 0$ on $\R^d$, if we consider the augmented target density $\widehat{\pi}$ on $\R^{2d}$ given in \eqref{eq:augmented target},
  then choosing
  \begin{equation}
  \label{eq:underdamped_gamma}
    \gamma((q,p)) = \left(\begin{array}{c} M^{-1}p \\ -\nabla V(q)\end{array}\right)
  \end{equation}
  and 
  \begin{equation}
    \label{eq:underdamped_sigma}
    b = \left(\begin{array}{c}\boldsymbol{0} \\ \sqrt{\Gamma}\end{array}\right) \in \mathbb{R}^{2d \times d},
  \end{equation}
  where $M$ and $\Gamma$ are positive definite symmetric matrices, the conditions of Theorem \ref{theorem:invariance_theorem} are satisfied for the target density $\widehat{\pi}$.  The resulting dynamics $(q_t, p_t)_{t\ge0}$ is determined by the underdamped Langevin equation (\ref{eq:langevin}). It is straightforward to verify that the generator is hypoelliptic, \cite[Sec 2.2.3.1]{Free_energy_computations}, and thus $(q_t, p_t)_{t\ge0}$ is ergodic. 

  \item More generally, consider the augmented target density $\widehat{\pi}$ on $\mathbb{R}^{2d}$ as above, and choose 
  \begin{equation}
  \label{eq:underdamped_gamma_perturbed}
    \gamma((q,p)) = \left(\begin{array}{c} M^{-1}p - \mu J_1\nabla V(q) \\ -\nabla V(q) - \nu J_2 M^{-1}p\end{array}\right)
  \end{equation}
  and 
  \begin{equation}
    \label{eq:underdamped_sigma_pertured}
    b = \left(\begin{array}{c}\boldsymbol{0} \\ \sqrt{\Gamma}\end{array}\right) \in \mathbb{R}^{2d \times d},
  \end{equation}
  where $\mu$ and $\nu$ are scalar constants and $J_1, J_2 \in \mathbb{R}^{d\times d}$ are constant skew-symmetric matrices.  With this choice we recover the perturbed Langevin dynamics \eqref{eq:perturbed_underdamped}.  It is straightforward to check that \eqref{eq:underdamped_gamma_perturbed}  satisfies the invariance condition (\ref{eq:invariance_condition}), and thus Theorem \ref{theorem:invariance_theorem} guarantees that (\ref{eq:perturbed_underdamped}) is invariant with respect to $\widehat{\pi}$. 

  \item In a similar fashion, one can introduce an  augmented target density on $\mathbb{R}^{(m+2)d}$, with
  \begin{align*}
   \widehat{\widehat{\pi}}(q, p, u_1,\ldots, u_m) \propto e^{-\frac{|p|^2}{2} - \frac{u_1^2 + \ldots + u_m^2}{2}-V(q)},
  \end{align*}
  where $p, q, u_i \in \mathbb{R}^d$, for $i=1,\ldots, m$.  Clearly $\int_{\mathbb{R}^{d}\times \mathbb{R}^{md}} \widehat{\widehat{\pi}}(q, p, u_1,\ldots,u_m)\,\mathrm{d}p\,\mathrm{d}u_1\,\ldots \mathrm{d}u_m = \pi(q)$. We now define $\gamma:\mathbb{R}^{(m+2)d}\rightarrow \mathbb{R}^{(m+2)d}$ by
  $$
  \gamma(q,p, u_1,\ldots,u_m) = \left(\begin{array}{c}p \\ -\nabla_q V(q) + \sum_{j=1}^{m} \lambda_j u_j \\ -\lambda_1 p \\ \vdots \\ -\lambda_m p \end{array}\right) 
  $$
  and $b: \mathbb{R}^{(m+2)d}\rightarrow \mathbb{R}^{(m+2)d\times (m+2)d}$ by 
  $$
  b(q,p,u_1,\ldots, u_m) = \left(\begin{array}{cccccc}\boldsymbol{0} & \boldsymbol{0} & \boldsymbol{0} & \boldsymbol{0} & \ldots & \boldsymbol{0}\\ \boldsymbol{0} & \boldsymbol{0} & \boldsymbol{0} & \boldsymbol{0}& \ldots & \boldsymbol{0} \\ \boldsymbol{0} & \boldsymbol{0} & \sqrt{\alpha_1}I_{d\times d} & \boldsymbol{0} & \ldots & \boldsymbol{0} \\ \boldsymbol{0} & \boldsymbol{0} & \boldsymbol{0} & \sqrt{\alpha_2}I_{d\times d} & \ldots & \boldsymbol{0} \\ 
   \vdots & \vdots & \vdots & \vdots & \ddots & \vdots \\ \boldsymbol{0} & \boldsymbol{0} & \boldsymbol{0} &\boldsymbol{0} & \ldots & \sqrt{\alpha_m}I_{d\times d}\end{array}\right),
  $$
  where $\lambda_i \in \mathbb{R}$ and $\alpha_i > 0$, for $i=1,\ldots, m$.  The resulting process \eqref{eq:sde_general} is given by 
  \begin{equation}
  \label{eq:gle_markov}
  \begin{aligned}
    \mathrm{d}q_t &= p_t \,\mathrm{d}t \\ 
    \mathrm{d}p_t &= -\nabla_q V(q_t)\,\mathrm{d}t + \sum_{j=1}^{d}\lambda_j u^{j}(t)\,\mathrm{d}t \\
    \mathrm{d}u^{1}_t &= -\lambda_1 p_t\,\mathrm{d}t -\alpha_1 u^{1}_t\,\mathrm{d}t + \sqrt{2\alpha_1 }\,\mathrm{d}W^{1}_t\\
    \vdots &  \\
    \mathrm{d}u^{m}_t &= -\lambda_m p_t\,\mathrm{d}t -\alpha_m u^{m}_t\,\mathrm{d}t + \sqrt{2\alpha_m }\,\mathrm{d}W^{m}_t,
  \end{aligned}
  \end{equation}
  where $(W^1_t)_{t\ge0}, \ldots (W^m_t)_{t\ge0}$ are independent $\mathbb{R}^d$--valued Brownian motions.   This process is ergodic with unique invariant distribution $\widehat{\widehat{\pi}}$, and under appropriate conditions on $V$, converges exponentially fast to equilibrium in relative entropy \cite{ottobre2011asymptotic}.  Equation \eqref{eq:gle_markov} is a Markovian representation of a generalised Langevin equation of the form
  \begin{align*}
  \mathrm{d}q_t &= p_t \,\mathrm{d}t \\
  \mathrm{d}p_t &= -\nabla_{q}V(q_t) \,\mathrm{d}t - \int_0^t F(t-s)p_s\,\mathrm{d}s + N(t),
  \end{align*}
  where $N(t)$ is a mean-zero stationary Gaussian process with autocorrelation function $F(t)$, i.e.
  $$
    \mathbb{E}\left[ N(t) \otimes N(s) \right] = F(t-s)I_{d\times d},
  $$
  and 
  $$
    F(t) = \sum_{i=1}^{m} \lambda_i^2 e^{-\alpha_i|t|}.
  $$

  \item Let $\widetilde{\pi}(z) \propto \exp(-\Phi(z))$ be a positive density on $\mathbb{R}^N$ where $N > d$ such that 
  $$
    \pi(x) = \int_{\mathbb{R}^{N-d}}\widetilde{\pi}(x,z)\,\mathrm{d}z,
  $$
  where $(x, y)\in \mathbb{R}^d\times \mathbb{R}^{N-d}$. Then choosing $b = I_{D\times D}$ and $\gamma = 0$ we obtain the dynamics
  \begin{align*}
      \mathrm{d}X_t &= -\nabla_x \Phi(X_t, Y_t)\,\mathrm{d}t + \sqrt{2}\,\mathrm{d}W^{1}_t \\ 
      \mathrm{d}Y_t &= -\nabla_y \Phi(X_t, Y_t)\,\mathrm{d}t + \sqrt{2}\,\mathrm{d}W^{2}_t,
  \end{align*}
  then  $(X_t, Y_t)_{t\ge0}$ is immediately ergodic with respect to $\widetilde{\pi}$.
\end{enumerate}

\subsection{Comparison Criteria}
\label{sec:comparison}

For a fixed observable $f$, a natural measure of accuracy of the estimator $\pi_T(f) = t^{-1}\int_0^{t}f(X_s)\,\mathrm{d}s$ is the \emph{mean square error} (MSE) defined by
\begin{equation}
\label{eq:mse}
MSE(f, T) := \mathbb{E}_{x}\norm{\pi_T(f) - \pi(f)}^2,
\end{equation}
where $\mathbb{E}_{x}$ denotes the expectation conditioned on the process $(X_t)_{t\ge0}$ starting at $x$.  
It is instructive to introduce the decomposition $MSE(f, T) = \mu^2(f, T) + \sigma^2(f, T)$, where
\begin{equation}
\label{eq:bias_variance_decomposition}
  \mu(f, T) = \norm{\mathbb{E}_{x}[\pi_T(f)] - \pi(f)}\quad\mbox{ and }\quad \sigma^2(f, T) = \mathbb{E}_{x}\norm{\pi_T(f) - \pi(f)}^2 = \mbox{Var}[\pi_T(f)].
\end{equation}
Here $\mu(f, T)$ measures the bias of the estimator $\pi_T(f)$ and $\sigma^2(f, T)$ measures the variance of fluctuations of $\pi_T(f)$ around the mean.   
\\\\
The speed of convergence to equilibrium of the process $(X_t)_{t\ge0}$ will control both the bias term $\mu(f, T)$ and the variance $\sigma^2(f, T)$.  To make this claim more precise, suppose that the semigroup $(P_t)_{t\ge0}$ associated with $(X_t)_{t\ge0}$ decays exponentially fast in $L^2(\pi)$, i.e. there exist constants  $\lambda > 0$ and $C\ge1$ such that
\begin{equation}
\label{eq:hypocoercive estimate}
 \left\lVert P_t g - \pi(g) \right\rVert_{L^2(\pi)} \leq C e^{- \lambda t} \left\lVert g-\pi(g) \right\rVert_{L^2(\pi)},\quad g\in L^2(\pi).
\end{equation}
\begin{remark}
	If \eqref{eq:hypocoercive estimate} holds with $C=1$, this estimate is equivalent to $-\gen$ having a spectral gap in $L^2(\pi)$. Allowing for a constant $C>1$ is essential for our purposes though in order to treat nonreversible and degenerate diffusion processes by the theory of \emph{hypocoercivity} as outlined in \cite{villani2009hypocoercivity}.  
\end{remark}
The following lemma characterises the decay of the bias $\mu(f,T)$ as $T\rightarrow \infty$ in terms of $\lambda$ and $C$.  The proof can be found in Appendix \ref{app:proofs}.
\begin{lemma}
\label{lemma:bias}
Let $(X_t)_{t\geq 0}$ be the unique, non-explosive solution of \eqref{eq:sde_general}, such that $X_0 \sim \pi_0 \ll \pi$ and $\frac{d\pi_0}{d\pi} \in L^2(\pi)$, where $\frac{d\pi_0}{d\pi}$ denotes the Radon-Nikodym derivative of $\pi_0$ with respect to $\pi$.  Suppose that the process is ergodic with respect to $\pi$ such that the Markov semigroup $(P_t)_{t\geq 0}$ satisfies (\ref{eq:hypocoercive estimate}).  Then for $f \in L^\infty(\pi)$,
\begin{equation*}
\mu(f, T)  \leq \frac{C}{\lambda T}\left({1 - e^{-\lambda T}}\right)\lVert f \rVert_{L^\infty}\mbox{Var}_{\pi}\left[\frac{d\pi_0}{d\pi}\right]^{\frac{1}{2}}.
\end{equation*}
\end{lemma}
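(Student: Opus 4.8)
The plan is to rewrite the bias as a time average of an $L^2(\pi)$ pairing between the exponentially decaying quantity $P_s\big(f-\pi(f)\big)$ and the mean-zero density fluctuation $\mathrm{d}\pi_0/\mathrm{d}\pi-1$, and then to invoke \eqref{eq:hypocoercive estimate} together with the Cauchy--Schwarz inequality. First, since $f$ is bounded and $T<\infty$, Fubini's theorem gives $\mathbb{E}_x[\pi_T(f)]=\tfrac1T\int_0^T P_s f(x)\,\mathrm{d}s$; reading $\mathbb{E}_x$ as the expectation under the initial law $\pi_0$ and using the invariance $\pi(P_s f)=\pi(f)$, one obtains
$$\mathbb{E}_{\pi_0}[\pi_T(f)]-\pi(f) = \frac{1}{T}\int_0^T\left(\int_{\R^d}P_s f(x)\,\pi_0(\mathrm{d}x)-\pi(f)\right)\mathrm{d}s = \frac{1}{T}\int_0^T\left\langle P_s\big(f-\pi(f)\big),\,\frac{\mathrm{d}\pi_0}{\mathrm{d}\pi}-1\right\rangle_{L^2(\pi)}\mathrm{d}s.$$
The first equality uses $\int P_s f\,\mathrm{d}\pi_0=\langle P_s f,\mathrm{d}\pi_0/\mathrm{d}\pi\rangle_{L^2(\pi)}$, which is legitimate because $f\in L^\infty(\pi)\subset L^2(\pi)$ (as $\pi$ is a probability measure), $P_s$ is a contraction on $L^2(\pi)$, and $\mathrm{d}\pi_0/\mathrm{d}\pi\in L^2(\pi)$ by hypothesis; the second equality uses that $P_s$ fixes constants and that $\langle P_s h,1\rangle_{L^2(\pi)}=\pi(P_s h)=\pi(h)=0$ for $h=f-\pi(f)$, so that the constant $1$ may be subtracted in the right slot at no cost.

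Next I would estimate the integrand. Writing $g:=f-\pi(f)$, which has $\pi(g)=0$, the hypocoercive bound \eqref{eq:hypocoercive estimate} gives $\Vert P_s g\Vert_{L^2(\pi)}\le Ce^{-\lambda s}\Vert g\Vert_{L^2(\pi)}$, and Cauchy--Schwarz yields
$$\left|\left\langle P_s g,\,\frac{\mathrm{d}\pi_0}{\mathrm{d}\pi}-1\right\rangle_{L^2(\pi)}\right|\le Ce^{-\lambda s}\,\Vert g\Vert_{L^2(\pi)}\,\left\Vert\frac{\mathrm{d}\pi_0}{\mathrm{d}\pi}-1\right\Vert_{L^2(\pi)}.$$
Two elementary identities finish the estimate: $\Vert g\Vert_{L^2(\pi)}^2=\Var_\pi[f]\le\Vert f\Vert_{L^\infty}^2$, and, since $\mathbb{E}_\pi[\mathrm{d}\pi_0/\mathrm{d}\pi]=1$, $\Vert\mathrm{d}\pi_0/\mathrm{d}\pi-1\Vert_{L^2(\pi)}^2=\Var_\pi[\mathrm{d}\pi_0/\mathrm{d}\pi]$.

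Finally, applying the triangle inequality under the time integral and substituting the above gives
$$\mu(f,T)\le\frac{1}{T}\int_0^T Ce^{-\lambda s}\,\Vert f\Vert_{L^\infty}\,\Var_\pi\!\left[\frac{\mathrm{d}\pi_0}{\mathrm{d}\pi}\right]^{1/2}\mathrm{d}s = \frac{C}{\lambda T}\big(1-e^{-\lambda T}\big)\,\Vert f\Vert_{L^\infty}\,\Var_\pi\!\left[\frac{\mathrm{d}\pi_0}{\mathrm{d}\pi}\right]^{1/2},$$
using $\int_0^T e^{-\lambda s}\,\mathrm{d}s=\lambda^{-1}(1-e^{-\lambda T})$, which is exactly the claimed bound. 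I do not expect a genuine obstacle: the argument is essentially bookkeeping, and the only points deserving a line of justification are the Fubini exchange, the inclusion $L^\infty(\pi)\subset L^2(\pi)$ that makes all the pairings well defined, and the (harmless) reinterpretation of $\mathbb{E}_x$ as averaging over the initial distribution $\pi_0$ in the definition of $\mu(f,T)$.
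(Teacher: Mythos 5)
Your proof is correct and is essentially the dual of the paper's argument: the paper applies the decay estimate to the adjoint semigroup acting on the density fluctuation $\frac{d\pi_0}{d\pi}-1$ and passes through the total-variation norm, whereas you apply it directly to $P_s\big(f-\pi(f)\big)$ and pair against $\frac{d\pi_0}{d\pi}-1$ via Cauchy--Schwarz, both routes giving the identical bound after integrating $Ce^{-\lambda s}$ over $[0,T]$. The only cosmetic advantage of your version is that it sidesteps the identification of the decay rate of $P_t^*$ on mean-zero functions with that of $P_t$, which the paper's proof uses implicitly.
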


The study of the behaviour of the variance $\sigma^2(f, T)$ involves deriving a central limit theorem for the additive functional $\int_0^t f(X_t)-\pi(f)\,\mathrm{d}t$.  As discussed in \cite{cattiaux2012central}, we reduce this problem to proving well-posedness of the Poisson equation 
\begin{equation}
\label{eq:poisson_general}
-\gen \chi = f - \pi(f),\quad \pi(\chi) = 0.
\end{equation}
The only complications in this approach arise from the fact that the generator $\gen$ need not be symmetric in $L^2(\pi)$ nor uniformly elliptic. The following result summarises conditions for the well-posedness of the Poisson equation and it also provides with  us with a formula for the asymptotic variance.  The proof can be found in Appendix \ref{app:proofs}.

\begin{lemma}
\label{lemma:variance}
Let $(X_t)_{t\geq 0}$ be the unique, non-explosive solution of \eqref{eq:sde_general}  with smooth drift and diffusion coefficients, such that the corresponding infinitesimal generator is hypoelliptic.  Syppose that $(X_t)_{t\ge0}$ is ergodic with respect to $\pi$ and moreover, $(P_t)_{t\ge0}$ decays exponentially fast in $L^2(\pi)$ as in \eqref{eq:hypocoercive estimate}.  Then for all $f\in L^2(\pi)$, there exists a unique mean zero solution $\chi$ to the Poisson equation \eqref{eq:poisson_general}.  If $X_0 \sim \pi$, then for all $f \in C^\infty(\mathbb{R}^d) \cap L^2(\pi)$
\begin{equation}
\label{eq:CLT}
  \sqrt{T}\left(\pi_T(f) - \pi(f)\right) \xrightarrow[T\rightarrow\infty]{d} \mathcal{N}(0, 2\sigma^2_f),
\end{equation}
where $\sigma^2_f$ is the asymptotic variance defined by
\begin{equation}
\label{eq:asymptoticvariance}
\sigma^2_{f} = \inner{\chi}{(-\gen)\chi}_{L^2(\pi)} = \inner{\nabla \chi}{\Sigma\nabla\chi}_{L^2(\pi)}.
\end{equation}
Moreover, if $X_0 \sim \pi_0$ where $\pi_0 \ll \pi$ and $\frac{d\pi_0}{d\pi}\in L^2(\pi)$ then \eqref{eq:CLT} holds for all $f \in C^\infty(\mathbb{R}^d) \cap L^\infty(\pi)$.
\end{lemma}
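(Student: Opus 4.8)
The plan is to proceed in four steps: construct the solution of the Poisson equation~\eqref{eq:poisson_general} from the hypocoercive bound~\eqref{eq:hypocoercive estimate}; record the variance identity using the symmetric/antisymmetric splitting of $\gen$ and a martingale decomposition of the additive functional when $X_0\sim\pi$; deduce the CLT; and finally transfer it to absolutely continuous initial data. First I would set $L^2_0(\pi)=\{g\in L^2(\pi):\pi(g)=0\}$ and note that on this subspace~\eqref{eq:hypocoercive estimate} reads $\Norm{P_t g}_{L^2(\pi)}\le Ce^{-\lambda t}\Norm{g}_{L^2(\pi)}$, so that for any $f\in L^2(\pi)$ the $L^2(\pi)$-valued Bochner integral $\chi:=\int_0^\infty P_t\big(f-\pi(f)\big)\,\mathrm{d}t$ converges, with $\Norm{\chi}_{L^2(\pi)}\le (C/\lambda)\Norm{f-\pi(f)}_{L^2(\pi)}$. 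Differentiating $P_s\chi$ shows $\chi\in D(\gen)$ and $-\gen\chi=f-\pi(f)$, i.e.\ $0$ lies in the resolvent set of $\gen$ restricted to $L^2_0(\pi)$; uniqueness of the mean-zero solution follows because any $\psi\in L^2_0(\pi)$ with $\gen\psi=0$ satisfies $\psi=P_t\psi$, hence $\Norm{\psi}_{L^2(\pi)}\le Ce^{-\lambda t}\Norm{\psi}_{L^2(\pi)}\to0$. By H\"ormander's theorem the hypoelliptic $\gen$ has smooth transition density, so $\chi$ is in fact smooth.

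For the variance formula I would use the divergence-form representation of Theorem~\ref{theorem:invariance_theorem} to split $\gen=\mathcal{S}+\mathcal{A}$ with $\mathcal{S}g=\pi^{-1}\nabla\cdot(\pi\Sigma\nabla g)$ symmetric and $\mathcal{A}g=\gamma\cdot\nabla g$ antisymmetric on $L^2(\pi)$ (the latter exactly because $\nabla\cdot(\pi\gamma)=0$, cf.\ \eqref{eq:invariance_condition}), so that integration by parts gives $\inner{\chi}{(-\gen)\chi}_{L^2(\pi)}=\inner{\chi}{(-\mathcal{S})\chi}_{L^2(\pi)}=\inner{\nabla\chi}{\Sigma\nabla\chi}_{L^2(\pi)}=\Norm{\sigma^\top\nabla\chi}_{L^2(\pi)}^2<\infty$, which is~\eqref{eq:asymptoticvariance}; in particular $\sigma^\top\nabla\chi\in L^2(\pi)$. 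Now for $f\in C^\infty\cap L^2(\pi)$, applying It\^{o}'s formula to $\chi(X_t)$ yields
\[
  \sqrt T\,\big(\pi_T(f)-\pi(f)\big)=\frac{\chi(X_0)-\chi(X_T)}{\sqrt T}+\frac{1}{\sqrt T}\,M_T,\qquad M_T:=\sqrt 2\int_0^T(\sigma^\top\nabla\chi)(X_s)\cdot\mathrm{d}W_s.
\]
When $X_0\sim\pi$ the process $\chi(X_t)$ is stationary and square-integrable, so the boundary term tends to $0$ in $L^2$; and $\mathbb{E}_\pi[\langle M\rangle_T]=2T\sigma_f^2<\infty$ shows $M$ is a genuine $L^2$-martingale with $\tfrac1T\langle M\rangle_T=\tfrac2T\int_0^T|(\sigma^\top\nabla\chi)(X_s)|^2\,\mathrm{d}s\to 2\sigma_f^2$ almost surely by the ergodic theorem. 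The martingale central limit theorem applied to $T^{-1/2}M_T$, combined with Slutsky's lemma, then gives~\eqref{eq:CLT}.

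Finally, for $\pi_0\ll\pi$ with $h=\mathrm{d}\pi_0/\mathrm{d}\pi\in L^2(\pi)$ and $f\in C^\infty\cap L^\infty(\pi)$, I would fix $T_0>0$ and discard the initial segment $\int_0^{T_0}$ at cost $2T_0\Norm{f}_{L^\infty}/\sqrt T\to0$, which is exactly where the $L^\infty$ hypothesis is used. For a bounded Lipschitz test functional $\Phi$ of the rescaled tail integral, the Markov property lets me write the expectation under $\pi_0$ as $\inner{g_T}{P_{T_0}^{*}h}_{L^2(\pi)}$ with a function $g_T$ obeying $\Norm{g_T}_{L^2(\pi)}\le\Norm{\Phi}_\infty$; comparing with the stationary expectation $\inner{g_T}{1}_{L^2(\pi)}$ and invoking~\eqref{eq:hypocoercive estimate} for the density $P_{T_0}^{*}h-1$ bounds the discrepancy by $Ce^{-\lambda T_0}\Norm{\Phi}_\infty\Norm{h-1}_{L^2(\pi)}$. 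Letting $T\to\infty$ (using the equilibrium CLT for the tail) and then $T_0\to\infty$ finishes the argument.

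The step I expect to be the main obstacle is the rigorous justification, in the second and third steps, that the abstract solution $\chi\in D(\gen)$ produced by the resolvent formula is regular and integrable enough to (i) integrate by parts in the variance identity with no boundary contribution at infinity and (ii) legitimately apply It\^{o}'s formula and conclude that $M_t$ is a true rather than merely local martingale. Concretely, one needs $\chi\in L^2(\pi)$ together with $\sigma^\top\nabla\chi\in L^2(\pi)$, the latter being precisely the finiteness of $\sigma_f^2$; but for a degenerate (hypoelliptic) generator $\chi\in D(\gen)$ does not a priori lie in $H^1(\pi)$, so the energy/form-domain estimate $\inner{\chi}{(-\gen)\chi}_{L^2(\pi)}=\Norm{\sigma^\top\nabla\chi}_{L^2(\pi)}^2$ has to be obtained through a careful cut-off and mollification procedure. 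These are exactly the technical points treated in~\cite{cattiaux2012central}, which I would cite for the relevant regularity and integrability estimates.
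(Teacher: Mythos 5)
Your construction of $\chi$ via the resolvent formula $\chi=\int_0^\infty P_t(f-\pi(f))\,\mathrm{d}t$, the uniqueness argument, the use of hypoellipticity for smoothness, the It\^{o} decomposition into a boundary term plus a martingale, the ergodic-theorem-plus-martingale-CLT step, and the derivation of \eqref{eq:asymptoticvariance} from the divergence-form representation of $\gen$ in Theorem~\ref{theorem:invariance_theorem} all coincide with the paper's proof (which invokes \cite[Theorem 7.1.4]{EthierKu86} for the martingale CLT and, like you, leans on \cite{cattiaux2012central} for the regularity/integrability technicalities you correctly flag). The one place where you genuinely diverge is the final transfer to $X_0\sim\pi_0$: the paper instead shows that for $f\in L^\infty(\pi)$ the solution satisfies $\chi\in L^\infty(\pi)$, by bounding $|\chi(x)|\le\int_0^\infty|\pi(f)-P_tf(x)|\,\mathrm{d}t$ through the total-variation decay of the law, so that the boundary term $T^{-1/2}(\chi(X_0)-\chi(X_T))$ vanishes in probability for \emph{any} initial law, and then asserts the CLT "similarly." Your route — discarding the initial segment $[0,T_0]$ at cost $O(T_0\Norm{f}_{L^\infty}/\sqrt{T})$ and comparing the law of the tail under $\pi_0P_{T_0}$ with the stationary one via $\Norm{P_{T_0}^*h-1}_{L^2(\pi)}\le Ce^{-\lambda T_0}\Norm{h-1}_{L^2(\pi)}$ — is equally valid and in fact more complete, since it also justifies that the martingale part converges to the correct Gaussian limit under the non-stationary law, a point the paper's "similarly" glosses over; the paper's route buys the extra conclusion $\chi\in L^\infty(\pi)$, which is reused elsewhere.
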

Clearly, observables that only differ by a constant have the same asymptotic variance. 
In the	sequel, we will hence restrict our attention to observables $f\in L^{2}(\pi)$	satisfying $\pi(f)=0$, simplifying expressions~\eqref{eq:poisson_general}
	and~\eqref{eq:CLT}. The corresponding subspace of $L^2(\pi)$ will be denoted by 
\begin{equation}
L_{0}^2(\pi):=\{f \in L^2(\pi): \pi(f)=0\}.
\end{equation}
If the exponential decay estimate \eqref{eq:hypocoercive estimate} is satisfied, then Lemma \ref{lemma:variance} shows that $-\gen$ is invertible on $L^2_{0}(\pi)$, so we can express the asymptoptic variance as 
\begin{equation}
\label{eq:asym variance inverse}
\sigma_{f}^2=\langle f, (-\gen)^{-1} f \rangle_{L^2(\pi)}, \quad f \in L^2_{0}(\pi).
\end{equation}
Let us also remark that from the proof of Lemma \ref{lemma:variance} it follows that the inverse of $\mathcal{L}$ is given by
\begin{equation}
\mathcal{L}^{-1}=\int_0^{\infty}P_t \,\mathrm{d}t.
\end{equation}
We note that the constants $C$ and $\lambda$ appearing in the exponential decay estimate \eqref{eq:hypocoercive estimate} also control the speed of convergence of $\sigma^2(f, T)$ to zero.  Indeed, it is straightforward to show that if \eqref{eq:hypocoercive estimate} is satisfied, then the solution $\chi$ of \eqref{eq:poisson_general} satisfies
\begin{equation}\label{e:estim-sigma}
  \sigma^2_{f} = \inner{\chi}{f-\pi(f)}_{L^2(\pi)} \leq \frac{C}{\lambda}\Norm{f}^2_{L^2(\pi)}.
\end{equation}

Lemmas \ref{lemma:bias} and \ref{lemma:variance} would suggest that choosing the coefficients $\Sigma$ and $\gamma$ to optimize the constants $C$ and $\lambda$ in~\eqref{e:estim-sigma} would be an effective means of improving the performance of the estimator $\pi_T(f)$, especially since the improvement in performance would be uniform over an entire class of observables.  When this is possible, this is indeed the case. However, as has been observed in \cite{LelievreNierPavliotis2013,Hwang1993,Hwang2005}, maximising the speed of convergence to equilibrium is a delicate task.  As the leading order term in $MSE(f, T)$, it is typically sufficient to focus specifically on the asymptotic variance $\sigma^2_{f}$ and study how the parameters of the SDE \eqref{eq:sde_general} can be chosen to minimise $\sigma^2_{f}$. This study was undertaken in \cite{duncan2016variance} for processes of the form \eqref{eq:nonreversible_overdamped}.

\section{Perturbation of Underdamped Langevin Dynamics}
\label{sec:perturbed_langevin}

The primary objective of this work is to compare the performances of the perturbed underdamped Langevin dynamics (\ref{eq:perturbed_underdamped}) and the unperturbed dynamics (\ref{eq:langevin}) according to the criteria outlined in Section \ref{sec:comparison} and to find suitable choices for the matrices $J_{1}$, $J_{2}$, $M$ and $\Gamma$ that improve the performance of the sampler.  We begin our investigations of (\ref{eq:perturbed_underdamped}) by establishing ergodicity and exponentially fast return to equilibrium, and by studying the overdamped limit of~\eqref{eq:perturbed_underdamped}. As the latter turns out to be nonreversible and therefore in principle superior to the usual overdamped limit~\eqref{eq:overdamped},e.g.~\cite{Hwang2005}, this calculation provides us with further motivation to study the proposed dynamics.
\\\\
For the bulk of this work, we focus on the particular case when the target measure is Gaussian, i.e. when the potential is given by $V(q)=\frac{1}{2}q^{T}Sq$
with a symmetric and positive definite precision matrix $S$ (i.e. the covariance matrix is given by $S^{-1}$). In this
case, we advocate the following conditions for the choice of parameters:\begin{subequations}
	\label{eq:optimal parameters}
	\begin{align}
	M & =S,\label{eq:M=00003DS}\\
	\Gamma & =\gamma S,\\
	SJ_{1}S & =J_{2},\label{eq: perturbation condition}\\
	\mu & =\nu.
	\end{align}
\end{subequations}
Under the above choices \eqref{eq:optimal parameters}, we show that the large perturbation limit $\lim_{\mu\rightarrow\infty} \sigma_f^2$ exists and is finite and we provide an explicit expression for it (see Theorem \ref{cor:limit_asym_var}). From this expression, we derive an algorithm for finding optimal choices for $J_1$ in the case of quadratic observables (see Algorithm \ref{alg:optimal J general}).
\\\\
If the friction coefficient is not too small ($\gamma > \sqrt {2}$), and under certain mild nondegeneracy conditions, we prove that adding a small perturbation will always decrease the asymptotic variance for observables of the form $f(q)=q\cdot Kq+l\cdot q+C$:
\[
\left. \frac{\mathrm{d}}{\mathrm{d}\mu}\sigma_{f}^{2}\right\rvert_{\mu=0}=0\quad\text{and }\quad \left. \frac{\mathrm{d}^{2}}{\mathrm{d}\mu^{2}}\sigma_{f}^{2}\right\rvert_{\mu=0}<0,
\]
see Theorem \ref{cor:small pert unit var}. 
In fact, we conjecture that this statement is true for arbitrary observables
$f\in L^{2}(\pi)$, but we have not been able to prove this. The dynamics (\ref{eq:perturbed_underdamped})
(used in conjunction with the conditions (\ref{eq:M=00003DS})-(\ref{eq: perturbation condition}))
proves to be especially effective when the observable is antisymmetric
(i.e. when it is invariant under the substitution $q\mapsto-q$) or when it
has a significant antisymmetric part. In particular, in Proposition~\ref{prop:antisymmetric observables} we show that under certain conditions on the spectrum of $J_1$, for any antisymmetric observable $f\in L^{2}(\pi)$ it holds that  $\lim_{\mu\rightarrow\infty}\sigma_{f}^{2}=0$.
\\\\
Numerical experiments and analysis show that departing significantly
from~\ref{eq: perturbation condition} in fact possibly decreases
the performance of the sampler. This is in stark contrast to~\eqref{eq:nonreversible_overdamped}, where it is not possible to increase the asymptotic variance by \emph{any} perturbation.  For that reason, until now it seems practical to use (\ref{eq:perturbed_underdamped})  as a sampler only when a reasonable estimate of the global covariance of the target distribution is available. In the case of Bayesian inverse problems and diffusion bridge sampling, the target measure $\pi$ is given with respect to a Gaussian prior. We demonstrate the effectiveness of our approach in these applications, taking the prior Gaussian covariance as $S$ in (\ref{eq:M=00003DS})-(\ref{eq: perturbation condition}).
\begin{remark}
	In \cite[Rem. 3]{LelievreNierPavliotis2013} another modification of (\ref{eq:langevin})
	was suggested (albeit with the simplifications $\Gamma=\gamma\cdot I$
	and $M=I$):
\end{remark}
\begin{align}
\mathrm{d}q_{t} & =(1-J)M^{-1}p_{t}\mathrm{d}t ,\nonumber \\
\mathrm{d}p_{t} & =-(1+J)\nabla V(q_{t})\mathrm{d}t-\Gamma M^{-1}p_{t}\mathrm{d}t+\sqrt{2\Gamma}\mathrm{d}W_{t},\label{eq: JJ perturbation}
\end{align}
$J$ again denoting an antisymmetric matrix. However, under the change
of variables $p\mapsto(1+J)\tilde{p}$ the above equations transform
into 
\begin{align*}
\mathrm{d}q_{t} & =\tilde{M}^{-1}p_{t}\mathrm{d}t,\\
\mathrm{d}\tilde{p_{t}} & =-\nabla V(q_{t})\mathrm{d}t-\tilde{\Gamma}\tilde{M}^{-1}\tilde{p}_{t}\mathrm{d}t+\sqrt{2\tilde{\Gamma}}\mathrm{d}\tilde{W}_{t},
\end{align*}
where $\tilde{M}=(1+J)^{-1}M(1-J)^{-1}$ and $\tilde{\Gamma}=(1+J)^{-1}\Gamma(1-J)^{-1}$.
Since any observable $f$ depends only on $q$ (the $p$-variables
are merely auxiliary), the estimator $\pi_T(f)$ as well as its associated convergence characteristics (i.e. asymptotic
variance and speed of convergence to equilibrium) are invariant under this transformation.
Therefore, (\ref{eq: JJ perturbation}) reduces to the underdamped
Langevin dynamics (\ref{eq:langevin}) and does not represent an independent approach to sampling. Suitable choices
of $M$ and $\Gamma$ will be discussed in Section \ref{sec:arbitrary covariance}.

\subsection{Properties of Perturbed Underdamped Langevin Dynamics}
\label{sec:hypocoercivity}

In this section we study some of the properties of the perturbed underdamped dynamics (\ref{eq:perturbed_underdamped}). First, note that its generator is given by
\begin{equation}
\label{eq:generator}
\mathcal{L}=\underbrace{\underbrace{M^{-1}p\cdot\nabla_{q}-\nabla_{q}V\cdot\nabla_{p}}_{\mathcal{L}_{ham}}\underbrace{-\Gamma M^{-1}p\cdot\nabla_{p}+\Gamma : D^2_{p}}_{\mathcal{L}_{therm}}}_{\mathcal{L}_0} \underbrace{-\mu J_{1}\nabla V \cdot \nabla_{q} - \nu J M^{-1} p \cdot \nabla_{p}}_{\mathcal{L}_{pert}},
\end{equation}	
decomposed into the perturbation $\mathcal{L}_{pert}$ and the unperturbed operator $\mathcal{L}_0$, which can be further split into the Hamiltonian part $\mathcal{L}_{ham}$ and the thermostat (Ornstein-Uhlenbeck) part $\mathcal{L}_{therm}$, see \cite{pavliotis2014stochastic,Free_energy_computations,LS2016}.

\begin{lemma}
\label{lem:hypoellipticity}
	The infinitesimal generator $\gen$~\eqref{eq:generator} is hypoelliptic.
\end{lemma}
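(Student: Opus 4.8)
The plan is to verify Hörmander's bracket condition for the generator $\gen$ in~\eqref{eq:generator}. First I would write the SDE~\eqref{eq:perturbed_underdamped} in Stratonovich form (trivially here, since the diffusion coefficient $\sqrt{2\Gamma}$ is constant, Itô and Stratonovich coincide) and identify the vector fields: the diffusion vector fields are the columns of $\sqrt{2\Gamma}$, i.e.\ $X_i = \sum_{k}(\sqrt{2\Gamma})_{ki}\,\partial_{p_k}$ for $i=1,\dots,d$, and the drift vector field is
\[
X_0 = \big(M^{-1}p - \mu J_1\nabla V(q)\big)\cdot\nabla_q - \big(\nabla V(q) + \nu J_2 M^{-1}p + \Gamma M^{-1}p\big)\cdot\nabla_p .
\]
Since $\Gamma$ is strictly positive definite, the span of $\{X_1,\dots,X_d\}$ is exactly all of the $p$-directions, $\Span\{\partial_{p_1},\dots,\partial_{p_d}\}$, at every point. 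Hörmander's condition requires that the Lie algebra generated by $\{X_1,\dots,X_d\}$ together with $X_0$ span $\R^{2d}$ at every $(q,p)$; because the $X_i$ already give the $p$-directions, it suffices to produce the $q$-directions from iterated brackets.

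The key computation is the bracket $[X_i, X_0]$. Since each $X_i$ has constant coefficients and only $\partial_p$ components, and $X_0$'s $q$-component is affine in $p$ via the term $M^{-1}p\cdot\nabla_q$, the bracket picks up exactly the $q$-derivative contributions: schematically $[X_i, X_0] = \sum_k (\sqrt{2\Gamma})_{ki}\,(M^{-1})_{jk}\,\partial_{q_j} + (\text{terms in }\partial_p)$. More compactly, the $q$-projections of the vector fields $[X_i, X_0]$, $i=1,\dots,d$, span the column space of $M^{-1}\sqrt{2\Gamma}$, which is all of $\R^d$ because both $M^{-1}$ and $\sqrt{2\Gamma}$ are invertible. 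Hence at every point $(q,p)$ the vector fields $X_1,\dots,X_d$ together with $[X_1,X_0],\dots,[X_d,X_0]$ span $\R^{2d}$: the $X_i$ handle the $p$-block and the brackets handle the $q$-block (modulo $p$-block components already covered). Therefore $\gen$ satisfies Hörmander's condition and is hypoelliptic by Hörmander's theorem (as quoted, e.g., in~\cite{pavliotis2014stochastic}).

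I would present this cleanly by computing only the $q$-components, remarking that the $\partial_p$-components of the brackets are irrelevant once the $\partial_{p}$ directions are already spanned by the $X_i$ themselves. It is worth noting that the perturbation terms $-\mu J_1\nabla V\cdot\nabla_q$ and $-\nu J_2 M^{-1}p\cdot\nabla_p$ do not affect this argument at all: the $-\mu J_1\nabla V\cdot\nabla_q$ term has coefficients depending only on $q$ (so it commutes in the relevant sense with the constant $\partial_p$ fields and only modifies $\partial_p$-components of brackets), and the $-\nu J_2 M^{-1}p\cdot\nabla_p$ term is itself a $\partial_p$ field. So the hypoellipticity is robust and identical to that of the unperturbed dynamics~\eqref{eq:langevin}.

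There is essentially no serious obstacle here — this is a one-bracket Hörmander check, structurally identical to the classical kinetic Fokker–Planck/underdamped Langevin case. The only mild care needed is bookkeeping: being explicit that one computes brackets of the diffusion fields with the \emph{drift} field $X_0$ (not just among the diffusion fields, which all commute since they are constant), and confirming that the invertibility of $M$ and $\Gamma$ is exactly what makes the $q$-block get filled. If one prefers to avoid even writing brackets, an equivalent route is to invoke the characterization for SDEs with affine-in-$p$ structure, or simply cite~\cite[Sec.\ 2.2.3.1]{Free_energy_computations} as is done in the excerpt for the unperturbed case and note that the argument is unchanged; but I would include the short bracket computation for completeness.
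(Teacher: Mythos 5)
Your proposal is correct and follows essentially the same route as the paper's own proof: both verify H\"ormander's bracket condition by taking the constant diffusion fields in the $p$-directions (spanning all of $\Span\{\partial_{p_k}\}$ since $\Gamma$ is positive definite) and computing their commutator with the drift, whose $q$-component via $M^{-1}p\cdot\nabla_q$ produces $q$-directions spanning $\R^d$ by invertibility of $M$ and $\Gamma$. The observation that the perturbation terms do not interfere (the $J_1$ term having $q$-only coefficients and the $J_2$ term contributing only $\partial_p$-components already in the span) matches the commutator $[A_0,A_k]=e_k\cdot\Gamma^{1/2}M^{-1}(\Gamma\nabla_p-\nabla_q-\nu J_2\nabla_p)$ appearing in the paper.
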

\begin{proof}
	See Appendix \ref{app:hypocoercivity}.\qed
\end{proof}

An immediate corollary of this result and of Theorem \ref{theorem:invariance_theorem} is that the perturbed underdamped Langevin process \eqref{eq:perturbed_underdamped} is ergodic with unique invariant distribution $\widehat{\pi}$ given by \eqref{eq:augmented target}.
\\\\
As explained in Section \ref{sec:comparison}, the exponential decay estimate \eqref{eq:hypocoercive estimate} is crucial for our approach, as in particular it guarantees the well-posedness of the Poisson equation \eqref{eq:poisson_general}. 
From now on, we will therefore make the following assumption on the potential $V,$ required to prove exponential decay in $L^2(\pi)$:

\begin{assumption}
	\label{ass:bounded+Poincare}
	Assume that the Hessian of $V$ is \emph{bounded} and that the target measure $\pi(\mathrm{d}q) = \frac{1}{Z}e^{-V}\mathrm{d}q$ satisfies a \emph{Poincare inequality}, i.e. there exists a constant $\rho>0$ such that 
	\begin{equation}
	\int_{\mathbb{R}^d}\phi^2\mathrm{d}\pi \le \rho \int_{\mathbb{R}^d} \vert \nabla \phi \vert ^2 \mathrm{d}\pi, 
	\end{equation}
	holds for all $\phi \in L_{0}^2(\pi)\cap H^1(\pi)$.
\end{assumption}
Sufficient conditions on the potential so that Poincar\'{e}'s inequality holds, e.g. the Bakry-Emery criterion, are presented in~\cite{bakry2013analysis}.
\begin{theorem}
	\label{theorem:Hypocoercivity}Under Assumption \ref{ass:bounded+Poincare} there exist constants $C\ge 1$ and $\lambda>0$ such that the semigroup $(P_t)_{t\ge0}$ generated by $\gen$ satisfies exponential decay in $L^2(\pi)$ as in \eqref{eq:hypocoercive estimate}.
\end{theorem}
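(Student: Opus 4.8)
The plan is to apply Villani's hypocoercivity framework (specifically the $L^2$ version developed in \cite{villani2009hypocoercivity} and refined for Langevin-type equations in, e.g., \cite{LelievreNierPavliotis2013,Free_energy_computations}) to the generator \eqref{eq:generator}. The key structural observation is that $\mathcal{L}$ can be written in the abstract form $\mathcal{L} = -A^*A + B$ on $L^2(\widehat{\pi})$, where $B$ is antisymmetric and $A$ encodes the dissipative (momentum-noise) directions. Concretely, since $\Gamma$ is symmetric positive definite, the thermostat part $\mathcal{L}_{therm} = -\Gamma M^{-1} p\cdot\nabla_p + \Gamma : D_p^2$ is (up to the change of inner product to $L^2(\widehat{\pi})$) of the form $-A^*A$ with $A = \sqrt{\Gamma}\,\nabla_p$ (more precisely $A_i$ being suitable combinations of $\partial_{p_j}$), while the remaining terms $\mathcal{L}_{ham} - \mu J_1\nabla V\cdot\nabla_q - \nu J M^{-1}p\cdot\nabla_p$ are all antisymmetric in $L^2(\widehat{\pi})$ — for $\mathcal{L}_{ham}$ this is the standard Hamiltonian-flow computation, and for the two perturbation terms it is precisely the content of the invariance condition \eqref{eq:invariance_condition} verified in item 4 of Section \ref{sec:characterisation} (skew-symmetry of $J_1, J$ makes the associated drifts divergence-free against $\widehat{\pi}$, hence the operators antisymmetric). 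Thus $B := \mathcal{L} - \mathcal{L}_{therm}$ is antisymmetric on $L^2(\widehat{\pi})$.

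The standard recipe then calls for the modified entropy (Lyapunov) functional
\begin{equation*}
\mathcal{H}[h] = \tfrac12\Norm{h}^2_{L^2(\widehat{\pi})} + a\Norm{Ah}^2_{L^2(\widehat{\pi})} + b\,\inner{Ah}{Ch}_{L^2(\widehat{\pi})} + c\Norm{Ch}^2_{L^2(\widehat{\pi})},
\end{equation*}
for carefully chosen small constants $a,b,c>0$, where $C := [A, B]$ is the commutator; one checks $\mathcal{H}[h]$ is equivalent to the squared $H^1(\widehat{\pi})$-norm (using boundedness of $\Hess V$ to control cross terms), and then computes $\frac{d}{dt}\mathcal{H}[P_t h]$ and shows it is bounded above by $-\kappa\,\mathcal{H}[P_t h]$ for some $\kappa>0$, invoking the Poincar\'e inequality from Assumption \ref{ass:bounded+Poincare} at the step where one needs coercivity in the $q$-directions. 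Exponential decay of $\mathcal{H}[P_t h]$ together with the norm equivalence yields \eqref{eq:hypocoercive estimate} with some $C\ge 1$ (the constant being $>1$ precisely because the $H^1$ and $L^2$ norms differ); a standard regularization/density argument extends the estimate from smooth $h$ to all of $L^2_0(\widehat{\pi})$, and hypoellipticity (Lemma \ref{lem:hypoellipticity}) guarantees $P_t h$ is smooth for $t>0$ so the computation is justified.

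I expect the main obstacle to be controlling the extra commutator terms generated by the perturbation $\mathcal{L}_{pert}$. In the unperturbed case the relevant commutators close nicely because $\mathcal{L}_{ham}$ is a clean transport operator; here $C = [A,B]$ and the higher commutators $[A,C]$, $[C,B]$ pick up contributions from $-\mu J_1 \nabla V\cdot\nabla_q$ (which, since $V$ is not quadratic in general, produces $\Hess V$ terms — controlled by the boundedness assumption — but composed with $J_1$) and from $-\nu J M^{-1}p\cdot\nabla_p$ (a term in the $p$-directions that interacts directly with $A = \sqrt\Gamma\nabla_p$). One must verify these remain lower-order, i.e. bounded relative to $\Norm{Ah}$, $\Norm{Ch}$ and $\Norm{h}$, so that they can be absorbed by choosing $a,b,c$ small; this is where the boundedness of $\Hess V$ is essential and where the argument would fail for, say, superquadratic potentials without further hypotheses. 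The dependence of $C$ and $\lambda$ on the parameters $\mu,\nu,J_1,J,M,\Gamma$ is not tracked here — this qualitative statement only asserts existence — which is what makes the absorption argument go through for any fixed choice of parameters, though naturally the constants degrade as $\mu\to\infty$, a point revisited in the Gaussian analysis of Section \ref{sec:Gaussian}.
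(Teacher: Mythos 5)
Your proposal follows essentially the same route as the paper: both rest on Villani's $L^2$-hypocoercivity machinery applied to the decomposition $\mathcal{L}=-A^{*}A+B$ with $A=\sqrt{\Gamma}\,\nabla_p$ and $B=\mathcal{L}_{ham}+\mathcal{L}_{pert}$ antisymmetric in $L^2(\widehat{\pi})$, with the bounded Hessian controlling the commutators involving $\nabla^2 V$ (and $J_1$, $J_2$) and the Poincar\'e inequality for $\pi$ supplying coercivity of $\sum_j C_j^{*}C_j$ after tensorization with the Gaussian momentum marginal. The only real difference is cosmetic: the paper verifies the hypotheses of Villani's abstract Theorem 24 and cites it, whereas you unpack that theorem by writing down the modified Lyapunov functional $\mathcal{H}[h]$ directly; these are the same argument.

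There is, however, one genuine gap in the final step. What the Lyapunov-functional computation delivers is decay of the $H^1(\widehat{\pi})$-type norm, i.e.\ an estimate of the form $\Vert P_t h-\pi(h)\Vert_{H^1(\widehat{\pi})}\le C e^{-\lambda t}\Vert h-\pi(h)\Vert_{H^1(\widehat{\pi})}$. You assert that the $L^2$-to-$L^2$ estimate \eqref{eq:hypocoercive estimate} then follows from the norm equivalence, with a ``regularization/density argument'' extending it to all of $L^2_0(\widehat{\pi})$. That cannot work as stated: the right-hand side still carries the $H^1$-norm of the initial datum, and no density argument will replace $\Vert h\Vert_{H^1(\widehat{\pi})}$ by $\Vert h\Vert_{L^2(\widehat{\pi})}$ (the embedding goes the wrong way). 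What is actually needed is the quantitative hypoelliptic regularization estimate $\Vert C_k P_t h\Vert\le C\,\Vert h\Vert\, t^{-(k+1/2)}$ for $t\in(0,1]$ (Villani's Theorem A.12, reproduced as Theorem~\ref{thm:hypocoercive regularisation} in the paper), which yields $\Vert P_1 h\Vert_{H^1(\widehat{\pi})}\le \tilde{C}\Vert h\Vert_{L^2(\widehat{\pi})}$; one then writes $P_t=P_{t-1}\circ P_1$ for $t\ge1$, applies the $H^1$-decay to $P_{t-1}$, and absorbs the bounded interval $t\in[0,1]$ into the constant. Your proof sketch gestures at hypoellipticity and regularization, so the ingredient is within reach, but the logical role it plays — converting the $H^1$-norm of the data into its $L^2$-norm, not merely justifying the computation for nonsmooth data — needs to be made explicit for the argument to close.
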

\begin{proof}
	See Appendix \ref{app:hypocoercivity}.
\end{proof}
\begin{remark}
	The proof uses the machinery of hypocoercivity developed in \cite{villani2009hypocoercivity}.
	However, it seems likely that using the framework of \cite{DolbeaultMouhotSchmeiser2015},
	the assumption on the boundedness of the Hessian of $V$ can be substantially
	weakened.
\end{remark}

\subsection{The Overdamped Limit}
\label{sec:overdamped}

In this section we develop a connection between the perturbed underdamped
Langevin dynamics (\ref{eq:perturbed_underdamped}) and
the nonreversible overdamped Langevin dynamics (\ref{eq:nonreversible_overdamped}). The analysis is very similar to the one presented in \cite[Section 2.2.2]{Free_energy_computations} and we will be brief. For convenience in this section we will perform the analysis on the $d$-dimensional torus $\mathbb{T}^d \cong (\mathbb{R} / \mathbb{Z})^d$, i.e. we will assume $q \in \mathbb{T}^d$.
Consider the following scaling of (\ref{eq:perturbed_underdamped}):
\begin{subequations}
\begin{eqnarray}
\mathrm{d}q_{t}^{\epsilon} & = &  \frac{1}{\epsilon}M^{-1}p_{t}^{\epsilon},\mathrm{d}t-\mu J_{1}\nabla_{q}V(q_{t})\mathrm{d}t, \\
\mathrm{d}p_{t}^{\epsilon} & = & -\frac{1}{\epsilon}\nabla_{q}V(q_{t}^{\epsilon})\mathrm{d}t-\frac{1}{\epsilon^{2}}\nu J_{2}M^{-1}p_{t}^{\epsilon}\mathrm{d}t-\frac{1}{\epsilon^{2}}\Gamma M^{-1}p_{t}^{\epsilon}\mathrm{d}t+\frac{1}{\epsilon}\sqrt{2\Gamma}\mathrm{d}W_{t},
\end{eqnarray}
\label{eq:rescaling}
\end{subequations}
valid for the small mass/small momentum regime 
\begin{equation*}
M  \rightarrow\epsilon^{2}M, \quad   p_{t}  \rightarrow\epsilon p_{t}.
\end{equation*}
Equivalently, those modifications can be obtained from subsituting
$\Gamma\rightarrow\epsilon^{-1}\Gamma$ and $t\mapsto\epsilon^{-1}t$,
and so in the limit as $\epsilon\rightarrow0$ the dynamics (\ref{eq:rescaling})
describes the limit of large friction with rescaled time. It turns
out that as $\epsilon\rightarrow0$, the dynamics (\ref{eq:rescaling})
converges to the limiting SDE 
\begin{equation}
\mathrm{d}q_{t}=-(\nu J_{2}+\Gamma)^{-1}\nabla_{q}V(q_{t})\mathrm{d}t-\mu J_{1}\nabla_{q}V(q_{t})\mathrm{d}t+(\nu J_{2}+\Gamma)^{-1}\sqrt{2\Gamma}\mathrm{d}W_{t}.\label{eq:overdamped limit}
\end{equation}
The following proposition makes this statement precise.
\begin{proposition}
	\label{prop: overdamped limit}Denote by $(q_{t}^{\epsilon},p_{t}^{\epsilon})$
	the solution to (\ref{eq:rescaling}) with (deterministic) initial
	conditions $(q_{0}^{\epsilon},p_{0}^{\epsilon})=(q_{init},p_{init})$
	and by $q_{t}^{0}$ the solution to (\ref{eq:overdamped limit}) with
	initial condition $q_{0}^{0}=q_{init}.$ For any $T>0$, $(q_{t}^{\epsilon})_{0\le t\le T}$
	converges to $(q_{t}^{0})_{0\le t\le T}$ in $L^{2}(\Omega,C([0,T]),\mathbb{T}^{d})$
	as $\epsilon\rightarrow0$, i.e. 
	\[
	\lim_{\epsilon\rightarrow0}\mathbb{E}\big(\sup_{0\le t\le T}\vert q_{t}^{\epsilon}-q_{t}^{0}\vert^{2}\big)=0.
	\]
\end{proposition}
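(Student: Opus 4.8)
The plan is to eliminate the (fast) momentum variable from \eqref{eq:rescaling} and thereby reduce the claim to a Grönwall estimate for $q_t^\epsilon - q_t^0$ together with a vanishing bound on a corrector term of order $\epsilon p_t^\epsilon$. Set $A := (\nu J_2 + \Gamma)M^{-1}$; this is invertible, since $x^\top(\nu J_2 + \Gamma)x = x^\top\Gamma x > 0$ for $x\neq 0$. Rearranging the momentum equation in \eqref{eq:rescaling} and integrating gives
\[
\int_0^t p_s^\epsilon\,\mathrm{d}s = -\epsilon A^{-1}\int_0^t\nabla V(q_s^\epsilon)\,\mathrm{d}s + \epsilon A^{-1}\sqrt{2\Gamma}\,W_t - \epsilon^2 A^{-1}\left(p_t^\epsilon - p_{init}\right).
\]
Substituting $\tfrac{1}{\epsilon}\int_0^t M^{-1}p_s^\epsilon\,\mathrm{d}s$ into the $q$-equation and using $M^{-1}A^{-1} = (\nu J_2 + \Gamma)^{-1}$ yields
\[
q_t^\epsilon = q_{init} - \int_0^t\left[(\nu J_2 + \Gamma)^{-1} + \mu J_1\right]\nabla V(q_s^\epsilon)\,\mathrm{d}s + (\nu J_2 + \Gamma)^{-1}\sqrt{2\Gamma}\,W_t - R_t^\epsilon,
\]
with $R_t^\epsilon := \epsilon(\nu J_2 + \Gamma)^{-1}(p_t^\epsilon - p_{init})$. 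Since $q_t^0$ solves \eqref{eq:overdamped limit} driven by the same Brownian motion, subtracting the two representations makes the stochastic integrals cancel, and using that $\nabla V$ is globally Lipschitz on $\mathbb{T}^d$ (being smooth on a compact manifold) Grönwall's inequality gives
\[
\mathbb{E}\!\left[\sup_{0\le t\le T}\left|q_t^\epsilon - q_t^0\right|^2\right] \le C_T\,\mathbb{E}\!\left[\sup_{0\le t\le T}\left|R_t^\epsilon\right|^2\right],
\]
so it suffices to prove $\epsilon^2\,\mathbb{E}\!\left[\sup_{t\le T}|p_t^\epsilon|^2\right]\to 0$ as $\epsilon\to 0$. (Well-posedness of both \eqref{eq:rescaling} and \eqref{eq:overdamped limit} is immediate since $\nabla V$ and $\sqrt{\Gamma}$ are bounded and Lipschitz on the torus.)

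For this I would use the variation-of-constants representation
\[
\epsilon\, p_t^\epsilon = \epsilon\, e^{-tA/\epsilon^2}p_{init} - \int_0^t e^{-(t-s)A/\epsilon^2}\nabla V(q_s^\epsilon)\,\mathrm{d}s + \Xi_t^\epsilon, \qquad \Xi_t^\epsilon := \int_0^t e^{-(t-s)A/\epsilon^2}\sqrt{2\Gamma}\,\mathrm{d}W_s.
\]
Since $A^\top M^{-1} + M^{-1}A = 2M^{-1}\Gamma M^{-1}$ is positive definite (the skew part $J_2$ dropping out), one has $\|e^{-uA}\| \le \kappa\, e^{-\omega u}$ for some $\kappa,\omega > 0$, hence $\|e^{-uA/\epsilon^2}\| \le \kappa\, e^{-\omega u/\epsilon^2}$. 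The first term on the right is then $O(\epsilon)$ and the second is bounded by $\|\nabla V\|_\infty\int_0^\infty\kappa\, e^{-\omega u/\epsilon^2}\,\mathrm{d}u = O(\epsilon^2)$, both uniformly in $t\in[0,T]$. It remains to control $\mathbb{E}[\sup_{t\le T}|\Xi_t^\epsilon|^2]$: Itô's isometry immediately gives $\mathbb{E}|\Xi_t^\epsilon|^2 = O(\epsilon^2)$, and since $\Xi_t^\epsilon$ is a rapidly mixing Ornstein–Uhlenbeck-type process whose running maximum over a fixed horizon degrades only logarithmically in the time-scale ratio, a standard argument — e.g.\ partitioning $[0,T]$ into $O(\epsilon^{-2})$ subintervals and applying Doob's maximal inequality on each, or the factorization method — yields $\mathbb{E}[\sup_{t\le T}|\Xi_t^\epsilon|^2] = O(\epsilon^2|\log\epsilon|)\to 0$. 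Combining these bounds shows $\mathbb{E}[\sup_{t\le T}|R_t^\epsilon|^2]\to 0$, which finishes the proof.

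I expect the uniform-in-time control of $\Xi_t^\epsilon$ to be the one genuinely delicate point: pointwise-in-$t$ estimates on the eliminated fast variable are elementary, but because the corrector $R_t^\epsilon$ enters the expression for $q_t^\epsilon$ outside of any time integral, the supremum of the fast stochastic convolution cannot be avoided. Everything else — the algebraic elimination of $p^\epsilon$, the semigroup decay estimate, and the Grönwall step — is routine, which is consistent with the authors' remark that the analysis closely follows \cite[Section 2.2.2]{Free_energy_computations}.
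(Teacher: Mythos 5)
Your proof is correct and follows essentially the same route as the paper's: both eliminate the fast momentum variable to identify the corrector $\epsilon(\nu J_{2}+\Gamma)^{-1}(p_{t}^{\epsilon}-p_{init})$, bound it via the variation-of-constants formula together with the decay estimate $\Vert e^{-uA}\Vert\le\kappa e^{-\omega u}$, and conclude by Gr\"onwall. The only difference is one of emphasis: the paper proves the cruder uniform bound $\mathbb{E}\sup_{t\le T}\vert p_{t}^{\epsilon}\vert^{2}\le C$ (Lemma \ref{lem:bounded p}) and multiplies by $\epsilon^{2}$, whereas you estimate $\epsilon p_{t}^{\epsilon}$ directly and are more explicit about the supremum of the fast stochastic convolution, a point the paper dispatches with a brief appeal to the Burkholder--Davis--Gundy inequality.
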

\begin{remark}
	By a refined analysis, it is possible to get information on the rate of convergence; see, e.g.~\cite{PavlSt03,PavSt05a}.
\end{remark}
The limiting SDE (\ref{eq:overdamped limit}) is nonreversible due to the term $-\mu J_1 \nabla_q V(q_t)\mathrm{d}t$ and also because the
matrix $(\nu J_{2}+\Gamma)^{-1}$ is in general neither symmetric
nor antisymmetric.
This result, together with the fact that nonreversible perturbations
of overdamped Langevin dynamics of the form \eqref{eq:nonreversible_overdamped} are by now well-known to have improved
performance properties, motivates further investigation of the dynamics
(\ref{eq:perturbed_underdamped}).

\begin{remark}
	The limit we described in this section respects the invariant distribution,
	in the sense that the limiting dynamics (\ref{eq:overdamped limit})
	is ergodic with respect to the measure $\pi(dq)=\frac{1}{Z}e^{-V}\mathrm{d}q.$
	To see this, we have to check that (we are using the notation $\nabla$ instead of $\nabla_q$) 
	\[
	\mathcal{L}^{\dagger}(e^{-V})=-\nabla\cdot\big((\nu J_{2}+\Gamma)^{-1}\nabla e^{-V}\big)+\nabla\cdot(\mu J_{1}\nabla e^{-V})+\nabla\cdot\big((\nu J_{2}+\Gamma)^{-1}\Gamma(-\nu J_{2}+\Gamma)^{-1}\nabla e^{-V}\big)=0,
	\]
	where $\mathcal{L}^{\dagger}$ refers to the $L^{2}(\mathbb{R}^{d})$-adjoint
	of the generator of (\ref{eq:overdamped limit}), i.e. to the associated Fokker-Planck operator. Indeed, the term
	$\nabla\cdot(\mu e^{-V}J_{1}\nabla V)$ vanishes because of the
	antisymmetry of $J_{1}.$ Therefore, it remains to show that 
	\[
	\nabla\cdot\big((\nu J_{2}+\Gamma)^{-1}\Gamma(-\nu J_{2}+\Gamma)^{-1}-(\nu J_{2}+\Gamma)^{-1}\big)\nabla e^{-V}\big)=0,
	\]
	i.e. that the matrix $(\nu J_{2}+\Gamma)^{-1}\Gamma(-\nu J_{2}+\Gamma)^{-1}-(\nu J_{2}+\Gamma)^{-1}$
	is antisymmetric. Clearly, the first term is symmetric and furthermore
	it turns out to be equal to the symmetric part of the second term:
		\begin{eqnarray*}
 \frac{1}{2}\big((\nu J_{2}+\Gamma)^{-1}+(-\nu J_{2}+\Gamma)^{-1}\big) & = &
	  =\frac{1}{2}\big((\nu J_{2}+\Gamma)^{-1}(-\nu J_{2}+\Gamma)(-\nu J_{2}+\Gamma)^{-1}  \\ && + (\nu J_{2}+\Gamma)^{-1}(\nu J_{2}+\Gamma)(-\nu J_{2}+\Gamma)^{-1}\big)\\
	& = & (\nu J_{2}+\Gamma)^{-1}\Gamma(-\nu J_{2}+\Gamma)^{-1},
		\end{eqnarray*}
	so $\pi$ is indeed invariant under the limiting dynamics (\ref{eq:overdamped limit}).
\end{remark}

\section{Sampling from a Gaussian Distribution}
\label{sec:Gaussian}

In this section we study in detail the performance of the Langevin sampler~\eqref{eq:perturbed_underdamped} for Gaussian target densities, first considering the case of unit covariance. In particular, we study the optimal choice for the parameters in the sampler, the exponential decay rate and the asymptotic variance. We then extend our results to Gaussian target densities with arbitrary covariance matrices.

\subsection{Unit covariance - small perturbations}
\label{sec:small perturbations}

In our study of the dynamics given by \eqref{eq:perturbed_underdamped}
we first consider the simple case when $V(q)=\frac{1}{2}\vert q\vert^{2}$,
i.e. the task of sampling from a Gaussian measure with unit covariance.
We will assume $M=I$, $\Gamma=\gamma I$ and $J_{1}=J_{2}=:J$
(so that the $q-$ and $p-$dynamics are perturbed in the same way,
albeit posssibly with different strengths $\mu$ and $\nu$). Using
these simplifications, (\ref{eq:perturbed_underdamped})
reduces to the linear system 
\begin{align}
\mathrm{d}q_{t} & =p_{t}\mathrm{d}t-\mu Jq_{t}\mathrm{d}t\nonumber, \\
\mathrm{d}p_{t} & =-q_{t}\mathrm{d}t-\nu Jp_{t}\mathrm{d}t-\gamma p_{t}\mathrm{d}t+\sqrt{2\gamma}\mathrm{d}W_{t}.\label{eq:unit covariance}
\end{align}
The above dynamics are of Ornstein-Uhlenbeck type, i.e. we can write
\begin{equation}
\mathrm{d}X_{t}=-BX_{t}\mathrm{d}t+\sqrt{2Q}\mathrm{d}\bar{W}_{t}\label{eq:OU process}
\end{equation}
with $X=(q,p)^{T}$, 
\begin{equation}
B=\left(\begin{array}{cc}
\mu J & -I\\
I & \gamma I+\nu J
\end{array}\right),\label{eq:drift matrix}
\end{equation}
\begin{equation}
Q=\left(\begin{array}{cc}
\boldsymbol{0} & \boldsymbol{0}\\
\boldsymbol{0} & \gamma I
\end{array}\right)\label{eq:diffusion matrix}
\end{equation}
and $(\bar{W}_{t})_{t\ge0}$ denoting a standard Wiener process on
$\mathbb{R}^{2d}$. The generator of (\ref{eq:OU process}) is then
given by 
\begin{equation}
\mathcal{L}=-Bx\cdot\nabla+\nabla^{T}Q\nabla.\label{eq:OU generator}
\end{equation}
We will consider quadratic observables of the form 
\[
f(q)=q\cdot Kq+l\cdot q+C,
\]
with $K\in\mathbb{R}_{sym}^{d\times d}$, $l\in\mathbb{R}^{d}$ and
$C\in\mathbb{R}$, however it is worth recalling that the asymptotic variance $\sigma^2_f$ does not depend on $C$. We also stress that $f$ is assumed to be independent of
$p$ as those extra degrees of freedom are merely auxiliary. Our
aim will be to study the associated asymptotic variance $\sigma_{f}^{2}$, see equation (\ref{eq:asymptoticvariance}), in particular its
dependence on the parameters $\mu$ and $\nu$.  This dependence is encoded in the
function 
\begin{alignat*}{1}
\Theta:\quad\mathbb{R}^{2} & \rightarrow\mathbb{R}\\
(\mu,\nu) & \mapsto\sigma_{f}^{2},
\end{alignat*}
assuming a fixed observable $f$ and perturbation matrix $J$. In
this section we will focus on small perturbations, i.e. on the behaviour
of the function $\Theta$ in the neighbourhood of the origin. Our
main theoretical tool will be the Poisson equation \eqref{eq:poisson_general}, see the proofs in Appendix \ref{app:Gaussian_proofs}. Anticipating the forthcoming analysis, let us already state our main result, showing that in the neighbourhood of the origin, the function $\Theta$ has favourable properties along the diagonal $\mu=\nu$ (note that the perturbation strengths in the first and second line of \eqref{eq: unit covariance-1-1} coincide):

\begin{theorem}
	\label{cor:small pert unit var}Consider the dynamics 
\begin{align}
\mathrm{d}q_{t} & =p_{t}\mathrm{d}t-\mu Jq_{t}\mathrm{d}t,\nonumber \\
\mathrm{d}p_{t} & =-q_{t}\mathrm{d}t-\mu Jp_{t}\mathrm{d}t-\gamma p_{t}\mathrm{d}t+\sqrt{2\gamma}\mathrm{d}W_{t},\label{eq: unit covariance-1-1}
\end{align}
with $\gamma>\sqrt{2}$ and an observable of the form $f(q)=q\cdot Kq+l\cdot q+C$.
If at least one of the conditions $[J,K]\neq0$ and $l\notin\ker J$
is satisfied, then the asymptotic variance of the unperturbed sampler
is at a local maximum independently of $K$ and $J$ (and $\gamma$,
as long as $\gamma>\sqrt{2}$), i.e. 
\[
\left. \partial_{\mu}\sigma_{f}^{2} \right\rvert_{\mu =0}=0
\]
and 
\[
\left. \partial_{\mu}^{2}\sigma_{f}^{2}\right\rvert_{\mu = 0}<0.
\]
\end{theorem}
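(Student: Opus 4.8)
The plan is to work directly with the Ornstein--Uhlenbeck representation \eqref{eq:OU process}--\eqref{eq:OU generator} of the system \eqref{eq: unit covariance-1-1}, for which $B=\left(\begin{smallmatrix}\mu J&-I\\ I&\gamma I+\mu J\end{smallmatrix}\right)$ and the invariant measure is $\widehat\pi=\mathcal N(0,I_{2d})$ (one checks $B+B^\top=2Q$, consistent with $I_{2d}$ being the stationary covariance). By Theorem~\ref{theorem:Hypocoercivity} and the resolvent identity $\cL^{-1}=\int_0^\infty P_t\,\mathrm dt$, any mean-zero $g\in L^2(\widehat\pi)$ satisfies $\sigma_g^2=\int_0^\infty\E_{\widehat\pi}[g(X_0)g(X_t)]\,\mathrm dt$, the integral converging absolutely. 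I would then write $f-\pi(f)=f_2+f_1$ with $f_2(q)=q\cdot Kq-\Tr K$ (mean zero, even in $(q,p)$) and $f_1(q)=l\cdot q$ (odd). Since $\cL$ maps affine functions into affine functions and quadratic-plus-constant functions into quadratic-plus-constant functions, the Poisson solution splits as $\chi=\chi_1+\chi_2$ with $\chi_1$ linear (odd) and $\chi_2$ quadratic-minus-its-mean (even); the two cross terms in $\langle\chi,f-\pi(f)\rangle_{L^2(\widehat\pi)}$ vanish by parity of $\widehat\pi$, so $\sigma_f^2=\sigma_{f_1}^2+\sigma_{f_2}^2$. It thus suffices to handle the linear and quadratic observables separately and add.

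The crucial structural point is that, under the standing choices $M=I$, $\Gamma=\gamma I$, $J_1=J_2=J$, $\mu=\nu$, the drift factorises: $B=B_0+\mu B_1$ with $B_0=\beta_0\otimes I_d$, $\beta_0=\left(\begin{smallmatrix}0&-1\\ 1&\gamma\end{smallmatrix}\right)$, and $B_1=I_2\otimes J$, and $B_0B_1=B_1B_0=\beta_0\otimes J$. Hence $e^{-Bt}=e^{-\beta_0 t}\otimes e^{-\mu tJ}$. Feeding this into the autocorrelation formula, using $X_t\mid X_0\sim\mathcal N(e^{-Bt}X_0,\,I-e^{-Bt}e^{-B^\top t})$ together with $\E[X_0X_0^\top]=I$ and Isserlis' formula for covariances of quadratic forms, both quantities reduce to one-dimensional integrals involving $g(t):=(e^{-\beta_0 t})_{11}$:
\[
\sigma_{f_1}^2(\mu)=\int_0^\infty g(t)\,l^\top e^{\mu tJ}l\,\mathrm dt,\qquad
\sigma_{f_2}^2(\mu)=2\int_0^\infty g(t)^2\,\Tr\!\bigl(Ke^{\mu tJ}Ke^{-\mu tJ}\bigr)\,\mathrm dt .
\]
Since $\gamma>0$, both eigenvalues of $\beta_0$ have positive real part, so $g$ decays exponentially and all integrals below (polynomial weights included) converge and may be differentiated under the integral sign.

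Next I would expand the integrands in $\theta:=\mu t$. As $e^{\theta J}$ is orthogonal, $l^\top e^{\theta J}l$ and (by cyclicity of the trace) $\Tr(Ke^{\theta J}Ke^{-\theta J})$ are both \emph{even} in $\theta$; hence $\sigma_{f_1}^2$ and $\sigma_{f_2}^2$ are even functions of $\mu$ and $\partial_\mu\sigma_f^2|_{\mu=0}=0$ with no extra hypotheses. For the quadratic coefficients, $l^\top Jl=0$ and $l^\top J^2l=-\norm{Jl}^2$ give $l^\top e^{\theta J}l=\norm{l}^2-\tfrac12\norm{Jl}^2\theta^2+O(\theta^4)$, and, with $\mathrm{ad}_JK=[J,K]$, $\Tr(Ke^{\theta J}Ke^{-\theta J})=\Tr K^2+\theta\,\Tr(K[J,K])+\tfrac{\theta^2}{2}\Tr\!\bigl(K[J,[J,K]]\bigr)+O(\theta^4)$, where $\Tr(K[J,K])=0$ by cyclicity and $\Tr\!\bigl(K[J,[J,K]]\bigr)=-\Tr\!\bigl([J,K]^2\bigr)$; since $J^\top=-J$ and $K^\top=K$, the commutator $[J,K]$ is symmetric, so $\Tr([J,K]^2)=\Norm{[J,K]}_F^2\ge 0$ with equality iff $[J,K]=0$. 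Differentiating under the integral sign yields
\[
\left.\partial_\mu^2\sigma_{f_1}^2\right|_{\mu=0}=-\norm{Jl}^2\int_0^\infty t^2 g(t)\,\mathrm dt,\qquad
\left.\partial_\mu^2\sigma_{f_2}^2\right|_{\mu=0}=-\,2\,\Norm{[J,K]}_F^2\int_0^\infty t^2 g(t)^2\,\mathrm dt .
\]

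It remains to sign the two weighted integrals. Trivially $\int_0^\infty t^2 g(t)^2\,\mathrm dt>0$ (as $g$ is continuous with $g(0)=1$), so $\partial_\mu^2\sigma_{f_2}^2|_{\mu=0}<0$ exactly when $[J,K]\neq0$ and is $0$ otherwise --- requiring no condition on $\gamma$. For the other integral, $\int_0^\infty t^2 e^{-\beta_0 t}\,\mathrm dt=2\beta_0^{-3}$, and a direct $2\times2$ computation gives $(\beta_0^{-3})_{11}=\gamma(\gamma^2-2)$, so $\int_0^\infty t^2 g(t)\,\mathrm dt=2\gamma(\gamma^2-2)$; this is strictly positive precisely when $\gamma>\sqrt2$ --- which is exactly where (and why) the hypothesis $\gamma>\sqrt2$ enters, and it is sharp, since for $\gamma<\sqrt2$ a linear observable with $Jl\neq0$ makes the second derivative positive. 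Combining, under $\gamma>\sqrt2$ both second derivatives at $\mu=0$ are $\le0$; the first is strictly negative iff $l\notin\ker J$, the second strictly negative iff $[J,K]\neq0$, and summing gives $\partial_\mu^2\sigma_f^2|_{\mu=0}<0$ whenever at least one nondegeneracy condition holds, which together with $\partial_\mu\sigma_f^2|_{\mu=0}=0$ is the claim. The genuinely non-routine parts are spotting the commuting/tensor structure of $B$ (without it one faces a messy perturbation expansion of a $2d\times2d$ Sylvester equation) and the identity $\Tr(K[J,[J,K]])=-\Tr([J,K]^2)$ combined with the symmetry of $[J,K]$; the remaining bookkeeping --- including justifying differentiation under the integral, which is immediate from the exponential decay of $g$ --- is mechanical. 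As an alternative to the autocorrelation route, one may solve the Poisson equation explicitly, $\chi_2=x^\top\Phi x-\Tr\Phi$ with $\Phi B+B^\top\Phi=\widetilde K$ for $\widetilde K=\left(\begin{smallmatrix}K&0\\0&0\end{smallmatrix}\right)$, and $\chi_1=v\cdot x$ with $B^\top v=(l,0)^\top$, so that $\sigma_f^2=2\Tr(\Phi\widetilde K)+v^\top(l,0)^\top$; the perturbation expansion of $\Phi$ and $v$ in $\mu$ is again trivialised by the tensor factorisation of $B$ and leads to the same conclusion.
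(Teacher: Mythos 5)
Your proof is correct, but it takes a genuinely different route from the paper's. The paper obtains the result by first computing the full Hessian of $\Theta(\mu,\nu)$ at the origin for \emph{independent} perturbation strengths: it solves the Lyapunov equation $AC+CA^{T}=\bar K$ and the linear system $AD=\bar l$ perturbatively, block by block, through second order (Propositions \ref{thm: local quadratic observable} and \ref{thm:linear_full_J}), and only then restricts to the diagonal direction $(1,1)$ and invokes Lemma \ref{lem:basic_inequalities}. You instead fix $\mu=\nu$ from the outset, observe that the drift then factorises as $B=\beta_0\otimes I_d+\mu\, I_2\otimes J$ with commuting summands --- this is precisely the finite-dimensional shadow of Lemma \ref{operator lemma}\ref{it:oplem3}, $[\mathcal{L}_0,\mathcal{A}]=0$, which the paper only exploits later for the large-$\mu$ analysis --- and compute $\sigma_f^2(\mu)$ in closed form via the stationary autocorrelation function and Isserlis' theorem. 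What your approach buys: exact (not merely second-order) formulas for $\sigma_{f_1}^2(\mu)$ and $\sigma_{f_2}^2(\mu)$ valid for all $\mu$, a much shorter computation, and a transparent explanation of the threshold $\gamma>\sqrt2$ as the positivity condition for $\int_0^\infty t^2 g(t)\,\mathrm{d}t=2(\beta_0^{-3})_{11}=2\gamma(\gamma^2-2)$ (your diagonal second derivatives $-2\gamma(\gamma^2-2)\norm{Jl}^2$ and $-2\Tr([J,K]^2)\int_0^\infty t^2g(t)^2\,\mathrm{d}t$ agree with the paper's Hessians contracted against $(1,1)$, using $\Tr([J,K]^2)=2(\Tr(JKJK)-\Tr(J^2K^2))$ and, at $\gamma=2$, $\int_0^\infty t^2g^2=7/4=\frac14(\gamma^3+\gamma^{-1}+4\gamma^{-3}-\gamma)$). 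What it loses: the off-diagonal information in $(\mu,\nu)$, which the paper needs elsewhere (e.g.\ Example \ref{ex:opposed perturbation} and the discussion of why the matching $\mu=\nu$ is essential), since your tensor factorisation breaks down when $\mu\neq\nu$.
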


\subsubsection{\label{sub:Purely-quadratic-observables}Purely quadratic observables}

Let us start with the case $l=0$, i.e. $f(q)=q\cdot Kq+C$. The following
holds: 
\begin{proposition}
	\label{thm: local quadratic observable}The function $\Theta$ satisfies
	\begin{equation}
	\left. \nabla\Theta\right\rvert_{(\mu,\nu)=(0,0)}=0\label{eq:gradTheta}
	\end{equation}
	and 
	\begin{equation}
	\left. \Hess\Theta\right\rvert_{(\mu,\nu)=(0,0)}=\left(\begin{array}{cc}
	-(\gamma+\frac{1}{\gamma^{3}}+\gamma^{3})\left(\Tr(JKJK)-\Tr(J^{2}K^{2})\right) & (\frac{1}{\gamma^{3}}+\frac{1}{\gamma}-\gamma)\Tr(J^{2}K^{2})\\
	-\frac{2}{\gamma}\Tr(JKJK) & +(-\frac{1}{\gamma^{3}}+\frac{1}{\gamma}+\gamma)\Tr(JKJK)\\
	(\frac{1}{\gamma^{3}}+\frac{1}{\gamma}-\gamma)\Tr(J^{2}K^{2}) & (\frac{1}{\gamma^{3}}-\frac{1}{\gamma})\Tr(J^{2}K^{2})\\
	+(-\frac{1}{\gamma^{3}}+\frac{1}{\gamma}+\gamma)\Tr(JKJK) & -(\frac{1}{\gamma^{3}}+\frac{1}{\gamma})\Tr(JKJK)
	\end{array}\right).\label{eq:HessTheta}
	\end{equation}
\end{proposition}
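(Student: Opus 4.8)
The plan is to solve the Poisson equation~\eqref{eq:poisson_general} in closed form and then Taylor-expand the resulting expression for $\sigma_f^2$ to second order in $(\mu,\nu)$. First I would record the structural facts needed to start: with $V(q)=\tfrac12|q|^2$, $M=I$ and $\Gamma=\gamma I$ the augmented Gibbs measure is $\widehat{\pi}=\mathcal{N}(0,I_{2d})$, equivalently $B+B^\top=2Q$; the unperturbed drift $B_0=\left(\begin{smallmatrix}0 & -I\\ I & \gamma I\end{smallmatrix}\right)$ is a stable matrix (its eigenvalues are $\tfrac{\gamma}{2}\pm\tfrac12\sqrt{\gamma^2-4}$, each with positive real part), hence so is $B=B(\mu,\nu)$ for $(\mu,\nu)$ near the origin, and Theorem~\ref{theorem:Hypocoercivity} (or a direct spectral computation) yields the exponential decay~\eqref{eq:hypocoercive estimate}. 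By Lemma~\ref{lemma:variance} the Poisson equation then has a unique mean-zero solution $\chi$ with $\sigma_f^2=\inner{\chi}{f-\widehat{\pi}(f)}_{L^2(\widehat{\pi})}$. For $f(q)=q\cdot Kq+C$ I would make the ansatz $\chi(x)=x^\top\Phi x-\Tr\Phi$ with symmetric $\Phi\in\R^{2d\times 2d}$; substituting into $-\mathcal{L}\chi=f-\widehat{\pi}(f)$ and matching the quadratic and constant parts reduces the Poisson equation to the single Lyapunov-type equation
\begin{equation*}
B^\top\Phi+\Phi B=\widetilde{K},\qquad \widetilde{K}:=\begin{pmatrix}K & 0\\ 0 & 0\end{pmatrix},
\end{equation*}
which has a unique symmetric solution because $B$ is stable. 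The Gaussian identity $\mathrm{Cov}_{\mathcal{N}(0,I)}(x^\top A x,\,x^\top B x)=2\Tr(AB)$ for symmetric $A,B$ then gives the compact formula $\Theta(\mu,\nu)=\sigma_f^2=2\Tr(\Phi\widetilde{K})=2\Tr(\Phi_{qq}K)$, where $\Phi_{qq}$ is the $(q,q)$-block of $\Phi$; since $B(\mu,\nu)$ is stable and depends polynomially on $(\mu,\nu)$, $\Phi$ and hence $\Theta$ are real-analytic near the origin, which justifies the Taylor expansion to follow.

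The computational heart of the proof is a perturbative solution of $B^\top\Phi+\Phi B=\widetilde{K}$. Writing $B=B_0+\mu\mathcal{J}_1+\nu\mathcal{J}_2$ with $\mathcal{J}_1=\diag(J,0)$ and $\mathcal{J}_2=\diag(0,J)$ (both skew-symmetric, so that $\mathcal{J}_i^\top\Phi+\Phi\mathcal{J}_i=[\Phi,\mathcal{J}_i]$ for symmetric $\Phi$), and expanding $\Phi=\Phi_{00}+\mu\Phi_{10}+\nu\Phi_{01}+\mu^2\Phi_{20}+\mu\nu\Phi_{11}+\nu^2\Phi_{02}+\cdots$, matching powers of $(\mu,\nu)$ produces a hierarchy all governed by the single Lyapunov operator $\mathcal{A}_0\colon\Phi\mapsto B_0^\top\Phi+\Phi B_0$: $\mathcal{A}_0\Phi_{00}=\widetilde{K}$; $\mathcal{A}_0\Phi_{10}=-[\Phi_{00},\mathcal{J}_1]$, $\mathcal{A}_0\Phi_{01}=-[\Phi_{00},\mathcal{J}_2]$; $\mathcal{A}_0\Phi_{20}=-[\Phi_{10},\mathcal{J}_1]$, $\mathcal{A}_0\Phi_{02}=-[\Phi_{01},\mathcal{J}_2]$, $\mathcal{A}_0\Phi_{11}=-[\Phi_{01},\mathcal{J}_1]-[\Phi_{10},\mathcal{J}_2]$. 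Because of the block form of $B_0$, each such equation is solved explicitly by a short block computation: if the right-hand side is $\left(\begin{smallmatrix}G_{11} & G_{12}\\ G_{12}^\top & G_{22}\end{smallmatrix}\right)$ with $G_{11},G_{22}$ symmetric, then the $(q,q)$-block of $\mathcal{A}_0^{-1}$ applied to it equals $\tfrac{1+\gamma^2}{2\gamma}G_{11}+\tfrac{1}{2\gamma}G_{22}-\tfrac12\bigl(G_{12}+G_{12}^\top\bigr)$ (and the other blocks are given by similar formulas, which are needed to form the next commutator). In particular $\Phi_{00}=\left(\begin{smallmatrix}\frac{1+\gamma^2}{2\gamma}K & \frac12 K\\ \frac12 K & \frac{1}{2\gamma}K\end{smallmatrix}\right)$, and the first- and second-order corrections follow by iterating. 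The gradient and Hessian at the origin are then read off from $\left.\nabla\Theta\right|_0=\bigl(2\Tr(\Phi_{10}\widetilde{K}),\,2\Tr(\Phi_{01}\widetilde{K})\bigr)$ and $\left.\Hess\Theta\right|_0=\left(\begin{smallmatrix}4\Tr(\Phi_{20}\widetilde{K}) & 2\Tr(\Phi_{11}\widetilde{K})\\ 2\Tr(\Phi_{11}\widetilde{K}) & 4\Tr(\Phi_{02}\widetilde{K})\end{smallmatrix}\right)$.

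It then remains to evaluate the traces. For the gradient one finds that the $(q,q)$-blocks of $\Phi_{10}$ and $\Phi_{01}$ are scalar multiples of the commutator $[J,K]=JK-KJ$, and $\Tr([J,K]K)=\Tr(JK^2)-\Tr(JK^2)=0$ by cyclicity, so $\left.\nabla\Theta\right|_0=0$. (Conceptually this vanishing also follows from the symmetry $\Theta(\mu,\nu)=\Theta(-\mu,-\nu)$: the $L^2(\widehat{\pi})$-adjoint of $\mathcal{L}_{(\mu,\nu)}$ becomes, after conjugation by the momentum reflection $(q,p)\mapsto(q,-p)$ --- an isometry of $L^2(\widehat{\pi})$ fixing $f$ --- precisely $\mathcal{L}_{(-\mu,-\nu)}$, while $\inner{f}{(-\mathcal{L})^{-1}f}$ is unchanged under passing to the adjoint.) For the Hessian, the iterated commutators feed only the words $\{J^2,K\}=J^2K+KJ^2$, $JKJ$ and $[J^2,K]$ into the right-hand sides; after extracting the $(q,q)$-block with the formula above and tracing against $K$, cyclicity of the trace together with $J^\top=-J$ collapses everything onto the two invariants $\Tr(J^2K^2)$ and $\Tr(JKJK)$ --- the only scalars one can form from two copies of $J$ and two of $K$ --- and collecting the $\gamma$-dependent coefficients reproduces exactly~\eqref{eq:HessTheta}.

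The only genuine obstacle is bookkeeping: the matrix algebra is noncommutative ($JK\neq KJ$ in general), and one has to push the block solves and the trace reductions through the zeroth, first and second orders without slips; but there is no conceptual difficulty once the reduction to the Lyapunov hierarchy and the closed-form inversion of $\mathcal{A}_0$ are in hand. This Hessian formula is, in turn, the input for the analysis of the diagonal $\mu=\nu$ underlying Theorem~\ref{cor:small pert unit var} in the purely quadratic case.
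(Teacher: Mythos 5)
Your proposal is correct and follows essentially the same route as the paper: reduce the Poisson equation to the Lyapunov equation $B^\top\Phi+\Phi B=\widetilde K$ via the quadratic ansatz, differentiate it in $(\mu,\nu)$ at the origin, solve the resulting block systems (your closed-form expression for the $(q,q)$-block of $\mathcal{A}_0^{-1}$ and your $\Phi_{00}$ agree with the paper's $C(0)$), and read off $\nabla\Theta$ and $\Hess\Theta$ from traces against $K$; the symmetry argument $\Theta(\mu,\nu)=\Theta(-\mu,-\nu)$ via momentum reflection is a nice extra not in the paper. The only caveat is that the second-order trace computations yielding the explicit $\gamma$-coefficients in \eqref{eq:HessTheta} are asserted rather than carried out, whereas they occupy the bulk of the paper's proof.
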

\begin{proof}
	See Appendix \ref{app:Gaussian_proofs}.
	\qed
\end{proof}
The above proposition shows that the unperturbed dynamics represents a
critical point of $\Theta$, independently of the choice of $K$,
$J$ and $\gamma$. In general though, $\Hess\Theta\vert_{(\mu,\nu)=(0,0)}$
can have both positive and negative eigenvalues. In particular, this
implies that an unfortunate choice of the perturbations will actually
increase the asymptotic variance of the dynamics (in contrast to the situation
of perturbed \emph{overdamped }Langevin dynamics, where any nonreversible
perturbation leads to an improvement in asymptotic variance as detailed
in \cite{asvar_Hwang} and \cite{duncan2016variance}). Furthermore, the nondiagonality
of $\Hess\Theta\vert_{(\mu,\nu)=(0,0)}$ hints at the fact that the
interplay of the perturbations $J_{1}$ and $J_{2}$ (or rather their
relative strengths $\mu$ and $\nu$) is crucial for the performance
of the sampler and, consequently, the effect of these perturbations cannot be satisfactorily
studied independently. 
\begin{example}
	Assuming $J^{2}=-I$ and $[J,K]=0$ it follows that
	\[
	\left. \partial_{\mu}^{2}\Theta\right\rvert_{\mu=0}=\left. \partial_{\nu}^{2}\Theta\right\rvert_{\mu=0}=\frac{1}{\gamma}\Tr(K^{2})>0,
	\]
	for all nonzero $K$. Therefore in this case, a small perturbation of $J_{1}$ only or $J_{2}$ only will increase  the asymptotic variance, uniformly over all choices of $K$ and $\gamma$.
\end{example}
However, it turns out that it is possible to construct an improved sampler
by combining both perturbations in a suitable way. Indeed,
the function $\Theta$ can be seen to have good properties along $\mu=\nu$. We set $\mu(s)=s$, $\nu(s):=s$ and compute

\begin{align*}
\left. \frac{\mathrm{d}^{2}}{\mathrm{d}s^{2}}\Theta\right\rvert_{s=0} & =(1,1)\cdot\Hess\Theta\vert_{(\mu,\nu)=(0,0)}(1,1)\\
& =-(\gamma+\frac{1}{\gamma^{3}}+\gamma^{3})\left(\Tr(JKJK)-\Tr(J^{2}K^{2})\right)-\frac{2}{\gamma}\Tr(JKJK)\\
& +2\cdot\left((\frac{1}{\gamma^{3}}+\frac{1}{\gamma}-\gamma)\Tr(J^{2}K^{2})+(-\frac{1}{\gamma^{3}}+\frac{1}{\gamma}+\gamma)\Tr(JKJK)\right)\\
& +(\frac{1}{\gamma^{3}}-\frac{1}{\gamma})\Tr(J^{2}K^{2})-(\frac{1}{\gamma^{3}}+\frac{1}{\gamma})\Tr(JKJK)\\
& =\big(\gamma-\frac{4}{\gamma^{3}}-\gamma^{3}-\frac{1}{\gamma}\big)\cdot(\Tr(JKJK)-\Tr(J^{2}K^{2}))\le0.
\end{align*}
The last inequality follows from 
\[
\gamma-\frac{4}{\gamma^{3}}-\gamma^{3}-\frac{1}{\gamma}<0
\]
and 
\[
\Tr(JKJK)-\Tr(J^{2}K^{2})\ge0
\]
(both inequalities are proven in the Appendix, Lemma \ref{lem:basic_inequalities}), where the last inequality
is strict if $[J,K]\neq0$. Consequently, choosing both perturbations
to be of the same magnitude ($\mu=\nu$) and assuring that $J$ and
$K$ do not commute always leads to a smaller asymptotic variance,
independently of the choice of $K$, $J$ and $\gamma$. We state
this result in the following corrolary:
\begin{corollary}
	\label{cor:unit covariance quadratic obs}Consider the dynamics 
	\begin{align}
	\mathrm{d}q_{t} & =p_{t}\mathrm{d}t-\mu Jq_{t}\mathrm{d}t\nonumber, \\
	\mathrm{d}p_{t} & =-q_{t}\mathrm{d}t-\mu Jp_{t}\mathrm{d}t-\gamma p_{t}\mathrm{d}t+\sqrt{2\gamma}\mathrm{d}W_{t},\label{eq: unit covariance-1}
	\end{align}
 and a quadratic observable $f(q)=q\cdot Kq+C$. If $[J,K] \neq 0,$
	then the asymptotic variance of the unperturbed sampler is at a local
	maximum independently of K, $J$ and $\gamma$, i.e. 
	\[
	\left. \partial_{\mu}\sigma_{f}^{2}\right\rvert_{\mu=0}=0
	\]
	and 
	\[
	\left. \partial_{\mu}^{2}\sigma_{f}^{2}\right\rvert_{\mu=0}<0.
	\]
\end{corollary}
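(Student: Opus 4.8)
The plan is to deduce the corollary directly from Proposition~\ref{thm: local quadratic observable} by restricting the two-parameter family of dynamics~\eqref{eq:unit covariance} to the diagonal $\mu=\nu$. First I would record that for $V(q)=\frac12|q|^2$ Assumption~\ref{ass:bounded+Poincare} holds trivially, so that Theorem~\ref{theorem:Hypocoercivity} together with Lemma~\ref{lemma:variance} applies: the Poisson equation~\eqref{eq:poisson_general} is well posed and $\sigma_f^2<\infty$. Moreover, for the linear dynamics~\eqref{eq: unit covariance-1} the solution $\chi$ is a quadratic polynomial in $(q,p)$ whose coefficients depend smoothly (in fact rationally) on $(\mu,\nu)$, so $\Theta$ is smooth near the origin and the derivatives appearing in the statement are meaningful.

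Next I would set $g(s):=\Theta(s,s)$, the asymptotic variance of~\eqref{eq: unit covariance-1}, and differentiate along the diagonal. By the chain rule and~\eqref{eq:gradTheta},
\[
g'(0)=\partial_\mu\Theta(0,0)+\partial_\nu\Theta(0,0)=0,
\]
which gives the first assertion $\partial_\mu\sigma_f^2|_{\mu=0}=0$. For the second derivative I would use
\[
g''(0)=(1,1)\,\Hess\Theta(0,0)\,(1,1)^T,
\]
and substitute the explicit matrix~\eqref{eq:HessTheta}; collecting the coefficients of $\Tr(JKJK)$ and of $\Tr(J^2K^2)$ then produces
\[
g''(0)=\Big(\gamma-\tfrac{4}{\gamma^3}-\gamma^3-\tfrac1\gamma\Big)\big(\Tr(JKJK)-\Tr(J^2K^2)\big).
\]
This is precisely the computation already carried out in the discussion preceding the corollary; the only point requiring care is keeping track of the four entries of the Hessian.

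To finish I would fix the signs using Lemma~\ref{lem:basic_inequalities}: the prefactor $\gamma-\tfrac{4}{\gamma^3}-\gamma^3-\tfrac1\gamma$ is strictly negative for every $\gamma>0$ (equivalently $\gamma^6-\gamma^4+\gamma^2+4>0$, which holds because $u\mapsto u^3-u^2+u+4$ is increasing on $[0,\infty)$ and equals $4$ at $u=0$), whereas $\Tr(JKJK)-\Tr(J^2K^2)\ge0$ with the inequality strict whenever $[J,K]\neq0$. Hence, under the hypothesis $[J,K]\neq0$, both factors are nonzero and of opposite sign, so $g''(0)<0$, i.e. $\partial_\mu^2\sigma_f^2|_{\mu=0}<0$. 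Note that, in contrast with Theorem~\ref{cor:small pert unit var}, no lower bound on $\gamma$ is needed here, since the absence of a linear term $l$ removes the restriction $\gamma>\sqrt2$.

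The real work is not in the corollary but in its input Proposition~\ref{thm: local quadratic observable}: obtaining $\Hess\Theta(0,0)$ requires solving the Poisson equation for the quadratic observable, which for the Ornstein--Uhlenbeck process~\eqref{eq:OU process} reduces to Lyapunov/Sylvester-type matrix equations for the coefficients of $\chi$, followed by two differentiations in $(\mu,\nu)$ at the origin and the reorganisation of the resulting traces into the stated $2\times2$ form; that bookkeeping is the main obstacle. Granting Proposition~\ref{thm: local quadratic observable} and Lemma~\ref{lem:basic_inequalities}, the corollary is then immediate.
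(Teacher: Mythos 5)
Your proposal is correct and follows essentially the same route as the paper: restrict $\Theta$ to the diagonal $\mu=\nu$, use $\nabla\Theta(0,0)=0$ from Proposition~\ref{thm: local quadratic observable} for the first derivative, compute $(1,1)\,\Hess\Theta(0,0)\,(1,1)^{T}=\big(\gamma-\tfrac{4}{\gamma^{3}}-\gamma^{3}-\tfrac{1}{\gamma}\big)\big(\Tr(JKJK)-\Tr(J^{2}K^{2})\big)$, and conclude via Lemma~\ref{lem:basic_inequalities}. Your alternative verification that the prefactor is negative (via $u^{3}-u^{2}+u+4>0$ for $u=\gamma^{2}\ge0$) is a valid, slightly different elementary argument from the paper's, but this is an inessential variation.
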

\begin{remark}
	As we will see in Section \ref{sec:large perturbations}, more precisely Example \ref{ex:commutation quadratic observables}, if $[J,K]=0,$
	the asymptotic variance is constant as a function of $\mu$, i.e.
	the perturbation has no effect.\end{remark}
\begin{example}
\label{ex:opposed perturbation}
	Let us set $\mu(s):=s$ and $\nu(s):=-s$ (this corresponds to a small
	perturbation with $J\nabla V(q_{t})\mathrm{d}t$ in $q$ and $-Jp_{t}\mathrm{d}t$
	in $p$). In this case we get 
	\[
	\left. \frac{\mathrm{d}^{2}\Theta}{\mathrm{d}s^{2}} \right\rvert_{s=0}=\underbrace{-\frac{1}{2}\cdot\frac{\gamma^{4}+3\gamma^{2}+5}{\gamma}\left(\Tr(JKJK)-\Tr(J^{2}K^{2})\right)}_{\le0}\underbrace{-4\frac{\Tr(J^{2}K^{2})}{\gamma}}_{\ge0},
	\]
	which changes its sign depending on $J$ and $K$ as the first term
	is negative and the second is positive. Whether the perturbation improves the performance of the sampler in terms of asymptotic variance therefore depends on the specifics of the observable and the perturbation in this case.
	
\end{example}

\subsubsection{Linear observables}

Here we consider the case $K=0$, i.e. $f(q)=l\cdot q+C$, where
again $l\in\mathbb{R}^{d}$ and $C\in\mathbb{R}$. We have the following
result: 
\begin{proposition}
	\label{thm:linear_full_J}The function $\Theta$ satisfies 
	\[
	\nabla\Theta\vert_{(\mu,\nu)=(0,0)}=0
	\]
	and 
	\[
	\Hess\Theta\vert_{(\mu,\nu)=(0,0)}=\left(\begin{array}{cc}
	-2\gamma^{3}\vert Jl\vert^{2} & 2\gamma\vert Jl\vert^{2}\\
	2\gamma\vert Jl\vert^{2} & 0
	\end{array}\right).
	\]
\end{proposition}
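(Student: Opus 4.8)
The plan is to solve the Poisson equation $-\mathcal{L}\chi = f - \pi(f)$ explicitly for the linear observable $f(q) = l\cdot q + C$ in the Ornstein--Uhlenbeck setting \eqref{eq:OU process}--\eqref{eq:OU generator}, and then differentiate the resulting asymptotic variance formula twice in the perturbation parameters. Since $f$ is linear in $x = (q,p)^\top$ (it only involves $q$), a natural ansatz is that $\chi(x) = w\cdot x$ is also linear, with $w\in\mathbb{R}^{2d}$ to be determined. For the OU generator $\mathcal{L} = -Bx\cdot\nabla + Q : D^2$ one has $\mathcal{L}(w\cdot x) = -w\cdot Bx = -(B^\top w)\cdot x$, so the Poisson equation reduces to the linear algebra problem $B^\top w = \tilde{l}$, where $\tilde{l} = (l,0)^\top\in\mathbb{R}^{2d}$ encodes the observable. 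Thus $w = (B^\top)^{-1}\tilde l = (B^{-1})^\top \tilde l$, and by \eqref{eq:asymptoticvariance} the asymptotic variance is $\sigma_f^2 = \langle \nabla\chi, \Sigma\nabla\chi\rangle_{L^2(\pi)} = w^\top Q w = \gamma\, w_p\cdot w_p$, where $w_p$ is the lower ($p$-)block of $w$ (recall $Q$ is $\gamma I$ on the $p$-block and zero elsewhere).

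Next I would carry out the block inversion of $B = \begin{pmatrix} \mu J & -I \\ I & \gamma I + \nu J\end{pmatrix}$ explicitly. Writing $B^{-1}$ in $2\times 2$ block form via the Schur complement, one can express the $p$-block $w_p$ of $w = (B^{-1})^\top (l,0)^\top$ as a rational matrix function of $\mu, \nu, \gamma$ and $J$ applied to $l$. At $\mu = \nu = 0$ one has $B = \begin{pmatrix} 0 & -I \\ I & \gamma I\end{pmatrix}$, whose inverse is $\begin{pmatrix} \gamma I & I \\ -I & 0\end{pmatrix}$, giving $w|_{0} = (\gamma l, -l)^\top$ and hence $\sigma_f^2|_{0} = \gamma|l|^2$, consistent with known formulas for the unperturbed Langevin sampler. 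To get the Hessian I would expand $B^{-1}$ (equivalently $w_p$) to second order in $(\mu,\nu)$ using the Neumann-type expansion $B^{-1} = (B_0 + E)^{-1} = B_0^{-1} - B_0^{-1} E B_0^{-1} + B_0^{-1}E B_0^{-1} E B_0^{-1} - \cdots$ with $E = \begin{pmatrix} \mu J & 0 \\ 0 & \nu J\end{pmatrix}$, then substitute into $\sigma_f^2 = \gamma |w_p|^2$ and collect the quadratic terms. Using skew-symmetry of $J$ (so $l\cdot Jl = 0$, $|Jl|^2 = -l\cdot J^2 l$) and the fact that the first-order term vanishes (which is where $\nabla\Theta|_0 = 0$ comes from), the coefficients of $\mu^2$, $\mu\nu$, $\nu^2$ should reduce to multiples of $|Jl|^2$, yielding the claimed matrix $\begin{pmatrix} -2\gamma^3 |Jl|^2 & 2\gamma|Jl|^2 \\ 2\gamma|Jl|^2 & 0\end{pmatrix}$.

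The main obstacle I anticipate is purely computational rather than conceptual: carefully tracking the noncommuting matrix products $B_0^{-1}E B_0^{-1} E B_0^{-1}$ through the block structure and simplifying using the skew-symmetry of $J$ and the structure $B_0^{-1} = \begin{pmatrix}\gamma I & I \\ -I & 0\end{pmatrix}$, so that all the surviving contributions collapse to scalar multiples of $|Jl|^2 = l^\top J^\top J l$. One has to be attentive that cross terms of the form $l\cdot J^k l$ for odd $k$ vanish, that $l\cdot J^2 l = -|Jl|^2$, and that higher powers like $l\cdot J^4 l$ do \emph{not} in general reduce to $|Jl|^2$ — so it is important to verify that such terms actually cancel in the final second-order coefficients. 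A secondary technical point is justifying that the linear ansatz $\chi = w\cdot x$ indeed gives \emph{the} mean-zero solution guaranteed by Lemma \ref{lemma:variance}: since $\pi(\chi) = w\cdot \mathbb{E}_{\widehat\pi}[x] = 0$ (the Gaussian $\widehat\pi$ is centered) and $\mathcal{L}$ is invertible on $L^2_0(\widehat\pi)$ under the hypocoercivity estimate of Theorem \ref{theorem:Hypocoercivity}, uniqueness is immediate, and $\chi$ lies in the relevant weighted Sobolev space since it is polynomial of degree one with Gaussian weight. I would relegate the bookkeeping of the expansion to Appendix \ref{app:Gaussian_proofs}, as the authors indicate.
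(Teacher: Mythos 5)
Your proposal is correct and follows essentially the same route as the paper: the linear ansatz $\chi=w\cdot x$ reduces the Poisson equation to $Aw=\bar l$ with $A=B^{\top}$ (this is \eqref{eq:linear condition}), and the paper likewise obtains $\Theta(\mu,\nu)$ by blockwise (Schur-complement) inversion of the drift matrix and then differentiates at the origin. The only cosmetic difference is that the paper evaluates $\sigma_f^2=D\cdot\bar l$, so that only the top-left block $\bigl(-\mu J+(\gamma I-\nu J)^{-1}\bigr)^{-1}$ of $A^{-1}$ is needed and can be differentiated in closed form without a Neumann expansion; your expression $\sigma_f^2=\gamma|w_p|^2$ equals this by \eqref{eq:asymptoticvariance}, and the sign slip $w|_{0}=(\gamma l,\,l)^{\top}$ rather than $(\gamma l,\,-l)^{\top}$ is harmless since only $|w_p|^2$ enters.
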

\begin{proof}
	See Appendix \ref{app:Gaussian_proofs}.
	\qed
\end{proof}
	Let us assume that $l\notin\ker J$. Then $\partial_{\mu}^{2}\Theta\vert_{\mu,\nu=0}<0$,
	and hence Theorem \ref{thm:linear_full_J} shows that a small perturbation
	by $\mu J\nabla V(q_{t})\mathrm{d}t$ alone always results in an improvement
	of the asymptotic variance. However, if we combine both perturbations
	$\mu J\nabla V(q_{t})\mathrm{d}t$ and $\nu Jp_{t}\mathrm{d}t$, then
	the effect depends on the sign of 
	\[
	\left(\begin{array}{cc}
	\mu & \nu\end{array}\right)\left(\begin{array}{cc}
	-2\gamma^{3}\vert Jl\vert^{2} & 2\gamma\vert Jl\vert^{2}\\
	2\gamma\vert Jl\vert^{2} & 0
	\end{array}\right)\left(\begin{array}{c}
	\mu\\
	\nu
	\end{array}\right)=-(2\mu^{2}\gamma^{3}-4\mu\nu\gamma)\vert Jl\vert^{2}.
	\]
	This will be negative if $\mu$ and $\nu$ have different signs, and
	also if they have the same sign and $\gamma$ is big enough.
Following Section \ref{sub:Purely-quadratic-observables}, we require
$\mu=\nu$. We then end up with the requirement 
\[
2\mu^{2}\gamma^{3}-4\mu\nu\gamma>0,
\]
which is satisfied if $\gamma>\sqrt{2}$

Summarizing the results of this section, for observables of the form
$f(q)=q\cdot Kq+l\cdot q+C$, choosing equal perturbations ($\mu=\nu$)
with a sufficiently strong damping $(\gamma>\sqrt{2}$) always leads
to an improvement in asymptotic variance under the conditions $[J,K]\neq0$
and $l\notin\ker J$. This is finally the content of Theorem \ref{cor:small pert unit var}.
\begin{figure}
	\begin{subfigure}[b]{0.5 \textwidth}
		\includegraphics[width=\textwidth]{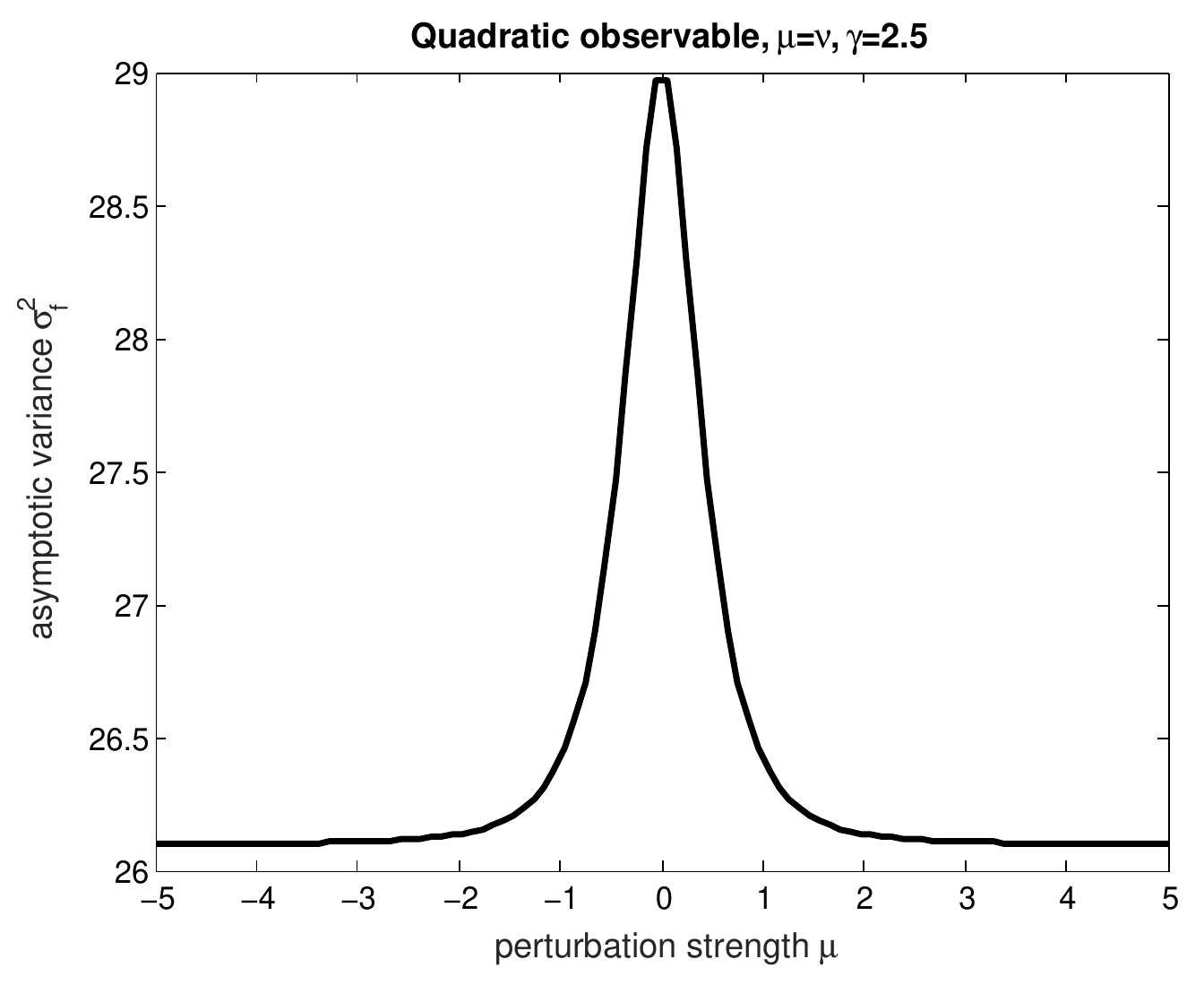}
		\caption{Equal perturbations: $\mu=\nu$}
		\label{fig:asym_quad}
	\end{subfigure}
	\hfill
	\begin{subfigure}[b]{0.5 \textwidth}
		\includegraphics[width=\textwidth]{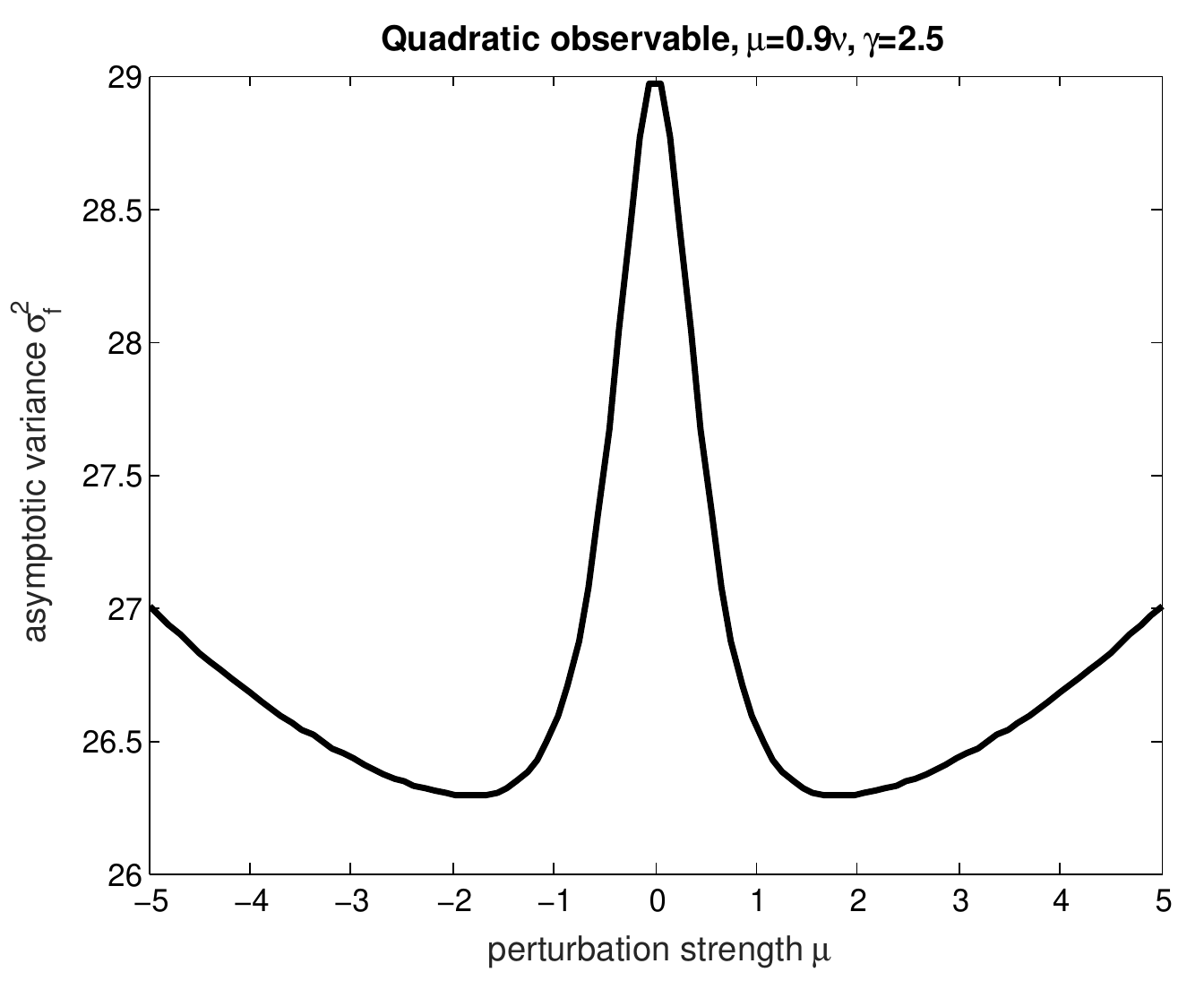}
		\caption{Approximately equal perturbations: $\mu=0.9\nu$}
		\label{fig:no_limit1}
	\end{subfigure}
	\hfill
	\begin{subfigure}[b]{0.5 \textwidth}
		\includegraphics[width=\textwidth]{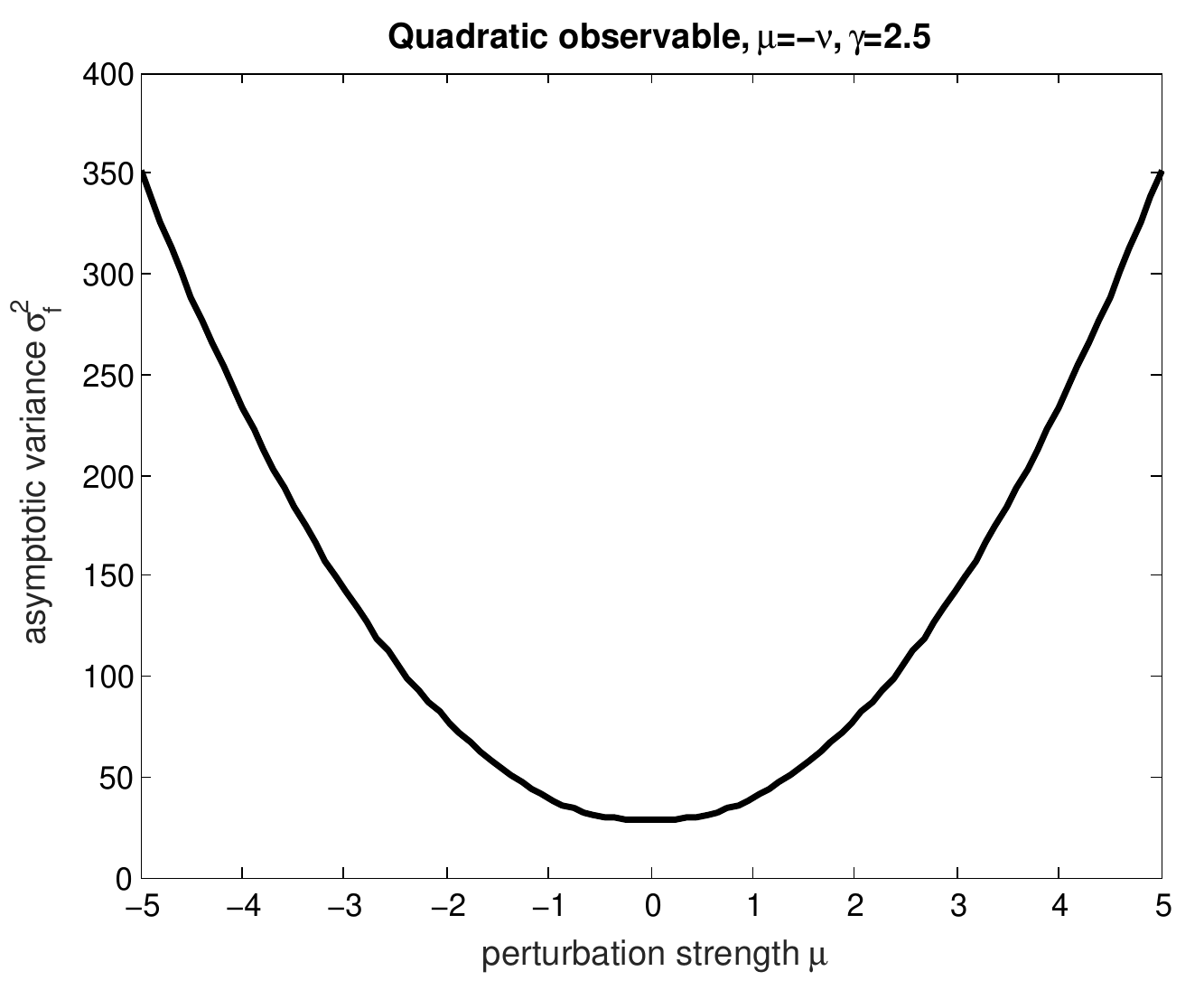}
		\caption{Opposing perturbations: $\mu=-\nu$ 
			\label{fig:no_limit2}}
	\end{subfigure}
	\hfill
	\begin{subfigure}[b]{0.5 \textwidth}
		\includegraphics[width=\textwidth]{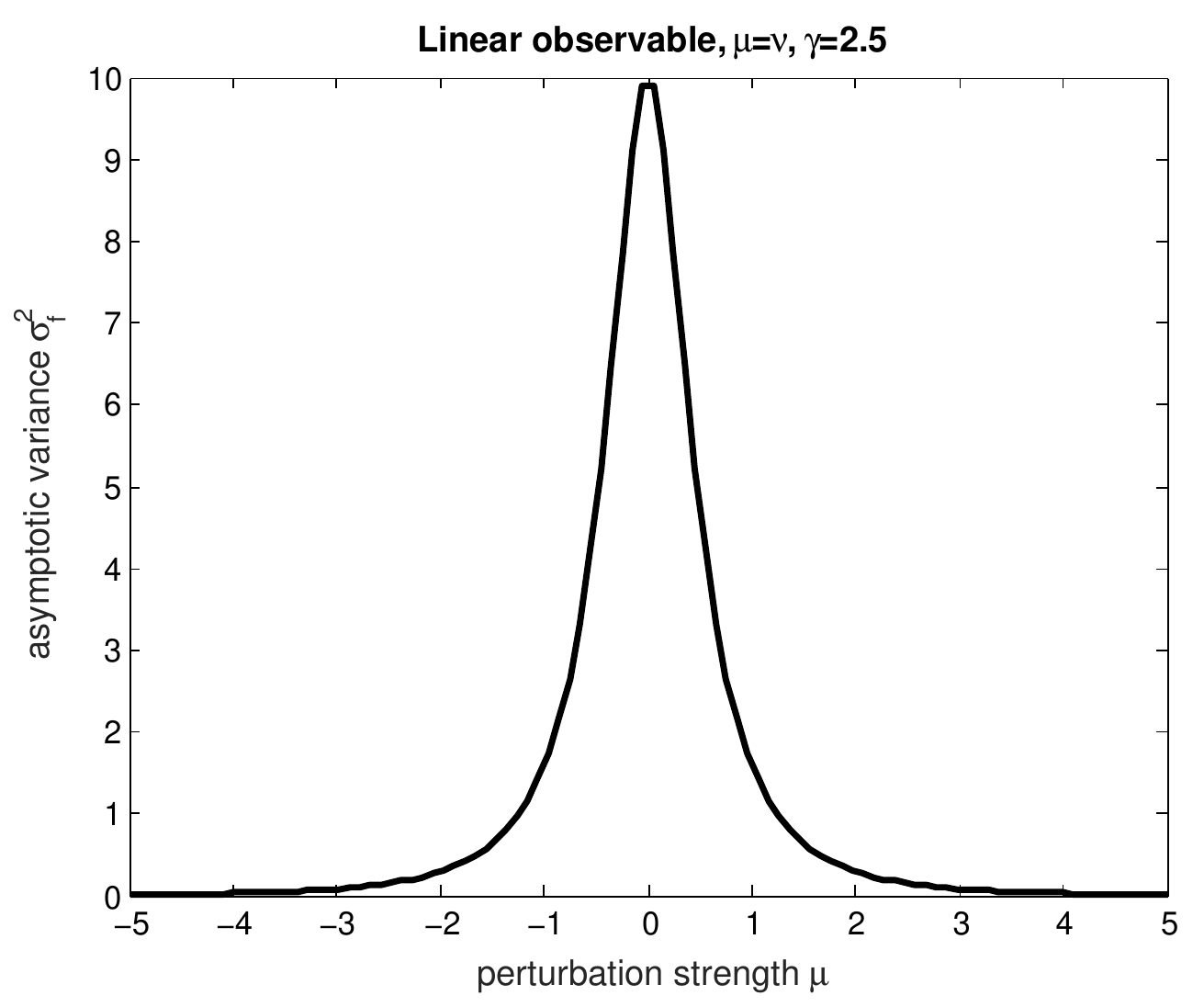}
		\caption{Equal perturbations: $\mu=\nu$ (sufficiently large friction $\gamma$)}
		\label{fig:lin_large_friction}
	\end{subfigure}
	\hfill
	\begin{subfigure}[b]{0.5 \textwidth}
		\includegraphics[width=\textwidth]{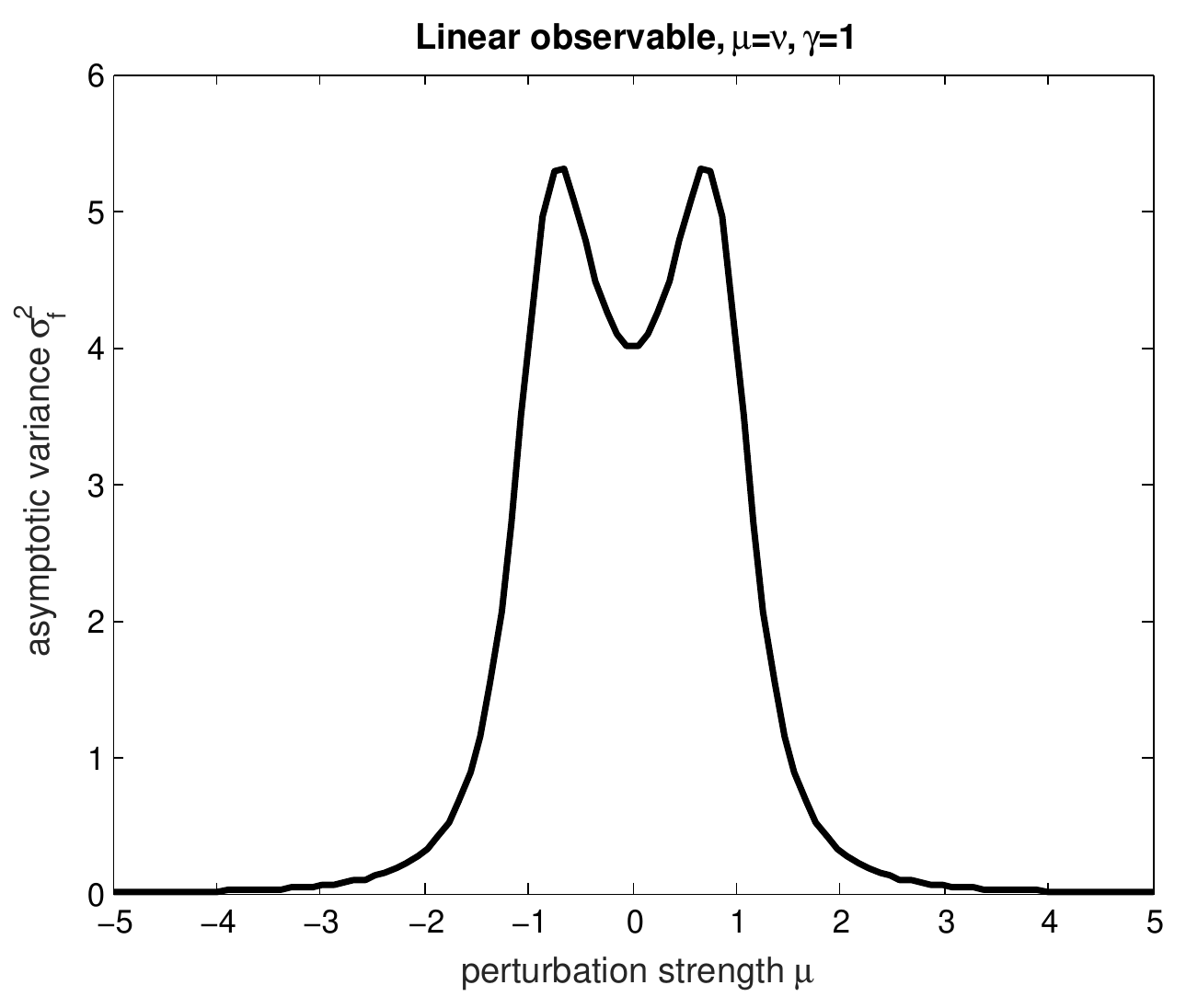}
		
		\caption{Equal perturbations: $\mu=\nu$ (small friction $\gamma$)}
		\label{fig:lin_small_friction}
	\end{subfigure}
	\caption{Asymptotic variance for linear and quadratic observables, depending on relative perturbation and friction strengths}
	\label{fig:linear and quadratic observables}
\end{figure}

Let us illustrate the results of this section by plotting the asymptotic variance as a function of the perturbation strength $\mu$ (see Figure \ref{fig:linear and quadratic observables}), making the choices $d=2$, $l=(1,1)^{T}$,
\begin{equation}
K=\left(\begin{array}{cc}
2 & 0\\
0 & 1
\end{array}\right)
\quad \text{and} \quad
J=\left(\begin{array}{cc}
0 & 1\\
-1 & 0
\end{array}\right).
\end{equation}
The asymptotic variance has been computed according to \eqref{eq:Gaussian asymvar}, using \eqref{eq:Lyapunov equation} and \eqref{eq:linear condition} from Appendix \ref{app:Gaussian_proofs}. The graphs confirm the results summarized in Corollary \ref{cor:small pert unit var} concerning the asymptotic variance in the neighbourhood of the unperturbed dynamics ($\mu = 0$). Additionally, they give an impression of the global behaviour, i.e. for larger values of $\mu$.

Figures \ref{fig:asym_quad}, \ref{fig:no_limit1} and \ref{fig:no_limit2}  show the asymptotic variance associated with the quadratic observable $f(q)=q\cdot K q$. In accordance with Corollary \ref{cor:unit covariance quadratic obs}, the asymptotic variance is at a  local maximum at zero perturbation in the case $\mu=\nu$ (see Figure \ref{fig:asym_quad}). For increasing perturbation strength, the graph shows that it decays monotonically
and reaches a limit for $\mu\rightarrow\infty$ (this limiting behaviour will be explored analytically in Section \ref{sec:large perturbations}). If the condition $\mu=\nu$ is only approximately satisfied (Figure \ref{fig:no_limit1}), our numerical examples still exhibits decaying asymptotic variance in the neighbourhood of the critical point. In this case, however, the asymptotic variance diverges for growing values of the perturbation $\mu$. If the perturbations are opposed ($\mu=-\nu$) as in Example \ref{ex:opposed perturbation}, it is possible for certain observables that the unperturbed dynamics represents a global minimum. Such a case is observed in Figure \ref{fig:no_limit2}. In Figures \ref{fig:lin_large_friction} and \ref{fig:lin_small_friction} the observable $f(q)=l\cdot q$ is considered. If the damping is sufficiently strong ($\gamma > \sqrt{2}$), the unperturbed dynamics is at a local maximum of the asymptotic variance (Figure \ref{fig:lin_large_friction}). Furthermore, the asymptotic variance approaches zero as $\mu \rightarrow \infty$ (for a theoretical explanation see again Section \ref{sec:large perturbations}). The graph in Figure \ref{fig:lin_small_friction} shows that the assumption of $\gamma$ not being too small cannot be dropped from Corollary \ref{cor:small pert unit var}. Even in this case though the example shows decay of the asymptotic variance for large values of $\mu$.    
\subsection{Exponential decay rate}
\label{sec:exp_decay}
Let us denote by $\lambda^{*}$ the \emph{optimal exponential decay rate} in \eqref{eq:hypocoercive estimate}, i.e.
\begin{equation}
\lambda^{*}=\sup\{\lambda > 0 \, \vert \, \text{There exists } C\ge 1 \text{ such that } \eqref{eq:hypocoercive estimate} \text{ holds}\}.
\end{equation}
Note that $\lambda^{*}$ is well-defined and positive by Theorem \ref{theorem:Hypocoercivity}. We also define the \emph{spectral bound} of the generator $\gen$ by
\begin{equation}
s(\gen)=\inf(\text{Re}\,\sigma(-\gen)\setminus\{0\}).
\end{equation} 
In \cite{Metafune_formula} it is proven that the Ornstein-Uhlenbeck semigroup $(P_t)_{t\ge0}$ considered in this section is differentiable (see Proposition 2.1). In this case (see Corollary 3.12 of \cite{Engel2000Semigroup}), it is known that the exponential decay rate and the spectral bound coincide, i.e. $\lambda^{*}=s(\gen)$, whereas in general only $\lambda^{*}\le s(\gen)$ holds.
In this section we will therefore analyse the spectral properties of the generator
(\ref{eq:OU generator}). In particular, this leads to some intuition
of why choosing equal perturbations ($\mu=\nu$) is crucial for the
performance of the sampler.

In \cite{Metafune_formula} (see also \cite{OPP12}), it was proven that
the spectrum of $\mathcal{L}$ as in (\ref{eq:OU generator}) in $L^{2}(\widehat{\pi})$
is given by 
\begin{equation}
\sigma(\mathcal{L})=\left\{-\sum_{j=1}^{r}n_{j}\lambda_{j}:\, n_{j}\in\mathbb{N},\lambda_{j}\in \sigma(B)\right\}.\label{eq:Metafune formula}
\end{equation}
Note that $\sigma(\mathcal{L})$ only depends on the drift matrix
$B$. In the case where $\mu=\nu$,
the spectrum of $B$ can be computed explicitly. 
\begin{lemma}
	\label{lem:drift matrix properties}Assume $\mu=\nu$. Then the spectrum
	of $B$ is given by
	\begin{equation}
	\sigma(B)=\left\{\mu\lambda+\sqrt{\big(\frac{\gamma}{2}\big)^{2}-1}+\frac{\gamma}{2}\vert\lambda\in\sigma(J)\}\cup\{\mu\lambda-\sqrt{\big(\frac{\gamma}{2}\big)^{2}-1}+\frac{\gamma}{2}\vert\lambda\in\sigma(J)\right\}.\label{eq:spectrum of B}
	\end{equation}
\end{lemma}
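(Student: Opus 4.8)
The plan is to use the hypothesis $\mu=\nu$ to block-diagonalise $B$ along the eigenspaces of the skew-symmetric matrix $J$. Since $J\in\mathbb{R}^{d\times d}$ is skew-symmetric it is normal, hence diagonalisable over $\mathbb{C}$; let $\lambda_{1},\dots,\lambda_{r}$ denote its distinct (purely imaginary) eigenvalues with corresponding eigenspaces $E_{1},\dots,E_{r}$, so that $\mathbb{C}^{d}=\bigoplus_{k=1}^{r}E_{k}$. The goal is to transfer the spectral problem for the $2d\times2d$ matrix $B$ to a family of $2\times2$ matrices indexed by $\sigma(J)$.

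The key observation is that, when $\mu=\nu$, each subspace $\widehat{E}_{k}:=\{(u,w)^{T}:u,w\in E_{k}\}\subseteq\mathbb{C}^{2d}$ is invariant under $B$ as in \eqref{eq:drift matrix}: for $u,w\in E_{k}$ one computes
\begin{equation*}
B\begin{pmatrix}u\\ w\end{pmatrix}=\begin{pmatrix}\mu\lambda_{k}u-w\\ u+(\gamma+\mu\lambda_{k})w\end{pmatrix}\in\widehat{E}_{k}.
\end{equation*}
Choosing any basis of $E_{k}$ and using it in both components, the restriction $B|_{\widehat{E}_{k}}$ is represented by $\dim E_{k}$ copies of the $2\times2$ matrix
\begin{equation*}
\widetilde{B}_{\lambda_{k}}=\begin{pmatrix}\mu\lambda_{k} & -1\\ 1 & \gamma+\mu\lambda_{k}\end{pmatrix}.
\end{equation*}
Since $\mathbb{C}^{2d}=\bigoplus_{k=1}^{r}\widehat{E}_{k}$ is a $B$-invariant decomposition, it follows that $\sigma(B)=\bigcup_{k=1}^{r}\sigma(\widetilde{B}_{\lambda_{k}})$, with algebraic multiplicities accounted for.

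It then suffices to compute $\sigma(\widetilde{B}_{\lambda_{k}})$. The characteristic polynomial is $z^{2}-(2\mu\lambda_{k}+\gamma)z+\mu\lambda_{k}(\gamma+\mu\lambda_{k})+1$, and a short calculation shows that its discriminant equals $\gamma^{2}-4$ — in particular it is \emph{independent} of $\lambda_{k}$, and this cancellation is precisely where the hypothesis $\mu=\nu$ enters. Hence the two eigenvalues of $\widetilde{B}_{\lambda_{k}}$ are $\mu\lambda_{k}+\tfrac{\gamma}{2}\pm\sqrt{(\gamma/2)^{2}-1}$, and taking the union over $k\in\{1,\dots,r\}$ gives precisely \eqref{eq:spectrum of B}.

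The argument is elementary; the only points requiring care are the bookkeeping of the direct-sum decomposition $\mathbb{C}^{2d}=\bigoplus_{k}\widehat{E}_{k}$ (which rests on the diagonalisability of $J$, so that the eigenvectors really do span) and the verification that the discriminant simplifies as claimed. For orientation, I note that if $\mu\neq\nu$ the same block reduction still applies, but the discriminant of $\widetilde{B}_{\lambda_{k}}$ becomes $(\mu-\nu)^{2}\lambda_{k}^{2}-2(\mu-\nu)\gamma\lambda_{k}+\gamma^{2}-4$, which genuinely depends on $\lambda_{k}$ and admits no closed-form analogue of \eqref{eq:spectrum of B}; this is the structural reason behind imposing $\mu=\nu$.
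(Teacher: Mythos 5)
Your proof is correct, and it takes a route that is recognisably different from the one in the paper. The paper never diagonalises $J$: it shifts by $\tfrac{\gamma}{2}I$, applies the block-determinant identity $\det\bigl(\begin{smallmatrix}A & -I\\ I & D\end{smallmatrix}\bigr)=\det(AD+I)$ (valid here because, with $\mu=\nu$, both diagonal blocks are polynomials in $J$ and hence commute), and then factors $(\mu J-\lambda I)^2-\bigl((\tfrac{\gamma}{2})^2-1\bigr)I$ as a difference of squares, reading off the spectrum from the vanishing of the two factors. You instead exploit the normality of $J$ to split $\mathbb{C}^{2d}$ into $B$-invariant subspaces $\widehat{E}_k$ and reduce everything to the $2\times2$ matrices $\widetilde{B}_{\lambda_k}$, whose discriminant you correctly compute to be $\gamma^2-4$, independent of $\lambda_k$. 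Both arguments hinge on the same structural fact — that $\mu=\nu$ makes the two diagonal blocks commute, so that the $q$- and $p$-perturbations can be simultaneously reduced — but your version buys a little more: it exhibits the eigenvectors and algebraic multiplicities explicitly (each eigenvalue of $J$ contributes with multiplicity $\dim E_k$), and your closing remark that for $\mu\neq\nu$ the discriminant becomes $\lambda_k$-dependent gives a concrete structural explanation for the hypothesis, which complements the paper's numerical illustration of the degraded spectrum in that case. The paper's computation is shorter and stays entirely at the level of characteristic polynomials of $d\times d$ matrices. The one step worth stating carefully in your write-up is the bookkeeping that $B|_{\widehat{E}_k}$, in the paired basis $(v_j,0),(0,v_j)$, really is a direct sum of $\dim E_k$ copies of $\widetilde{B}_{\lambda_k}$; you flag this yourself, and it is indeed the only place where diagonalisability of $J$ is used.
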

\begin{proof}
	We will compute $\sigma\big(B-\frac{\gamma}{2}I\big)$ and then use
	the identity
	\begin{equation}
	\sigma(B)=\left\{\lambda+\frac{\gamma}{2}\vert\lambda\in\sigma\left(B-\frac{\gamma}{2}I\right)\right\}.\label{eq:shift spectrum}
	\end{equation}
	We have 
	\begin{align*}
	\det\left(B-\frac{\gamma}{2}I-\lambda I\right) & =\det\left(\left(\mu J-\frac{\gamma}{2}I-\lambda I\right)\left(\mu J+\frac{\gamma}{2}I-\lambda I\right)+I\right)\\
	& =\det\left((\mu J-\lambda I)^{2}-\left(\frac{\gamma}{2}\right)^{2}I+I\right)\\
	& =\det\left(\left(\mu J-\lambda I+\sqrt{\left(\frac{\gamma}{2}\right)^{2}-1} I\right)\cdot\left(\mu J-\lambda I-\sqrt{\left(\frac{\gamma}{2} \right)^{2}-1} I\right)\right)\\
	& =\det\left(\mu J-\lambda I+\sqrt{\left(\frac{\gamma}{2}\right)^{2}-1} I\right)\cdot\det\left(\mu J-\lambda I-\sqrt{\left(\frac{\gamma}{2}\right)^{2}-1} I\right),
	\end{align*}
	where $I$ is understood to denote the identity matrix of appropriate dimension.
	The above quantity is zero if and only if 
	\[
	\lambda-\sqrt{\left(\frac{\gamma}{2}\right)^{2}-1}\in\sigma(\mu J)
	\]
	or 
	\[
	\lambda+\sqrt{\left(\frac{\gamma}{2}\right)^{2}-1}\in\sigma(\mu J).
	\]
	Together with (\ref{eq:shift spectrum}), the claim follows.
	\qed
\end{proof}
Using formula \eqref{eq:Metafune formula}, in Figure \ref{fig:good_spectrum} we show a sketch of the spectrum $\sigma(-\mathcal{L}$)
for the case of equal perturbations ($\mu=\nu)$ with the convenient
choices $n=1$ and $\gamma=2.$ Of course, the eigenvalue at $0$ is
associated to the invariant measure since $\sigma(-\mathcal{L})=\sigma(-\mathcal{L}^{\dagger})$
and $\mathcal{L}^{\dagger}\widehat{\pi}=0$, where $\mathcal{L}^{\dagger}$ denotes the Fokker-Planck operator, i.e. the $L^2(\mathbb{R}^{2d})$-adjoint of $\mathcal{L}$. The arrows indicate the movement
of the eigenvalues as the perturbation $\mu$ increases in accordance
with Lemma \ref{lem:drift matrix properties}. Clearly, the spectral
bound of $\gen$ is not affected by the perturbation. 
Note that the eigenvalues on the real axis stay invariant under the
perturbation. The subspace of $L_{0}^{2}(\widehat{\pi})$ associated to
those will turn out to be crucial for the characterisation of the
limiting asymptotic variance as $\mu\rightarrow\infty$.

To illustrate the suboptimal properties of the perturbed dynamics
when the perturbations are not equal, we plot the spectrum of the
drift matrix $\sigma(B)$ in the case when the dynamics is only perturbed
by the term $\nu J_{2}p\mathrm{d}t$ (i.e. $\mu=0$) for $n=2$, $\gamma=2$ and
\begin{equation}
J_2=\left(\begin{array}{cc}
0 & -1\\
1 & 0
\end{array}\right),
\end{equation} 
(see Figure \ref{fig:bad_spectrum}). Note that the full spectrum $\sigma(-\mathcal{L})$
can be inferred from (\ref{eq:Metafune formula}). For $\nu=0$ we have that the spectrum $\sigma(B)$
only consists of the (degenerate) eigenvalue $1$. For increasing
$\nu$, the figure shows that the degenerate eigenvalue splits up
into four eigenvalues, two of which get closer to the imaginary axis as $\nu$ increases, leading to a smaller spectral
bound and therefore to a decrease in the speed of convergence to equilibrium.
Figures (\ref{fig:good_spectrum}) and (\ref{fig:bad_spectrum}) give an intuitive explanation
of why the fine-tuning of the perturbation strengths is crucial.

\begin{figure}
	\begin{subfigure}[b]{0.45 \textwidth}
		\includegraphics[width=\textwidth]{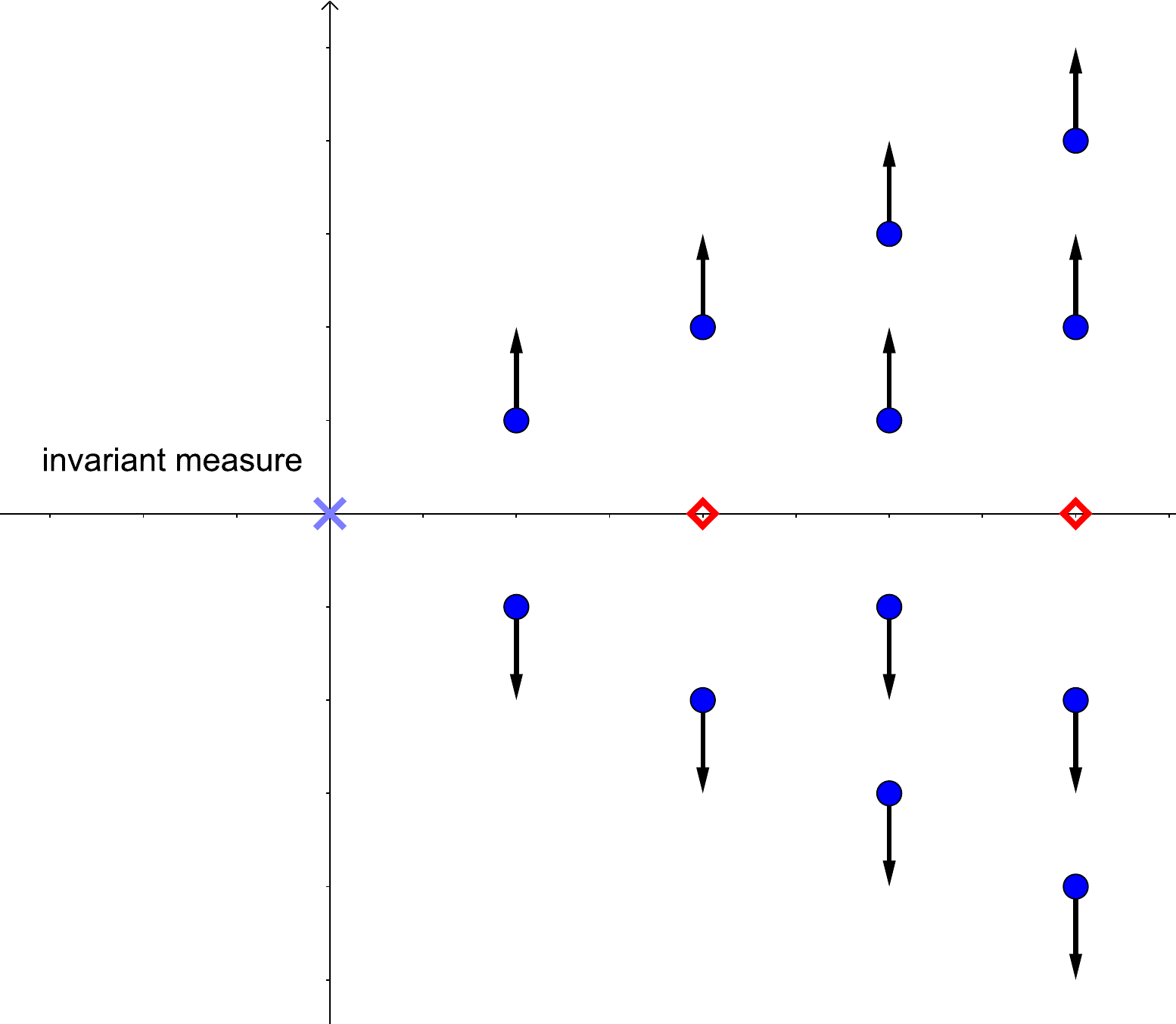}
		\caption{$\sigma(-\gen)$ in the case $\mu=\nu$. The arrows indicate the movement of the spectrum as the perturbation strength $\mu$ increases.\label{fig:good_spectrum}}
	\end{subfigure}
	\hfill
	\begin{subfigure}[b]{0.45 \textwidth}
		\includegraphics[width=\textwidth]{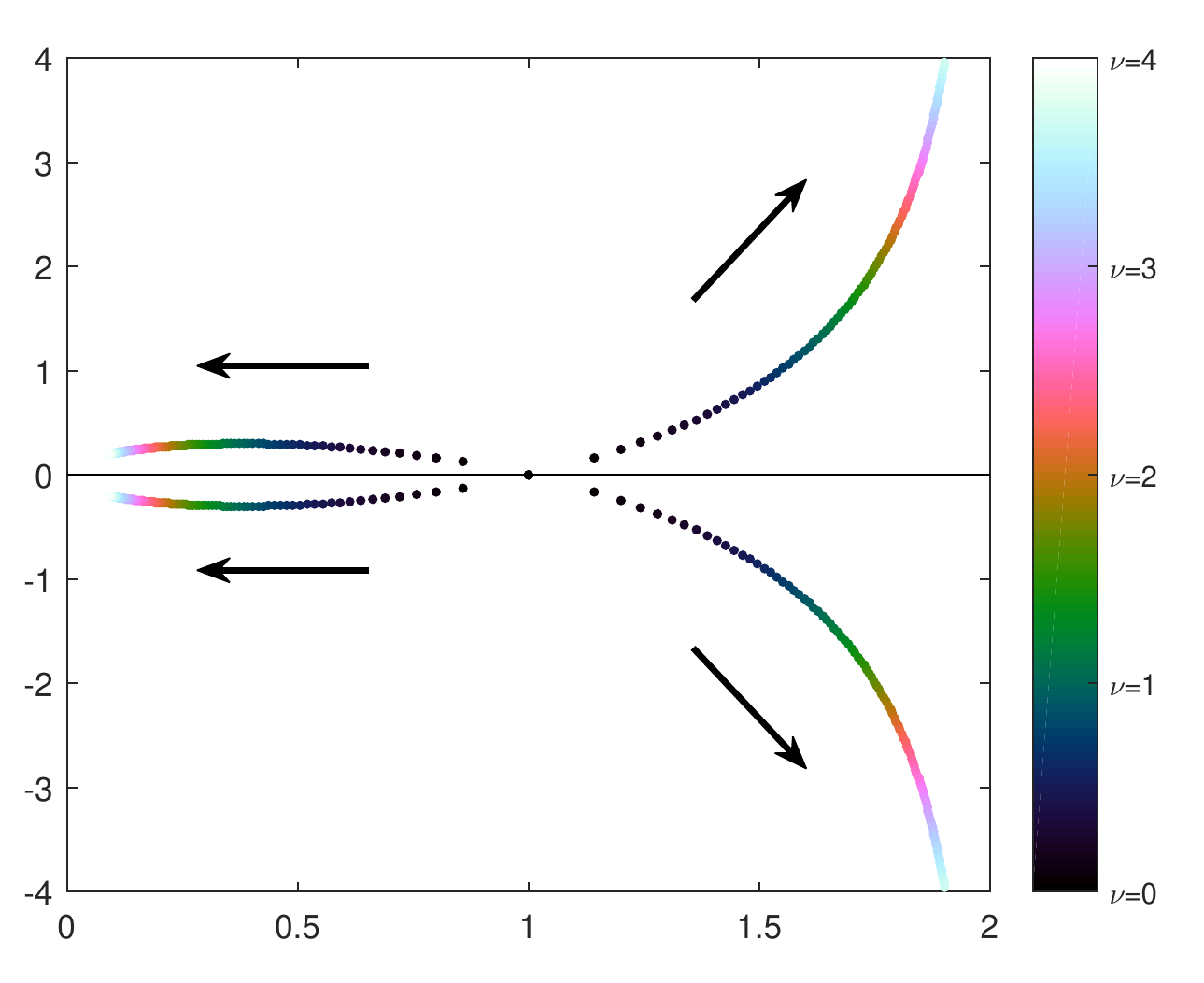}
		\caption{$\sigma(B)$ in the case $J_{1}=0$, i.e. the dynamics is only perturbed
			by $-\nu J_{2}p\mathrm{d}t$. The arrows indicate the movement of
			the eigenvalues as $\nu$ increases.\label{fig:bad_spectrum}}
	\end{subfigure}
	.	\caption{Effects of the perturbation on the spectra of $-\gen$ and $B$.}
	\label{fig:examples-introduction}
\end{figure}

\subsection{Unit covariance - large perturbations}
\label{sec:large perturbations}

In the previous subsection we observed that for the particular perturbation $J_1 = J_2$ and $\mu = \nu$, i.e. 
\begin{align}
\mathrm{d}q_{t} & =p_{t}\mathrm{d}t-\mu Jq_{t}\mathrm{d}t\nonumber \\
\mathrm{d}p_{t} & =-q_{t}\mathrm{d}t-\mu Jp_{t}\mathrm{d}t-\gamma p_{t}\mathrm{d}t+\sqrt{2\gamma}\,\mathrm{d}W_{t},\label{eq: unit covariance perfect perturbation}
\end{align}
the perturbed Langevin dynamics demonstrated an improvement in performance for $\mu$ in a neighbourhood of $0$, when the observable is linear or quadratic.  Recall that this dynamics is ergodic with respect to a standard Gaussian measure $\widehat{\pi}$ on $\mathbb{R}^{2d}$ with marginal $\pi$  with respect to the $q$--variable.  In the following we shall consider only observables that do not depend on $p$. Moreover, we assume without loss of generality that $\pi(f)=0$. For such an observable we will write $f\in L^2_0(\pi)$ and assume the canonical embedding $L^2_0(\pi)\subset L^2(\widehat{\pi})$.  The infinitesimal generator of (\ref{eq: unit covariance perfect perturbation})
is given by 
\begin{equation}
\label{eq:generator_equal}
\mathcal{L}=\underbrace{p\cdot\nabla_{q}-q\cdot\nabla_{p}+\gamma(-p\cdot\nabla_{p}+\Delta_{p})}_{\mathcal{L}_{0}}+\mu\underbrace{(-Jq\cdot\nabla_{q}-Jp\cdot\nabla_{p})}_{\mathcal{A}}=:\mathcal{L}_{0}+\mu\mathcal{A},
\end{equation}
where we have introduced the notation $\mathcal{L}_{pert}=\mu \mathcal{A}$. In the sequel, the adjoint of an operator $B$ in $L^2(\widehat{\pi})$ will be denoted by $B^{*}$. In the rest of this section we will make repeated use of the Hermite polynomials
\begin{equation}
g_{\alpha}(x)=(-1)^{\vert\alpha\vert}e^{\frac{\vert x\vert^{2}}{2}}\nabla^{\alpha}e^{-\frac{\vert x\vert^{2}}{2}},\quad\alpha\in\mathbb{N}^{2d},\label{eq: Hermite polynomials}
\end{equation}
invoking the notation $x=(q,p)\in\mathbb{R}^{2d}$. For $m\in\mathbb{N}_{0}$
define the spaces 
\[
\label{eq:Hermite spaces}
H_{m}=\Span\{g_{\alpha}:\,\vert\alpha\vert=m\},
\]
with induced scalar product 
\[
\langle f,g\rangle_{m}:=\langle f,g\rangle_{L^2(\widehat{\pi})},\quad f,g\in H_{m}.
\]
The space $(H_{m},\langle\cdot,\cdot\rangle_{m})$ is then a real Hilbert
space with (finite) dimension
\[
\dim H_{m}=\left(\begin{array}{c}
m+2d-1\\
m
\end{array}\right).
\]
The following result (Theorem \ref{thm:L2 decomposition}) holds for operators of the form
\begin{equation}
\label{eq:OU_operator}
\mathcal{L}=-Bx\cdot\nabla+\nabla^{T}Q\nabla,
\end{equation}
where the quadratic drift and diffusion matrices $B$ and $Q$ are such that $\mathcal{L}$ is the generator of an ergodic stochastic process (see \cite[Definition 2.1]{Arnold2014} for precise conditions on $B$ and $Q$ that ensure ergodicity). The generator of the SDE \eqref{eq: unit covariance perfect perturbation} is given by \eqref{eq:OU_operator} with $B$ and $Q$  as in equations \eqref{eq:drift matrix}
and \eqref{eq:diffusion matrix}, respectively.  The following result provides an orthogonal decomposition of $L^{2}(\widehat{\pi})$ into invariant subspaces of the operator $\mathcal{L}$.
\begin{theorem}{\cite[Section 5]{Arnold2014}.}
	\label{thm:L2 decomposition}The following holds:
	\begin{enumerate}[label=(\alph*)]
		\item The space $L^{2}(\widehat{\pi})$ has a decomposition into mutually orthogonal
		subspaces:
		\[
		L^{2}(\widehat{\pi})=\bigoplus_{m\in\mathbb{N}_{0}}H_{m}.
		\]
		
		\item For all $m\in\mathbb{N}_{0}$, $H_{m}$ is invariant under $\mathcal{L}$
		as well as under the semigroup $(e^{-t\mathcal{L}})_{t\ge0}$. 
		\item The spectrum of $\mathcal{L}$ has the following decomposition:
		\[
		\sigma(\mathcal{L})=\bigcup_{m\in\mathbb{N}_{0}}\sigma(\mathcal{L}\vert_{H_{m}}),
		\]
		where 
		\begin{equation}
		\sigma(\mathcal{L}\vert_{H_{m}})=\left\lbrace\sum_{j=1}^{2d}\alpha_{j}\lambda_{j}:\,\vert\alpha\vert=m,\,\lambda_{j}\in\sigma(B)\right\rbrace.\label{eq:spectrum on subspaces}
		\end{equation}
		
	\end{enumerate}
\end{theorem}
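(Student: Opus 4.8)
The plan is to follow \cite{Arnold2014}, the starting point being that with $V(q)=\tfrac12|q|^2$ and $M=I$ the invariant measure $\widehat{\pi}$ is the \emph{standard} Gaussian on $\mathbb{R}^{2d}$, so the Hermite polynomials $\{g_\alpha\}_{\alpha\in\mathbb{N}^{2d}}$ of \eqref{eq: Hermite polynomials} form an orthonormal basis of $L^2(\widehat{\pi})$. Part (a) is then immediate: Hermite polynomials of distinct total degree are orthogonal, so $H_m\perp H_n$ for $m\ne n$, and completeness of the basis yields $L^2(\widehat{\pi})=\bigoplus_{m\in\mathbb{N}_0}H_m$. I would also record the auxiliary identity $\mathcal{P}_m=\bigoplus_{k\le m}H_k$, where $\mathcal{P}_m$ denotes the finite-dimensional space of polynomials of degree at most $m$ (valid since $g_\alpha$ has leading term $x^\alpha$); equivalently, $H_m=\mathcal{P}_m\cap\mathcal{P}_{m-1}^{\perp}$ with the orthogonal complement taken in $L^2(\widehat{\pi})$.

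For part (b), I would first observe that in the form \eqref{eq:OU_operator} the first-order part $-Bx\cdot\nabla$ of $\mathcal{L}$ preserves the degree of a homogeneous polynomial while $\nabla^{T}Q\nabla$ lowers it by two, so $\mathcal{L}$ maps each $\mathcal{P}_m$ into itself. The same is true of the $L^2(\widehat{\pi})$-adjoint $\mathcal{L}^{*}$, which for an Ornstein--Uhlenbeck generator is again an operator of the form \eqref{eq:OU_operator} with $B$ replaced by a matrix determined by $B$, $Q$ and the covariance of $\widehat{\pi}$, and hence also preserves every $\mathcal{P}_m$. The decisive step is then a pairing argument: for $f\in H_m$ and $g\in\mathcal{P}_{m-1}$ one has $\langle\mathcal{L}f,g\rangle_{L^2(\widehat{\pi})}=\langle f,\mathcal{L}^{*}g\rangle_{L^2(\widehat{\pi})}=0$, since $\mathcal{L}^{*}g\in\mathcal{P}_{m-1}$ and $f\perp\mathcal{P}_{m-1}$; together with $\mathcal{L}f\in\mathcal{P}_m$ this gives $\mathcal{L}f\in\mathcal{P}_m\cap\mathcal{P}_{m-1}^{\perp}=H_m$, so $H_m$ is $\mathcal{L}$-invariant. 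Invariance under $(e^{-t\mathcal{L}})_{t\ge0}$ follows because $\mathcal{L}$ restricted to the finite-dimensional $H_m$ is a matrix whose exponential leaves $H_m$ invariant and coincides with the restriction of the semigroup.

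For part (c), the orthogonal decomposition into finite-dimensional $\mathcal{L}$-invariant subspaces makes $\mathcal{L}$ block-diagonal, so that trivially $\bigcup_m\sigma(\mathcal{L}|_{H_m})\subseteq\sigma(\mathcal{L})$; the reverse inclusion needs to rule out spectrum accumulating from the infinitely many blocks, which is where I would appeal to the quantitative resolvent bounds of \cite{Arnold2014}. To compute $\sigma(\mathcal{L}|_{H_m})$ I would use the $\mathcal{L}$-invariant filtration $\mathcal{P}_0\subset\mathcal{P}_1\subset\cdots\subset\mathcal{P}_m$: with respect to the associated grading, $\mathcal{L}|_{\mathcal{P}_m}$ is block-triangular, and on the graded piece $\mathcal{P}_k/\mathcal{P}_{k-1}\cong\mathrm{Sym}^k(\mathbb{R}^{2d})$ it acts as $-Bx\cdot\nabla$ alone ($\nabla^{T}Q\nabla$ being strictly degree-lowering and thus off-diagonal). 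Triangularising $B$ with diagonal entries $\lambda_1,\dots,\lambda_{2d}\in\sigma(B)$ and using the monomials of degree $m$ as a basis of $\mathrm{Sym}^m$, one finds the degree-$m$ diagonal block is triangular with diagonal entries $\sum_{j=1}^{2d}\alpha_j\lambda_j$, $|\alpha|=m$ (signs according to whether one works with $B$ or $-B$); since the lower graded pieces contribute only via the $H_k$ with $k<m$, this identifies $\sigma(\mathcal{L}|_{H_m})$ with the set in \eqref{eq:spectrum on subspaces}.

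The main obstacle I anticipate is conceptual rather than computational: at first glance only the filtration $(\mathcal{P}_m)_m$ looks $\mathcal{L}$-invariant, precisely because $\nabla^{T}Q\nabla$ lowers the degree, and it is the observation that the $L^2(\widehat{\pi})$-adjoint is again an Ornstein--Uhlenbeck operator of the same type that promotes this to invariance of each individual layer $H_m$. Beyond that, the only genuinely delicate point is establishing equality (not just inclusion) in the spectral decomposition of (c), for which the explicit block structure and resolvent estimates are required.
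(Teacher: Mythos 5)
The paper does not prove this theorem at all: it is stated as a citation to \cite[Section 5]{Arnold2014} (and, for the spectral formula, is consistent with \cite{Metafune_formula}), so there is no internal proof to compare against. Your sketch is a correct reconstruction of the standard argument, and it is essentially the one used in the cited reference: orthogonality and completeness of the Hermite basis for (a); the observation that both $\mathcal{L}$ and its $L^2(\widehat{\pi})$-adjoint (again an Ornstein--Uhlenbeck operator) preserve the filtration $\mathcal{P}_m$, which upgrades invariance of the filtration to invariance of each layer $H_m=\mathcal{P}_m\cap\mathcal{P}_{m-1}^{\perp}$ for (b); and the block-triangular action of $-Bx\cdot\nabla$ on the graded pieces for the computation of $\sigma(\mathcal{L}\vert_{H_m})$ in (c). Two caveats. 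First, the one step you do not supply is the inclusion $\sigma(\mathcal{L})\subseteq\bigcup_m\sigma(\mathcal{L}\vert_{H_m})$: block-diagonality over infinitely many finite-dimensional blocks gives only the reverse containment, and to conclude that a point outside the union is in the resolvent set one needs resolvent bounds that are uniform in $m$ (this is the genuine content of the cited result, and your explicit acknowledgement of the deferral is appropriate, but a self-contained proof would have to do this work). Second, note the sign convention: with $\mathcal{L}=-Bx\cdot\nabla+\nabla^{T}Q\nabla$ and $B$ diagonalised, the eigenvalue on the monomial $x^{\alpha}$ is $-\sum_j\alpha_j\lambda_j$, matching \eqref{eq:Metafune formula} rather than the unsigned expression in \eqref{eq:spectrum on subspaces}; your parenthetical remark about working with $B$ versus $-B$ correctly flags that the paper's two displayed spectral formulas differ by this sign.
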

\begin{remark}
	Note that by the ergodicity of the dynamics, $\ker\mathcal{L}$ consists of constant functions and so $\ker\mathcal{L}=H_{0}$. Therefore, $L^2_0(\widehat{\pi})$ has the decomposition
	\[
	L_{0}^{2}(\widehat{\pi})=L^{2}(\widehat{\pi})/\ker\mathcal{L}=\bigoplus_{m\ge1}H_{m}.
	\]
	\end{remark}
Our first main result of this section is an expression for the asymptotic
variance in terms of the unperturbed operator $\mathcal{L}_{0}$ and
the perturbation $\mathcal{A}$:
\begin{proposition}
	\label{prop:asymvar_op_formula}
	Let $f\in L_{0}^{2}(\pi)$  (so in particular $f=f(q)$).
	Then the associated asymptotic variance is given by 
	\begin{equation}
	\label{eq:asymvar_op_formula}
	\sigma_{f}^{2}=\langle f,-\mathcal{L}_{0}(\mathcal{L}_{0}^{2}+\mu^{2}\mathcal{A}^{*}\mathcal{A})^{-1}f\rangle_{L^{2}(\widehat{\pi})}.
	\end{equation}
	
\end{proposition}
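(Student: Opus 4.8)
The plan is to exploit the special algebraic structure of the perturbed generator $\mathcal{L}=\mathcal{L}_0+\mu\mathcal{A}$ in \eqref{eq:generator_equal} to factorise the resolvent $(-\mathcal{L})^{-1}$, starting from $\sigma_f^2=\langle f,(-\mathcal{L})^{-1}f\rangle_{L^2(\widehat\pi)}$, which holds for $f\in L^2_0(\widehat\pi)$ by Lemma \ref{lemma:variance} and \eqref{eq:asym variance inverse} (note $f\in L^2_0(\pi)$ embeds into $L^2_0(\widehat\pi)$ since $\widehat\pi(f)=\pi(f)=0$). Three elementary observations underlie the argument. First, $\mathcal{A}^*=-\mathcal{A}$ in $L^2(\widehat\pi)$, because $\mathcal{A}$ is a first-order operator $b\cdot\nabla$ whose drift $b(q,p)=(-Jq,-Jp)$ is divergence free with respect to $\widehat\pi$ (this is the invariance condition \eqref{eq:invariance_condition}). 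Second, $[\mathcal{A},\mathcal{L}_0]=0$: the flow generated by $\mathcal{A}$ is the simultaneous rotation $(q,p)\mapsto(e^{-tJ}q,e^{-tJ}p)$ by the orthogonal matrix $e^{-tJ}$, and both the Hamiltonian part $p\cdot\nabla_q-q\cdot\nabla_p$ and the Ornstein--Uhlenbeck part $\gamma(-p\cdot\nabla_p+\Delta_p)$ of $\mathcal{L}_0$ are invariant under such rotations. Third, writing $U$ for the momentum-flip operator $(Uh)(q,p)=h(q,-p)$, which is unitary, self-adjoint and involutive on $L^2(\widehat\pi)$, the substitution $p\mapsto-p$ gives $U\mathcal{L}_0U=\mathcal{L}_0^*$ (the Hamiltonian part changes sign, the thermostat part is unchanged) and $U\mathcal{A}U=\mathcal{A}$, while $Uf=f$ since $f=f(q)$. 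To keep manipulations with unbounded operators rigorous I would work on the dense invariant core $\mathcal{D}=\bigcup_m H_m$ of Hermite polynomials from \eqref{eq: Hermite polynomials}, which splits into the finite-dimensional $\mathcal{L}$-, $\mathcal{L}_0$-, $\mathcal{A}$- and $U$-invariant subspaces $H_m$ (Theorem \ref{thm:L2 decomposition}), and extend the resulting quadratic-form identity to $L^2_0(\pi)$ by continuity of $f\mapsto\langle f,(-\mathcal{L})^{-1}f\rangle$.

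With these facts I would introduce the reflected generator $\widetilde{\mathcal{L}}:=\mathcal{L}_0-\mu\mathcal{A}$ (the generator of \eqref{eq: unit covariance perfect perturbation} with $J$ replaced by $-J$), again invertible on $L^2_0(\widehat\pi)$ by Theorem \ref{theorem:Hypocoercivity}. Because $[\mathcal{A},\mathcal{L}_0]=0$ the cross terms cancel and
\[
\mathcal{L}\,\widetilde{\mathcal{L}}=\widetilde{\mathcal{L}}\,\mathcal{L}=\mathcal{L}_0^2-\mu^2\mathcal{A}^2=\mathcal{L}_0^2+\mu^2\mathcal{A}^*\mathcal{A}=:\mathcal{M},
\]
the last equality using $\mathcal{A}^*=-\mathcal{A}$. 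On each $H_m$ this is a finite-dimensional identity with $\mathcal{M}\vert_{H_m}$ invertible, so $\mathcal{M}$ is a bijection on $L^2_0(\widehat\pi)$ with $(-\mathcal{L})^{-1}=(-\widetilde{\mathcal{L}})\mathcal{M}^{-1}=(\mu\mathcal{A}-\mathcal{L}_0)\mathcal{M}^{-1}$. Substituting into $\sigma_f^2=\langle f,(-\mathcal{L})^{-1}f\rangle$ gives
\[
\sigma_f^2=-\langle f,\mathcal{L}_0\mathcal{M}^{-1}f\rangle+\mu\langle f,\mathcal{A}\mathcal{M}^{-1}f\rangle ,
\]
and since $\mathcal{M}=\mathcal{L}_0^2+\mu^2\mathcal{A}^*\mathcal{A}$ the first term is precisely the asserted expression. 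Thus the whole content of the proposition reduces to showing that the perturbation term $\langle f,\mathcal{A}\mathcal{M}^{-1}f\rangle$ vanishes.

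This cancellation is the crux, and it is where all three structural facts are used together: neither $\mathcal{A}^*=-\mathcal{A}$ nor $[\mathcal{A},\mathcal{L}_0]=0$ alone suffices, because the mixed symmetric/antisymmetric cross-terms hidden inside $\mathcal{L}_0^2$ survive without the additional time-reversal symmetry carried by $U$ and the fact that $f$ depends on $q$ only. Concretely, $[\mathcal{A},\mathcal{L}_0]=0$ and $\mathcal{A}^*=-\mathcal{A}$ give $[\mathcal{A},\mathcal{M}]=0$, hence $\mathcal{A}\mathcal{M}^{-1}=\mathcal{M}^{-1}\mathcal{A}$, while $U\mathcal{L}_0U=\mathcal{L}_0^*$ and $U\mathcal{A}U=\mathcal{A}$ give $U\mathcal{M}U=(\mathcal{L}_0^*)^2+\mu^2\mathcal{A}^*\mathcal{A}=\mathcal{M}^*$. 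Using $Uf=f$ and $U^*=U=U^{-1}$,
\[
\langle f,\mathcal{A}\mathcal{M}^{-1}f\rangle=\langle Uf,\mathcal{A}\mathcal{M}^{-1}Uf\rangle=\langle f,(U\mathcal{A}U)(U\mathcal{M}^{-1}U)f\rangle=\langle f,\mathcal{A}(\mathcal{M}^*)^{-1}f\rangle=\langle f,\mathcal{A}(\mathcal{M}^{-1})^*f\rangle ,
\]
whereas $\mathcal{A}^*=-\mathcal{A}$ together with $[\mathcal{A},\mathcal{M}^{-1}]=0$ yields
\[
\langle f,\mathcal{A}(\mathcal{M}^{-1})^*f\rangle=-\langle \mathcal{A}f,(\mathcal{M}^{-1})^*f\rangle=-\langle \mathcal{M}^{-1}\mathcal{A}f,f\rangle=-\langle f,\mathcal{M}^{-1}\mathcal{A}f\rangle=-\langle f,\mathcal{A}\mathcal{M}^{-1}f\rangle .
\]
Combining the two displays forces $\langle f,\mathcal{A}\mathcal{M}^{-1}f\rangle=0$, so $\sigma_f^2=\langle f,-\mathcal{L}_0(\mathcal{L}_0^2+\mu^2\mathcal{A}^*\mathcal{A})^{-1}f\rangle$. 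The main obstacle is exactly this last step; beyond it, the only care needed is the finite-dimensional bookkeeping on the core $\mathcal{D}$ and checking that $U$ preserves $\mathcal{D}$ and commutes with the projections onto the $H_m$, which is immediate from \eqref{eq: Hermite polynomials}.
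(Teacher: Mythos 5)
Your argument is correct and uses exactly the same ingredients as the paper's proof: the reversed-perturbation generator $\mathcal{L}_{-}=\mathcal{L}_0-\mu\mathcal{A}$, the momentum-flip operator (your $U$ is the paper's $P$), the commutation and (anti)symmetry relations of Lemma \ref{operator lemma}, the factorisation $\mathcal{L}\mathcal{L}_{-}=\mathcal{L}_0^2+\mu^2\mathcal{A}^{*}\mathcal{A}$, and the Hermite-space decomposition as a core with extension by continuity. The only difference is organisational — the paper symmetrises $(-\mathcal{L})^{-1}$ with its adjoint first and then factorises, whereas you factorise first and then kill the cross term $\mu\langle f,\mathcal{A}\mathcal{M}^{-1}f\rangle$ by the same $U$-conjugation and antisymmetry argument — and these two steps are algebraically equivalent.
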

\begin{remark}
The proof of the preceding Proposition will show that $\mathcal{L}_{0}^{2}+\mu^{2}\mathcal{A}^{*}\mathcal{A}$ is invertible on $L^2_0(\widehat{\pi})$ and that $(\mathcal{L}_{0}^{2}+\mu^{2}\mathcal{A}^{*}\mathcal{A})^{-1}f \in \mathcal{D}(\mathcal{L}_0)$ for all $f \in L^2_0(\widehat{\pi})$. 
\end{remark}
To prove Proposition \ref{prop:asymvar_op_formula} we will make use of the \emph{generator
	with reversed perturbation} 
\[
\mathcal{L}_{-}=\mathcal{L}_{0}-\mu\mathcal{A}
\]
and the \emph{momentum flip operator} 
\begin{align*}
P:L_{0}^{2}(\widehat{\pi}) & \rightarrow L_{0}^{2}(\widehat{\pi})\\
\phi(q,p) & \mapsto\phi(q,-p).
\end{align*}
Clearly, $P^{2}=I$ and $P^{*}=P$. Further properties of $\mathcal{L}_{0}$,
$\mathcal{A}$ and the auxiliary operators $\mathcal{L}_{-}$ and
$P$ are gathered in the following lemma:
\begin{lemma}
	\label{operator lemma}
	For all $\phi, \psi \in C^{\infty}(\mathbb{R}^{2d})\cap L^2(\widehat{\pi})$ the following holds:
	\begin{enumerate}[label=(\alph*)]
		\item \label{it:oplem1} The generator $\mathcal{L}_{0}$ is symmetric in $L^2(\widehat{\pi})$ with respect to $P$:
		\[
		\langle  \phi, P\mathcal{L}_{0}P \psi\rangle_{L^2(\widehat{\pi})}=\langle \mathcal{L}_{0} \phi, \psi \rangle_{L^2(\widehat{\pi})}.
		\]
		
		\item \label{it:oplem2} The perturbation $\mathcal{A}$ is skewadjoint in $L^{2}(\widehat{\pi})$:
		\[ 
		\mathcal{A}^{*} = -\mathcal{A}.
		\]
		
		\item \label{it:oplem3} The operators $\mathcal{L}_{0}$ and $\mathcal{A}$ commute:
		\[
		[\mathcal{L}_{0},\mathcal{A}]\phi=0.
		\]
		
		\item \label{it:oplem4} The perturbation $\mathcal{A}$ satisfies
		\[
		P\mathcal{A}P\phi=\mathcal{A}\phi.
		\]
		
		\item \label{it:oplem5} $\mathcal{L}$ and $\mathcal{L}_{-}$ commute,
		\begin{equation*}
		[\mathcal{L},\mathcal{L}_{-}]\phi = 0,
		\end{equation*}
		
		 and the following relation holds:
		\begin{equation}
		\langle \phi ,P\mathcal{L}P\psi\rangle_{L^{2}(\widehat{\pi})}=\langle\mathcal{L}_{-}\phi,\psi\rangle_{ L^{2}(\widehat{\pi})}.\label{eq:L+L-}
		\end{equation}
		\item \label{it:oplem6} 
		The operators $\mathcal{L}$, $\mathcal{L}_0$, $\mathcal{L}_{-}$, $\mathcal{A}$ and $P$ leave the Hermite spaces $H_m$ invariant.
	\end{enumerate}
\end{lemma}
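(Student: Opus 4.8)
The plan is to verify the six assertions one at a time, working throughout on the dense subspace of $L^2(\widehat{\pi})$ spanned by the Hermite polynomials $g_\alpha$ (equivalently, on Schwartz functions), where all the integrations by parts below produce no boundary terms; the stated identities then extend to the class $C^\infty(\mathbb{R}^{2d})\cap L^2(\widehat{\pi})$ by the usual density and closure arguments. It is convenient to use the splitting $\mathcal{L}_0 = \mathcal{L}_{ham} + \mathcal{L}_{therm}$ with $\mathcal{L}_{ham} = p\cdot\nabla_q - q\cdot\nabla_p$ and $\mathcal{L}_{therm} = \gamma(-p\cdot\nabla_p + \Delta_p)$, together with the two elementary facts that a skew-symmetric matrix $J$ satisfies $\Tr J = 0$ and $x\cdot Jx = 0$ for every $x$.

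For \ref{it:oplem1}, a direct integration by parts against $\widehat{\pi}\propto e^{-(|q|^2+|p|^2)/2}$ shows that $\mathcal{L}_{ham}$ is antisymmetric and that $\mathcal{L}_{therm}$ (an Ornstein--Uhlenbeck operator in $p$ with invariant density $\mathcal{N}(0,I)$) is symmetric, so $\mathcal{L}_0^{*} = -\mathcal{L}_{ham} + \mathcal{L}_{therm}$; on the other hand substituting $p\mapsto -p$ sends $\mathcal{L}_{ham}$ to $-\mathcal{L}_{ham}$ and leaves $\mathcal{L}_{therm}$ unchanged, i.e.\ $P\mathcal{L}_0 P = -\mathcal{L}_{ham} + \mathcal{L}_{therm} = \mathcal{L}_0^{*}$, which is \ref{it:oplem1}. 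For \ref{it:oplem2}, write $\mathcal{A} = v\cdot\nabla$ with the vector field $v(q,p) = (-Jq, -Jp)$ on $\mathbb{R}^{2d}$; integration by parts gives $\mathcal{A}^{*}\psi = -v\cdot\nabla\psi - (\nabla\cdot v)\psi - (v\cdot\nabla\log\widehat{\pi})\psi$, and here $\nabla\cdot v = -\Tr J - \Tr J = 0$ while $v\cdot\nabla\log\widehat{\pi} = Jq\cdot q + Jp\cdot p = 0$, so $\mathcal{A}^{*} = -\mathcal{A}$. Assertion \ref{it:oplem4} is the observation, via a one-line application of the chain rule, that under $p\mapsto -p$ the two summands of $\mathcal{A}$ are each invariant (in $-Jp\cdot\nabla_p$ both $p$ and $\nabla_p$ change sign), i.e.\ $P\mathcal{A}P = \mathcal{A}$.

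The substantive point is \ref{it:oplem3}, $[\mathcal{L}_0,\mathcal{A}] = 0$. Rather than expand the commutator by brute force I would argue by symmetry: $\mathcal{A}$ is the infinitesimal generator of the one-parameter group of simultaneous rotations $T_t\phi(q,p) = \phi(e^{-tJ}q, e^{-tJ}p)$ (note $e^{-tJ}$ is orthogonal since $J$ is skew-symmetric), and $\mathcal{L}_0$ is invariant under every such simultaneous orthogonal change of variables $(q,p)\mapsto(Oq,Op)$ --- $\mathcal{L}_{ham}$ because it generates the flow $(q,p)\mapsto(\cos t\, q + \sin t\, p,\ -\sin t\, q + \cos t\, p)$, which manifestly commutes with $(q,p)\mapsto(Oq,Op)$, and $\mathcal{L}_{therm}$ because the Ornstein--Uhlenbeck operator in $p$ with isotropic coefficients is rotation invariant in $p$. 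Hence $T_t$ commutes with $\mathcal{L}_0$, and differentiating at $t=0$ yields $[\mathcal{A},\mathcal{L}_0]\phi = 0$. (Equivalently one may check $[\mathcal{L}_{ham},\mathcal{A}] = 0$ and $[\mathcal{L}_{therm},\mathcal{A}] = 0$ term by term using $J^{\top} = -J$; the rotation argument merely packages this computation.)

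Given \ref{it:oplem1}--\ref{it:oplem4}, the remaining items are immediate. For the commutation relation in \ref{it:oplem5}, $[\mathcal{L},\mathcal{L}_-] = [\mathcal{L}_0 + \mu\mathcal{A},\,\mathcal{L}_0 - \mu\mathcal{A}] = -2\mu[\mathcal{L}_0,\mathcal{A}] = 0$ by \ref{it:oplem3}; and for \eqref{eq:L+L-}, combining \ref{it:oplem1}, \ref{it:oplem4} and \ref{it:oplem2} gives $P\mathcal{L}P = P\mathcal{L}_0P + \mu P\mathcal{A}P = \mathcal{L}_0^{*} + \mu\mathcal{A}$, whence $\langle\phi, P\mathcal{L}P\psi\rangle_{L^2(\widehat{\pi})} = \langle\mathcal{L}_0\phi,\psi\rangle_{L^2(\widehat{\pi})} - \mu\langle\mathcal{A}\phi,\psi\rangle_{L^2(\widehat{\pi})} = \langle\mathcal{L}_-\phi,\psi\rangle_{L^2(\widehat{\pi})}$. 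Finally, for \ref{it:oplem6}, $\mathcal{L}_0$, $\mathcal{L}$ and $\mathcal{L}_-$ are all generators of ergodic Ornstein--Uhlenbeck processes of the form \eqref{eq:OU_operator} ($\mathcal{L}_-$ is obtained from $\mathcal{L}$ by replacing $J$ with the skew-symmetric matrix $-J$), so Theorem \ref{thm:L2 decomposition}(b) shows each of them leaves $H_m$ invariant; then $\mathcal{A} = \frac{1}{2\mu}(\mathcal{L}-\mathcal{L}_-)$ does too, while $P g_\alpha = (-1)^{|\alpha_p|} g_\alpha$ (writing $\alpha = (\alpha_q,\alpha_p)$ and using the product structure of the Hermite polynomials) shows $P$ preserves each $H_m$ as well. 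The only step requiring genuine care is \ref{it:oplem3}; everything else is bookkeeping once the integration-by-parts identities and the rotation invariance of $\mathcal{L}_0$ are in place.
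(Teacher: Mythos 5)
Your proof is correct, and for the two substantive items it takes a genuinely different route from the paper. For \ref{it:oplem2} the paper observes that the vector field $(-Jq,-Jp)$ is divergence-free with respect to $\widehat{\pi}$ and then invokes Stone's theorem (the flow is measure-preserving, hence the induced group is unitary, hence its generator is skew-adjoint); you instead compute $\mathcal{A}^{*}$ directly by integration by parts and check that the two correction terms vanish. Your computation establishes skew-symmetry on a core rather than the operator identity $\mathcal{A}^{*}=-\mathcal{A}$ in the strict unbounded-operator sense, which is what the Stone argument buys; since the lemma is only ever applied on the finite-dimensional invariant subspaces $H_m$, this distinction is harmless here, but it is the one place where the paper's argument is genuinely stronger. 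For \ref{it:oplem3} the paper expands $[\mathcal{L}_{ham},\mathcal{A}]$ and $[\mathcal{L}_{therm},\mathcal{A}]$ by brute force, whereas you recognise $\mathcal{A}$ as the generator of the simultaneous rotation group $T_t\phi(q,p)=\phi(e^{-tJ}q,e^{-tJ}p)$ and note that both $\mathcal{L}_{ham}$ and the isotropic Ornstein--Uhlenbeck operator $\mathcal{L}_{therm}$ commute with every such change of variables; this is cleaner, makes transparent \emph{why} the unit-covariance normalisation and the matching of the $q$- and $p$-perturbations ($J_1=J_2$) are essential, and is correct as stated (differentiating $T_t\mathcal{L}_0=\mathcal{L}_0T_t$ at $t=0$ is legitimate on the smooth class considered). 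Parts \ref{it:oplem1}, \ref{it:oplem4}, \ref{it:oplem5} and \ref{it:oplem6} follow the paper's proof essentially verbatim, including the reduction of \ref{it:oplem6} to Theorem \ref{thm:L2 decomposition} and the parity identity $Pg_\alpha=(-1)^{|\alpha_p|}g_\alpha$ for the Hermite basis.
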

\begin{remark}
	The claim \ref{it:oplem3} in the above lemma is crucial for our approach, which
	itself rests heavily on the fact that the $q-$ and $p-$perturbations
	match ($J_{1}=J_{2}$).
\end{remark}
\begin{proof}[of Lemma \ref{operator lemma}]
	To prove \ref{it:oplem1}, consider the following
	decomposition of $\mathcal{L}_{0}$ as in (\ref{eq:generator}):
	\[
	\mathcal{L}_{0}=\underbrace{p\cdot\nabla_{q}-q\cdot\nabla_{p}}_{\mathcal{L}_{ham}}+\underbrace{\gamma\left(- p\cdot\nabla_{p}+ \Delta_{p}\right)}_{\mathcal{L}_{therm}}.
	\]
	By partial integration it is straightforward to see that 
	\begin{equation*}
	\langle\phi,\mathcal{L}_{ham}\psi\rangle_{ L^{2}(\widehat{\pi})}=-\langle\mathcal{L}_{ham}\phi,\psi\rangle_{ L^{2}(\widehat{\pi})}
	\end{equation*}
	and
	\begin{equation*}
	 \langle \phi,\mathcal{L}_{therm}\psi\rangle_{ L^{2}(\widehat{\pi})}=\langle\mathcal{L}_{therm}\phi,\psi\rangle_{ L^{2}(\widehat{\pi})},
	 \end{equation*}
	 for all $\phi,\psi \in C^{\infty}(\mathbb{R}^{2d})\cap L^2(\widehat{\pi})$,
	  i.e. $\mathcal{L}_{ham}$ and $\mathcal{L}_{therm}$
	are antisymmetric and symmetric in $L^{2}(\widehat{\pi})$ respectively.
	Furthermore, we immediately see that $P\mathcal{L}_{ham}P\phi=-\mathcal{L}_{ham}\phi$ and $P\mathcal{L}_{therm}P\phi = \mathcal{L}_{therm}\phi$, so that
	\[
	\langle \phi,P\mathcal{L}_{0}P\psi\rangle_{ L^{2}(\widehat{\pi})}=\langle\phi,-\mathcal{L}_{ham}\psi+\mathcal{L}_{therm}\psi\rangle_{ L^{2}(\widehat{\pi})}=\langle\mathcal{L}_{0}\phi,\psi\rangle_{ L^{2}(\widehat{\pi})}.
	\]
	We note that this result holds in the more general setting of Section \ref{sec:perturbed_langevin} for the infinitesimal generator \eqref{eq:generator}.  The claim \ref{it:oplem2} follows by noting that the flow vector field $b(q,p)=(-Jq,-Jp)$ associated to $\mathcal{A}$ is divergence-free with respect to $\widehat{\pi}$, i.e. $\nabla \cdot(\widehat{\pi}b)=0$. Therefore, $\mathcal{A}$ is the generator of a strongly continuous unitary semigroup on $L^2(\widehat{\pi})$ and hence skewadjoint by Stone's Theorem.
  To prove \ref{it:oplem3} we use the decomposition $\mathcal{L}_{0}=\mathcal{L}_{ham}+\mathcal{L}_{therm}$ to obtain
	\begin{equation}
	\label{eq:oplemmac_proof}
	[\mathcal{L}_{0},\mathcal{A}]\phi=[\mathcal{L}_{ham},\mathcal{A}]\phi+[\mathcal{L}_{therm},\mathcal{A}]\phi,\quad \phi \in C^\infty(\mathbb{R}^{2d})\cap L^2(\widehat{\pi}).
	\end{equation}
	The first term of \eqref{eq:oplemmac_proof} gives 
	\begin{align*}
	[p\cdot\nabla_{q}-q\cdot\nabla_{p}&,-Jq\cdot\nabla_{q} -Jp\cdot\nabla_{p}]\phi\\
	& =\big([p\cdot\nabla_{q},-Jq\cdot\nabla_{q}]+[p\cdot\nabla_{q},-Jp\cdot\nabla_{p}]+[-q\cdot\nabla_{p},-Jq\cdot\nabla_{q}] \\
	& \qquad +[-q\cdot\nabla_{p},-Jp\cdot\nabla_{p}]\big)\phi\\
	&= Jp\cdot\nabla_{q}\phi-Jp\cdot\nabla_{q}\phi+Jq\cdot\nabla_{p}\phi-Jq\cdot\nabla_{p}\phi=0.
	\end{align*}
	The second term of \eqref{eq:oplemmac_proof} gives 
	\begin{equation}
	\label{eq:term1}
	[-p\cdot\nabla_{p}+\Delta_{p},\mathcal{A}]\phi =[-p\cdot\nabla_{p},-Jp\cdot\nabla_{p}]\phi+[\Delta_{p},-Jp\cdot\nabla_{p}]\phi,
	\end{equation}
	since $Jq\cdot\nabla_{q}$ commutes with $p\cdot\nabla_{p}+\Delta_{p}$. Both  terms in \eqref{eq:term1} are clearly zero due the antisymmetry of $J$ and the symmetry of the Hessian $D^2_p \phi$. 
	\\\\
	The claim \ref{it:oplem4} follows from a short calculation similar to the proof of  \ref{it:oplem1}.  To prove \ref{it:oplem5}, note that the fact that $\mathcal{L}$ and $\mathcal{L}_{-}$ commute follows from \ref{it:oplem3}, as 
	\[
	[\mathcal{L},\mathcal{L}_{-}]\phi=[\mathcal{L}_{0}+\mu\mathcal{A},\mathcal{L}_{0}-\mu\mathcal{A}]\phi=-2\mu[\mathcal{L}_{0},\mathcal{A}]\phi=0,\quad \phi \in C^{\infty}\cap L^2(\widehat{\pi}),
	\]
	while the property $\langle \phi ,P\mathcal{L}_{0}P\psi\rangle_{L^{2}(\widehat{\pi})}=\langle\mathcal{L}_{-}\phi,\psi\rangle_{ L^{2}(\widehat{\pi})}$ follows from properties \ref{it:oplem1}, \ref{it:oplem2} and \ref{it:oplem4}. Indeed,
	\begin{subequations}
	\begin{eqnarray*}
	\langle \phi,P\mathcal{L}P\psi\rangle_{ L^{2}(\widehat{\pi})}& = & \langle \phi, P(\mathcal{L}_{0}+\mu\mathcal{A})P\psi\rangle_{ L^{2}(\widehat{\pi})}=\langle\phi,\left(P\mathcal{L}_{0}P+\mu\mathcal{A}\right)\psi\rangle_{ L^{2}(\widehat{\pi})} \\	
	 & = & \langle (\mathcal{L}_{0}-\mu\mathcal{A})\phi,\psi\rangle_{ L^{2}(\widehat{\pi})}=\langle\mathcal{L}_{-}\phi,\psi\rangle_{ L^{2}(\widehat{\pi})}, 
\end{eqnarray*}
\end{subequations}
	as required. To prove \ref{it:oplem6} first notice that $\mathcal{L}$, $\mathcal{L}_0$ and $\mathcal{L}_{-}$ are of the form \eqref{eq:OU_operator} and therefore leave the spaces $H_m$ invariant by Theorem \ref{thm:L2 decomposition}. It follows immediately that also $\mathcal{A}$ leaves those spaces invariant. The fact that $P$ leaves the spaces $H_m$ invariant follows directly by inspection of \eqref{eq: Hermite polynomials}.
	\qed
\end{proof}
Now we proceed with the proof of Proposition  \ref{prop:asymvar_op_formula}:
\begin{proof}[of Proposition \ref{prop:asymvar_op_formula}] Since the potential $V$ is quadratic, Assumption \ref{ass:bounded+Poincare} clearly holds and thus Lemma \ref{lemma:variance} ensures that $\mathcal{L}$ and $\mathcal{L}_{-}$ are invertible on $L^2_{0}(\widehat{\pi})$ with 
\begin{equation}
\label{eq:Laplace transform}
\mathcal{L}^{-1}=\int_0^\infty e^{-t\mathcal{L}}\mathrm{d}t,
\end{equation}
	and analogously for $\mathcal{L}_{-}^{-1}$.
	 In particular, the asymptotic variance can be written as 
	 \begin{equation*}
	 \sigma_{f}^{2}=\langle f,(-\mathcal{L})^{-1}f\rangle_{L^{2}(\widehat{\pi})}.
	 \end{equation*}
	  Due to the respresentation \eqref{eq:Laplace transform} and Theorem \ref{thm:L2 decomposition}, the inverses of $\mathcal{L}$ and $\mathcal{L}_{-}$ leave the Hermite spaces $H_m$ invariant. We will prove the claim from Proposition \ref{prop:asymvar_op_formula} under the assumption that $Pf=f$ which includes the case 
	$f=f(q)$. For the following calculations we will assume $f\in H_m$ for fixed $m \ge 1$. Combining statement \ref{it:oplem6} with \ref{it:oplem1} and \ref{it:oplem5} of Lemma \ref{operator lemma} (and noting that $H_m \subset C^\infty(\mathbb{R}^{2d})\cap L^2(\widehat{\pi})$) we see that 
	\begin{equation}
	\label{eq:PLPL-}
	P\mathcal{L}P=\mathcal{L}_{-}^{*}
	\end{equation}
	 and 
	 \begin{equation}
	 P\mathcal{L}_{0}P=\mathcal{L}_{0}^{*}
	 \end{equation}
	  when restricted to $H_m$. Therefore, the following calculations are justified:
	\begin{align*}
	\langle f,(-\mathcal{L})^{-1}f\rangle_{L^{2}(\widehat{\pi})} &=\frac{1}{2}\langle f,(-\mathcal{L})^{-1}f\rangle_{L^{2}(\widehat{\pi})}+\langle f,(-\mathcal{L}^{*})^{-1}f\rangle_{L^{2}(\widehat{\pi})}\\
	&=\frac{1}{2}\langle f,(-\mathcal{L})^{-1}f\rangle_{L^{2}(\widehat{\pi})}+\langle Pf,(-\mathcal{L}^{*})^{-1}Pf\rangle_{L^{2}(\widehat{\pi})}\\
	&=\frac{1}{2}\langle f,(-\mathcal{L})^{-1}f\rangle_{L^{2}(\widehat{\pi})}+\langle f,(-\mathcal{L}_{-})^{-1}f\rangle_{L^{2}(\widehat{\pi})}\\
	&=\frac{1}{2}\langle f,\left((-\mathcal{L})^{-1}+(-\mathcal{L}_{-})^{-1}\right)f\rangle_{L^{2}(\widehat{\pi})},
	\end{align*}
	where in the third line we have used the assumption $Pf=f$ and in
	the fourth line the properties $P^{2}=I$, $P^{*}=P$ and equation
	(\ref{eq:PLPL-}).   Since $\mathcal{L}$ and $\mathcal{L}_{-}$ commute on $H_m$ according to Lemma
	\ref{operator lemma}\ref{it:oplem5},\ref{it:oplem6} we can write
	\begin{equation*}
	(-\mathcal{L})^{-1}+(-\mathcal{L}_{-})^{-1}  =\mathcal{L}_{-}(-\mathcal{L}\mathcal{L}_{-})^{-1}+\mathcal{L}(-\mathcal{L}\mathcal{L}_{-})^{-1}
	=-2\mathcal{L}_{0}(\mathcal{L}\mathcal{L}_{-})^{-1}
	\end{equation*}
	for the restrictions on $H_m$, 
	using $\mathcal{L}+\mathcal{L}_{-}=2\mathcal{L}_{0}$. We also have
	\begin{alignat*}{1}
	\mathcal{L}\mathcal{L}_{-} & =(\mathcal{L}_{0}+\mu\mathcal{A})(\mathcal{L}_{0}-\mu\mathcal{A}) =\mathcal{L}_{0}^{2}+\mu^{2}\mathcal{A}^{*}\mathcal{A},
	\end{alignat*}
	since $\mathcal{L}_{0}$ and $\mathcal{A}$ commute. We thus arrive at the formula
	\begin{equation}
	\label{eq:av_formula_Hm}
	\sigma_{f}^{2}=\langle f,-\mathcal{L}_{0}(\mathcal{L}_{0}^{2}+\mu^{2}\mathcal{A}^{*}\mathcal{A})^{-1}f\rangle_{L^{2}(\widehat{\pi})}, \quad f\in H_m.
	\end{equation}
	Now since $(\mathcal{L}_{0}^{2}+\mu^{2}\mathcal{A}^{*}\mathcal{A})^{-1}f = (\mathcal{L}\mathcal{L}_{-})^{-1}f \in \mathcal{D}(\mathcal{L}_{0})$ for all $f\in L^2(\widehat{\pi})$, it follows that the operator $-\mathcal{L}_{0}(\mathcal{L}_{0}^{2}+\mu^{2}\mathcal{A}^{*}\mathcal{A})^{-1}$ is bounded. We can therefore extend formula \eqref{eq:av_formula_Hm} to the whole of $L^2(\widehat{\pi})$ by continuity, using the fact that $L^2_0(\widehat{\pi})=\bigoplus_{m\ge 1}H_m$. 
	\qed
\end{proof}
Applying Proposition \ref{prop:asymvar_op_formula} we can analyse the behaviour
of $\sigma_{f}^{2}$ in the limit of large perturbation strength $\mu\rightarrow\infty$.
To this end, we introduce the orthogonal decomposition
\begin{equation}
\label{eq:kernel decomposition}
L_{0}^{2}(\pi)=\ker (Jq\cdot \nabla_q) \oplus\ker (Jq\cdot \nabla_q)^{\perp},
\end{equation}
where $Jq\cdot\nabla_q$ is understood as an unbounded operator acting on $L_0^2(\pi)$, obtained as the smallest closed extension of $Jq\cdot \nabla_q$ acting on $C^{\infty}_c(\mathbb{R}^d)$. In particular, $\ker (Jq\cdot \nabla_q)$ is a closed linear subspace of $L^2_0(\pi)$.   
Let $\Pi$ denote the $L_{0}^{2}(\pi)$-orthogonal projection onto
$\ker (Jq\cdot \nabla_q)$. We will write $\sigma_{f}^{2}(\mu)$ to
stress the dependence of the asymptotic variance on the perturbation
strength. The following result shows that for large perturbations,
the limiting asymptotic variance is always smaller than the asymptotic
variance in the unperturbed case. Furthermore, the limit is given as
the asymptotic variance of the projected observable $\Pi f$ for the
unperturbed dynamics.
\begin{theorem}
	\label{prop:large pert}
	Let $f\in L_{0}^{2}(\pi)$, then
	\[
	\lim_{\mu\rightarrow\infty}\sigma_{f}^{2}(\mu)=\sigma_{\Pi f}^{2}(0)\le\sigma_{f}^{2}(0).
	\]
\end{theorem}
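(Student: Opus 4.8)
The plan is to combine the operator identity of Proposition~\ref{prop:asymvar_op_formula} with the Hermite decomposition $L_0^2(\widehat\pi)=\bigoplus_{m\ge1}H_m$ from Theorem~\ref{thm:L2 decomposition}, on whose finite-dimensional summands all operators involved restrict by Lemma~\ref{operator lemma}\ref{it:oplem6}. Since $\mathcal{A}$ is skew-adjoint and commutes with $\mathcal{L}_0$ (Lemma~\ref{operator lemma}\ref{it:oplem2},\ref{it:oplem3}), the operator $\mathcal{A}^*\mathcal{A}=-\mathcal{A}^2$ is self-adjoint, nonnegative, and commutes with $\mathcal{L}_0$; moreover $\mathcal{L}_0^*=P\mathcal{L}_0P$ also commutes with $\mathcal{A}$ (using $P\mathcal{A}P=\mathcal{A}$). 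Hence $\ker\mathcal{A}=\ker(\mathcal{A}^*\mathcal{A})$ is invariant under $\mathcal{L}_0$ and under $\mathcal{L}_0^*$, so the orthogonal projection $\Pi_0$ onto $\ker\mathcal{A}$ commutes with $\mathcal{L}_0$, with $\mathcal{A}^*\mathcal{A}$, and therefore with $R_\mu:=-\mathcal{L}_0(\mathcal{L}_0^2+\mu^2\mathcal{A}^*\mathcal{A})^{-1}$. On $\ker\mathcal{A}$ one has $\mathcal{A}^*\mathcal{A}=0$ and $\mathcal{L}_0$ is invertible, so $R_\mu$ restricted to $\ker\mathcal{A}$ equals $-\mathcal{L}_0^{-1}$ independently of $\mu$. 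Splitting $f=\Pi_0f+(I-\Pi_0)f$, the cross terms vanish since $R_\mu$ commutes with $\Pi_0$, and Proposition~\ref{prop:asymvar_op_formula} gives, for every $\mu$,
\[
\sigma_f^2(\mu)=\langle \Pi_0f,-\mathcal{L}_0^{-1}\Pi_0f\rangle_{L^2(\widehat\pi)}+\langle g^\perp,R_\mu g^\perp\rangle_{L^2(\widehat\pi)},\qquad g^\perp:=(I-\Pi_0)f ,
\]
where the first term equals $\sigma_{\Pi_0f}^2(0)$ by the $\mu=0$ case of Proposition~\ref{prop:asymvar_op_formula}, and the second is nonnegative, being $\sigma_{g^\perp}^2(\mu)$ because $g^\perp$ is again a function of $q$ only.

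Next I would show $\Pi_0f=\Pi f$ for $f=f(q)$, where $\Pi$ is the $L_0^2(\pi)$-orthogonal projection onto $\ker(Jq\cdot\nabla_q)$. On one hand $\Pi f\in\ker(Jq\cdot\nabla_q)$, so viewed as a function on $\mathbb{R}^{2d}$ it satisfies $\mathcal{A}(\Pi f)=-Jq\cdot\nabla_q(\Pi f)-Jp\cdot\nabla_p(\Pi f)=0$, i.e.\ $\Pi f\in\ker\mathcal{A}$. On the other hand, any $h\in\ker\mathcal{A}$ is invariant under the simultaneous rotation $(q,p)\mapsto(e^{-Jt}q,e^{-Jt}p)$ whose generator is $\mathcal{A}$; since $e^{-Jt}$ is orthogonal the standard Gaussian in the $p$-variable is invariant, hence the conditional average $\bar h(q):=\int_{\mathbb{R}^d}h(q,p)\,\mathcal{N}(0,I)(\mathrm{d}p)$ is invariant under $q\mapsto e^{-Jt}q$, i.e.\ $\bar h\in\ker(Jq\cdot\nabla_q)\cap L_0^2(\pi)$. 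Therefore $\langle f-\Pi f,h\rangle_{L^2(\widehat\pi)}=\langle f-\Pi f,\bar h\rangle_{L^2(\pi)}=0$, since $f-\Pi f\perp\ker(Jq\cdot\nabla_q)$ in $L^2(\pi)$; combined with $\Pi f\in\ker\mathcal{A}$ this gives $\Pi_0f=\Pi f$. In particular $g^\perp=(I-\Pi)f$ is a function of $q$ orthogonal to $\ker(Jq\cdot\nabla_q)$, and evaluating the displayed identity at $\mu=0$ yields $\sigma_f^2(0)=\sigma_{\Pi f}^2(0)+\sigma_{(I-\Pi)f}^2(0)\ge\sigma_{\Pi f}^2(0)$, so the asserted inequality is already in hand; it only remains to prove $\lim_{\mu\to\infty}\sigma_{(I-\Pi)f}^2(\mu)=0$.

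For the vanishing of the remainder I would work Hermite space by Hermite space. On the finite-dimensional $H_m$, decompose orthogonally into the $\mathcal{A}^*\mathcal{A}$-eigenspaces $E_j^{(m)}$ with eigenvalues $a_j^{(m)}\ge0$ (all preserved by $\mathcal{L}_0$), where $E_0^{(m)}=\ker\mathcal{A}\cap H_m$. The component $g^\perp_m$ of $g^\perp$ in $H_m$ lies entirely in $\bigoplus_{a_j^{(m)}>0}E_j^{(m)}$, and on each such $E_j^{(m)}$ a Neumann-series estimate bounds $\Norm{(\mathcal{L}_0^2+\mu^2a_j^{(m)}I)^{-1}}$ by $O(\mu^{-2})$, so $R_\mu g^\perp_m\to0$ and hence $\langle g^\perp_m,R_\mu g^\perp_m\rangle\to0$ as $\mu\to\infty$.

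The main obstacle is interchanging this limit with the sum over $m$ in $\sigma_{g^\perp}^2(\mu)=\sum_{m\ge1}\langle g^\perp_m,R_\mu g^\perp_m\rangle$: each summand is nonnegative and tends to $0$, but a $\mu$-uniform summable majorant is needed. I expect this to come from the identity $R_\mu=\tfrac12\big((-\mathcal{L})^{-1}+(-\mathcal{L}_-)^{-1}\big)$ established in the proof of Proposition~\ref{prop:asymvar_op_formula}, together with the observation already used in Section~\ref{sec:exp_decay} that for $\mu=\nu$ the real parts of $\sigma(B)$, and hence the spectral gaps of $\mathcal{L}$ and $\mathcal{L}_-$, are \emph{independent of $\mu$} because $\sigma(J)\subset i\mathbb{R}$ (Lemma~\ref{lem:drift matrix properties}); feeding this, via the Hermite spectrum \eqref{eq:spectrum on subspaces}, into a resolvent bound should produce a constant $\Lambda<\infty$ with $|\langle g^\perp_m,R_\mu g^\perp_m\rangle|\le\Lambda\Norm{g^\perp_m}_{L^2(\widehat\pi)}^2$ for all $\mu$, and since $\sum_m\Lambda\Norm{g^\perp_m}^2=\Lambda\Norm{g^\perp}_{L^2(\widehat\pi)}^2<\infty$, dominated convergence finishes the proof and gives $\lim_{\mu\to\infty}\sigma_f^2(\mu)=\sigma_{\Pi f}^2(0)$.
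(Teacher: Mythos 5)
Your proposal follows the paper's own argument: the operator identity of Proposition~\ref{prop:asymvar_op_formula} combined with the joint eigenspace decomposition of $\mathcal{A}^{*}\mathcal{A}$ inside the finite-dimensional Hermite spaces, reduction of the $\mu\to\infty$ limit to the component of $f$ in $\ker\mathcal{A}$, and vanishing of the cross terms to get the inequality at $\mu=0$. The two points you treat more explicitly than the paper --- the identification $\Pi_0 f=\Pi f$ for $p$-independent observables via averaging over $p$, and the justification for interchanging the limit with the infinite sum over Hermite spaces (which the paper dispatches with ``clearly'') --- are sound refinements of steps the paper leaves implicit.
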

\begin{remark}
	Note that the fact that the limit exists and is finite is nontrivial.
	In particular, as Figures \ref{fig:no_limit1} and \ref{fig:no_limit2} demonstrate, it is often
	the case that $\lim_{\mu\rightarrow\infty}\sigma_{f}^{2}(\mu)=\infty$
	if the condition $\mu=\nu$ is not satisfied.
\end{remark}
\begin{remark}
	\label{rem:projection}
	The projection $\Pi$ onto $\ker(Jq\cdot\nabla_q)$ can be understood in terms of Figure \ref{fig:good_spectrum}. Indeed, the eigenvalues on the real axis (highlighted by diamonds) are not affected by the perturbations. Let us denote by $\tilde{\Pi}$ the projection onto the span of the eigenspaces of those eigenvalues. As $\mu \rightarrow \infty$, the limiting asymptotic  variance is given as the asymptotic variance associated to the unperturbed dynamics of the projection $\tilde{\Pi}f$. If we denote by $\Pi_0$ the projection of $L^2(\widehat{\pi})$ onto $L^2_0(\pi)$, then we have that $\Pi\Pi_0=\Pi_0\tilde{\Pi}$. 
\end{remark}
\begin{proof}[of Theorem \ref{prop:large pert}]
	Note that $\mathcal{L}_{0}$ and $\mathcal{A}^{*}\mathcal{A}$ leave the Hermite spaces $H_m$ invariant and their restrictions to those spaces commute 
	(see Lemma \ref{operator lemma}, \ref{it:oplem2}, \ref{it:oplem3} and \ref{it:oplem6}). Furthermore, as the Hermite spaces $H_m$ are finite-dimensional, those operators have discrete spectrum. As $\mathcal{A}^{*}\mathcal{A}$
	is nonnegative self-adjoint, there exists an orthogonal
	decomposition $L_{0}^{2}(\pi)=\bigoplus_{i}W_{i}$  into eigenspaces of the operator $-\mathcal{L}_{0}(\mathcal{L}_{0}^{2}+\mu^{2}\mathcal{A}^{*}\mathcal{A})^{-1}$,
	the decomposition $\bigoplus W_i$ being finer then $\bigoplus H_m$ in the sense that every $W_i$ is a subspace of some $H_m$. 
	 Moreover,
	\[
	-\mathcal{L}_{0}(\mathcal{L}_{0}^{2}+\mu^{2}\mathcal{A}^{*}\mathcal{A})^{-1}\vert_{W_{i}}=-\mathcal{L}_{0}(\mathcal{L}_{0}^{2}+\mu^{2}\lambda_{i})^{-1}\vert_{W_i},
	\]
	where $\lambda_{i}\ge0$ is the eigenvalue of $\mathcal{A}^{*}\mathcal{A}$
	associated to the subspace $W_{i}$. Consequently, formula (\ref{eq:asymvar_op_formula})
	can be written as 
	\begin{equation}
	\label{eq:asymvar_spectral}
	\sigma_{f}^{2}=\sum_{i}\langle f_{i},-\mathcal{L}_{0}(\mathcal{L}_{0}^{2}+\mu^{2}\lambda_{i})^{-1}f_{i}\rangle_{L^{2}(\widehat{\pi})},
	\end{equation}
	where $f=\sum_{i}f_{i}$ and $f_{i}\in W_{i}$. Let us assume now
	without loss of generality that $W_{0}=\ker\mathcal{A}^{*}\mathcal{A}$,
	so in particular $\lambda_{0}=0$. Then clearly 
	\[
	\lim_{\mu\rightarrow\infty}\sigma_{f}^{2}=2\langle f_{0},-\mathcal{L}_{0}(\mathcal{L}_{0}^{2})^{-1}f_{0}\rangle_{L^{2}(\widehat{\pi})}=2\langle f_{0},(-\mathcal{L}_{0})^{-1}f_{0}\rangle_{L^{2}(\widehat{\pi})}=\sigma_{f_{0}}^{2}(0).
	\]
	Now note that $W_{0}=\ker\mathcal{A}^{*}\mathcal{A}=\ker\mathcal{A}$ due
	to $\ker\mathcal{A}^{*}=(\im\mathcal{A})^{\perp}$.  It remains to show that  $\sigma_{\Pi f}^{2}(0)\le\sigma_{f}^{2}(0)$.  To see this, we write 
	\begin{align*}
	\sigma_{f}^{2}(0) & =2\langle f,(-\mathcal{L}_{0})^{-1}f\rangle_{L^{2}(\widehat{\pi})}=2\langle\Pi f+(1-\Pi)f,(-\mathcal{L}_{0})^{-1}\big(\Pi f+(1-\Pi)f\big)\rangle_{L^{2}(\widehat{\pi})}\\
	& =\sigma_{\Pi f}^{2}(0)+\sigma_{(1-\Pi)f}^{2}(0)+R,
	\end{align*}
	where 
	\[
	R=2\langle\Pi f,(-\mathcal{L}_{0})^{-1}(1-\Pi)f\rangle_{L^{2}(\widehat{\pi})}+2\langle(1-\Pi)f,(-\mathcal{L}_{0})^{-1}\Pi f\rangle_{L^{2}(\widehat{\pi})}.
	\]
	Note that since we only consider observables that do not depend on $p$, $\Pi f\in \ker (Jq\cdot \nabla_q)$ and $(1-\Pi)f\in\bigoplus_{i\ge1}W_{i}$.
	Since $\mathcal{L}_{0}$ commutes with $\mathcal{A}$, it follows
	that $(-\mathcal{L}_{0})^{-1}$ leaves both $W_{0}$ and $\bigoplus_{i\ge1}W_{i}$
	invariant. Therefore, as the latter spaces are orthogonal to each
	other, it follows that $R=0$, from which the result follows. 
	\qed
\end{proof}
From Theorem \ref{prop:large pert} it follows that in the limit as $\mu \rightarrow \infty$, the asymptotic variance $\sigma_f^2(\mu)$ is not decreased by the perturbation if $f \in \ker(Jq \cdot \nabla_q)$. In fact, this result also holds true non-asymptotically, i.e. observables in $\ker(Jq \cdot \nabla_q)$ are not affected at all by the perturbation:
\begin{lemma}
	\label{lem:invariant observables}
	Let $f\in \ker (Jq\cdot \nabla_q)$. Then
	\begin{equation*}
	\sigma^2_f(\mu) = \sigma^2_f(0)
	\end{equation*}
	for all $\mu \in \mathbb{R}$.
\end{lemma}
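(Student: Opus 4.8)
The idea is to apply the operator formula of Proposition~\ref{prop:asymvar_op_formula},
\[
\sigma_f^2(\mu)=\langle f,-\mathcal{L}_0(\mathcal{L}_0^2+\mu^2\mathcal{A}^*\mathcal{A})^{-1}f\rangle_{L^2(\widehat{\pi})},
\]
and to show that when $f\in\ker(Jq\cdot\nabla_q)$ the perturbation $\mathcal{A}$ annihilates not only $f$ but also $(-\mathcal{L}_0)^{-2}f$, so that the term $\mu^2\mathcal{A}^*\mathcal{A}$ drops out altogether. First I would check that $\mathcal{A}f=0$: since $f=f(q)$ does not depend on $p$, the summand $-Jp\cdot\nabla_p f$ of $\mathcal{A}=-Jq\cdot\nabla_q-Jp\cdot\nabla_p$ vanishes, while $-Jq\cdot\nabla_q f=0$ is exactly the hypothesis. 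At the level of the closed operator on $L^2_0(\pi)$ this is made precise by approximating $f$ by functions in $C^\infty_c(\mathbb{R}^d)$ along which $Jq\cdot\nabla_q$ converges to $0$ in $L^2(\pi)$, hence also in $L^2(\widehat{\pi})$. In particular $\mathcal{A}^*\mathcal{A}f=0$ as well.

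Next I would carry the computation out on the Hermite decomposition $L^2_0(\widehat{\pi})=\bigoplus_{m\ge1}H_m$ from the remark after Theorem~\ref{thm:L2 decomposition}, where all the operators involved act as genuine linear maps on finite-dimensional spaces. By Lemma~\ref{operator lemma}\,\ref{it:oplem2},\,\ref{it:oplem3},\,\ref{it:oplem6}, the operators $\mathcal{L}_0$ and $\mathcal{A}$ leave each $H_m$ invariant and commute there, with $\mathcal{A}^*=-\mathcal{A}$; and by ergodicity of the unperturbed dynamics $\ker\mathcal{L}_0=H_0$, so $\mathcal{L}_0|_{H_m}$ is invertible for $m\ge1$. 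Because $\mathcal{A}$ restricted to $p$-independent functions coincides with $-Jq\cdot\nabla_q$, the components $f_m\in H_m$ in the expansion $f=\sum_{m\ge1}f_m$ are again $p$-independent and satisfy $\mathcal{A}f_m=-Jq\cdot\nabla_q f_m=0$. Setting $g_m:=(-\mathcal{L}_0)^{-2}f_m\in H_m$, commutation on the finite-dimensional space $H_m$ gives $\mathcal{A}g_m=(-\mathcal{L}_0)^{-2}\mathcal{A}f_m=0$, so that
\[
(\mathcal{L}_0^2+\mu^2\mathcal{A}^*\mathcal{A})g_m=\mathcal{L}_0^2 g_m-\mu^2\mathcal{A}(\mathcal{A}g_m)=f_m,
\]
and, since $\mathcal{L}_0^2+\mu^2\mathcal{A}^*\mathcal{A}$ is invertible on $L^2_0(\widehat{\pi})$ (as recorded in the remark following Proposition~\ref{prop:asymvar_op_formula}), $g_m=(\mathcal{L}_0^2+\mu^2\mathcal{A}^*\mathcal{A})^{-1}f_m$ independently of $\mu$.

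Finally I would substitute back and use orthogonality of the $H_m$:
\[
\sigma_f^2(\mu)=\sum_{m\ge1}\langle f_m,-\mathcal{L}_0 g_m\rangle_{L^2(\widehat{\pi})}=\sum_{m\ge1}\langle f_m,(-\mathcal{L}_0)^{-1}f_m\rangle_{L^2(\widehat{\pi})},
\]
which is manifestly independent of $\mu$; taking $\mu=0$ in Proposition~\ref{prop:asymvar_op_formula} identifies the right-hand side with $\sigma_f^2(0)$. I do not expect a genuine obstacle here: the only mildly delicate points are justifying $\mathcal{A}f=0$ as an identity for the closed operator (handled by the $C^\infty_c$ approximation above) and the bookkeeping with the Hermite spaces, which is already in place from the proof of Proposition~\ref{prop:asymvar_op_formula}. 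It is worth noting that this lemma refines the inequality $\sigma_{\Pi f}^2(0)\le\sigma_f^2(0)$ of Theorem~\ref{prop:large pert} in the case $\Pi f=f$, upgrading the $\mu\to\infty$ statement to equality for every $\mu$.
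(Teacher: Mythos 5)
Your proof is correct and follows essentially the same route as the paper: the paper's two-line argument simply notes that $f\in\ker(Jq\cdot\nabla_q)$ implies $f\in\ker\mathcal{A}^{*}\mathcal{A}$ and then invokes the eigenspace expansion \eqref{eq:asymvar_spectral} of Proposition~\ref{prop:asymvar_op_formula}, which packages exactly the computation you carry out by hand on the Hermite spaces. Your version just makes explicit the commutation and invertibility bookkeeping that the cited formula already encodes.
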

\begin{proof}
	From $f \in  \ker (Jq\cdot \nabla_q)$ it follows immediately that $f \in \ker \mathcal{A}^{*}\mathcal{A}$. Then the claim follows from the expression \eqref{eq:asymvar_spectral}.
	\qed
\end{proof}
\begin{example}
	\label{ex:commutation quadratic observables}
	Recall the case of observables of the form $f(q)=q\cdot Kq+l\cdot q+C$
	with $K\in\mathbb{R}_{sym}^{d\times d}$, $l\in\mathbb{R}^{d}$ and
	$C\in\mathbb{R}$ from Section \ref{sec:small perturbations}. If $[J,K]=0$
	and $l\in\ker J$, then $f\in\ker (Jq\cdot \nabla_q)$ as 
	\[
	Jq\cdot\nabla_{q}(q\cdot Kq+l\cdot q+C)=2Jq\cdot Kq+Jq\cdot l=q\cdot(KJ-JK)q-q\cdot Jl=0.
	\]
	From the preceding lemma it follows that $\sigma_{f}^{2}(\mu)=\sigma_{f}^{2}(0)$
	for all $\mu\in\mathbb{R},$ showing that the assumption in Theorem
	\ref{cor:small pert unit var} does not exclude nontrivial cases.
\end{example}
The following result shows that the dynamics (\ref{eq: unit covariance perfect perturbation})
is particularly effective for antisymmetric observables (at least
in the limit of large perturbations):
\begin{proposition}
	\label{prop:antisymmetric observables}Let $f\in L_{0}^{2}(\pi)$
	satisfy $f(-q)=-f(q)$ and assume that $\ker J=\{0\}$.
	Furthermore, assume that the eigenvalues of $J$ are rationally independent, i.e. 
	\begin{equation}
	\label{eq:rat_indp_spectrum}
	\sigma(J)=\{\pm i\lambda_{1},\pm i\lambda_{2},\ldots,\pm i\lambda_d\}
	\end{equation}
	with $\lambda_{i}\in\mathbb{R}_{>0}$ and  $\sum_i k_i \lambda_i \neq 0$ for all $(k_1,\ldots,k_d)\in\mathbb{Z}^d\setminus(0,\ldots,0)$. Then $\lim_{\mu\rightarrow\infty}\sigma_{f}^{2}(\mu)=0$.
\end{proposition}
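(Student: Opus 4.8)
The plan is to reduce everything to Theorem~\ref{prop:large pert}. That result already gives $\lim_{\mu\to\infty}\sigma_f^2(\mu)=\sigma_{\Pi f}^2(0)$, where $\Pi$ is the $L_0^2(\pi)$-orthogonal projection onto $\ker(Jq\cdot\nabla_q)$, and since the asymptotic variance of the zero observable is $0$, it suffices to show that $\Pi f=0$ for every antisymmetric $f$, i.e. that $\ker(Jq\cdot\nabla_q)$ contains no nonzero odd function. I will in fact show that every element of $\ker(Jq\cdot\nabla_q)$ is \emph{even}; since $\pi$ is symmetric under $q\mapsto-q$, the subspaces of even and odd functions of $L_0^2(\pi)$ are orthogonal, so $f$ odd forces $\Pi f=0$.

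The next step is to identify $\ker(Jq\cdot\nabla_q)$ with the space of functions invariant under the rotation group generated by $J$. Because $J$ is skew-symmetric, $e^{tJ}\in SO(d)$ for all $t$, so the flow $\Phi_t(q)=e^{tJ}q$ preserves the standard Gaussian $\pi$, and $U_t g:=g\circ\Phi_t$ defines a strongly continuous group of unitaries on $L^2(\pi)$. A routine computation shows its generator agrees with $Jq\cdot\nabla_q$ on $C_c^\infty(\mathbb{R}^d)$; hence, by Stone's theorem, the closed operator $Jq\cdot\nabla_q$ is exactly this generator and $\ker(Jq\cdot\nabla_q)=\{g\in L_0^2(\pi): U_t g=g \ \text{for all } t\in\mathbb{R}\}$, i.e. the $\pi$-a.e. $\Phi_t$-invariant functions.

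Now I bring in the spectral hypotheses. Since $\ker J=\{0\}$, the dimension $d=2n$ is even and there are orthonormal coordinates in which $J$ is block diagonal with $n$ antisymmetric $2\times2$ blocks, the $i$-th block generating planar rotations of angular speed $\lambda_i$, where the $\lambda_i>0$ are the positive imaginary parts occurring in \eqref{eq:rat_indp_spectrum}. In the associated planar polar coordinates $(r_i,\theta_i)$ the flow acts by $\theta_i\mapsto\theta_i+\lambda_i t$ and fixes every radius $r_i$, so $\Phi_t$-invariance of $g$ reads $g\circ R_{c(t)}=g$ in $L^2(\pi)$ for all $t$, where $c(t)=(\lambda_1 t,\dots,\lambda_n t)\in\mathbb{T}^n$ and $R_\theta$ is the blockwise rotation by the angle vector $\theta\in\mathbb{T}^n$. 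The map $\theta\mapsto g\circ R_\theta$ is a continuous $\mathbb{T}^n$-action on $L^2(\pi)$, and by Kronecker's theorem the rational independence of $\lambda_1,\dots,\lambda_n$ guarantees that $\{c(t):t\in\mathbb{R}\}$ is dense in $\mathbb{T}^n$; hence $g\circ R_\theta=g$ for all $\theta\in\mathbb{T}^n$. A function invariant under all such rotations depends only on $(r_1,\dots,r_n)$ and is therefore even (negating $q$ rotates each plane by $\pi$ and leaves the radii unchanged). Combining with the first paragraph, $\Pi f=0$ whenever $f$ is odd, so $\lim_{\mu\to\infty}\sigma_f^2(\mu)=\sigma_{\Pi f}^2(0)=0$.

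I expect the only genuinely delicate points to be the two soft-analysis steps, not any computation. The first is the core/Stone identification, ensuring that the closed operator $Jq\cdot\nabla_q$ really is the generator of $(U_t)$ so that its kernel is the set of flow-invariant functions; alternatively one can take the polynomials, which are dense in $L^2(\pi)$ and mapped into themselves by $Jq\cdot\nabla_q$, as a core. The second is promoting invariance under the single one-parameter group $\{U_t\}$ to invariance under the full torus $\mathbb{T}^n$: this is where continuity of the $\mathbb{T}^n$-action on $L^2(\pi)$ together with Kronecker's density theorem and the rational-independence assumption \eqref{eq:rat_indp_spectrum} do the work, handled cleanly at the level of $L^2$ without having to track the ``a.e.'' quantifier fibrewise.
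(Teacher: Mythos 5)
Your proof is correct, but it takes a genuinely different route from the paper's. You reduce everything to Theorem~\ref{prop:large pert} and then prove directly that $\Pi f=0$ for odd $f$, by identifying $\ker(Jq\cdot\nabla_q)$ with the functions invariant under the rotation flow $e^{tJ}$ (a core/Stone argument for which $C_c^\infty$, being dense and flow-invariant, suffices), upgrading invariance under the one-parameter subgroup to invariance under the full torus $\mathbb{T}^n$ via Kronecker density and rational independence, and concluding that every kernel element is a function of the radii alone, hence even and orthogonal to $f$. The paper explicitly remarks that showing $f\in\ker(Jq\cdot\nabla)^{\perp}$ ``does not seem to be so easy to prove directly'' and instead argues through the Hermite decomposition: an odd $f$ lies in $\bigoplus_{m\,\mathrm{odd}}H_m$, and on each such $H_m$ the rational-independence hypothesis forces the spectrum of $\mathcal{L}\vert_{H_m}$ to recede from the origin as $\mu\to\infty$, so $(-\mathcal{L}\vert_{H_m})^{-1}\to 0$. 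Your argument buys a cleaner structural statement (the kernel of $Jq\cdot\nabla_q$ is exactly the torus-invariant, hence even, functions) and sidesteps the interchange of the limit with the infinite sum over $m$ and the passage from spectral radius to operator norm that the paper's finite-dimensional spectral argument relies on; the paper's approach, in exchange, gives quantitative information about how fast each Hermite block's contribution decays in $\mu$. One cosmetic point: the paper's statement lists $d$ eigenvalue pairs for a $d\times d$ matrix, which is a notational slip; your block count $n=d/2$ is the consistent reading, and the rational independence of $\lambda_1,\dots,\lambda_n$ is precisely the hypothesis needed for the continuous Kronecker/Weyl density you invoke.
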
 
\begin{proof}
	[of Proposition \ref{prop:antisymmetric observables}]
	The claim would
	immediately follow from $f\in\ker(Jq\cdot\nabla)^{\perp}$ according to Theorem \ref{prop:large pert}, but that does not seem to be so easy to prove directly. Instead, we again make
	use of the Hermite polynomials.
	
	Recall from the proof of Proposition \ref{prop:asymvar_op_formula} that $\gen$ is invertible on $L_{0}^{2}(\widehat{\pi})$ and its inverse leaves the Hermite spaces $H_m$ invariant. Consequently, the
	asymptotic variance of an observable $f\in L_{0}^{2}(\widehat{\pi})$ can be written as  
	\begin{subequations}
	\begin{eqnarray}
	\sigma_{f}^{2} & = & \langle f,(-\mathcal{L})^{-1}f\rangle_{L^{2}(\widehat{\pi})} \\
	& = & \sum_{m=1}^{\infty}\langle\Pi_{m}f,(-\mathcal{L}\vert_{H_{m}})^{-1}\Pi_{m}f\rangle_{L^2(\widehat{\pi})},\label{eq:asymvar decomposition} 
	\end{eqnarray}
	\end{subequations}
	where $\Pi_{m}:L_{0}^{2}(\widehat{\pi})\rightarrow H_{m}$ denotes the orthogonal
	projection onto $H_{m}$. From (\ref{eq: Hermite polynomials}) it
	is clear that $g_{a}$ is symmetric for $\vert\alpha\vert$ even and
	antisymmetric for $\vert\alpha\vert$ odd. Therefore, from $f$ being
	antisymmetric it follows that 
	\[
	f\in\bigoplus_{m\ge1,m\,\text{odd}}H_{m}.
	\]
	In view of (\ref{eq:spectrum of B}), (\ref{eq:spectrum on subspaces}) and (\ref{eq:rat_indp_spectrum})
	the spectrum of $\mathcal{L}_{\vert H_{m}}$ can be written as 
	\begin{subequations}
	\begin{eqnarray}
	\sigma(\mathcal{L}\vert_{H_{m}}) & = &\left\lbrace \mu\sum_{j=1}^{2d}\alpha_{j}\beta_{j}+C_{\alpha,\gamma}:\,\vert\alpha\vert=m,\,\beta_{j}\in\sigma(J)\right\rbrace  \nonumber \\
	& = & \left\lbrace i\mu\sum_{j=1}^{d}(\alpha_{j}-\alpha_{j+d})\lambda_{j}+C_{\alpha,\gamma}:\,\vert\alpha\vert=m\right\rbrace  \label{eq:spec_grow}
	\end{eqnarray}
	\end{subequations}
	with appropriate real constants $C_{\alpha,\gamma}\in\mathbb{R}$ that depend
	on $\alpha$ and $\gamma$, but not on $\mu$. For $\vert\alpha\vert=\sum_{j=1}^{2d} \alpha_j=m$ odd, we have that
	\begin{equation}
	\label{eq:nonzero}
	\sum_{j=1}^{d}(\alpha_{j}-\alpha_{j+d})\lambda_{j} \neq 0.
	\end{equation}
	Indeed, assume to the contrary that the above expression is zero. Then it follows that $\alpha_j = \alpha_{j+d}$ for all $j=1,\ldots,d$ by rational independence of $\lambda_1,\ldots,\lambda_d$.
	From \eqref{eq:spec_grow} and \eqref{eq:nonzero} it is clear that
	\begin{equation*}
	\sup \left\lbrace r>0 : B(0,r) \cap \sigma(\mathcal{L}\vert_{H_m}) = \emptyset \right\rbrace \xrightarrow{\mu \rightarrow \infty} \infty,
	\end{equation*}
	where $B(0,r)$ denotes the ball of radius $r$ centered at the origin in $\mathbb{C}$.
	Consequently, the spectral radius of $(-\mathcal{L}\vert_{H_m})^{-1}$ and hence $(-\mathcal{L}\vert_{H_m})^{-1}$ itself converge to zero as $\mu \rightarrow \infty$. The result then follows from (\ref{eq:asymvar decomposition}). \qed\end{proof}
\begin{remark}
	The idea of the preceding proof can be explained using Figure \ref{fig:good_spectrum} and Remark \ref{rem:projection}. Since the real eigenvalues correspond to Hermite polynomials of even order, antisymmetric observables are orthogonal to the associated subspaces. The rational independence condition on the eigenvalues of $J$ prevents cancellations  that would lead to further eigenvalues on the real axis.
\end{remark}
The following corollary gives a version of the converse of Proposition \ref{prop:antisymmetric observables} and provides further intuition into the mechanics of the variance reduction achieved by the perturbation.
\begin{corollary}
	Let $f\in L_{0}^{2}(\pi)$ and assume that $lim_{\mu\rightarrow\infty}\sigma_{f}^{2}(\mu)=0$. Then 
	\[
	\int_{B(0,r)}f\mathrm{dq=0}
	\]
	for all $r\in(0,\infty)$, where $B(0,r)$ denotes the ball centered at $0$ with radius $r$.
\end{corollary}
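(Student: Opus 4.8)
The plan is to extract, from Theorem~\ref{prop:large pert}, that the projection $\Pi f$ of $f$ onto $\ker(Jq\cdot\nabla_q)$ vanishes, and then to pair this identity with an explicit radial element of $\ker(Jq\cdot\nabla_q)$ built from the indicator of the ball $B(0,r)$.

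\emph{Step 1: $\Pi f=0$.} Here $\Pi$ is the $L^2_0(\pi)$-orthogonal projection onto $\ker(Jq\cdot\nabla_q)$. By Theorem~\ref{prop:large pert}, $\sigma^2_{\Pi f}(0)=\lim_{\mu\to\infty}\sigma^2_f(\mu)=0$. Put $g:=\Pi f\in\ker(Jq\cdot\nabla_q)\subseteq L^2_0(\pi)\subseteq L^2_0(\widehat\pi)$ and let $\chi$ be the mean-zero solution of the Poisson equation $-\mathcal{L}_0\chi=g$ for the unperturbed dynamics~\eqref{eq:langevin} (with $M=I$, $\Gamma=\gamma I$). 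Formula~\eqref{eq:asymptoticvariance} gives
\[
0=\sigma^2_g(0)=\inner{\nabla\chi}{\Sigma\nabla\chi}_{L^2(\widehat\pi)}=\gamma\,\Norm{\nabla_p\chi}^2_{L^2(\widehat\pi)},
\]
since $\Sigma=\diag(0,\gamma I)$ for~\eqref{eq:langevin}. Hence the weak momentum-gradient $\nabla_p\chi$ vanishes, so $\chi$ is $\widehat\pi$-a.e.\ a function of $q$ alone, and the Poisson equation reduces to $-p\cdot\nabla_q\chi(q)=g(q)$; the left-hand side is linear in $p$ while $g$ does not depend on $p$, so both sides must vanish. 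Therefore $g\equiv 0$, i.e.\ $\Pi f=0$.

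\emph{Step 2: a radial test function.} Fix $r\in(0,\infty)$ and set
\[
\psi_r:=\frac{\mathbf{1}_{B(0,r)}}{\pi}-c_r,\qquad c_r:=\int_{\mathbb{R}^d}\frac{\mathbf{1}_{B(0,r)}}{\pi}\,\mathrm{d}\pi=\mathrm{vol}\big(B(0,r)\big).
\]
Since the first summand is bounded and compactly supported, $\psi_r\in L^2(\pi)$, and $\pi(\psi_r)=c_r-c_r=0$, so $\psi_r\in L^2_0(\pi)$. The key point is that $\psi_r$ is radial. I would verify that every radial element of $L^2_0(\pi)$ lies in $\ker(Jq\cdot\nabla_q)$: approximate $\psi_r$ in $L^2(\pi)$ by radial $\psi_{r,n}\in C^\infty_c(\mathbb{R}^d)$ (e.g.\ radial mollification and cutoff, or averaging a $C^\infty_c$ approximant over $SO(d)$), and note that for any radial $\phi\in C^\infty_c(\mathbb{R}^d)$ one has $(Jq\cdot\nabla_q)\phi=0$ because $\nabla\phi$ is parallel to $q$ while $Jq\cdot q=0$ by skew-symmetry of $J$; passing to the limit in the \emph{closed} operator $Jq\cdot\nabla_q$ then yields $\psi_r\in\ker(Jq\cdot\nabla_q)$. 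This verification — that the non-smooth $\psi_r$ belongs to the kernel of the closed extension — is the only step requiring a little care.

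\emph{Step 3: conclusion.} Since $\psi_r\in\ker(Jq\cdot\nabla_q)=\im\Pi$ we have $\psi_r=\Pi\psi_r$, so, because $\Pi$ is self-adjoint with $\Pi f=0$,
\[
\inner{f}{\psi_r}_{L^2(\pi)}=\inner{f}{\Pi\psi_r}_{L^2(\pi)}=\inner{\Pi f}{\psi_r}_{L^2(\pi)}=0.
\]
On the other hand, using $\pi(f)=0$,
\[
\inner{f}{\psi_r}_{L^2(\pi)}=\int_{\mathbb{R}^d}f(q)\,\frac{\mathbf{1}_{B(0,r)}(q)}{\pi(q)}\,\pi(q)\,\mathrm{d}q-c_r\int_{\mathbb{R}^d}f(q)\,\pi(q)\,\mathrm{d}q=\int_{B(0,r)}f(q)\,\mathrm{d}q,
\]
with absolute convergence guaranteed by $f,\psi_r\in L^2(\pi)$ and Cauchy--Schwarz. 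Comparing the two displays gives $\int_{B(0,r)}f\,\mathrm{d}q=0$, as claimed.
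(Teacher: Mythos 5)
Your proof is correct, but it takes a genuinely different route from the paper's at both stages. For the first step ($\Pi f=0$), the paper symmetrizes, writing $\sigma^2_{\Pi f}(0)=\tfrac12\langle \Pi f,((-\mathcal{L}_0)^{-1}+(-\mathcal{L}_0^{*})^{-1})\Pi f\rangle$, and uses the invariance of the Hermite spaces $H_m$ to conclude that this symmetrized inverse has trivial kernel on $L^2_0(\widehat\pi)$; you instead use the Dirichlet-form identity $\sigma^2_g(0)=\gamma\Norm{\nabla_p\chi}^2_{L^2(\widehat\pi)}$ together with the degenerate structure of the kinetic generator (a function of $q$ alone cannot equal $-p\cdot\nabla_q\chi$ unless both vanish) — more elementary and arguably more illuminating, though it quietly relies on enough regularity of $\chi$ to justify the carré-du-champ identity and the pointwise argument in $p$; this is fine here since $\mathcal{L}_0$ is an Ornstein--Uhlenbeck generator. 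For the second step, the paper works on the \emph{image} side: from $f\in\ker(Jq\cdot\nabla)^{\perp}=\overline{\im(Jq\cdot\nabla)}$ it approximates $f$ by $Jq\cdot\nabla\phi_n=\nabla\cdot(\phi_n Jq)$, applies the divergence theorem on $B(0,r)$ (the flux vanishes since $Jq\perp n$), and passes to the limit by dominated convergence along a subsequence. You work on the \emph{kernel} side, exhibiting the centered, $\pi$-normalized indicator $\psi_r=\mathbf{1}_{B(0,r)}/\pi-c_r$ as a radial element of $\ker(Jq\cdot\nabla_q)$ and pairing it with $f$ via self-adjointness of $\Pi$. These are dual arguments; yours trades the surface-integral and dominated-convergence step for the verification that the non-smooth radial function $\psi_r$ lies in the kernel of the \emph{closed} extension, which you handle correctly (radial mollification preserves radiality, $Jq\cdot\nabla$ annihilates radial smooth functions by skew-symmetry, and closedness passes the property to the limit). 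Note that your construction uses that $\pi$ is the standard Gaussian (so that $\psi_r$ is indeed radial and square-integrable), which is exactly the setting of this section, so no generality is lost.
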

\begin{proof}
	According to Theorem \ref{prop:large pert},  $\lim_{\mu\rightarrow\infty}\sigma_{f}^{2}(\mu)=0$  implies $\sigma_{\Pi f}^{2}(0)=0$. We can write 
	\begin{subequations}
	\begin{eqnarray}
	\sigma_{\Pi f}^{2}(0) & = & \langle \Pi f, (-\mathcal{L}_0)^{-1}\Pi f \rangle_{ L^{2}(\widehat{\pi})} \nonumber \\
	& = & \frac{1}{2}\langle \Pi f, \left((-\mathcal{L}_0)^{-1}+(-\mathcal{L}^{*}_0)^{-1}\right)\Pi f \rangle_{ L^{2}(\widehat{\pi})} \nonumber
	\end{eqnarray}
	\end{subequations}
	and recall from the proof of Proposition \ref{prop:asymvar_op_formula} that $(-\mathcal{L}_0)^{-1}$ and $(-\mathcal{L}^{*}_0)^{-1}$ leave the Hermite spaces $H_m$ invariant. Therefore  
	\begin{equation}
	\ker \left((-\mathcal{L}_0)^{-1}+(-\mathcal{L}^{*}_0)^{-1}\right) = {0}
	\end{equation}
	in $L^2_0(\widehat{\pi})$, and in particular $\sigma_{\Pi f}^{2}(0)=0$ implies $\Pi f = 0$, which in turn shows that
	  $f\in\ker(Jq\cdot\nabla)^{\perp}$. Using $\ker(Jq\cdot\nabla)^{\perp}=\overline{\im(Jq\cdot\nabla)}$,
	it follows that there exists a sequence $(\phi_n)_n\in C_c^{\infty}(\mathbb{R}^d)$ such that $Jq\cdot\nabla\phi_n \rightarrow f$ in $L^2(\pi)$. Taking a subsequence if necessary, we can assume that the convergence is pointwise $\pi$-almost everywhere and that the sequence is pointwise bounded by a function in $L^1(\pi)$. 
	Since $J$ is antisymmetric, we have that $Jq\cdot\nabla\phi_n=\nabla\cdot(\phi_n Jq)$.
	Now Gauss's theorem yields
	\[
	\int_{B(0,r)}f\mathrm{d}q=\int_{B(0,r)}\nabla\cdot(\phi Jq)\mathrm{d}q=\int_{\partial B(0,r)}\phi Jq\cdot\mathrm{d}n,
	\]
	where $n$ denotes
	the outward normal to the sphere $\partial B(0,r)$. This quantity
	is zero due to the orthogonality of $Jq$ and $n$, and so the result
	follows from Lebesgue's dominated convergence theorem.\qed
\end{proof}
\subsection{Optimal Choices of $J$ for Quadratic Observables}

Assume $f\in L_{0}^{2}(\pi)$ is given by $f(q)=q\cdot Kq+l\cdot q -\Tr K$,  with $K\in\mathbb{R}_{sym}^{d\times d}$ and $l\in\mathbb{R}^{d}$ (note that the constant term is chosen such that $ \pi(f)=0 $). Our objective is to choose $J$ in such a way that $\lim_{\mu\rightarrow\infty}\sigma_{f}^{2}(\mu)$ becomes as small as possible. To stress the dependence on the choice
of $J$, we introduce the notation $\sigma_{f}^{2}(\mu,J)$. Also, we denote the orthogonal projection onto $(\ker J)^{\perp}$ by $\Pi^{\perp}_{\ker J}$.
\begin{lemma}
	\label{lem:lin_observables}{(Zero variance limit for linear observables).} Assume $K=0$ and $\Pi^{\perp}_{\ker J}l=0$. Then 
	\[
	\lim_{\mu\rightarrow\infty}\sigma_{f}^{2}(\mu,J)=0.
	\]
\end{lemma}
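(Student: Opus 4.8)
The plan is to read the statement off from Theorem~\ref{prop:large pert}. Since $K=0$ (and hence $\Tr K=0$), the observable is simply $f(q)=l\cdot q$, which lies in $L_0^2(\pi)$ because $\pi$ is the standard Gaussian. Theorem~\ref{prop:large pert} then gives $\lim_{\mu\to\infty}\sigma_f^2(\mu,J)=\sigma_{\Pi f}^2(0)$, where $\Pi$ is the $L_0^2(\pi)$-orthogonal projection onto $\ker(Jq\cdot\nabla_q)$. The whole argument therefore reduces to showing that the hypothesis on $l$ forces $\Pi f=0$, for then the limit equals $\sigma_0^2(0)=0$.

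To prove $\Pi f=0$ I would first record that $Jq\cdot\nabla_q$, viewed as in Theorem~\ref{prop:large pert} as the unbounded operator on $L^2(\pi)$ obtained by closing $Jq\cdot\nabla_q$ from $C_c^\infty(\mathbb{R}^d)$, is \emph{skew-adjoint}: it is the infinitesimal generator of the one-parameter group $U_t\phi=\phi(e^{tJ}\,\cdot\,)$, which is unitary on $L^2(\pi)$ because $e^{tJ}\in SO(d)$ preserves the standard Gaussian, and $C_c^\infty(\mathbb{R}^d)$ is a core because it is dense in the domain and $U_t$-invariant, so the closure is precisely this generator (Stone's theorem). Consequently $\ker(Jq\cdot\nabla_q)^\perp=\overline{\im(Jq\cdot\nabla_q)}$. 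The hypothesis places $l$ in $(\ker J)^\perp=\im J$, so $l=Jm$ for some $m\in\mathbb{R}^d$; then $Jq\cdot\nabla_q(m\cdot q)=(Jq)\cdot m=-q\cdot(Jm)=-l\cdot q=-f$, after the routine verification that the polynomial $m\cdot q$ belongs to the domain of the closure (e.g.\ by approximating it with $(m\cdot q)\chi(\cdot/n)$ for a smooth cutoff $\chi$ and using the Gaussian tails of $\pi$). Hence $f=Jq\cdot\nabla_q(-m\cdot q)\in\im(Jq\cdot\nabla_q)\subseteq\ker(Jq\cdot\nabla_q)^\perp$, so $\Pi f=0$, and the claim follows.

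I expect the only mildly delicate point to be the functional-analytic bookkeeping in the second step — identifying the closure of $Jq\cdot\nabla_q$ with the skew-adjoint generator of the rotation group and checking that polynomials lie in its domain; everything else is an application of Theorem~\ref{prop:large pert} together with the elementary identity $\im J=(\ker J)^\perp$. If one prefers to bypass unbounded operators, an essentially equivalent variant is to compute $U_tf=(e^{-tJ}l)\cdot q$ and to identify $\Pi f$ with the time-average $\lim_{T\to\infty}\frac1T\int_0^T U_tf\,\mathrm{d}t$ (von Neumann mean ergodic theorem), which equals $(Pl)\cdot q$ with $P$ the orthogonal projection of $\mathbb{R}^d$ onto $\ker J$; this vanishes precisely because $l\in(\ker J)^\perp$, giving $\Pi f=0$ once more.
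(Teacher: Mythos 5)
Your proof is correct and follows essentially the same route as the paper's: reduce via Theorem~\ref{prop:large pert} to showing $\Pi f=0$, identify $\ker(Jq\cdot\nabla)^{\perp}$ with $\overline{\im(Jq\cdot\nabla)}$ using skew-adjointness, and exhibit $f=Jq\cdot\nabla(-m\cdot q)$ from $l=Jm\in\im J=(\ker J)^{\perp}$ (which is also the reading of the hypothesis that the paper's own proof uses when it invokes Fredholm's alternative). The additional care you take --- Stone's theorem and the core argument for the closed operator, the domain check for the linear polynomial, and the mean-ergodic-theorem variant --- elaborates on what the paper dispatches with the identity $(Jq\cdot\nabla)^{*}=-Jq\cdot\nabla$, but the underlying argument is the same.
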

\begin{proof}
	According to Proposition \ref{prop:large pert}, we have to show that
	$\Pi f=0$, where $\Pi$ is the $L^{2}(\pi)$-orthogonal projection
	onto $\ker(Jq\cdot\nabla)$. Let us thus prove that 
	\[
	f\in\ker(Jq\cdot\nabla)^{\perp}=\overline{\im(Jq\cdot\nabla)^{*}}=\overline{\im(Jq\cdot\nabla)},
	\]
	where the second identity uses the fact that $(Jq\cdot\nabla)^{*}=-Jq\cdot\nabla$.
	Indeed, since $\Pi^{\perp}_{\ker J}=0$, by Fredholm's alternative there exists $u \in \mathbb{R}^d$ such that $Ju=l$. Now define $\phi\in L_{0}^{2}(\pi)$
	by $\phi(q)=-u\cdot q,$ leading to 
	\[
	f=Jq\cdot\nabla\phi,
	\]
	so the result follows.\qed
\end{proof}

\begin{lemma}
	\label{lem:optimal_perturbation}{(Zero variance limit for purely quadratic observables.)} Let $l=0$ and consider the decomposition $K=K_{0}+K_{1}$ into the traceless part $K_{0}=K-\frac{\Tr K}{d}\cdot I$ and the
	trace-part $K_{1}=\frac{\Tr K}{d}\cdot I.$ For the corresponding
	decomposition of the observable 
	\[
	f(q)=f_{0}(q)+f_{1}(q)=q\cdot K_{0}q+q\cdot K_{1}q-\Tr K
	\]
	the following holds:
	\begin{enumerate}[label=(\alph*)]
		\item There exists an antisymmetric matrix $J$ such that  $\lim_{\mu\rightarrow\infty}\sigma_{f_{0}}^{2}(\mu,J)=0,$
		and there is an algorithmic way (see Algorithm \ref{alg:optimal J}) to compute an appropriate $J$ in terms
		of $K$.
		\item The trace-part is not effected by the perturbation, i.e. $\sigma_{f_{1}}^{2}(\mu,J)=\sigma_{f_{1}}^{2}(0)$ for all $\mu\in\mathbb{R}$.
	\end{enumerate}
\end{lemma}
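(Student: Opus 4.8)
The plan is to handle the two parts separately: part~(b) is immediate, while part~(a) will reduce, via Theorem~\ref{prop:large pert}, to a purely algebraic problem about skew-symmetric matrices. For part~(b), note that $K_1=\frac{\Tr K}{d}I$, so $f_1(q)=\frac{\Tr K}{d}\big(\vert q\vert^2-d\big)$; since $Jq\cdot\nabla_q\vert q\vert^2=2\,q\cdot Jq=0$ for every skew-symmetric $J$, we have $f_1\in\ker(Jq\cdot\nabla_q)$, and Lemma~\ref{lem:invariant observables} immediately yields $\sigma_{f_1}^2(\mu,J)=\sigma_{f_1}^2(0)$ for all $\mu\in\mathbb{R}$, whatever matrix $J$ is eventually chosen in part~(a).

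For part~(a), by Theorem~\ref{prop:large pert} we have $\lim_{\mu\to\infty}\sigma_{f_0}^2(\mu,J)=\sigma_{\Pi f_0}^2(0)$ with $\Pi$ the $L^2(\pi)$--orthogonal projection onto $\ker(Jq\cdot\nabla_q)$, so it suffices to produce an antisymmetric $J$ with $\Pi f_0=0$. Exactly as in the proof of Lemma~\ref{lem:lin_observables} I would use skew-adjointness of $Jq\cdot\nabla_q$ to write $\ker(Jq\cdot\nabla_q)^\perp=\overline{\im(Jq\cdot\nabla_q)}$, so that it is enough to exhibit $\phi$ with $f_0=Jq\cdot\nabla_q\phi$. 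Trying $\phi(q)=q\cdot Lq$ with $L$ symmetric, the identity $Jq\cdot\nabla_q(q\cdot Lq)=q\cdot(LJ-JL)q$, together with the fact that $LJ-JL$ is symmetric when $L$ is symmetric and $J$ skew, reduces the task to solving the commutator equation $LJ-JL=K_0$ for symmetric $L$. (One minor point: the polynomial $q\cdot Lq$ is not compactly supported, but it lies in the domain of the closed operator $Jq\cdot\nabla_q$ --- truncate with radial cut-offs $\chi_n$ and use $Jq\cdot\nabla_q\chi_n=0$, since $\nabla_q\chi_n$ is parallel to $q$ while $Jq\perp q$.)

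It then remains to choose $J$ so that $LJ-JL=K_0$ has a symmetric solution. The map $L\mapsto LJ-JL$ on symmetric matrices is skew-symmetric for the Frobenius inner product, with kernel the symmetric matrices commuting with $J$; hence the equation is solvable precisely when $\Tr(K_0 S)=0$ for every symmetric $S$ with $SJ=JS$. I would construct $J$ (which is the content of Algorithm~\ref{alg:optimal J}) by first decomposing $\mathbb{R}^d$ orthogonally into two-planes $V_1,\dots,V_n$, plus a one-dimensional $V_0$ when $d$ is odd, such that each compression $P_{V_k}K_0P_{V_k}$ is traceless. Such a decomposition exists by induction: at each stage the restriction of $K_0$ to the remaining subspace is traceless, so (if it is nonzero) the intermediate value theorem applied to $u\mapsto u\cdot K_0 u$ on the unit sphere produces a unit vector $u$ with $u\cdot K_0 u=0$, and repeating inside $u^\perp$ produces a unit vector $v\perp u$ with $v\cdot K_0 v=0$; one sets $V_k=\Span\{u,v\}$ and continues on the orthogonal complement. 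Picking pairwise distinct $\lambda_1,\dots,\lambda_n>0$ and an orthonormal basis $\{a_k,b_k\}$ of each $V_k$, I would set $J=\sum_{k=1}^n\lambda_k\big(a_kb_k^\top-b_ka_k^\top\big)$. A Sylvester-equation argument exploiting that the spectra $\{\pm i\lambda_k\}$ (and $\{0\}$ on $V_0$) are pairwise disjoint then shows that any symmetric $S$ commuting with this $J$ must be block-diagonal with each $S|_{V_k}$ a scalar, i.e.\ $S\in\Span\{P_{V_0},P_{V_1},\dots,P_{V_n}\}$; and $\Tr(K_0P_{V_k})=\Tr(P_{V_k}K_0P_{V_k})=0$ by construction. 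Hence the commutator equation is solvable, $\Pi f_0=0$, and Theorem~\ref{prop:large pert} gives $\lim_{\mu\to\infty}\sigma_{f_0}^2(\mu,J)=0$.

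The step I expect to be the main obstacle is the geometric construction of the orthogonal two-plane decomposition with traceless compressions of $K_0$ (the intermediate-value induction), together with the verification that, for the resulting $J$, the space of symmetric matrices commuting with $J$ is exactly $\Span\{P_{V_0},P_{V_1},\dots,P_{V_n}\}$; the reduction to the commutator equation and part~(b) are routine.
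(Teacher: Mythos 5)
Your argument is correct, and part (b) together with the reduction of part (a) to the commutator equation $[L,J]=K_0$ (via $f_0\in\ker(Jq\cdot\nabla)^{\perp}=\overline{\im(Jq\cdot\nabla)}$ and the identity $Jq\cdot\nabla(q\cdot Lq)=q\cdot[L,J]q$) is exactly the paper's route; your remark about truncating $q\cdot Lq$ with radial cut-offs is a legitimate extra care that the paper glosses over. Where you genuinely diverge is in solving $[L,J]=K_0$. The paper is fully explicit: it conjugates $K_0$ by an orthogonal $U$ so that $UK_0U^{T}$ has zero diagonal, picks distinct $a_1,\dots,a_d$, and writes down $\bar J_{ij}=(UK_0U^{T})_{ij}/(a_i-a_j)$ together with $\bar A=\diag(a_1,\dots,a_d)$, so that both $J$ and the solution $L=A$ are produced in closed form (this is Algorithm \ref{alg:optimal J}). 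You instead invoke a duality criterion --- $\mathrm{ad}_J$ is skew-adjoint on symmetric matrices for the Frobenius inner product, so solvability is equivalent to $K_0\perp\{S=S^{T}:SJ=JS\}$ --- and then build a block-rotation $J=\sum_k\lambda_k(a_kb_k^{\top}-b_ka_k^{\top})$ from an orthogonal two-plane decomposition on which the compressions of $K_0$ are traceless, checking that the symmetric commutant of this $J$ is $\Span\{P_{V_0},\dots,P_{V_n}\}$. Your geometric input (the traceless-compression two-planes, obtained by the intermediate-value induction) is really the same fact as the paper's zero-diagonal orthogonal basis from Appendix \ref{tracefree}, but your verification of solvability is more abstract and does not exhibit $L$; in exchange it makes transparent \emph{which} $J$ work (those whose symmetric commutant annihilates $K_0$ under the trace pairing). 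The only caveat is that your $J$ is not the one produced by Algorithm \ref{alg:optimal J}, so the parenthetical reference in the statement is not literally verified by your proof --- though your construction is itself algorithmic, so the substance of part (a) stands.
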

\begin{proof}
	To prove the first claim, according to Theorem \ref{prop:large pert}
	it is sufficient to show that $f_{0}\in\ker(Jq\cdot\nabla)^{\perp}=\overline{\im(Jq\cdot\nabla)}$.
	Let us consider the function $\phi(q)=q\cdot Aq$, with $A\in\mathbb{R}_{sym}^{d\times d}$. It holds that
	\begin{equation*}
	Jq\cdot\nabla\phi=q\cdot(J^{T}Aq)=q\cdot[A,J]q.
	\end{equation*}
	The task of finding an antisymmetric matrix $J$ such that 
	\begin{equation}
	\label{eq:quad_var_reduction}
	\lim_{\mu\rightarrow\infty}\sigma_{f_{0}}^{2}(\mu,J)=0
	\end{equation}
	can therefore be accomplished by constructing an antisymmetric matrix
	$J$ such that there exists a symmetric matrix $A$ with the property
	$K_{0}=[A,J]$.  Given any traceless matrix $K_{0}$ there exists
	an orthogonal matrix $U\in O(\mathbb{R}^{d})$ such that $UK_{0}U^{T}$
	has zero entries on the diagonal, and that $U$ can be obtained in
	an algorithmic manner (see for example \cite{alg_zero_diag} or \cite[Chapter 2, Section 2, Problem 3]{Horn_Johnson_Matrix_Analysis}; for the reader's convenience we have summarised the algorithm in Appendix \ref{tracefree}.) Assume
	thus that such a matrix $U\in O(\mathbb{R}^{d})$ has been found and choose real numbers $a_1,\ldots,a_d \in \mathbb{R}$ such that $a_{i}\neq a_{j}$ if $i\neq j$.
	We now set
	\begin{equation}
	\bar{A}=\diag(a_{1},\ldots,a_{n}),
	\end{equation}
	and 
	\begin{equation}
	\bar{J}_{ij}= 
	\begin{cases}
	\frac{(UK_{0}U^{T})_{ij}}{a_{i}-a_{j}} & \text{if } i\neq j, \\
	0 & \text{if } i=j. \\ 
	\end{cases}
	\end{equation}
	Observe that since $UK_{0}U^{T}$ is symmetric, $\bar{J}$ is antisymmetric. 
	A short calculation shows that $[\bar{A},\bar{J}]= UK_{0}U^{T}$.
	We can thus define $A=U^{T}\bar{A}U$ and $J=U^{T}\bar{J}U$ to obtain $[A,J]=K_0$. Therefore, the $J$ constructed in this way indeed satisfies \eqref{eq:quad_var_reduction}.  For the second claim, note that $f_{1}\in\ker(Jq\cdot\nabla)$, since
	\begin{equation}
	\label{eq:constant_trace}
	Jq\cdot\nabla\left(q\cdot\frac{\Tr K}{d}q\right)=2\frac{\Tr K}{d}q\cdot Jq=0
	\end{equation}
	because of the antisymmetry of $J$. The result then follows from
	Lemma \ref{lem:invariant observables}.\qed
\end{proof}
We would like to stress that the perturbation $J$ constructed in the previous lemma is far from unique due to the freedom of choice of $U$ and $a_1,\ldots,a_d \in \mathbb{R}$ in its proof. However, it is asymptotically optimal:
\begin{corollary}
	\label{cor:optimality}
	In the setting of Lemma \ref{lem:optimal_perturbation} the following holds:
	\[
	\min_{J^T=-J}\left(\lim_{\mu\rightarrow\infty} \sigma^2_{f}(\mu,J)\right)=\sigma^2_{f_1}(0).
	\]
\end{corollary}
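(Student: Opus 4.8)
The plan is to read off an upper bound from the construction in Lemma~\ref{lem:optimal_perturbation} and to match it with a lower bound obtained from an $O(d)$-equivariance argument. Throughout, write $\Pi_J$ for the $L^2(\pi)$-orthogonal projection onto $\ker(Jq\cdot\nabla_q)$; by Theorem~\ref{prop:large pert} we have $\lim_{\mu\to\infty}\sigma_f^2(\mu,J)=\sigma_{\Pi_J f}^2(0)$, so the corollary is equivalent to $\min_{J^T=-J}\sigma_{\Pi_J f}^2(0)=\sigma_{f_1}^2(0)$. Since $f_1=q\cdot\frac{\Tr K}{d}q-\Tr K$ lies in $\ker(Jq\cdot\nabla_q)$ for every antisymmetric $J$ by~\eqref{eq:constant_trace}, we have $\Pi_J f_1=f_1$ and hence $\Pi_J f=f_1+\Pi_J f_0$ for all $J$. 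Note also that $\sigma_{f_1}^2(0)=\sigma_{f_1}^2(\mu,J)$ for every $\mu$ and $J$ by Lemma~\ref{lem:optimal_perturbation}(b), so the right-hand side of the claimed identity is unambiguous.

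For the upper bound I would take the antisymmetric matrix $J^\ast$ produced in the proof of Lemma~\ref{lem:optimal_perturbation}(a), which satisfies $f_0\in\ker(J^\ast q\cdot\nabla_q)^\perp$, i.e. $\Pi_{J^\ast}f_0=0$. Then $\Pi_{J^\ast}f=f_1$, whence $\lim_{\mu\to\infty}\sigma_f^2(\mu,J^\ast)=\sigma_{f_1}^2(0)$, so the minimum over $J$ is at most $\sigma_{f_1}^2(0)$.

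For the lower bound I would show $\sigma_{\Pi_J f}^2(0)\ge\sigma_{f_1}^2(0)$ for every antisymmetric $J$. All functions involved ($f_0$, $f_1$, $\Pi_J f_0$) lie in the finite-dimensional second Hermite space $H_2$, which is left invariant by $\mathcal{L}_0$ and $(-\mathcal{L}_0)^{-1}$ by Theorem~\ref{thm:L2 decomposition}. The crux is that $f_1$ and $\Pi_J f_0$ are orthogonal with respect to the symmetrised bilinear form $(g,h)\mapsto\langle g,(-\mathcal{L}_0)^{-1}h\rangle_{L^2(\widehat\pi)}+\langle h,(-\mathcal{L}_0)^{-1}g\rangle_{L^2(\widehat\pi)}$, so that $\sigma_{\Pi_J f}^2(0)=\sigma_{f_1}^2(0)+\sigma_{\Pi_J f_0}^2(0)$. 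I would derive this orthogonality from equivariance under the diagonal action $(q,p)\mapsto(Rq,Rp)$ of $O(d)$: since the mass and friction tensors $M=I$, $\Gamma=\gamma I$ are isotropic, $\mathcal{L}_0$ commutes with this action, hence so does $(-\mathcal{L}_0)^{-1}$, and therefore $(-\mathcal{L}_0)^{-1}$ preserves both the subspace of $O(d)$-invariant functions and its orthogonal complement. Now $f_1$ is $O(d)$-invariant, whereas $\Pi_J f_0$ lies in the complement: $f_0=q\cdot K_0q$ with $K_0$ symmetric traceless, and since $Jq\cdot\nabla_q$ sends $q\cdot Aq$ to $q\cdot[A,J]q$ with $[A,J]$ again symmetric traceless while annihilating the rotation-invariant quadratic $|q|^2-d$, the projection $\Pi_J$ (which respects the Hermite grading, being built from the unitary rotation flow $e^{-tJ}$) maps the space of symmetric-traceless-coefficient quadratics into itself; a one-line Gaussian moment computation (using $\Tr(\cdot)=0$ and the independence of $q$ and $p$) then shows every such quadratic is $L^2(\widehat\pi)$-orthogonal to all $O(d)$-invariant functions in $H_2$, i.e. to $\Span\{|q|^2-d,\ |p|^2-d,\ q\cdot p\}$. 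Hence $\langle f_1,(-\mathcal{L}_0)^{-1}\Pi_J f_0\rangle_{L^2(\widehat\pi)}=\langle\Pi_J f_0,(-\mathcal{L}_0)^{-1}f_1\rangle_{L^2(\widehat\pi)}=0$, so
\[
\sigma_{\Pi_J f}^2(0)=\sigma_{f_1}^2(0)+\sigma_{\Pi_J f_0}^2(0)\ge\sigma_{f_1}^2(0),
\]
the inequality because the asymptotic variance $\sigma_{\Pi_J f_0}^2(0)$ is nonnegative. Combined with the upper bound, this yields the claimed minimum, attained at $J^\ast$.

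The routine parts are the finite-dimensional linear algebra inside $H_2$: that $\Pi_J$ respects the Hermite grading, that the degree-two Hermite polynomials in $q$ alone split orthogonally into $\mathbb{R}(|q|^2-d)$ and the symmetric-traceless-coefficient quadratics, and the Gaussian moment identities. The one genuinely structural step, and the thing to get right, is the vanishing of the cross term between $f_1$ and $\Pi_J f_0$; the clean route is exactly the $O(d)$-equivariance of $\mathcal{L}_0$ together with the orthogonality of traceless-coefficient quadratics to the rotation-invariant ones described above, which lets one avoid any attempt to compare $\sigma_{f_1+\Pi_J f_0}^2(0)$ with $\sigma_{f_1}^2(0)$ by direct computation.
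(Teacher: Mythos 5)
Your argument is correct, and its skeleton is the same as the paper's: the value $\sigma^2_{f_1}(0)$ is attained at the $J^\ast$ constructed in Lemma \ref{lem:optimal_perturbation}(a), and the lower bound comes from the fact that $f_1\in\ker(Jq\cdot\nabla_q)$ for \emph{every} antisymmetric $J$. The difference is in the lower bound. The paper disposes of it in one sentence ("the contribution of the trace part cannot be reduced", citing Lemma \ref{lem:invariant observables}), which tacitly assumes that $\sigma^2_{f_1+\Pi_J f_0}(0)$ splits as $\sigma^2_{f_1}(0)+\sigma^2_{\Pi_J f_0}(0)$ --- precisely the vanishing of the cross term in the bilinear form $\langle\cdot,(-\mathcal{L}_0)^{-1}\cdot\rangle_{L^{2}(\widehat{\pi})}$ that you single out as the structural step. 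You are right that this needs an argument: both $f_1$ and $\Pi_J f_0$ lie in $\ker\mathcal{A}$, so the $W_0\perp\bigoplus_{i\ge1}W_i$ orthogonality used in the proof of Theorem \ref{prop:large pert} gives nothing here, and positive semidefiniteness of the form alone would not exclude $\sigma^2_{f_1+\Pi_Jf_0}(0)<\sigma^2_{f_1}(0)$. Your $O(d)$-equivariance argument is a valid and rather conceptual way to kill the cross term. A shorter route, closer in spirit to Appendix~\ref{app:Gaussian_proofs}, is to note that $\Pi_J f_0=q\cdot\bar{K}_0q$ with $\bar{K}_0=\lim_{T\rightarrow\infty}T^{-1}\int_0^T e^{tJ}K_0e^{-tJ}\,\mathrm{d}t$ symmetric and traceless, and to read off from \eqref{eq:C(0) result} and \eqref{eq:Gaussian asymvar} that $\sigma^2_{q\cdot Aq-\Tr A}(0)=(\gamma+\gamma^{-1})\Tr(A^2)$, so the cross term is proportional to $\Tr(K_1\bar{K}_0)=\frac{\Tr K}{d}\Tr\bar{K}_0=0$. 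Either way, your write-up justifies a step the paper's proof leaves implicit.
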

\begin{proof}
	The claim follows immediately since $f_{1}\in\ker(Jq\cdot\nabla)$ for arbitrary antisymmetric $J$ as shown in \eqref{eq:constant_trace}, and therefore the contribution of the trace part $f_1$ to the asymptotic variance cannot be reduced by any choice of $J$ according to Lemma \ref{lem:invariant observables}.	
\end{proof}
As the proof of Lemma \ref{lem:optimal_perturbation} is constructive, we obtain the following algorithm for determining optimal perturbations for quadratic observables:
\begin{algorithm}
	\label{alg:optimal J}
	Given $K\in\mathbb{R}_{sym}^{d\times d}$,
	determine an optimal antisymmetric perturbation $J$ as follows:
	\begin{enumerate}
		\item Set $K_{0}=K-\frac{\Tr K}{d}\cdot I.$
		\item Find $U\in O(\mathbb{R}^{d})$ such that $UK_{0}U^{T}$ has zero entries
		on the diagonal (see Appendix \ref{tracefree}).
		\item Choose $a_{i}\in\mathbb{R},$ $i=1,\ldots d$ such that $a_{i}\neq a_{j}$
		for $i\neq j$ and set 
		\[
		\bar{J}_{ij}=\frac{(UK_{0}U^{T})_{ij}}{a_{i}-a_{j}}
		\]
		for $i\ne j$ and $\bar{J}_{ii}=0$ otherwise.
		\item
		Set $J=U^{T}\bar{J}U$.
	\end{enumerate}
\end{algorithm}
\begin{remark}
	In \cite{duncan2016variance}, the authors consider the task of finding optimal perturbations $J$ for the nonreversible overdamped Langevin dynamics given in \eqref{eq:nonrev_overdamped_J}. In the Gaussian case this optimization problem turns out be equivalent to the one considered in this section. Indeed, equation (39) of \cite{duncan2016variance} can be rephrased as 
	\begin{equation*}
	f \in \ker(Jq\cdot \nabla)^{\perp}.
	\end{equation*}
	Therefore, Algorithm \ref{alg:optimal J} and its generalization Algorithm \ref{alg:optimal J general} (described in Section~\ref{sec:arbitrary covariance})  can be used without modifications to find optimal perturbations of overdamped Langevin dynamics.
\end{remark}

\subsection{Gaussians with Arbitrary Covariance and Preconditioning}
\label{sec:arbitrary covariance}

In this section we  extend the results of the preceding sections to the case
when the target measure $\pi$ is given by a Gaussian with arbitrary
covariance, i.e. $V(q)=\frac{1}{2}q\cdot Sq$ with $S\in\mathbb{R}_{sym}^{d\times d}$ symmetric and positive definite. The
dynamics (\ref{eq:perturbed_underdamped}) then takes the
form 

\begin{align}
\mathrm{d}q_{t} & =M^{-1}p_{t}\mathrm{d}t-\mu J_{1}Sq_{t}\mathrm{d}t\nonumber, \\
\mathrm{d}p_{t} & =-Sq_{t}\mathrm{d}t-\nu J_{2}M^{-1}p_{t}\mathrm{d}t-\Gamma M^{-1}p_{t}\mathrm{d}t+\sqrt{2\Gamma}\mathrm{d}W_{t}.\label{eq:Underdamped Langevin Gaussian}
\end{align}
The key observation is now that the choices $M=S$ and $\Gamma=\gamma S$
together with the transformation $\widetilde{q}=S^{1/2}q$ and $\widetilde{p}=S^{-1/2}p$
lead to the dynamics
\begin{align}
\mathrm{d}\widetilde{q}_{t} & =\widetilde{p}_{t}\mathrm{d}t-\mu S^{1/2}J_{1}S^{1/2}\widetilde{q}_{t}\mathrm{d}t,\nonumber \\
\mathrm{d}\widetilde{p}_{t} & =-\widetilde{q}_{t}\mathrm{d}t-\mu S^{-1/2}J_{2}S^{-1/2}\widetilde{p}_{t}\mathrm{d}t-\gamma\widetilde{p}_{t}\mathrm{d}t+\sqrt{2\gamma}\mathrm{d}W_{t},\label{eq:Underdamped Langevin transformed}
\end{align}
which is of the form (\ref{eq:unit covariance}) if $J_{1}$ and
$J_{2}$ obey the condition $SJ_{1}S=J_{2}$ (note that both $S^{1/2}J_{1}S^{1/2}$
and $S^{-1/2}J_{2}S^{-1/2}$ are of course antisymmetric). Clearly
the dynamics (\ref{eq:Underdamped Langevin transformed}) is ergodic
with respect to a Gaussian measure with unit covariance, in the following
denoted by $\widetilde{\pi}$. The connection between the asymptotic variances
associated to (\ref{eq:Underdamped Langevin Gaussian}) and (\ref{eq:Underdamped Langevin transformed})
is as follows: 
\\\\
For an observable $f\in L_{0}^{2}(\pi)$ we can write 
\[
\sqrt{T}\bigg(\frac{1}{T}\int_{0}^{T}f(q_{s})\mathrm{d}s-\pi(f)\bigg)=\sqrt{T}\bigg(\frac{1}{T}\int_{0}^{T}\widetilde{f}(\widetilde{q}_{s})\mathrm{d}s-\widetilde{\pi}(\widetilde{f})\bigg),
\]
where $\widetilde{f}(q)=f(S^{-1/2}q)$. Therefore, the asymptotic variances
satisfy
\begin{equation}
\sigma_{f}^{2}=\widetilde{\sigma}_{\widetilde{f}}^{2},\label{eq:asymvar transform}
\end{equation}
where $\widetilde{\sigma}_{\widetilde{f}}^{2}$ denotes the asymptotic variance
of the process $(\widetilde{q}_{t})_{t\ge0}$. Because of this, the results
from the previous sections generalise to (\ref{eq:Underdamped Langevin Gaussian}),
subject to the condition that the choices $M=S$, $\Gamma=\gamma S$
and $SJ_{1}S=J_{2}$ are made. We formulate our results in this general
setting as corollaries:
\begin{corollary}
	\label{cor:small_pert_general}
	Consider the dynamics 
	\begin{align}
	\mathrm{d}q_{t} & =M^{-1}p_{t}\mathrm{d}t-\mu J_{1}\nabla V(q_{t})\mathrm{d}t,\nonumber \\
	\mathrm{d}p_{t} & =-\nabla V(q_{t})\mathrm{d}t-\mu J_{2}M^{-1}p_{t}\mathrm{d}t-\Gamma M^{-1}p_{t}\mathrm{d}t+\sqrt{2\Gamma}\mathrm{d}W_{t},\label{eq: perturbed Langevin corollary}
	\end{align}
	with $V(q)=\frac{1}{2}q\cdot Sq$. Assume that $M=S$, $\Gamma=\gamma S$
	with $\gamma > \sqrt{2}$ and $SJ_{1}S=J_{2}$. Let $f\in L^{2}(\pi)$ be an observable of the form 
	\begin{equation}
	f(q)=q\cdot Kq+l\cdot q+C\label{eq:quadratic observable}
	\end{equation}
	with $K\in\mathbb{R}_{sym}^{d\times d}$, $l\in\mathbb{R}^{d}$ and
	$C\in\mathbb{R}$. If at least one of the conditions $KJ_{1}S\neq SJ_{1}K$
	and $l \notin \ker J$ is satisfied, then the asymptotic variance is at a local maximum for the unperturbed sampler, i.e.
	\[
	\left. \partial_{\mu}\sigma_{f}^{2}\right\rvert_{\mu=0}=0\qquad \mbox{ and } \qquad 	\left. \partial_{\mu}^{2}\sigma_{f}^{2}\right\rvert_{\mu=0}<0.
	\]
\end{corollary}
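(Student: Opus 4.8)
The plan is to deduce Corollary~\ref{cor:small_pert_general} directly from Theorem~\ref{cor:small pert unit var} by means of the change of variables introduced at the beginning of Section~\ref{sec:arbitrary covariance}, so that no new analytic input is required. Recall that under the hypotheses $M=S$, $\Gamma=\gamma S$ and $SJ_1S=J_2$ (together with $\mu=\nu$, which is built into~\eqref{eq: perturbed Langevin corollary}), the substitution $\widetilde q_t=S^{1/2}q_t$, $\widetilde p_t=S^{-1/2}p_t$ transforms~\eqref{eq: perturbed Langevin corollary} with $V(q)=\frac{1}{2}q\cdot Sq$ into the unit-covariance dynamics~\eqref{eq: unit covariance-1-1} with the same friction $\gamma$, the same perturbation strength $\mu$, and the skew-symmetric perturbation matrix $J:=S^{1/2}J_1S^{1/2}$; the condition $SJ_1S=J_2$ is exactly what guarantees that the $p$-perturbation $-\mu S^{-1/2}J_2S^{-1/2}\widetilde p_t$ equals $-\mu J\widetilde p_t$ as well, so that both components are perturbed by the \emph{same} matrix with the \emph{same} strength. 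The transformed process is ergodic with respect to the standard Gaussian $\widetilde\pi$ on $\mathbb{R}^{2d}$, and by~\eqref{eq:asymvar transform} we have, as functions of $\mu$, the identity $\sigma_f^2(\mu)=\widetilde\sigma_{\widetilde f}^2(\mu)$, where $\widetilde f(q)=f(S^{-1/2}q)$. In particular $\mu\mapsto\sigma_f^2(\mu)$ inherits from the unit-covariance case the regularity needed for the derivative statements to make sense.

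Next I would put $\widetilde f$ into standard quadratic form. Since $q\mapsto S^{-1/2}q$ is linear and $f(q)=q\cdot Kq+l\cdot q+C$, one obtains $\widetilde f(q)=q\cdot\widetilde Kq+\widetilde l\cdot q+C$ with $\widetilde K=S^{-1/2}KS^{-1/2}\in\mathbb{R}_{sym}^{d\times d}$ and $\widetilde l=S^{-1/2}l$. Applying Theorem~\ref{cor:small pert unit var} to the observable $\widetilde f$, the perturbation matrix $J$ and the friction $\gamma>\sqrt 2$ gives $\partial_\mu\widetilde\sigma_{\widetilde f}^2|_{\mu=0}=0$ and $\partial_\mu^2\widetilde\sigma_{\widetilde f}^2|_{\mu=0}<0$ as soon as at least one of $[J,\widetilde K]\neq 0$ and $\widetilde l\notin\ker J$ holds; by the identity above this is precisely the assertion of the corollary for $\sigma_f^2$. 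Hence it only remains to translate the two nondegeneracy conditions back to the original data $K,l,J_1,S$.

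For the commutator, conjugating by $S^{\pm1/2}$ and using $S^{1/2}S^{-1/2}=I$ yields
\[
[J,\widetilde K]=S^{1/2}J_1S^{1/2}\,S^{-1/2}KS^{-1/2}-S^{-1/2}KS^{-1/2}\,S^{1/2}J_1S^{1/2}=S^{-1/2}\bigl(SJ_1K-KJ_1S\bigr)S^{-1/2},
\]
so, $S^{1/2}$ being invertible, $[J,\widetilde K]\neq 0$ is equivalent to $KJ_1S\neq SJ_1K$. Likewise $\widetilde l=S^{-1/2}l\in\ker J$ iff $S^{1/2}J_1S^{1/2}S^{-1/2}l=S^{1/2}J_1l=0$, i.e.\ iff $J_1l=0$, so $\widetilde l\notin\ker J$ is equivalent to $l\notin\ker J_1$, which is the hypothesis $l\notin\ker J$ in the statement. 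Thus the hypotheses of Corollary~\ref{cor:small_pert_general} are exactly the hypotheses of Theorem~\ref{cor:small pert unit var} transported through the change of variables, and the conclusion follows. The only delicate point is the bookkeeping of the similarity transformations — in particular verifying that both perturbation terms become $-\mu J(\cdot)$ with the \emph{same} $J$, which is where $SJ_1S=J_2$ and $\mu=\nu$ enter, and that the restriction $\gamma>\sqrt2$ is preserved verbatim; everything else is a routine consequence of Theorem~\ref{cor:small pert unit var}.
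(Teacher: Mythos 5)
Your proposal is correct and follows essentially the same route as the paper's own proof: the change of variables $\widetilde q=S^{1/2}q$, $\widetilde p=S^{-1/2}p$ reducing to the unit-covariance case with $J=S^{1/2}J_1S^{1/2}$, the identity $\sigma_f^2=\widetilde\sigma_{\widetilde f}^2$, an application of Theorem~\ref{cor:small pert unit var}, and the translation of the two nondegeneracy conditions via nondegeneracy of $S$. Your verification that $[J,\widetilde K]=S^{-1/2}(SJ_1K-KJ_1S)S^{-1/2}$ and that $\widetilde l\in\ker J$ iff $J_1l=0$ matches the paper exactly.
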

\begin{proof}
	Note that 
	\[
	\widetilde{f}(q)=f(S^{-1/2}q)=q\cdot S^{-1/2}KS^{-1/2}q+S^{-1/2}l\cdot q+C=q\cdot\widetilde{K}q+\widetilde{l}\cdot q+C
	\]
	is again of the form (\ref{eq:quadratic observable}) (where in the
	last equality, $\widetilde{K}=S^{-1/2}KS^{-1/2}$ and $\widetilde{l}=S^{-1/2}l$
	have been defined). From (\ref{eq:Underdamped Langevin transformed}),
	(\ref{eq:asymvar transform}) and Theorem \ref{cor:small pert unit var}
	the claim follows if at least one of the conditions $[\widetilde{K},S^{1/2}J_{1}S^{1/2}]\neq0$
	and $\widetilde{l}\notin\ker(S^{1/2}J_{1}S^{1/2})$ is satisfied. The
	first of those can easily seen to be equivalent to 
	\[
	S^{-1/2}(KJS-SJK)S^{-1/2}\neq0,
	\]
	which is equivalent to $KJ_{1}S\neq SJ_{1}K$ since $S$ is nondegenerate.
	The second condition is equivalent to 
	\[
	S^{1/2}J_{1}l\neq0,
	\]
	which is equivalent to $J_{1}l\neq0,$ again by nondegeneracy of $S$. \qed \end{proof}
\begin{corollary}
	\label{cor:limit_asym_var}
	Assume the setting from the previous corollary and denote by $\Pi$
	the orthogonal projection onto $\ker(J_{1}Sq\cdot\nabla)$. For $f\in L^{2}(\pi)$
	it holds that
	\[
	\lim_{\mu\rightarrow\infty}\sigma_{f}^{2}(\mu)=\sigma_{\Pi f}^{2}(0)\le\sigma_{f}^{2}(0).
	\]
\end{corollary}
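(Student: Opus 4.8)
The plan is to deduce the corollary from the unit–covariance result, Theorem~\ref{prop:large pert}, using the linear change of variables $\widetilde q = S^{1/2}q$, $\widetilde p = S^{-1/2}p$ already employed in this section. Introduce the map $U\colon L^2(\pi)\to L^2(\widetilde\pi)$, $(Uf)(q)=f(S^{-1/2}q)$. Since the pushforward of $\pi(\mathrm{d}q)\propto e^{-\frac12 q\cdot Sq}\,\mathrm{d}q$ under $q\mapsto S^{1/2}q$ is the standard Gaussian $\widetilde\pi$, the map $U$ is unitary, and it maps $L^2_0(\pi)$ onto $L^2_0(\widetilde\pi)$. Under the hypotheses $M=S$, $\Gamma=\gamma S$ and $SJ_1S=J_2$ (with equal perturbation strengths $\mu=\nu$) the transformed process $(\widetilde q_t,\widetilde p_t)$ solves \eqref{eq:Underdamped Langevin transformed}, which is exactly the unit–covariance dynamics \eqref{eq: unit covariance perfect perturbation} with perturbation matrix $\widetilde J:=S^{1/2}J_1S^{1/2}=S^{-1/2}J_2S^{-1/2}$; note that $\widetilde J$ is antisymmetric and that here the $q$– and $p$–perturbations genuinely coincide. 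By \eqref{eq:asymvar transform} we have $\sigma_f^2(\mu)=\widetilde\sigma_{Uf}^2(\mu)$ for every $\mu$, where $\widetilde\sigma$ denotes the asymptotic variance of the transformed dynamics.

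Applying Theorem~\ref{prop:large pert} to \eqref{eq:Underdamped Langevin transformed} with observable $Uf\in L^2_0(\widetilde\pi)$ gives
\[
\lim_{\mu\to\infty}\widetilde\sigma_{Uf}^2(\mu)=\widetilde\sigma_{\widetilde\Pi(Uf)}^2(0)\le\widetilde\sigma_{Uf}^2(0),
\]
where $\widetilde\Pi$ is the $L^2_0(\widetilde\pi)$–orthogonal projection onto $\ker(\widetilde Jq\cdot\nabla)$. It therefore remains only to identify $\widetilde\Pi\,U$ with $U\,\Pi$.

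This last identification is the one step requiring a little care. First I would check that $U$ restricts to a bijection of $C_c^\infty(\mathbb{R}^d)$ and that the chain rule yields, for $\phi\in C_c^\infty(\mathbb{R}^d)$,
\[
(\widetilde Jq\cdot\nabla)(U\phi)=U\big((J_1Sq\cdot\nabla)\phi\big),
\]
which is immediate from $S^{-1/2}\widetilde J(S^{1/2}q')=J_1Sq'$: indeed $\nabla_q(U\phi)(q)=S^{-1/2}(\nabla\phi)(S^{-1/2}q)$, so with $q'=S^{-1/2}q$ one has $(\widetilde Jq\cdot\nabla)(U\phi)(q)=(S^{-1/2}\widetilde Jq)\cdot(\nabla\phi)(q')=(J_1Sq'\cdot\nabla\phi)(q')$. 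Since both $J_1Sq\cdot\nabla$ on $L^2_0(\pi)$ and $\widetilde Jq\cdot\nabla$ on $L^2_0(\widetilde\pi)$ are by definition the closures of their restrictions to $C_c^\infty(\mathbb{R}^d)$, and conjugation by a unitary commutes with taking operator closures, the displayed identity extends to $U\,(J_1Sq\cdot\nabla)\,U^{-1}=\widetilde Jq\cdot\nabla$. In particular $U$ maps $\ker(J_1Sq\cdot\nabla)$ isometrically onto $\ker(\widetilde Jq\cdot\nabla)$ and hence conjugates the respective orthogonal projections, $\widetilde\Pi=U\Pi U^{-1}$, so $\widetilde\Pi(Uf)=U(\Pi f)$. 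Using \eqref{eq:asymvar transform} once more, $\widetilde\sigma_{\widetilde\Pi(Uf)}^2(0)=\widetilde\sigma_{U(\Pi f)}^2(0)=\sigma_{\Pi f}^2(0)$, and combining with the display above yields $\lim_{\mu\to\infty}\sigma_f^2(\mu)=\sigma_{\Pi f}^2(0)\le\sigma_f^2(0)$, as claimed.

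In this plan essentially everything is routine bookkeeping with the change of variables, most of which has already been set up in the lead–in to this subsection; the only place to be slightly careful is the passage from the intertwining identity on the core $C_c^\infty(\mathbb{R}^d)$ to the corresponding statement for the closed operators, equivalently for their kernels and the associated orthogonal projections. It is also worth recording the mild fact that $J_1Sq\cdot\nabla$ maps $C_c^\infty(\mathbb{R}^d)$ into $L^2_0(\pi)$ and is skew–adjoint on $L^2_0(\pi)$ — both following from $\Tr(J_1S)=0$ and the antisymmetry of $SJ_1S$, i.e.\ from the fact that the flow of $\dot q=J_1Sq$ preserves $\pi$ — so that $\ker(J_1Sq\cdot\nabla)$ and the projection $\Pi$ in the statement are well defined.
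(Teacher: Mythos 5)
Your proposal is correct and follows essentially the same route as the paper's proof: transform to the unit-covariance system via $\widetilde q=S^{1/2}q$, apply Theorem~\ref{prop:large pert}, and identify the two projections through the (unitary) change of variables $f\mapsto f\circ S^{-1/2}$. The extra care you take with cores, closures and unitarity only makes explicit what the paper asserts in one line, namely that $\phi\mapsto\phi\circ S^{1/2}$ maps $\ker(S^{1/2}J_1S^{1/2}q\cdot\nabla)$ bijectively onto $\ker(J_1Sq\cdot\nabla)$.
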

\begin{proof}
	Theorem \ref{prop:large pert} implies 
	\[
	\lim_{\mu\rightarrow\infty}\widetilde{\sigma}_{\widetilde{f}}^{2}(\mu)=\widetilde{\sigma}_{\widetilde{\Pi}\widetilde{f}}^{2}(0)\le\widetilde{\sigma}_{\widetilde{f}}^{2}(0)
	\]
	for the transformed system (\ref{eq:Underdamped Langevin transformed}).
	Here $\widetilde{f}(q)=f(S^{-1/2}q)$ is the transformed observable and
	$\widetilde{\Pi}$ denotes $L^{2}(\pi)$-orthogonal projection onto $\ker(S^{1/2}J_{1}S^{1/2}q\cdot\nabla)$.
	According to (\ref{eq:asymvar transform}), it is sufficient to show
	that $(\Pi f)\circ S^{-1/2}=\widetilde{\Pi}\widetilde{f}$. This however follows
	directly from the fact that the linear transformation $\phi\mapsto\phi\circ S^{1/2}$
	maps $\ker(S^{1/2}J_{1}S^{1/2}q\cdot\nabla)$ bijectively onto $\ker(J_{1}Sq\cdot\nabla)$.\qed
\end{proof}
Let us also reformulate Algorithm \ref{alg:optimal J} for the case of a Gaussian with arbitrary covariance.
\begin{algorithm}
	\label{alg:optimal J general}Given $K,S\in\mathbb{R}_{sym}^{d\times d}$
	with $f(q)=q\cdot Kq$ and $V(q)=\frac{1}{2}q\cdot Sq$ (assuming $S$ is nondegenerate), determine optimal perturbations $J_{1}$
	and $J_{2}$ as follows:
	\begin{enumerate}
		\item Set $\widetilde{K}=S^{-1/2}KS^{-1/2}$ and $\widetilde{K}_{0}=\widetilde{K}-\frac{\Tr\widetilde{K}}{d}\cdot I$.
		\item Find $U\in O(\mathbb{R}^{d})$ such that $U\widetilde{K}_{0}U^{T}$ has
		zero entries on the diagonal (see Appendix \ref{tracefree}).
		\item Choose $a_{i}\in\mathbb{R}$, $i=1,\ldots,d$ such that $a_{i}\ne a_{j}$
		for $i\ne j$ and set 
		\[
		\bar{J}_{ij}=\frac{(U\widetilde{K}_{0}U^{T})_{ij}}{a_{i}-a_{j}}.
		\]
		\item
		Set $\widetilde{J}=U^{T}\bar{J}U$.
		\item Put $J_{1}=S^{-1/2}\widetilde{J}S^{-1/2}$ and $J_{2}=S^{1/2}JS^{1/2}$.
	\end{enumerate}
\end{algorithm}
Finally, we obtain the following optimality result from Lemma \ref{lem:lin_observables} and Corollary \ref{cor:optimality}.
\begin{corollary}
	Let $f(q)=q\cdot Kq+l\cdot q-\Tr K$ and assume that $\Pi^{\perp}_{\ker J}l=0$.
	Then 
	\[
	\min_{J_1^T=-J_1,\, J_2=SJ_1 S}\left(\lim_{\mu\rightarrow\infty} \sigma^2_{f}(\mu,J_1,J_2)\right)=\sigma^2_{f_1}(0),
	\]
	where $f_{1}(q)=q\cdot K_{1}q$, $K_{1}=\frac{\Tr(S^{-1}K)}{d}S$.
	Optimal choices for $J_{1}$ and $J_{2}$ can be obtained using Algorithm \ref{alg:optimal J general}.
\end{corollary}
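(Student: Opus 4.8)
The plan is to deduce this from the unit–covariance results of Sections~\ref{sec:large perturbations}--\ref{sec:arbitrary covariance} together with a \emph{decoupling of the trace part} of the observable; it is essentially the combination of (the transformed versions of) Corollary~\ref{cor:optimality} and Lemma~\ref{lem:lin_observables}. First I would pass to the transformed variables $\widetilde q=S^{1/2}q$, $\widetilde p=S^{-1/2}p$ of Section~\ref{sec:arbitrary covariance}: under $M=S$, $\Gamma=\gamma S$ and $J_{2}=SJ_{1}S$ this turns the dynamics into the unit–covariance process~\eqref{eq:unit covariance} with perturbation matrix $\widetilde J:=S^{1/2}J_{1}S^{1/2}$, transformed observable $\widetilde f:=f\circ S^{-1/2}$, and $\sigma_{f}^{2}=\widetilde\sigma_{\widetilde f}^{2}$ by~\eqref{eq:asymvar transform}. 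Since $J_{1}\mapsto S^{1/2}J_{1}S^{1/2}$ is a linear bijection of the space of skew–symmetric matrices, the minimum over admissible $(J_{1},J_{2})$ equals the minimum over skew–symmetric $\widetilde J$ of $\lim_{\mu\to\infty}\widetilde\sigma_{\widetilde f}^{2}(\mu,\widetilde J)$. Writing $\widetilde K=S^{-1/2}KS^{-1/2}$, $\widetilde l=S^{-1/2}l$ and splitting $\widetilde K=\widetilde K_{0}+\widetilde K_{1}$ with $\widetilde K_{1}=\tfrac{\Tr(S^{-1}K)}{d}I$ a multiple of the identity and $\widetilde K_{0}$ traceless, the observable decomposes, modulo an additive constant, as $\widetilde f=\widetilde f_{0}+\widetilde f_{1}$ with $\widetilde f_{0}(q)=q\cdot\widetilde K_{0}q+\widetilde l\cdot q$ and $\widetilde f_{1}(q)=\tfrac{\Tr(S^{-1}K)}{d}|q|^{2}$; note that $\widetilde f_{1}$ is exactly the transform of $f_{1}(q)=q\cdot K_{1}q$, $K_{1}=\tfrac{\Tr(S^{-1}K)}{d}S$, so $\widetilde\sigma_{\widetilde f_{1}}^{2}=\sigma_{f_{1}}^{2}$.

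For the lower bound I would show $\widetilde\sigma_{\widetilde f}^{2}(\mu,\widetilde J)\ge\sigma_{f_{1}}^{2}(0)$ for \emph{every} skew–symmetric $\widetilde J$ and every $\mu$. The key observation is that, modulo a constant, $\widetilde f_{1}$ lies in the three–dimensional space $T=\Span\{|q|^{2}-d,\ q\cdot p,\ |p|^{2}-d\}$: a short computation shows that $T$ is invariant under $\mathcal{L}_{0}$ and is annihilated by $\mathcal{A}$, hence invariant under $\mathcal{L}=\mathcal{L}_{0}+\mu\mathcal{A}$ and under $\mathcal{L}_{-}=\mathcal{L}_{0}-\mu\mathcal{A}$; combining this with the adjoint identity $\mathcal{L}^{*}=P\mathcal{L}_{-}P$ (Lemma~\ref{operator lemma}\ref{it:oplem5}) and the fact that $P$ leaves $T$ invariant, one obtains that $(-\mathcal{L})^{-1}$ preserves both $T$ and $T^{\perp}$ on $L_{0}^{2}(\widehat\pi)$. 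Since the traceless quadratic $q\cdot\widetilde K_{0}q$ and the linear term $\widetilde l\cdot q$ are $L^{2}(\widehat\pi)$–orthogonal to $T$ (the first because $\Tr\widetilde K_{0}=0$, the second because it sits in $H_{1}$), the cross terms in the bilinear form $g\mapsto\langle g,(-\mathcal{L})^{-1}g\rangle_{L^2(\widehat\pi)}$ vanish and $\widetilde\sigma_{\widetilde f}^{2}(\mu,\widetilde J)=\widetilde\sigma_{\widetilde f_{0}}^{2}(\mu,\widetilde J)+\widetilde\sigma_{\widetilde f_{1}}^{2}(\mu,\widetilde J)$. By~\eqref{eq:constant_trace} we have $\widetilde f_{1}\in\ker(\widetilde J q\cdot\nabla)$ for every $\widetilde J$, so Lemma~\ref{lem:invariant observables} gives $\widetilde\sigma_{\widetilde f_{1}}^{2}(\mu,\widetilde J)=\widetilde\sigma_{\widetilde f_{1}}^{2}(0)=\sigma_{f_{1}}^{2}(0)$, while $\widetilde\sigma_{\widetilde f_{0}}^{2}\ge0$. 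Passing to the limit yields $\lim_{\mu\to\infty}\sigma_{f}^{2}(\mu,J_{1},J_{2})\ge\sigma_{f_{1}}^{2}(0)$ for every admissible $J_{1}$.

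For achievability I would run Algorithm~\ref{alg:optimal J general}, which outputs $\widetilde J=U^{T}\bar J U$ together with a symmetric $A=U^{T}\diag(a_{i})U$ satisfying $[A,\widetilde J]=\widetilde K_{0}$, so that $q\cdot\widetilde K_{0}q=\widetilde J q\cdot\nabla(q\cdot Aq)\in\im(\widetilde J q\cdot\nabla)$; invoking the hypothesis on $l$, which after the change of variables states that $\widetilde l$ lies in the range of $\widetilde J$ (as in the proof of Lemma~\ref{lem:lin_observables}), one also has $\widetilde l\cdot q=\widetilde J q\cdot\nabla(-u\cdot q)\in\im(\widetilde J q\cdot\nabla)$ for $u$ with $\widetilde J u=\widetilde l$. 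Hence $\widetilde f_{0}\in\im(\widetilde J q\cdot\nabla)\subseteq\overline{\im(\widetilde J q\cdot\nabla)}=\ker(\widetilde J q\cdot\nabla)^{\perp}$, so its projection $\widetilde\Pi\widetilde f_{0}$ (in the sense of Theorem~\ref{prop:large pert}) vanishes; since $\widetilde\Pi$ fixes $\widetilde f_{1}$ and constants, Theorem~\ref{prop:large pert} gives $\lim_{\mu\to\infty}\widetilde\sigma_{\widetilde f}^{2}(\mu,\widetilde J)=\widetilde\sigma_{\widetilde f_{1}}^{2}(0)=\sigma_{f_{1}}^{2}(0)$, and translating back through $J_{1}=S^{-1/2}\widetilde J S^{-1/2}$, $J_{2}=SJ_{1}S$ shows that this choice attains the lower bound. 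I expect the main obstacle to be the decoupling step of the second paragraph --- establishing invariance of $T$ under $(-\mathcal{L})^{-1}$, which relies on the identity $\mathcal{L}^{*}=P\mathcal{L}_{-}P$ rather than on self–adjointness --- together with the subsidiary point of checking that the remaining freedom in Algorithm~\ref{alg:optimal J general} (the choice of $U$ and of the distinct $a_{i}$) can be exercised compatibly with the condition $\Pi^{\perp}_{\ker J}l=0$ when both a quadratic and a linear part are present; the rest is routine bookkeeping under the change of variables.
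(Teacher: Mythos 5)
Your proposal is correct, and its overall strategy---reduce to unit covariance via $\widetilde q=S^{1/2}q$, $\widetilde p=S^{-1/2}p$, then invoke the unit-covariance achievability results---is exactly the paper's route (the paper's own proof is a one-line citation of Lemma~\ref{lem:lin_observables} and Corollary~\ref{cor:optimality} through the transformation of Section~\ref{sec:arbitrary covariance}). Where you genuinely go beyond the paper is in the lower bound: the paper's Corollary~\ref{cor:optimality} asserts that ``the contribution of the trace part cannot be reduced'' and leaves implicit why the trace part's contribution and the rest of the observable's contribution are additive in $\sigma_f^2$ when $\mathcal{L}$ is not self-adjoint. Your argument via the three-dimensional subspace $T=\Span\{|q|^2-d,\ q\cdot p,\ |p|^2-d\}$ --- checking that $T$ is $\mathcal{L}_0$-invariant, annihilated by $\mathcal{A}$, preserved by $P$, and hence, via $\mathcal{L}^{*}=P\mathcal{L}_{-}P$, that $(-\mathcal{L})^{-1}$ preserves both $T$ and $T^{\perp}$ --- supplies the missing decoupling and in fact yields the stronger statement $\sigma^2_{\widetilde f}(\mu)=\sigma^2_{\widetilde f_0}(\mu)+\sigma^2_{f_1}(0)$ for \emph{every} $\mu$, not only in the limit. (An equivalent route closer to the paper's text would apply Theorem~\ref{prop:large pert} first and then decouple $f_1$ from $\Pi f_0$ at $\mu=0$; the same $T$-invariance computation is needed there too, so your extra work is not optional.) The achievability half is exactly the paper's: Algorithm~\ref{alg:optimal J general} puts $q\cdot\widetilde K_0q$ in $\im(\widetilde Jq\cdot\nabla)$ and the hypothesis on $l$ puts $\widetilde l\cdot q$ there as well. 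The one point you flag but do not resolve --- whether the $\widetilde J$ produced for the quadratic part is compatible with the condition $\Pi^{\perp}_{\ker J}l=0$ --- is an ambiguity inherited from the paper's statement (it does not specify which $J$ the hypothesis refers to); under the natural reading that the hypothesis concerns the algorithm's output, nothing further is required, so this is not a gap in your argument.
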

\begin{remark}
	Since in Section \ref{sec:small perturbations} we analysed the case
	where $J_{1}$ and $J_{2}$ are proportional, we are not able to drop
	the restriction $J_{2}=SJ_{1}S$ from the above optimality
	result. Analysis of completely arbitrary perturbations will be the
	subject of future work. 
\end{remark}

\begin{remark}
	The choices $M=S$ and $\Gamma=\gamma S$ have been introduced to
	make the perturbations considered in this article lead to samplers that perform well in terms of reducing the asymptotic variance. However, adjusting
	the mass and friction matrices according to the target covariance
	in this way (i.e. $M=S$ and $\Gamma=\gamma S$) is a popular way of preconditioning the dynamics, see for instance \cite{GirolamiCalderhead2011} and, in particular mass-tensor molecular dynamics~\cite{Bennett1975267}. Here we will present an argument why such a preconditioning
	is indeed beneficial in terms of the convergence rate of the dynamics.
	Let us first assume that $S$ is diagonal, i.e. $S=\diag(s^{(1)},\ldots,s^{(d)})$
	and that $M=\diag(m^{(d)},\ldots,m^{(d)})$ and $\Gamma=\diag(\gamma^{(d)},\ldots,\gamma^{(d)})$
	are chosen diagonally as well. Then (\ref{eq:Underdamped Langevin Gaussian})
	decouples into one-dimensional SDEs of the following form: 
	\begin{align}
	\mathrm{d}q_{t}^{(i)} & =\frac{1}{m^{(i)}}p_{t}^{(i)}\mathrm{d}t,\nonumber \\
	\mathrm{d}p_{t}^{(i)} & =-s^{(i)}q_{t}^{(i)}\mathrm{d}t-\frac{\gamma^{(i)}}{m^{(i)}}p_{t}^{(i)}\mathrm{d}t+\sqrt{2\gamma^{(i)}}\mathrm{d}W_{t},\quad i=1,\ldots,d.\label{eq:decoupled Langevin}
	\end{align}
	Let us write those Ornstein-Uhlenbeck processes as 
	\begin{equation}
	\mathrm{d}X_{t}^{(i)}=-B^{(i)}X_{t}^{(i)}\mathrm{d}t+\sqrt{2Q^{(i)}}\mathrm{d}W_{t}^{(i)}\label{eq: decoupled OU process}
	\end{equation}
	with 
	\[
	B^{(i)}=\left(\begin{array}{cc}
	0 & -\frac{1}{m^{(i)}}\\
	s^{(i)} & \frac{\gamma^{(i)}}{m^{(i)}}
	\end{array}\right)\,\text{and }\,Q^{(i)}=\left(\begin{array}{cc}
	0 & 0\\
	0 & \gamma^{(i)}
	\end{array}\right).
	\]
	As in Section \ref{sec:exp_decay}, the rate of the exponential decay of (\ref{eq: decoupled OU process}) is equal to $\min\text{Re}\,\sigma(B^{(i)})$. A short calculation shows that the eigenvalues of $B^{(i)}$ are given by  
	\[
	\lambda_{1,2}^{(i)}=\frac{\gamma^{(i)}}{2m^{(i)}}\pm\sqrt{\bigg(\frac{\gamma^{(i)}}{2m^{(i)}}\bigg)^{2}-\frac{s^{(i)}}{m^{(i)}}}.
	\]
	Therefore, the rate of exponential decay is maximal when 
	\begin{equation}
	\bigg(\frac{\gamma^{(i)}}{2m^{(i)}}\bigg)^{2}-\frac{s^{(i)}}{m^{(i)}}=0,\label{eq:gm constraint}
	\end{equation}
	in which case it is given by 
	\[
	(\lambda^{(i)})^{*}=\sqrt{\frac{s^{(i)}}{m^{(i)}}}.
	\]
	Naturally, it is reasonable to choose $m^{(i)}$ in such a way that
	the exponential rate $(\lambda^{(i)})^{*}$ is the same for all $i$, leading
	to the restriction $M=cS$ with $c>0$. Choosing $c$ small will result in fast convergence to equilibrium,
	but also make the dynamics (\ref{eq:decoupled Langevin}) quite stiff,
	requiring a very small timestep $\Delta t$ in a discretisation scheme.
	The choice of $c$ will therefore need to strike a balance between
	those two competing effects. The constraint (\ref{eq:gm constraint})
	then implies $\Gamma=2cS$.	 By a coordinate transformation, the preceding argument also applies if $S$, $M$ and $\Gamma$ are diagonal in the same basis, and of course $M$ and $\Gamma$ can always be chosen that way.
	Numerical experiments show that it is possible to increase the rate of convergence to equilibrium even further by choosing $M$ and $\Gamma$ nondiagonally with respect to $S$ (although
	only by a small margin). A clearer understanding of this is a topic of further investigation.
\end{remark}


\section{Numerical Experiments: Diffusion Bridge Sampling}
\label{sec:numerics}

\subsection{Numerical Scheme}
\label{sec:numerical_scheme}

In this section we introduce a splitting scheme for simulating
the perturbed underdamped Langevin dynamics given by equation (\ref{eq:perturbed_underdamped}).
In the unpertubed case, i.e. when $J_{1}=J_{2}=0$, the right-hand side
can be decomposed into parts $A$, $B$ and $C$ according to 

\[
\mathrm{d}\left(\begin{array}{c}
q_{t}\\
p_{t}
\end{array}\right)=\underbrace{\left(\begin{array}{c}
	M^{-1}p_{t}\\
	\boldsymbol{0}
	\end{array}\right)\mathrm{d}t}_{A}+\underbrace{\left(\begin{array}{c}
	\boldsymbol{0}\\
	-\nabla V(q_{t})
	\end{array}\right)\mathrm{d}t}_{B}+\underbrace{\left(\begin{array}{c}
	\boldsymbol{0}\\
	-\Gamma M^{-1}+\sqrt{2\Gamma}\mathrm{d}W_{t}
	\end{array}\right),}_{O}
\]
i.e. $O$ refers to the Ornstein-Uhlenbeck part of the dynamics, whereas $A$ and $B$ stand for the momentum and position updates, respectively.
\\\\
One particular splitting scheme which has proven to be efficient is the  $BAOAB$ scheme, (see \cite{LeimkuhlerMatthews2015} and references therein).  The string
of letters refers to the order in which the different parts are integrated, namely
\begin{subequations}
\begin{eqnarray}
p_{n+1/2} & = & p_{n}-\frac{1}{2}\Delta t\nabla V(q_{n}),\\
q_{n+1/2} & = & q_{n}+\frac{1}{2}\Delta t\cdot M^{-1}p_{n+1/2},\\
\hat{p}   & = & \exp(-\Delta t\Gamma M^{-1})p_{n+1/2}+\sqrt{I-e^{-2\Gamma\Delta t}}\mathcal{N}(0,I),\label{eq:OU-step-1}\\
q_{n+1}   & = & q{}_{n+1/2}+\frac{1}{2}\Delta t\cdot M^{-1}\hat{p},\\
p_{n+1}   & = & \hat{p}-\frac{1}{2}\Delta t\cdot\nabla V(q_{n+1}).
\end{eqnarray}
\end{subequations}
We note that many different discretisation schemes such as $ABOBA$,
$OABAO$, etc. are viable, but that analytical and numerical evidence
has shown that the $BAOAB$-ordering has particularly good properties
to compute long-time ergodic averages with respect to $q$-dependent observables.
Motivated by this, we introduce the following perturbed scheme,
introducing additional Runge-Kutta integration steps between the $A$, $B$ and
$O$ parts:

\begin{subequations}
\begin{eqnarray}
p_{n+1/2} 	& = & p_{n}-\frac{1}{2}\Delta t\nabla V(q_{n}),\\
q_{n+1/2}	& = & q_{n}+\frac{1}{2}\Delta t\cdot M^{-1}p_{n+1/2},\\
q'_{n+1/2}	& = & RK_{4}(\frac{1}{2}\Delta t,q_{n+1/2})\label{eq:q'},\\
\hat{p}		& = & \exp(-\Delta t(\Gamma M^{-1}+\nu J_{2}M^{-1}))p_{n+1/2}+\sqrt{I-e^{-2\Gamma\Delta t}}\mathcal{N}(0,1)\label{OU},\\
q''_{n+1/2}	& = & RK_{4}(\frac{1}{2}\Delta t,q'_{n+1/2})\label{eq:q''},\\
q_{n+1}		& = & q''_{n+1/2}+\frac{1}{2}\Delta t\cdot M^{-1}\hat{p},\\
p_{n+1}		& = & \hat{p}-\frac{1}{2}\Delta t\cdot\nabla V(q_{n+1}),
\end{eqnarray}
\end{subequations}
where $RK_{4}(\Delta t,q_{0})$
refers to fourth order Runge-Kutta integration of the ODE 
\begin{equation}
\dot{q}=-J_{1}\nabla V(q), \quad q(0)=q_0\label{eq:J1 ODE}
\end{equation}
up until time $\Delta t$.
We remark that the $J_{2}$-perturbation is linear and can therefore be
included in the $O$-part without much computational overhead. Clearly,
other discretisation schemes are possible as well, for instance one could use a symplectic integrator for the ODE \eqref{eq:J1 ODE}, noting that it is of Hamiltonian type. However, since $V$ as the Hamiltonian for \eqref{eq:J1 ODE} is not separable in general, such a symplectic integrator would have to be implcit. Moreover, (\ref{eq:q'}) and (\ref{eq:q''})
could be merged since (\ref{eq:q''}) commutes with (\ref{OU}). In
this paper, we content ourselves with the above scheme for our numerical
experiments.
\begin{remark}
	The aformentioned schemes lead to an error in the approximation
	for $\pi(f)$, since the invariant measure $\pi$ is not preserved
	exactly by the numerical scheme. In practice, the $BAOAB$-scheme
	can therefore be accompanied by an accept-reject Metropolis step as in \cite{BAOABMetropolis},
	leading to an unbiased estimate of $\pi(f)$, albeit with an inflated
	variance. In this case, after every rejection the momentum variable
	has to be flipped ($p\mapsto-p$) in order to keep the correct invariant
	measure. We note here that our perturbed scheme can be 'Metropolized'
	in a similar way by 'flipping the matrices $J_{1}$ and $J_{2}$ after
	every rejection ($J_{1}\mapsto-J_{1}$ and $J_{2}\mapsto-J_{2})$
	and using an appropriate (volume-preserving and time-reversible) integrator for the
	dynamics given by (\ref{eq:J1 ODE}). Implementations of this idea are the subject of ongoing work.
\end{remark}

\subsection{Diffusion Bridge Sampling}

To numerically test our analytical results, we will apply the dynamics
(\ref{eq:perturbed_underdamped}) to sample a measure on
path space associated to a diffusion bridge. Specifically, consider
the SDE 

\[
\mathrm{d}X_{s}=-\nabla U(X_{s})\mathrm{d}s+\sqrt{2\beta^{-1}}\mathrm{d}W_{s},
\]
with $X_{s}\in\mathbb{R}^{n}$, $\beta>0$ and the potential $U:\mathbb{R}^{n}\rightarrow\mathbb{R}$ obeying adequate growth and smoothness conditions (see \cite{HairerStuartVoss2007}, Section 5 for precise statements). The law of the solution to this SDE conditioned on the events $X(0)=x_{-}$ and $X(s_{+})=x_{+}$ is a probability measure $\pi$ on $L^{2}([0,s_{+}],\mathbb{R}^{n})$
which poses a challenging and important sampling problem, especially if $U$ is multimodal. This
setting has been used as a test case for sampling probability measures
in high dimensions (see for example \cite{BeskosPinskiSanz-SernaEtAl2011} and
\cite{OttobrePillaiPinskiEtAl2016}). For a more detailed introduction (including applications)
see \cite{BeskosStuart2009} and for a rigorous theoretical treatment
the papers \cite{HairerStuartVossEtAl2005,HairerStuartVoss2007,HairerStuartVos2009,BeskosStuart2009} .

In the case $U\equiv0$, it can be shown that the law of the conditioned
process is given by a Gaussian measure $\pi_{0}$ with mean zero and
precision operator $\mathcal{S}=-\frac{\beta}{2}\Delta$ on the Sobolev
space $H^{1}([0,s_{+}],\mathbb{R}^{d})$ equipped with appropriate
boundary conditions. The general case can then be understood as a
perturbation thereof: The measure $\pi$ is absolutely continuous
with respect to $\pi_{0}$ with Radon-Nikodym derivative 
\begin{equation}
\frac{\mathrm{d}\pi}{\mathrm{d}\pi_{0}}\propto\exp\big(-\Psi\big),\label{eq:Radon-Nikodyn for diffusion bridges}
\end{equation}
where
\[
\Psi(x)=\frac{\beta}{2}\int_{0}^{s_{+}}G(x(s),\beta)\mathrm{d}s
\]
and 
\[
G(x,\beta)=\frac{1}{2}\vert\nabla U(x)\vert^{2}-\frac{1}{\beta}\Delta U(x).
\]
We will make the choice $x_{-}=x_{+}=0$, which is possible without
loss of generality as explained in \cite[Remark 3.1]{BeskosRobertsStuartEtAl2008}, leading to Dirichlet boundary conditions on $[0,s_+]$ for the precision operator $\mathcal{S}$. Furthermore, we choose $s_{+}=1$ and discretise the ensuing
$s$-interval $[0,1]$ according to 
\[
[0,1]=[0,s_{1})\cup[s_{1},s_{2})\cup\ldots\cup[s_{n-1},s_{n})\cup[s_{n},1]
\]
in an equidistant way with stespize $s_{j+1}-s_{j}\equiv\delta=\frac{1}{d+1}$. Functions on this grid are determined by the values $x(s_1)=x_1,\ldots,x(s_n)=x_n$, recalling that $x(0)=x(1)=0$ by the Dirichlet boundary conditions. We discretise the functional
$\Psi$ as
\begin{align*}
\tilde{\Psi}(x_{1},\ldots,x_{n}) & =\frac{\beta}{2}\delta\sum_{i=1}^{d}G(x_{i},\beta)\\
& =\frac{\beta}{2}\delta \sum_{i=1}^{d}\big((U'(x_{i})^{2}-\frac{1}{\beta}U''(x_{i})\big),
\end{align*}
such that its gradient is given by 
\[
(\nabla\tilde{\Psi})_{i}=\frac{\beta}{2}\delta\big(2U'(x_{i})U''(x_{i})-\frac{1}{\beta}U'''(x_{i})\big),\quad i=1,\ldots,d.
\]
The discretised version $A$ of the Dirichlet-Laplacian $\Delta$ on $[0,1]$ is given by
\[
A=\delta^{-2}\left(\begin{array}{ccccc}
-2 & 1\\
1 & -2\\
&  & \ldots\\
&  &  &  & 1\\
&  &  & 1 & -2
\end{array}\right).
\]
Following (\ref{eq:Radon-Nikodyn for diffusion bridges}), the discretised
target measure $\widehat{\pi}$ has the form
\[
\widehat{\pi}=\frac{1}{Z}e^{-V}\mathrm{d}x,
\]
with 
\[
V(x)=\tilde{\Psi}(x)-\frac{\beta\delta }{4}x\cdot Ax,\quad x\in\mathbb{R}^{d}.
\]
In the following we will consider the case $n=1$ with potential $U:\mathbb{R}\rightarrow\mathbb{R}$
given by $U(x)=\frac{1}{2}(x^{2}-1)^{2}$ and set $\beta=1$. To test
our algorithm we adjust the parameters $M$, $\Gamma$, $J_{1}$ and
$J_{2}$ according to the recommended choice in the Gaussian case,
\begin{equation}
\label{eq:Gaussian_parameters}
M  = S, \quad
\Gamma =\gamma S, \quad
SJ_{1}S =J_{2}, \quad 
\mu =\nu,
\end{equation}
where we take $S=\frac{\beta}{2}\delta\cdot A$ as the precision
operator of the Gaussian target.  We will consider the linear observable
$f_{1}(x)=l\cdot x$ with $l=(1,\ldots,1)$ and the quadratic observable
$f_{2}(x)=\vert x\vert^{2}$. In a first experiment we adjust the
perturbation $J_{1}$ (and via (\ref{eq:Gaussian_parameters})
also $J_{2}$) to the observable $f_{2}$ according to Algorithm \ref{alg:optimal J general}.
The dynamics (\ref{eq:perturbed_underdamped}) is integrated
using the splitting scheme introduced in Section \ref{sec:numerical_scheme}
with a stepsize of $\Delta t=10^{-4}$ over the time interval $[0,T]$
with $T=10^{2}$. Furthermore, we choose initial conditions $q_0=(1,\ldots,1)$, $p_0=(0,\ldots,0)$  and introduce a burn-in time $T_{0}=1$,
i.e. we take the estimator to be 
\[
\hat{\pi}(f)\approx\frac{1}{T-T_{0}}\int_{T_{0}}^{T}f(q_{t})\mathrm{d}t.
\]
We compute the variance of the above estimator from $N=500$ realisations
and compare the results for different choices of the friction coefficient $\gamma$
and of the perturbation strength $\mu$. 

The numerical experiments show that the perturbed dynamics generally outperform
the unperturbed dynamics independently of the choice of $\mu$ and $\gamma$, both for linear and quadratic observables. One notable exception is the behaviour of the linear observable for small friction $\gamma = 10^{-3}$ (see Figure \ref{fig:lin_1}), where the asymptotic variance initially increases for small perturbation strengths $\mu$. However, this does not contradict our analytical results, since the small perturbation results from Section \ref{sec:small perturbations} generally require $\gamma$ to be sufficiently big (for example $\gamma\ge \sqrt{2}$ in Theorem \ref{cor:small pert unit var}). We remark here that the condition $\gamma\ge\sqrt{2}$, while necessary for the theoretical results from Section \ref{sec:small perturbations}, is not a very advisable choice in practice (at least in this experiment), since Figures \ref{fig:lin_2} and \ref{fig:quad2} clearly indicate that the optimal friction is around $\gamma \approx 10^{-1}$. Interestingly, the problem of choosing a suitable value for the friction coefficient coefficient $\gamma$ becomes mitigated by the introduction of the perturbation: While the performance of the unperturbed sampler depends quite sensitively on $\gamma$, the asymptotic variance of the perturbed dynamics is a lot more stable with respect to variations of $\gamma$. 

In the regime of growing values of $\mu$, the experiments confirm the results from Section \ref{sec:large perturbations}, i.e. the asymptotic variance approaches a limit that is smaller than the asymptotic variance of the unperturbed dynamics.

As a final remark we report our finding that the performance of the sampler for the linear observable is qualitatively independent of the coice of $J_1$ (as long as $J_2$ is adjusted according to \eqref{eq:Gaussian_parameters}). This result is in alignment with Propostion \ref{prop:antisymmetric observables} which predicts good properties of the sampler for antisymmetric observables. In contrast to this, a judicious choice of $J_1$ is critical for quadratic observables. In particular, applying Algorithm \ref{alg:optimal J general} significantly improves the performance of the perturbed sampler in comparison to choosing $J_1$ arbitrarily.   
\begin{figure}
	\begin{subfigure}[b]{0.45 \textwidth}
		\includegraphics[width=\textwidth]{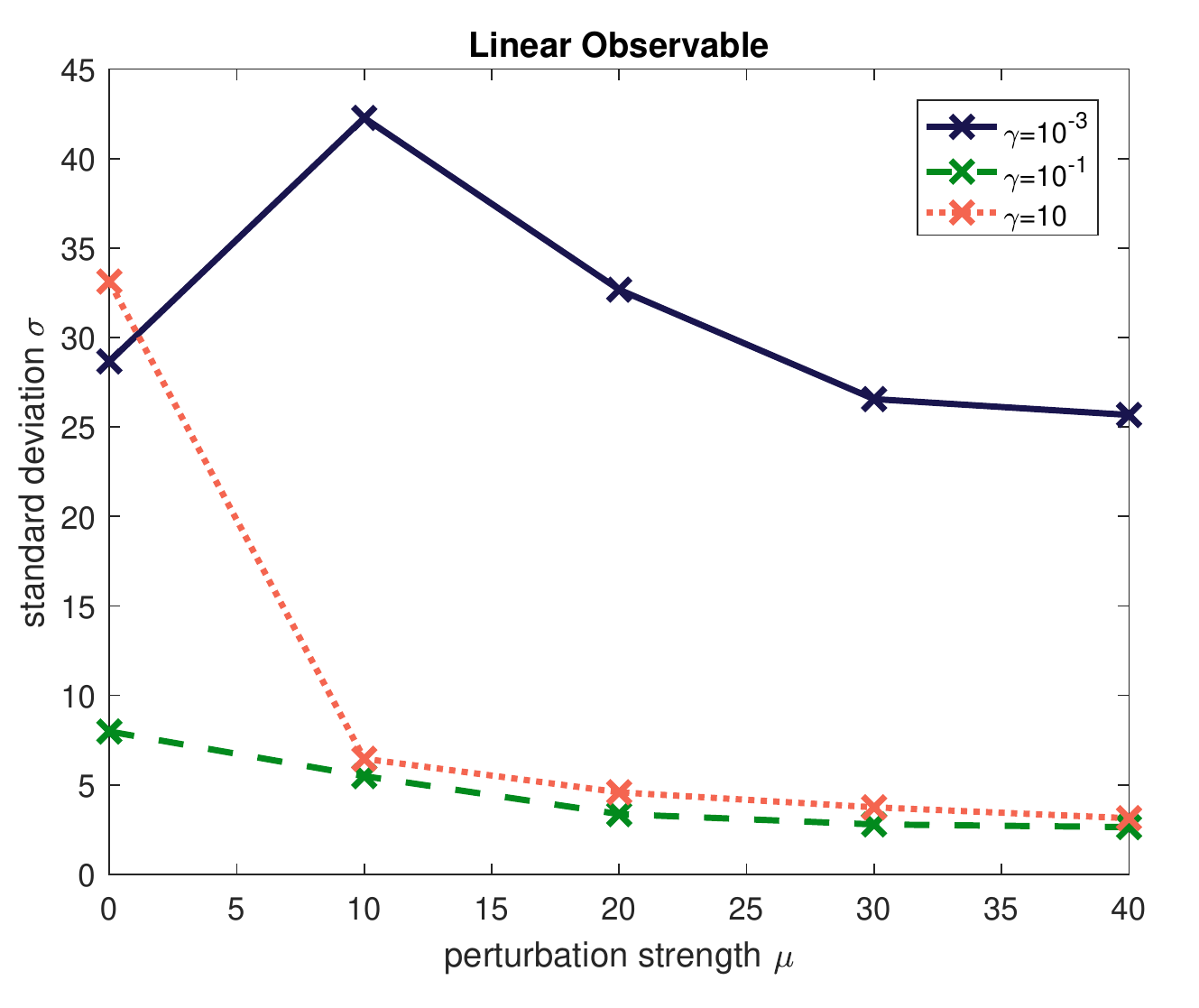}
		\caption{}
		\label{fig:lin_1}
	\end{subfigure}
	\hfill
	\begin{subfigure}[b]{0.45 \textwidth}
		\includegraphics[width=\textwidth]{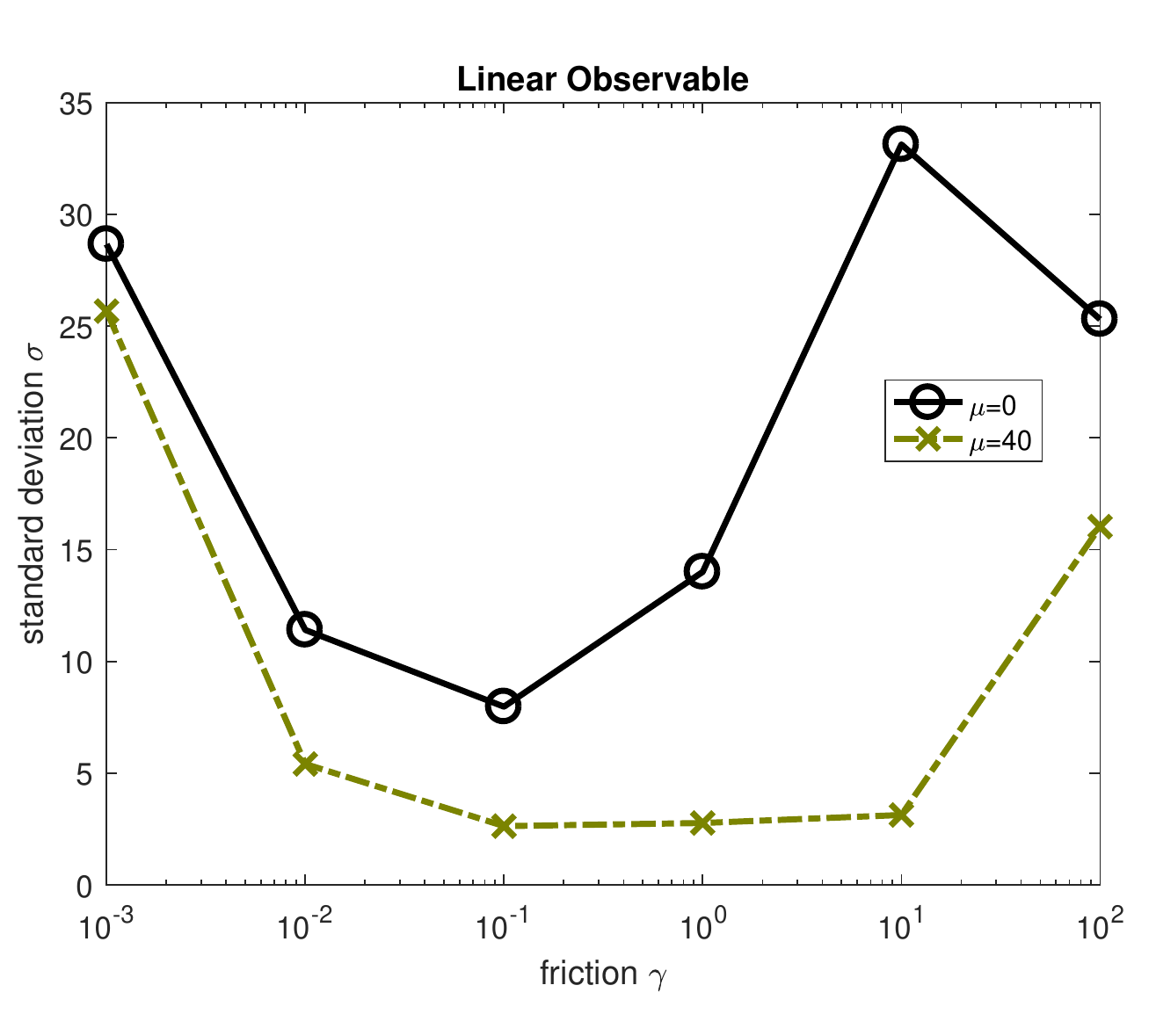}
		\caption{}
		\label{fig:lin_2}
	\end{subfigure}
	.	\caption{Standard deviation of $\hat{\pi}(f)$ for a linear observable as
			a function of friction $\gamma$ and perturbation strength $\mu$}
	\label{fig:linear_observable}
\end{figure}

\begin{figure}
	\begin{subfigure}[b]{0.45 \textwidth}
		\includegraphics[width=\textwidth]{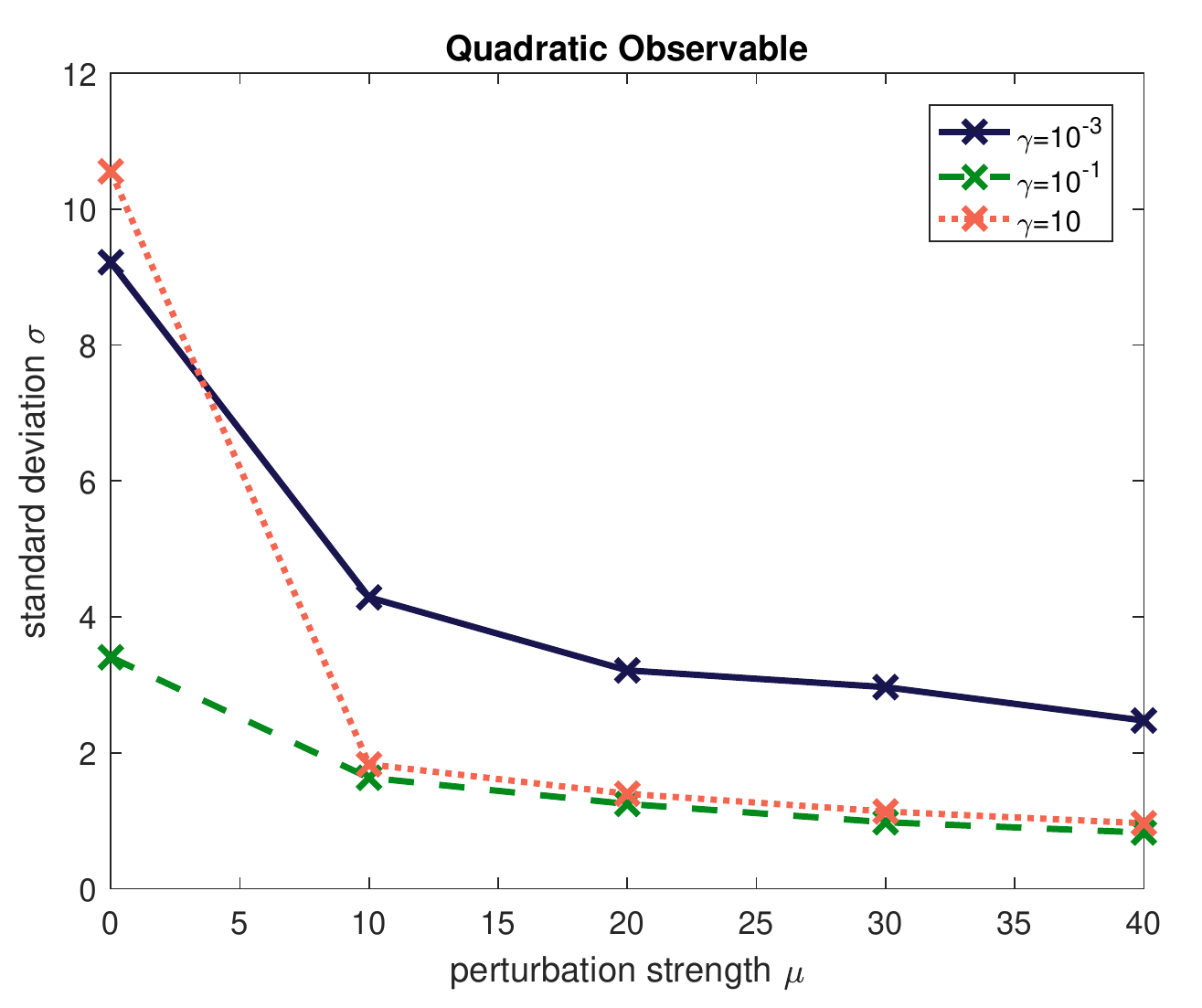}
		\caption{}
		\label{fig:quad1}
	\end{subfigure}
	\hfill
	\begin{subfigure}[b]{0.45 \textwidth}
		\includegraphics[width=\textwidth]{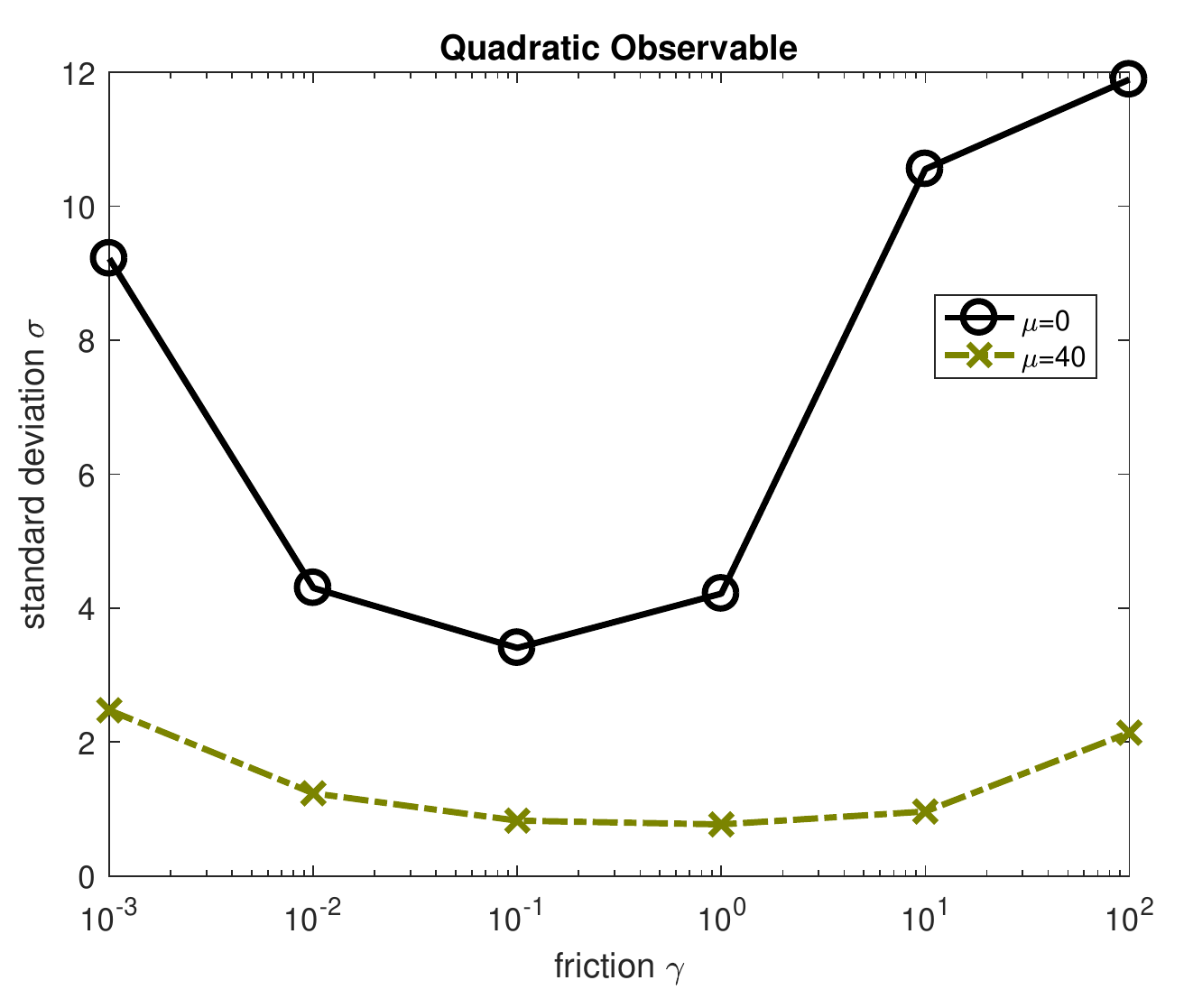}
		\caption{}
		\label{fig:quad2}
	\end{subfigure}
	.	\caption{Standard deviation of $\hat{\pi}(f)$ for a quadratic observable as
		a function of friction $\gamma$ and perturbation strength $\mu$}
	\label{fig:linear_observable}
\end{figure}

\section{Outlook and Future Work}
\label{sec:outlook}

A new family of Langevin samplers was introduced in this paper. These new SDE samplers consist of perturbations of the underdamped Langevin dynamics (that is known to be ergodic with respect to the canonical measure), where auxiliary drift terms in the equations for both the position and the momentum are added, in a way that the perturbed family of dynamics is ergodic with respect to the same (canonical) distribution. These new Langevin samplers were studied in detail for Gaussian target distributions where it was shown, using tools from spectral theory for differential operators, that an appropriate choice of the perturbations in the equations for the position and momentum can improve the performance of the Langvin sampler, at least in terms of reducing the asymptotic variance. The performance of the perturbed Langevin sampler to non-Gaussian target densities was tested numerically on the problem of diffusion bridge sampling.

The work presented in this paper can be improved and extended in several directions. First, a rigorous analysis of the new family of Langevin samplers for non-Gaussian target densities is needed. The analytical tools developed in~\cite{duncan2016variance} can be used as a starting point. Furthermore, the study of the actual computational cost and its minimization by an appropriate choice of the numerical scheme and of the perturbations in position and momentum would be of interest to practitioners. In addition, the analysis of our proposed samplers can be facilitated by using tools from symplectic and differential geometry. Finally, combining the new Langevin samplers with existing variance reduction techniques such as zero variance MCMC, preconditioning/Riemannian manifold MCMC can lead to sampling schemes that can be of interest to practitioners, in particular in molecular dynamics simulations. All these topics are currently under investigation.

\section*{Acknowledgments}
 AD was supported by the EPSRC under grant No. EP/J009636/1. NN is supported by EPSRC through a Roth Departmental Scholarship. GP is partially supported by the EPSRC under grants No. EP/J009636/1, EP/L024926/1, EP/L020564/1 and EP/L025159/1. Part of the work reported in this paper was done while NN and GP were visiting the Institut Henri Poincar\'{e} during the Trimester Program "Stochastic Dynamics Out of Equilibrium". The hospitality of the Institute and of the organizers of the program is greatly acknowledged. 

\appendix
\section{Estimates for the Bias and Variance}
\label{app:proofs}

\begin{proof}[of Lemma \ref{lemma:bias}]
Suppose that $(P_t)_{t \ge 0}$ satisfies \eqref{eq:hypocoercive estimate}.  Let $\pi_0$ be an initial distribution of $(X_t)_{t \ge 0}$ such that $\pi_0 \ll \pi$ and $h = \frac{d\pi_0}{d\pi} \in L^2(\pi)$.  Slightly abusing notation, we denote by $\pi_0 P_t$ the law of $X_t$ given $X_0 \sim \pi$.  Then
\begin{align*}
	\lVert \pi_0 P_t - \pi \rVert_{TV} = \left\lVert P_t^* h - 1 \right\rVert_{L^1(\pi)} \leq \left\lVert P_t^* \right\rVert_{L^2(\pi)\rightarrow L^2(\pi)} \left\lVert h - 1 \right\rVert_{L^2(\pi)} \leq Ce^{-\lambda t}\left\lVert h - 1 \right\rVert_{L^2(\pi)},
\end{align*}
where $P_t^*$ denotes the $L^2(\pi)$-adjoint of $P_t$.  Since $f$ is assumed to be bounded, we immediately obtain
$$
  \norm{\mathbb{E}[f(X_t) | X_0 \sim \pi_0] - \pi(f)} \leq C\Norm{f}_{L^\infty}e^{-\lambda t}\left(\mbox{Var}_{\pi}\left[\frac{d\pi_0}{d\pi}\right]\right)^{1/2},
$$
and so, for $X_0 \sim \pi_0$,
$$
	\norm{\pi_T(f) - \pi(f)} \leq \frac{C}{\lambda T}{\left(1 - e^{-\lambda t}\right)}\lVert f \rVert_{L^\infty}\left(\mbox{Var}_{\pi}\left[\frac{d\pi_0}{d\pi}\right]\right)^{1/2},
$$
as required.
  \qed 
\end{proof}

\begin{proof}[of Lemma \ref{lemma:variance}]
Given $f \in L^2(\pi)$, for fixed $T > 0$, 
\begin{equation}
  \chi_T(x) := \int_{0}^{T} \left(\pi(f) - P_t f(x)\right)\mathrm{d}t.
\end{equation}
Then we have that $\chi_T \in \mathcal{D}(\mathcal{L})$ and $\mathcal{L}\chi_T  = f - P_T f$, moreover
\begin{align*}
  \Norm{\chi_{T} - \chi_{T'}}_{L^2(\pi)} &= \Norm{\int^{T'}_{T} P_t(f)-\pi(f)\,\mathrm{d}t}_{L^2(\pi)}\\
  &\leq C\Norm{f}_{L^2(\pi)}\int_{T}^{T'}e^{-\lambda t}\,\mathrm{d}t,
\end{align*}
so that $\lbrace \chi_T \rbrace_{T \geq 0}$ is a Cauchy sequence in $L^2(\pi)$ converging to $\chi = \int_0^\infty \left(\pi(f) - P_tf\right)\mathrm{d}t$.  Since $\mathcal{L}$ is closed and
$$
  (\mathcal{L}\chi_T, \chi_T) \rightarrow (f-\pi(f), \chi),\quad T \rightarrow \infty,
$$
in $L^2(\pi)$, it follows that $\chi\in\mathcal{D}(\mathcal{L})$ and $\mathcal{L}\chi = f - \pi(f)$.  Moreover,
$$
  \Norm{\chi}_{L^{2}(\pi)} \leq \int_0^\infty \Norm{P_t(f) - \pi(f)}_{L^2(\pi)}\,\mathrm{d}t \leq K_{\lambda}\Norm{f-\pi(f)}_{L^2(\pi)},\quad 
$$
where $K_{\lambda} = C\int_0^\infty e^{-\lambda t}\,\mathrm{d}t$. 
Since we assume that $f$ is smooth, the coefficients are smooth and $\gen$ is hypoelliptic, then $\mathcal{L}\chi = f-\pi(f)$ implies that  $\chi \in C^{\infty}(\R^d)$, and thus we can apply It\^{o}'s formula to $\chi(X_t)$ to obtain:
$$
\frac{1}{T}\int_0^T \left[f(X_t) - \pi(f)\right]\,\mathrm{d}t = \frac{1}{T}\left[\chi(X_0) - \chi(X_T)\right] + \frac{1}{T}\int_0^T \nabla\chi(X_t)\sigma(X_t)\,\mathrm{d}W_t.
$$
One can check that the conditions of \cite[Theorem 7.1.4]{EthierKu86} hold.  In particular, the following central limit theorem follows
$$
	\frac{1}{\sqrt{T}}\int_0^T \nabla\chi(X_t)\sigma(X_t)\,\mathrm{d}W_t \xrightarrow{d} \mathcal{N}(0,2\sigma^2_f),\quad \mbox{ as } T \rightarrow \infty.
$$
By Theorem \ref{theorem:invariance_theorem}, the generator $\gen$ has the form
$$
	\gen = \pi^{-1}\nabla\cdot\left(\pi\Sigma \nabla \cdot \right) + \gamma\cdot\nabla,
$$
where $\nabla\cdot(\pi \gamma) = 0$.  It follows that
\begin{equation}
\label{eq:variance_equation}
\sigma^2_f = \inner{\Sigma \nabla\chi}{\nabla\chi}_{L^2(\pi)} = -\inner{\gen \chi}{\chi}_{L^2(\pi)}  = \inner{\chi}{f}_{L^2(\pi)} < \infty.
\end{equation}
First suppose that $X_0 \sim \pi$.  Then $(\chi(X_t))_{t\geq 0}$ is a stationary process, and so 
$$
	\frac{1}{\sqrt{T}}\left(\chi(X_0) - \chi(X_T)\right) \rightarrow 0,\quad \mbox{a.s as } T \rightarrow \infty.
$$
From which \eqref{eq:CLT} follows.  More generally, suppose that $X_0 \sim \pi_0$, where $\pi_0(x) = h(x)\pi(x)$ for $h \in L^2(\pi)$.  If $f \in L^\infty(\pi)$, then by \eqref{lemma:bias},
\begin{align*}
	|\chi(x)| &\leq \int_0^\infty |\pi(f) - P_t f(x)|\,dt \\
			  &\leq \int_0^\infty \lVert f\rVert_{L^\infty}\lVert\pi - \pi_0 P_t\rVert_{TV}\,dt \\
			  & \leq \frac{C}{\lambda }\lVert f\rVert_{L^\infty}\left(\mbox{Var}_{\pi}\left[\frac{d\pi_0}{d\pi}\right]\right)^{1/2},
\end{align*} 
so that $\chi \in L^\infty(\pi)$.  Therefore $\frac{1}{\sqrt{T}}(\chi(X_0)- \chi(X_T)) \xrightarrow{p} 0$ as $T \rightarrow \infty$, and so \eqref{eq:CLT} holds in this case, similarly.
\end{proof}

\section{Proofs of Section \ref{sec:perturbed_langevin}}
\label{app:hypocoercivity}

\begin{proof} of Lemma \ref{lem:hypoellipticity}
	We first note that $\gen$ in \eqref{eq:generator} can be written in the ``sum of squares'' form:
	$$
	\mathcal{L}=A_{0}+\frac{1}{2}\sum_{k=1}^{d}A_{k}^{2},
	$$
	where 
	$$
	A_{0}=M^{-1}p\cdot\nabla_{q}-\nabla_{q}V\cdot\nabla_{p}-\mu J_{1}\nabla_{q}V\cdot\nabla_{q}-\nu J_{2}M^{-1}p\cdot\nabla_{p}-\Gamma M^{-1}p\cdot\nabla_{p}
	$$
	and
	$$
	A_{k}=e_{k}\cdot\Gamma^{1/2}\nabla_{p}, \quad k = 1,\ldots, d.
	$$
	Here $\lbrace e_k \rbrace_{k=1,\ldots, d}$ denotes the standard Euclidean basis and $\Gamma^{1/2}$ is the unique positive definite square root of the matrix $\Gamma$.   The relevant commutators turn out to be
	\[
	[A_{0},A_{k}]=e_{k}\cdot\Gamma^{1/2} M^{-1}(\Gamma\nabla_{p}-\nabla_{q}-\nu J_{2}\nabla_{p}), \quad k = 1,\ldots, k.
	\]
	Because $\Gamma$ has full rank on $\R^d$, it follows that 
	\[
	\Span\{A_{k}:k=1,\ldots d\}=\Span\{\partial_{p_{k}}:k=1,\ldots,d\}.
	\]
	Since
	\[
	e_{k}\cdot\Gamma^{1/2} M^{-1}(\Gamma\nabla_{p}-\nu J_{2}\nabla_{p})\in\Span\{A_{j}:j=1,\ldots d\},\quad k=1,\ldots,d,
	\]
	and $\Span(\lbrace \Gamma^{1/2}M^{-1}\nabla_q \,:\, k=1,\ldots, d \rbrace) = \Span\{\partial_{q_{k}}:k=1,\ldots,d\}$, it follows that
	\[
	\Span(\lbrace A_{k}:k=0,1,\ldots,d\rbrace\cup\{[A_{0},A_{k}]:k=1,\ldots,d\}) = \R,
	\]
	so the assumptions of H\"{o}rmander's theorem hold.\qed
\end{proof}

\subsection{The overdamped limit}

The following is a technical lemma required for the proof of Proposition \ref{prop: overdamped limit}:
\begin{lemma}
	\label{lem:bounded p}Assume the conditions from Proposition \ref{prop: overdamped limit}.
	Then for every $T>0$ there exists $C>0$ such that 
	\[
	\mathbb{E} \left( \sup_{0\le t\le T}\vert p_{t}^{\epsilon}\vert^{2} \right) \le C.
	\]
	
\end{lemma}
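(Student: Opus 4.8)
The plan is to derive the bound from an energy estimate for the momentum, exploiting the dissipativity encoded in the friction term together with the boundedness of $\nabla V$ on the torus (so that $V_*:=\sup_{q\in\mathbb{T}^d}\vert\nabla V(q)\vert<\infty$). Introduce the weighted kinetic energy $\Phi(p):=p\cdot M^{-1}p$, which is equivalent to $\vert p\vert^2$ since $M$ is symmetric positive definite, and apply It\^{o}'s formula to $\Phi(p_t^\epsilon)$ along the rescaled dynamics~\eqref{eq:rescaling}. Three structural observations make the resulting expression manageable: (i) the contribution of the term $-\epsilon^{-2}\nu J_2M^{-1}p_t^\epsilon$ vanishes, because $\langle M^{-1}p,J_2M^{-1}p\rangle=0$ by skew-symmetry of $J_2$; (ii) the friction term is coercive, $\langle M^{-1}p,\Gamma M^{-1}p\rangle\ge\kappa\,\Phi(p)$ with $\kappa:=\lambda_{\min}(\Gamma)/\lambda_{\max}(M)>0$; and (iii) the It\^{o} correction is a deterministic constant of order $\epsilon^{-2}$, namely $\tfrac{2}{\epsilon^{2}}\Tr(M^{-1}\Gamma)$. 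Absorbing the cross term $-\tfrac{2}{\epsilon}(M^{-1}p_t^\epsilon)\cdot\nabla V(q_t^\epsilon)$ into the dissipation and a constant by Cauchy--Schwarz and Young's inequality, one is led to the differential inequality
\[
\mathrm{d}\Phi(p_t^\epsilon)\le\Big(-\frac{\kappa}{\epsilon^{2}}\Phi(p_t^\epsilon)+\frac{C_1}{\epsilon^{2}}\Big)\mathrm{d}t+\mathrm{d}\mathcal{M}_t,
\]
where $\mathcal{M}_t:=\tfrac{2}{\epsilon}\int_0^t(M^{-1}p_s^\epsilon)\cdot\sqrt{2\Gamma}\,\mathrm{d}W_s$ is a local martingale and $C_1$ depends only on $M$, $\Gamma$, $V_*$ for $\epsilon\le 1$.

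From here I would take expectations (after a standard localisation that turns $\mathcal{M}$ into a genuine martingale) and solve the scalar Gronwall inequality $\tfrac{\mathrm{d}}{\mathrm{d}t}\mathbb{E}\Phi(p_t^\epsilon)\le-\tfrac{\kappa}{\epsilon^2}\mathbb{E}\Phi(p_t^\epsilon)+\tfrac{C_1}{\epsilon^2}$, which gives $\mathbb{E}\Phi(p_t^\epsilon)\le\Phi(p_{init})+C_1/\kappa$ for all $t\ge0$ --- here one uses that $p_{init}$ is deterministic --- and in particular $\int_0^T\mathbb{E}\Phi(p_s^\epsilon)\,\mathrm{d}s<\infty$. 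To move the supremum inside the expectation, integrate the differential inequality over $[0,t]$, discard the nonnegative dissipation term, take $\sup_{0\le t\le T}$ and then the expectation:
\[
\mathbb{E}\sup_{0\le t\le T}\Phi(p_t^\epsilon)\le\Phi(p_{init})+\frac{C_1T}{\epsilon^{2}}+\mathbb{E}\sup_{0\le t\le T}\vert\mathcal{M}_t\vert .
\]
The martingale term is controlled by the Burkholder--Davis--Gundy inequality together with $[\mathcal{M}]_T=\tfrac{8}{\epsilon^{2}}\int_0^T\vert\Gamma^{1/2}M^{-1}p_s^\epsilon\vert^2\,\mathrm{d}s\le\tfrac{C_3}{\epsilon^{2}}\int_0^T\Phi(p_s^\epsilon)\,\mathrm{d}s$, which gives $\mathbb{E}\sup_{0\le t\le T}\vert\mathcal{M}_t\vert\le C_{\mathrm{BDG}}\,(\mathbb{E}[\mathcal{M}]_T)^{1/2}<\infty$. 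Combining the two displays and using $\Phi(p)\ge\lambda_{\min}(M^{-1})\vert p\vert^2$ yields the assertion.

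The main obstacle is the singular $1/\epsilon^{2}$ scaling of both the drift and the quadratic variation: it is precisely the coercivity (ii) that keeps $\Phi$ from growing, and the estimates must be arranged so that this dissipation is never simply thrown away where it is needed. The crude argument above already produces a finite constant; if, in addition, one wants good control of the constant as $\epsilon\to 0$ (as is implicitly used in the proof of Proposition~\ref{prop: overdamped limit}), the last step should be refined by retaining the exponential integrating factor, i.e.\ by estimating the Ornstein--Uhlenbeck type stochastic convolution $e^{-\kappa t/\epsilon^{2}}\int_0^t e^{\kappa s/\epsilon^{2}}\,\mathrm{d}\mathcal{M}_s$ via a maximal inequality for stochastic convolutions rather than through the wasteful bound $e^{-\kappa t/\epsilon^{2}}\le 1$.
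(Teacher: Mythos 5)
Your It\^{o}/energy computation is correct as far as it goes: the skew-symmetry of $J_2$ does kill that term, the friction is coercive for $\Phi$ with the constant you give, the It\^{o} correction is $2\epsilon^{-2}\Tr(\Gamma M^{-1})$, and the Gronwall step yields the $\epsilon$-uniform pointwise bound $\E\,\Phi(p_t^\epsilon)\le\Phi(p_{init})+C_1/\kappa$. The gap is in the last step, and you have in fact diagnosed it yourself: once you discard the dissipation and bound the integrated constant drift by $C_1T/\epsilon^{2}$, you obtain $\E\sup_{0\le t\le T}\Phi(p_t^\epsilon)=O(\epsilon^{-2})$. A constant of that size does not prove the lemma in the sense in which it is used: in the proof of Proposition~\ref{prop: overdamped limit} one needs $\epsilon^{2}\,\E\sup_{0\le t\le T}\vert p_t^\epsilon-p_0\vert^{2}\rightarrow 0$, so the constant $C$ must be independent of $\epsilon$ (or at least $o(\epsilon^{-2})$), and this uniformity is what the lemma is really asserting. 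Since the $O(\epsilon^{-2})$ contribution comes from the It\^{o} correction, which is genuinely of that order and can only be compensated by the dissipation you threw away, the ``refinement'' you relegate to a closing remark is not an optional sharpening --- it is the entire content of the lemma.

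Moreover, the fix you gesture at is not routine. Retaining the integrating factor turns the martingale contribution into the stochastic convolution $\int_0^t e^{-\kappa(t-s)/\epsilon^{2}}\,\mathrm{d}\mathcal{M}_s$, which is not a martingale, so Burkholder--Davis--Gundy does not apply directly; and any maximal inequality that simply controls the supremum by $(\E[\mathcal{M}]_T)^{1/2}$ reintroduces a factor $\epsilon^{-1}$. To get an $\epsilon$-uniform bound one must exploit the exponential decay of the kernel \emph{inside} the maximal estimate, which is precisely where the paper's proof does its work: it writes $p_t^\epsilon$ itself by variation of constants and estimates each Duhamel term using $\Vert e^{-t(\nu J_2+\Gamma)M^{-1}}\Vert\le Ce^{-\omega t}$, so that every $\epsilon^{-2}$ prefactor is cancelled by the $\epsilon^{2}$ coming from integrating the kernel. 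Your refined version would essentially collapse onto that argument. As written, the proposal establishes finiteness of $\E\sup_{0\le t\le T}\vert p_t^\epsilon\vert^2$ for each fixed $\epsilon>0$, but not the uniform estimate that the lemma requires.
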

\begin{proof}
	Using variation of constants, we can write the second line of (\ref{eq:rescaling})
	as 
	\[
	p_{t}^{\epsilon}=e^{-\frac{t}{\epsilon^{2}}(\nu J_{2}+\Gamma)M^{-1}}p_{0}-\frac{1}{\epsilon}\int_{0}^{t}e^{-\frac{(t-s)}{\epsilon^{2}}(\nu J_{2}+\Gamma)M^{-1}}\nabla_{q}V(q_{s}^{\epsilon})\mathrm{d}s+\frac{1}{\epsilon}\sqrt{2\Gamma}\int_{0}^{t}e^{-\frac{(t-s)}{\epsilon^{2}}(\nu J_{2}+\Gamma)M^{-1}}\mathrm{d}W_{s}.
	\]
	We then compute 
	\begin{align}
	\mathbb{E}\sup_{0\le t\le T}\vert p_{t}^{\epsilon}\vert^{2} & =\sup_{0\le t\le T}\left\lvert e^{-\frac{t}{\epsilon^{2}}(\nu J_{2}+\Gamma)M^{-1}}p_{0}\right\rvert^{2}+\frac{1}{\epsilon^{2}}\mathbb{E}\sup_{0\le t\le T}\left\lvert\int_{0}^{t}e^{-\frac{(t-s)}{\epsilon^{2}}(\nu J_{2}+\Gamma)M^{-1}}\nabla_{q}V(q_{s}^{\epsilon})\mathrm{d}s\right\rvert^{2}\nonumber \\
	& +\frac{1}{\epsilon^{2}}\mathbb{E}\sup_{0\le t\le T}\left\lvert\sqrt{2\Gamma}\int_{0}^{t}e^{-\frac{(t-s)}{\epsilon^{2}}(\nu J_{2}+\Gamma)M^{-1}}\mathrm{d}W_{s}\right\rvert^{2}\nonumber \\
	& -\frac{1}{\epsilon}\mathbb{E}\sup_{0\le t\le T}\left(e^{-\frac{t}{\epsilon^{2}}(\nu J_{2}+\Gamma)M^{-1}}p_{0}\cdot\int_{0}^{t}e^{-\frac{(t-s)}{\epsilon^{2}}(\nu J_{2}+\Gamma)M^{-1}}\nabla_{q}V(q_{s}^{\epsilon})\mathrm{d}s\right)\label{eq:P^2}\\
	& +\frac{1}{\epsilon}\mathbb{E}\sup_{0\le t\le T}\left(e^{-\frac{t}{\epsilon^{2}}(\nu J_{2}+\Gamma)M^{-1}}p_{0}\cdot\frac{1}{\epsilon}\sqrt{2\Gamma}\int_{0}^{t}e^{-\frac{(t-s)}{\epsilon^{2}}(\nu J_{2}+\Gamma)M^{-1}}\mathrm{d}W_{s}\right)\nonumber \\
	& -\frac{1}{\epsilon^{2}}\mathbb{E}\sup_{0\le t\le T}\left(\int_{0}^{t}e^{-\frac{(t-s)}{\epsilon^{2}}(\nu J_{2}+\Gamma)M^{-1}}\nabla_{q}V(q_{s}^{\epsilon})\mathrm{d}s\cdot\sqrt{2\Gamma}\int_{0}^{t}e^{-\frac{(t-s)}{\epsilon^{2}}(\nu J_{2}+\Gamma)M^{-1}}\mathrm{d}W_{s}\right).\nonumber 
	\end{align}
	Clearly, the first term on the right hand side of (\ref{eq:P^2})
	is bounded. For the second term, observe that 
	\begin{equation}
	\frac{1}{\epsilon^{2}}\mathbb{E}\sup_{0\le t\le T}\left\lvert\int_{0}^{t}e^{-\frac{(t-s)}{\epsilon^{2}}(\nu J_{2}+\Gamma)M^{-1}}\nabla_{q}V(q_{s}^{\epsilon})\mathrm{d}s\right\rvert^{2}\le\frac{1}{\epsilon^{2}}\sup_{0\le t\le T}\int_{0}^{t}\left\lVert e^{-\frac{(t-s)}{\epsilon^{2}}(\nu J_{2}+\Gamma)M^{-1}}\right\rVert^{2}\mathrm{d}s\label{eq:estimate1}
	\end{equation}
	since $V \in C^1(\mathbb{T}^d)$ and therefore $\nabla_{q}V$ is bounded. By the basic matrix exponential estimate
	$\Vert e^{-t(\nu J_{2}+\Gamma)M^{-1}}\Vert\le Ce^{-\omega t}$ for
	suitable $C$ and $\omega$, we see that (\ref{eq:estimate1}) can
	further be bounded by 
	\[
	\frac{1}{\epsilon^{2}}C\sup_{0\le t\le T}\int_{0}^{t}e^{-2\omega\frac{(t-s)}{\epsilon^{2}}}\mathrm{d}s=\frac{C}{2\omega}\left(1-e^{-2\omega\frac{T}{\epsilon^{2}}}\right),
	\]
	so this term is bounded as well. The third term is bounded by the
	Burkholder\textendash Davis\textendash Gundy inequality and a similar
	argument to the one used for the second term applies. The cross terms can
	be bounded by the previous ones, using the Cauchy-Schwarz inequality
	and the elementary fact that $\sup(ab)\le\sup a\cdot\sup b$ for $a,b>0$, so the
	result follows. \qed
\end{proof}

\begin{proof}
	[of Proposition \ref{prop: overdamped limit}] Equations (\ref{eq:rescaling})
	can be written in integral form as
	\[
	(\nu J_{2}+\Gamma)q_{t}^{\epsilon}=(\nu J_{2}+\Gamma)q_{0}^{\epsilon}+\frac{1}{\epsilon}\int_{0}^{t}(\nu J_{2}+\Gamma)M^{-1}p_{s}^{\epsilon}\mathrm{d}s-\mu\int_{0}^{t}(\nu J_{2}+\Gamma)J_{1}\nabla_{q}V(q_{s}^{\epsilon})\mathrm{d}s
	\]
	and 
	\begin{equation}
	-\int_{0}^{t}\nabla V(q_{s}^{\epsilon})\mathrm{d}s-\frac{1}{\epsilon}\int_{0}^{t}(\nu J_{2}+\Gamma)M^{-1}p_{s}^{\epsilon}\mathrm{d}s+\sqrt{2\Gamma}W(t)=\epsilon(p_{t}^{\epsilon}-p_{0}),\label{eq:rescaled p equation-1}
	\end{equation}
	where the first line has been multiplied by the matrix $\nu J_{2}+\Gamma$.
	Combining both equations yields
	\[
	q_{t}^{\epsilon}=q_{0}^{\epsilon}-\int_{0}^{t}(\nu J_{2}+\Gamma)\nabla_{q}V(q_{s}^{\epsilon})\mathrm{d}s-\epsilon(\nu J_{2}+\Gamma)^{-1}(p_{t}^{\epsilon}-p_{0})-\mu\int_{0}^{t}J_{1}\nabla_{q}V(q_{s})\mathrm{d}s+(\nu J_{2}+\Gamma)^{-1}\sqrt{2\Gamma}W_{t}.
	\]
	Now applying Lemma \ref{lem:bounded p} gives the desired result,
	since the above equation differs from the integral version of (\ref{eq:overdamped limit})
	only by the term $\epsilon(\nu J_{2}+\Gamma)^{-1}(p_{t}^{\epsilon}-p_{0})$
	which vanishes in the limit as $\epsilon\rightarrow0$. \qed
\end{proof}

\subsection{Hypocoercivity}

The objective of this section is to prove that the perturbed dynamics
(\ref{eq:perturbed_underdamped}) converges to equilibrium
exponentially fast, i.e. that the associated semigroup $(P_t)_{t\ge0}$ satisfies the estimate \eqref{eq:hypocoercive estimate}. We we will be using the theory of hypocoercivity outlined in
\cite{villani2009hypocoercivity} (see also the exposition in \cite[Section 6.2]{pavliotis2014stochastic}).
We provide a brief review of the theory of hypocoercivity.
\\\\
Let $(\mathcal{H},\langle\cdot,\cdot\rangle)$ be a real separable
Hilbert space and consider two unbounded operators $A$ and $B$ with
domains $D(A)$ and $D(B)$ respectively, $B$ antisymmetric. Let
$S\subset\mathcal{H}$ be a dense vectorspace such that $S\subset D(A)\cap D(B)$,
i.e. the operations of $A$ and $B$ are authorised on $S$. The theory
of hypocoercivity is concerned with equations of the form 
\begin{equation}
\label{eq:abstract fp equation}
\partial_{t}h+Lh=0,
\end{equation}
and the associated semigroup $(P_t)_{t\ge0}$ generated by $L=A^{*}A-B$. Let
us also introduce the notation $K=\ker L$. With the choices $\mathcal{H}=L^{2}(\widehat{\pi})$,
$A=\sigma\nabla_{p}$ and $B=M^{-1}p\cdot\nabla_{q}-\nabla_{q}V\cdot\nabla_{p}-\mu J_{1}\nabla_{q}V\cdot\nabla_{q}-\nu J_{2}M^{-1}p\cdot\nabla_{p},$
it turns out that $L$ is the (flat) $L^2(\mathbb{R}^{2d})$-adjoint of the generator $\mathcal{L}$ given in \eqref{eq:generator} and therefore equation \eqref{eq:abstract fp equation} is the Fokker-Planck equation associated to the dynamics \eqref{eq:perturbed_underdamped}. 
	In many situations of practical interest, the operator $A^{*}A$ is
	coercive only in certain directions of the state space, and therefore
	exponential return to equilibrium does not follow in general. In our
	case for instance, the noise acts only in the $p$-variables and therefore
	relaxation in the $q$-variables cannot be concluded a priori. However,
	intuitively speaking, the noise gets transported through the equations
	by the Hamiltonian part of the dynamics. This is what the theory of
	hypocoercivity makes precise. Under some conditions on the interactions
	between $A$ and $B$ (encoded in their iterated commutators), exponential
	return to equilibrium can be proved.
To state the main abstract theorem, we need the following definitions: 
\begin{definition}
	(Coercivity) Let $T$ be an unbounded operator on $\mathcal{H}$ with
	domain $D(T)$ and kernel $K$. Assume that there exists another Hilbert
	space $(\tilde{\mathcal{H}},\langle\cdot,\cdot\rangle_{\tilde{\mathcal{H}}})$,
	continuously and densely embedded in $K^{\perp}$. The operator is
	said to be $\lambda$-coercive if 
	\[
	\langle Th,h\rangle_{\tilde{\mathcal{H}}}\ge\lambda\Vert h\Vert_{\tilde{\mathcal{H}}}^{2}
	\]
	for all $h\in K^{\perp}\cap D(T)$. 
\end{definition}

\begin{definition}
	An operator $T$ on $\mathcal{H}$ is said to be relatively bounded
	with respect to the operators $T_{1},\ldots,T_{n}$ if the intersection
	of the domains $\cap D(T_{j})$ is contained in $D(T)$ and there
	exists a constant $\alpha>0$ such that 
	\[
	\Vert Th\Vert\le\alpha(\Vert T_{1}h\Vert+\ldots+\Vert T_{n}h\Vert)
	\]
	holds for all $h\in D(T)$. 
\end{definition}
We can now proceed to the main result of the theory.
\begin{theorem}{\cite[Theorem 24]{villani2009hypocoercivity}}
	\label{thm: hypocoercivity abstract}Assume there exists $N\in\mathbb{N}$
	and possibly unbounded operators $$C_{0},C_{1},\ldots,C_{N+1},R_{1},\ldots,R_{N+1},Z_{1},\ldots,Z_{N+1},$$
	such that $C_{0}=A$, 
	\begin{equation}
	[C_{j},B]=Z_{j+1}C_{j+1}+R_{j+1}\quad(0\le j\le N),\quad C_{N+1}=0,\label{eq:iterated commutators}
	\end{equation}
	and for all $k=0,1,\ldots,N$ 
	\begin{enumerate}[label=(\alph*)]
		\item \label{it:hypo1} $[A,C_{k}]$ is relatively bounded with respect to $\{C_{j}\}_{0\le j\le k}$
		and $\{C_{j}A\}_{0\le j\le k-1}$, 
		\item \label{it:hypo2} $[C_{k},A^{*}]$ is relatively bounded with respect to $I$ and $\{C_{j}\}_{0\le j\le k}$
		,
		\item \label{it:hypo3} $R_{k}$ is relatively bounded with respect to $\{C_{j}\}_{0\le j\le k-1}$
		and $\{C_{j}A\}_{0\le j\le k-1}$ and
		\item \label{it:hypo4} there are positive constants $\lambda_{i}$, $\Lambda_{i}$ such that
		$\lambda_{j}I\le Z_{j}\le\Lambda_{j}I$.
	\end{enumerate}
	Furthermore, assume that $\sum_{j=0}^{N}C_{j}^{*}C_{j}$ is $\kappa$-coercive
	for some $\kappa>0$.  Then, there exists $C\ge0$ and $\lambda>0$ such that 
	\begin{equation}
	\Vert P_t\Vert_{\mathcal{H}^{1}/K\rightarrow \mathcal{H}^{1}/K}\le Ce^{-\lambda t},\label{eq:hypocoercivity estimate}
	\end{equation}
	where $\mathcal{H}^{1}\subset\mathcal{H}$ is the subspace associated
	to the norm
	\begin{equation}
	\label{eq:abstractH1_norm}
	\Vert h\Vert_{\mathcal{H}^{1}}=\sqrt{\Vert h\Vert^{2}+\sum_{k=0}^{N}\Vert C_{k}h\Vert^{2}}
	\end{equation}
	and $K=\ker(A^{*}A-B)$. \end{theorem}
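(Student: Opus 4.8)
The statement is the abstract hypocoercivity theorem of Villani, so the natural route is to follow (a streamlined version of) his original argument based on a modified energy functional. The plan is to construct a quadratic form $\mathcal{F}$ on $\mathcal{H}^1/K$, equivalent to $\|\cdot\|_{\mathcal{H}^1}^2$, that decays exponentially along the flow $\partial_t h = -Lh$ with $L = A^*A - B$. Concretely, for a hierarchy of small parameters $0 < \varepsilon_N \ll \varepsilon_{N-1} \ll \cdots \ll \varepsilon_1 \ll \varepsilon_0 = 1$ and an auxiliary $\delta > 0$, I would set
\[
\mathcal{F}(h) = \|h\|^2 + \sum_{k=0}^{N}\varepsilon_k\|C_k h\|^2 - \delta\sum_{k=0}^{N-1}\varepsilon_k\,\mathrm{Re}\,\langle C_k h, C_{k+1}h\rangle ,
\]
recalling that $C_0 = A$. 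The first step is to check that, once $\delta$ is small relative to the $\varepsilon_k$, the cross terms are dominated (Cauchy--Schwarz and Young) by the diagonal ones, so that $\mathcal{F}(h)$ is comparable, with constants independent of $h$, to $\|h\|^2 + \sum_{k=0}^N\|C_k h\|^2 = \|h\|_{\mathcal{H}^1}^2$.

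Next I would differentiate $\mathcal{F}(h(t))$ in time. For the diagonal pieces, the antisymmetry of $B$ gives
\[
\frac{d}{dt}\|C_k h\|^2 = -2\|A C_k h\|^2 - 2\,\mathrm{Re}\,\langle [C_k,A^*A]h, C_k h\rangle + 2\,\mathrm{Re}\,\langle [C_k,B]h, C_k h\rangle ,
\]
and in particular $\frac{d}{dt}\|h\|^2 = -2\|Ah\|^2$, which already supplies dissipation in the $A$-directions. The dangerous term is $2\,\mathrm{Re}\,\langle Z_{k+1}C_{k+1}h, C_k h\rangle$ coming from $[C_k,B] = Z_{k+1}C_{k+1}+R_{k+1}$: it is sign-indefinite and is \emph{not} controlled by the dissipation $\|AC_k h\|^2$ available at level $k$. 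It is compensated by differentiating the cross term: using the antisymmetry of $B$ (which makes the pure transport contributions cancel) together with $[C_k,B] = Z_{k+1}C_{k+1}+R_{k+1}$ once more, one finds that $\frac{d}{dt}\big(-\delta\varepsilon_k\,\mathrm{Re}\,\langle C_k h, C_{k+1}h\rangle\big)$ contains, modulo lower-order terms, the contribution $-\delta\varepsilon_k\,\langle Z_{k+1}C_{k+1}h, C_{k+1}h\rangle$, which by hypothesis~\ref{it:hypo4} is $\le -\delta\varepsilon_k\lambda_{k+1}\|C_{k+1}h\|^2$ — a genuine new dissipative term at level $k+1$. One then closes the estimate by induction down the chain: the dissipation in $\|C_{k+1}h\|^2$ produced at step $k$ pays for the bad term produced at step $k+1$, and the cascade terminates because $C_{N+1}=0$.

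The remaining contributions are absorbed using the relative-boundedness hypotheses. The terms involving $[C_k,A^*A] = A^*[C_k,A] + [C_k,A^*]A$ are estimated by~\ref{it:hypo1}--\ref{it:hypo2} in terms of $\{C_j h\}_{j\le k}$ and $\{A C_j h\}_{j\le k}$, and the $R_{k+1}$-terms by~\ref{it:hypo3}; Young's inequality then moves each of them into the dissipative quantities $\sum_{k}\|A C_k h\|^2$ and into the smaller diagonal terms, at the cost of further shrinking $\varepsilon_k$ and $\delta$ — which is precisely where the strict ordering of the parameters is consumed. The outcome is an inequality of the form $\frac{d}{dt}\mathcal{F}(h(t)) \le -\kappa'\big(\|Ah\|^2 + \sum_{k=1}^N\|C_k h\|^2\big)$ for some $\kappa' > 0$, and the assumed $\kappa$-coercivity of $\sum_{k=0}^N C_k^* C_k$ then upgrades the right-hand side to $-c\|h\|_{\mathcal{H}^1}^2 \le -c'\mathcal{F}(h)$ for $h \in K^\perp$. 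Grönwall's inequality together with the norm equivalence $\mathcal{F}(h) \simeq \|h\|_{\mathcal{H}^1}^2$ then yields $\|P_t\|_{\mathcal{H}^1/K \to \mathcal{H}^1/K} \le Ce^{-\lambda t}$.

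The main obstacle is the bookkeeping: one must arrange the hierarchy $\{\varepsilon_k\}$, $\delta$ so that \emph{every} indefinite off-diagonal term generated at a given level of the commutator chain is strictly dominated by a dissipative term of the same or adjacent level, while simultaneously keeping all error terms within the relative-boundedness bounds, and all of this uniformly in $h$. A secondary, more technical point — which I would handle by a regularization/density argument on the core $S$ — is to justify that $\mathcal{F}$, its time-derivative, and all the commutator manipulations are legitimate along the semigroup, and that the evolution indeed stays in $K^\perp$.
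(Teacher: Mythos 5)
The paper offers no proof of this statement: it is imported verbatim as Theorem 24 of \cite{villani2009hypocoercivity}, and your sketch is a faithful outline of Villani's own argument --- the twisted $H^1$-type Lyapunov functional with cross terms $\langle C_k h, C_{k+1}h\rangle$, the cascade in which each cross term generates dissipation $-\delta\varepsilon_k\langle Z_{k+1}C_{k+1}h,C_{k+1}h\rangle$ at the next level of the commutator chain (terminating because $C_{N+1}=0$), absorption of the remaining terms via the relative-boundedness hypotheses, and closure by coercivity plus Gr\"onwall. The approach therefore matches the source; the only caveat is that the hierarchical choice of $\varepsilon_k$ and $\delta$, which you correctly identify as the main obstacle, is the actual substance of Villani's proof and is left unexecuted here.
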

\begin{remark}
	Property (\ref{eq:hypocoercivity estimate}) is called \emph{hypocoercivity
		of $L$ on $\mathcal{H}^{1}:=(K^{\perp},\Vert\cdot\Vert_{\mathcal{H}^{1}})$.}
\end{remark}
If the conditions of the above theorem hold, we also get a regularization
result for the semigroup $e^{-tL}$ (see  \cite[Theorem A.12]{villani2009hypocoercivity}):
\begin{theorem}
	\label{thm:hypocoercive regularisation}Assume the setting and notation
	of Theorem \ref{thm: hypocoercivity abstract}. Then there exists
	a constant $C>0$ such that for all $k=0,1,\ldots,N$ and $t \in (0,1]$ the following
	holds:
	\[
	\Vert C_{k}P_{t}h\Vert\le C\frac{\Vert h\Vert}{t^{k+\frac{1}{2}}},\quad h\in\mathcal{H}.
	\]
\end{theorem}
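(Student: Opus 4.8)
The plan is to prove this by the short-time hypoelliptic regularization argument of H\'erau and Villani; under the hypotheses of Theorem~\ref{thm: hypocoercivity abstract} this is precisely \cite[Theorem~A.12]{villani2009hypocoercivity}, so one could simply cite it, but let me indicate the mechanism. Writing $h(t) = P_t h_0$ for the solution of $\partial_t h + Lh = 0$ with $L = A^*A - B$, I would introduce a time-weighted energy
\[
\mathcal{E}(t) = \|h(t)\|^2 + \sum_{k=1}^{N} a_k\, t^{\,2k+1}\|C_k h(t)\|^2 - \sum_{k=1}^{N} b_k\, t^{\,2k}\,\mathrm{Re}\,\langle C_{k-1}h(t), C_k h(t)\rangle,
\]
with $C_0 = A$ and with small coupling constants $b_k \lesssim \sqrt{a_{k-1}a_k}$ chosen so that, by Cauchy--Schwarz, $\mathcal{E}(t) \ge \tfrac12\big(\|h(t)\|^2 + \sum_{k\ge1} a_k t^{2k+1}\|C_k h(t)\|^2\big)$. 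The goal is to choose the $a_k$ and $b_k$ so that $\tfrac{d}{dt}\mathcal{E}(t)\le 0$ on $(0,1]$; granting this, $\mathcal{E}(t)\le\mathcal{E}(0)=\|h_0\|^2$, and the lower bound yields $t^{2k+1}\|C_k P_t h_0\|^2 \lesssim \|h_0\|^2$, i.e. the asserted estimate. (This is exactly what is needed in the main text to upgrade \eqref{eq:hypocoercivity estimate} from decay on $\mathcal{H}^1$ to decay on $\mathcal{H} = L^2(\widehat{\pi})$: apply the regularization with $t=1$ and then the $\mathcal{H}^1$-decay of Theorem~\ref{thm: hypocoercivity abstract} for $t\ge1$.)

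The core is the differentiation of each term along the flow. From $\partial_t h = -Lh$ and the antisymmetry of $B$ one gets $\tfrac{d}{dt}\|h\|^2 = -2\|C_0 h\|^2$, the dissipation at level $0$. For the higher levels, commuting $C_k$ past $L$ via $[C_k,A^*A] = A^*[C_k,A] + [C_k,A^*]A$ and the hypothesis $[C_k,B] = Z_{k+1}C_{k+1}+R_{k+1}$ gives
\[
\tfrac{d}{dt}\|C_k h\|^2 = -2\|AC_k h\|^2 - 2\,\mathrm{Re}\,\langle[C_k,A]h, AC_k h\rangle - 2\,\mathrm{Re}\,\langle[C_k,A^*]Ah, C_k h\rangle + 2\,\mathrm{Re}\,\langle Z_{k+1}C_{k+1}h + R_{k+1}h, C_k h\rangle .
\]
Here $-2\|AC_k h\|^2\le 0$; the commutator terms and $R_{k+1}$ are absorbed, after Young's inequality, into $-\|AC_k h\|^2$ and into lower-order good terms by the relative-boundedness assumptions (a)--(c); the transfer term $\langle Z_{k+1}C_{k+1}h,C_k h\rangle$ pushes energy up the hierarchy. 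The point of the coupling term $-b_k t^{2k}\langle C_{k-1}h,C_k h\rangle$ is that, differentiating it and using $\partial_t(C_{k-1}h) = -C_{k-1}Lh$ — whose expansion contains $+Z_k C_k h$ — one regenerates a negative contribution $\lesssim -b_k\lambda_k t^{2k}\|C_k h\|^2$ by assumption (d), at exactly the power of $t$ needed to dominate the transfer term coming down from level $k-1$ together with the positive terms produced by differentiating the time weights; the $\kappa$-coercivity of $\sum_j C_j^*C_j$ closes the estimate at the bottom of the hierarchy.

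The main obstacle is the bookkeeping of the constants: one must fix $a_N \ll \cdots \ll a_1 \ll a_0 = 1$, the coupling weights $b_k \lesssim \sqrt{a_{k-1}a_k}$, and the various Young parameters in a single consistent pass so that every error term generated at level $k$ lands either in the dissipation $-\|AC_k h\|^2$ or in a strictly lower-order-in-$t$ positive term that has already been controlled, avoiding any circular dependence among the constants; since $C_{N+1}=0$ there is no transfer term at level $N$, which anchors the induction and lets it be run from the top down. Two technical points are routine within this framework: the differentiations are justified by first running the argument on a regularized (mollified or Galerkin) problem and passing to the limit using density of the core $S$, and all manipulations of the unbounded operators $A$, $B$ and $C_k$ must be carried out on $S$, where by hypothesis they act.
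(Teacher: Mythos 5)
Your proposal is correct and matches the paper, which does not prove this statement itself but simply cites \cite[Theorem A.12]{villani2009hypocoercivity}; your additional sketch of the time-weighted energy functional $\mathcal{E}(t)$ with weights $t^{2k+1}\|C_k h\|^2$ and cross terms $t^{2k}\langle C_{k-1}h, C_k h\rangle$ is a faithful account of how that cited theorem is actually proved. Nothing further is needed.
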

\begin{proof}
	[of Theorem \ref{theorem:Hypocoercivity}]. We pove the claim by verifying
	the conditions of Theorem \ref{thm: hypocoercivity abstract}. Recall
	that $C_{0}=A=\sigma\nabla_{p}$ and 
	\[
	B=M^{-1}p\cdot\nabla_{q}-\nabla_{q}V\cdot\nabla_{p}-\mu J_{1}\nabla_{q}V\cdot\nabla_{q}-\nu J_{2}M^{-1}p\cdot\nabla_{p}.
	\]
	A quick calculation shows that 
	\[
	A^{*}=\sigma M^{-1}p-\sigma\nabla_{p},
	\]
	so that indeed 
	\[
	A^{*}A=\Gamma M^{-1}p\cdot\nabla_{p}-\nabla^{T}\Gamma\nabla=\mathcal{L}_{therm}
	\]
	and 
	\[
	A^{*}A-B=-\mathcal{L}^{*}.
	\]
	We make the choice $N=1$ and calculate the commutator 
	\[
	[A,B]=\sigma M^{-1}(\nabla_{q}+\nu J_{2}\nabla_{p}).
	\]
	Let us now set $C_{1}=\sigma M^{-1}\nabla_{q}$, $Z_{1}=1$ and $R_{1}=\nu\sigma M^{-1}J_{2}\nabla_{p}$,
	such that (\ref{eq:iterated commutators}) holds for $j=0$. Note
	that $[A,A]=0$\footnote{This is not true automatically, since $[A,A]$ stands for the array
		$([A_{j},A_{k}])_{jk}$.}, $[A,C_{1}]=0$ and $[A^{*},C_{1}]=0$. Furthermore, we have that
	\[
	[A,A^{*}]=\sigma M^{-1}\sigma.
	\]
	We now compute 
	\[
	[C_{1},B]=-\sigma M^{-1}\nabla^{2}V\nabla_{p}+\mu\sigma M^{-1}\nabla^{2}VJ_{1}\nabla_{q}
	\]
	and choose $R_{2}=[C_{1},B]$, $Z_{2}=1$ and recall that $C_{2}=0$
	by assumption (of Theorem \ref{thm: hypocoercivity abstract}). With those choices, assumptions \ref{it:hypo1}-\ref{it:hypo4} of Theorem \ref{thm: hypocoercivity abstract} are fulfilled. Indeed, assumption \ref{it:hypo1} holds trivially 
	since all relevant commutators are zero. Assumption \ref{it:hypo2} follows from the fact that $[A,A^{*}]=\sigma M^{-1}\sigma$ is clearly bounded
	relative to $I$. To verify assumption \ref{it:hypo3}, let us start with the
	case $k=1$. It is necessary to show that $R_{1}=\nu\sigma M^{-1}J_{2}\nabla_{p}$
	is bounded relatively to $A=\sigma\nabla_{p}$ and $A^{2}$.
	This is obvious since the $p$-derivatives appearing in $R_{1}$ can
	be controlled by the $p$-derivatives appearing in $A$. For $k=2,$
	a similar argument shows that $R_{2}=-\sigma M^{-1}\nabla^{2}V\nabla_{p}+\mu\sigma M^{-1}\nabla^{2}VJ_{1}\nabla_{q}$
	is bounded relatively to $A=\sigma\nabla_{p}$ and $C_{1}=\sigma M^{-1}\nabla_{q}$
	because of the assumption that $\nabla^{2}V$ is bounded. Note that it
	is crucial for the preceding arguments to assume that the matrices
	$\sigma$ and $M$ have full rank. Assumption \ref{it:hypo4} is trivially satisfied,
	since $Z_{1}$ and $Z_{2}$ are equal to the identity.  It remains to show that 
	\[
	T:=\sum_{j=0}^{N}C_{j}^{*}C_{j}
	\]
	is $\kappa$-coercive for some $\kappa>0$.  It is straightforward
	to see that the kernel of $T$ consists of constant functions and
	therefore 
	\[
	(\ker T)^{\perp}=\{\phi\in L^{2}(\mathbb{R}^{2d},\widehat{\pi}):\quad\widehat{\pi}(\phi)=0\}.
	\]
	Hence, $\kappa$-coercivity of $T$ amounts to the functional inequality
	\[
	\int_{\mathbb{R}^{2d}}\big(\vert\sigma M^{-1}\nabla_{q}\phi\vert^{2}+\vert\sigma\nabla_{p}\phi\vert^{2}\big)\mathrm{d}\widehat{\pi}\ge\kappa\bigg(\int_{\mathbb{R}^{2d}}\phi^{2}\mathrm{d}\widehat{\pi}-\left(\int_{\mathbb{R}^{2d}}\phi\mathrm{d}\widehat{\pi}\right)^{2}\bigg),\quad\phi\in H^{1}(\widehat{\pi}).
	\]
	Since the transformation $\phi\mapsto\psi$, $\psi(q,p)=\phi(\sigma^{-1}Mq,\sigma^{-1}p)$ is bijective on $H^{1}(\mathbb{R}^{2d},\widehat{\pi})$, the above is equivalent to 
	\[
	\int_{\mathbb{R}^{2d}}\big(\vert\nabla_{q}\psi\vert^{2}+\vert\nabla_{p}\psi\vert^{2}\big)\mathrm{d}\widehat{\pi}\ge\kappa\bigg(\int_{\mathbb{R}^{2d}}\psi^{2}\mathrm{d}\widehat{\pi}-\left(\int_{\mathbb{R}^{2d}}\psi\mathrm{d}\widehat{\pi}\right)^{2}\bigg),\quad\psi\in H^{1}(\widehat{\pi}),
	\]
	i.e. a Poincar\'{e} inequality for $\widehat{\pi}$. Since $\widehat{\pi}=\pi\otimes\mathcal{N}(0,M),$
	coercivity of $T$ boils down to a Poincar\'{e} inequality for $\pi$
	as in Assumption \ref{ass:bounded+Poincare}. This concludes the proof of the hypocoercive decay estimate
	(\ref{eq:hypocoercivity estimate}). Clearly, the abstract $\mathcal{H}^{1}$-norm from \eqref{eq:abstractH1_norm}
	is equivalent to the Sobolev norm $H^{1}(\widehat{\pi})$, and therefore it follows that there exist constants $C\ge0$ and $\lambda\ge0$ such that 
	\begin{equation}
	\label{eq:H1_decay}
	\Vert P_{t} f \Vert_{H^1(\widehat{\pi})}  \le C e^{-\lambda t} \Vert f \Vert _{H^1(\widehat{\pi})},
	\end{equation}
	for all $f \in H^1(\widehat{\pi})\setminus K$, where $K=\ker T$ consists of constant functions.  Let us now lift this estimate to $L^2(\widehat{\pi})$. There exist a constant $\tilde{C}\ge0$ such that 
	\begin{equation}
	\Vert h \Vert_{H^1(\widehat{\pi})} \le \tilde{C} \sum_{k=0}^2 \Vert C_k h \Vert_{L^2(\widehat{\pi})},
	\quad f \in H^1(\widehat{\pi}).
	\end{equation}
	Therefore, Theorem \ref{thm:hypocoercive regularisation} implies 
	\begin{equation}
	\label{eq:H1L2_reg}
	\Vert P_{1} f \Vert_{H^1(\widehat{\pi})} \le \tilde{C} \Vert f \Vert_{L^2(\widehat{\pi})},
	\quad f \in L^2(\widehat{\pi}), 
	\end{equation}
	for $t=1$ and a possibly different constant $\tilde{C}$. Let us now assume that $t\ge1$ and $f \in L^2(\widehat{\pi})\setminus K$.	It holds that
	\begin{equation}
	\Vert P_t f \Vert_{L^2(\widehat{\pi})} \le \Vert P_t f \Vert_{H^1(\widehat{\pi})} = \Vert P_{t-1}P_{1} f \Vert_{H^1(\widehat{\pi})}
	\le C e^{-\lambda (t-1)} \Vert P_{1} f \Vert_{H^1(\widehat{\pi})}, 
	\end{equation}
	where the last inequality follows from \eqref{eq:H1_decay}. Now applying \eqref{eq:H1L2_reg} and gathering constants results in 
	\begin{equation}
	\Vert P_t f\Vert_{L^2(\widehat{\pi})} \le C e^{-\lambda t}\Vert f \Vert_{L^2(\widehat{\pi})}, \quad f \in L^2(\widehat{\pi})\setminus K.
	\end{equation} 
	Note that although we assumed $t\ge1$, the above estimate also holds for $t\ge0$ (although possibly with a different constant $C$) since $\Vert P_t \Vert_{L^2(\widehat{\pi})\rightarrow L^2(\widehat{\pi})}$ is bounded on $[0,1]$. 
	\qed  
\end{proof}

\section{Asymptotic Variance of Linear and Quadratic Observables in the Gaussian Case}
\label{app:Gaussian_proofs}

We begin by deriving a formula for the asymptotic variance of observables
of the form 
\[
f(q)=q\cdot Kq+l\cdot q-\Tr K,
\]
with $K\in\mathbb{R}_{sym}^{d\times d}$ and $l\in\mathbb{R}^{d}$.
Note that the constant term is chosen such that $\widehat{\pi}(f)=0$.
The following calculations are very much along the lines of \cite[Section  4]{duncan2016variance}. Since the Hessian of $V$ is bounded and the target measure $\pi$ is Gaussian, Assumption \ref{ass:bounded+Poincare} is satisfied and exponential decay of the semigroup $(P_t)_{t\ge0}$ as in \eqref{eq:hypocoercive estimate} follows by  Theorem \ref{theorem:Hypocoercivity}. According to Lemma \ref{lemma:variance}, the
asymptotic variance is then given by 
\begin{equation}
\sigma_{f}^{2}=\langle \chi,f\rangle_{L^{2}(\widehat{\pi})},
\end{equation}
where $\chi$ is the solution to the Poisson equation 
\begin{equation}
\label{eq:Poisson equation Gauss}
-\mathcal{L}\chi=f,\quad\widehat{\pi}(\chi)=0.
\end{equation}
Recall that 
\[
\mathcal{L}=-Bx\cdot\nabla+\nabla^{T}Q\nabla=-x\cdot A\nabla+\nabla^{T}Q\nabla
\]
is the generator as in (\ref{eq:OU generator}), where for later convenience
we have defined $A=B^{T}$, i.e.
\begin{equation}
A=\left(\begin{array}{cc}
-\mu J & I\\
-I & \gamma I -\nu J
\end{array}\right) \in \mathbb{R}^{2d \times 2d}.\label{eq:Amatrix}
\end{equation}
In the sequel we will solve (\ref{eq:Poisson equation Gauss}) analytically.  First, we introduce the notation 
\[
\bar{K}=\left(\begin{array}{cc}
K & \boldsymbol{0}\\
\boldsymbol{0} & \boldsymbol{0}
\end{array}\right)\in\mathbb{R}^{2d\times2d}
\]
and 
\[
\bar{l}=\left(\begin{array}{c}
l\\
\boldsymbol{0}
\end{array}\right)\in\mathbb{R}^{2d},
\]
such that by slight abuse of notation $f$ is given by 
\[
f(x)=x\cdot\bar{K}x+\bar{l}\cdot x-\Tr\bar{K}.
\]
By uniqueness (up to a constant) of the solution to the Poisson equation \eqref{eq:Poisson equation Gauss} and
linearity of $\mathcal{L}$, $g$ has to be a quadratic polynomial,
so we can write 
\[
g(x)=x\cdot Cx+D\cdot x-\Tr C,
\]
where $C\in\mathbb{R}_{sym}^{2d\times2d}$ and $D\in\mathbb{R}^{2d}$ (notice that $C$ can be chosen to be symmetrical since $x \cdot C x$ does not depend on the antisymmetric part of $C$).
Plugging this ansatz into (\ref{eq:Poisson equation Gauss}) yields
\[
-\mathcal{L}g(x)=x\cdot A\big(2Cx+D\big)-\gamma\Tr_{p}C=x\cdot\bar{K}x+\bar{l}\cdot x-\Tr\bar{K},
\]
where 
\[
\Tr_{p}C=\sum_{i=n+1}^{2n}C_{ii}
\]
denotes the trace of the momentum component of $C$. Comparing different
powers of $x$, this leads to the conditions
\begin{subequations}
\begin{eqnarray}
AC+CA^{T} & =\bar{K},
\label{eq:Lyapunov equation}\\
AD & =\bar{l},
\label{eq:linear condition}\\
\gamma\Tr_{p}C & =\Tr\bar{K}.
\label{eq:trace condition}
\end{eqnarray}
\end{subequations}
Note that (\ref{eq:trace condition}) will be satisfied eventually
by existence and uniqueness of the solution to (\ref{eq:Poisson equation Gauss}).
Then, by the calculations in \cite{duncan2016variance}, the asymptotic variance is given by 
\begin{equation}
\sigma_{f}^{2}=2\Tr(C\bar{K})+D\cdot\bar{l}.
\label{eq:Gaussian asymvar}
\end{equation}

\begin{proof}[of Proposition \ref{thm: local quadratic observable}]. According to
	(\ref{eq:Gaussian asymvar}) and (\ref{eq:Lyapunov equation}), the
	asymptotic variance satisfies 
	\[
	\sigma_{f}^{2}=2\Tr(C\bar{K}),
	\]
	where the matrix $C$ solves 
	\begin{equation}
	AC+CA^{T}=\bar{K}\label{eq: lyap equation}
	\end{equation}
	and $A$ is given as in (\ref{eq:Amatrix}). We will use the notation
	\[
	C(\mu,\nu)=\left(\begin{array}{cc}
	C_{1}(\mu,\nu) & C_{2}(\mu.\nu)\\
	C_{2}^{T}(\mu.\nu) & C_{3}(\mu,\nu)
	\end{array}\right)
	\]
	and the abbreviations $C(0):=C(0,0)$, $C^{\mu}(0):=\partial_{\mu}C\vert_{\mu,\nu=0}$
	and $C^{\nu}(0):=\partial_{\nu}C\vert_{\mu,\nu=0}$.  Let us first determine $C(0)$, i.e. the solution to the equation
	\[
	\left(\begin{array}{cc}
	\boldsymbol{0} & I\\
	-I & \gamma I
	\end{array}\right)C(0)+C(0)\left(\begin{array}{cc}
	\boldsymbol{0} & I\\
	-I & \gamma I
	\end{array}\right)^{T}=\left(\begin{array}{cc}
	K & \boldsymbol{0}\\
	\boldsymbol{0} & \boldsymbol{0}
	\end{array}\right).
	\]
	This leads to the following system of equations,
	\begin{subequations}
	\begin{eqnarray}
	C_{2}(0)+C_{2}(0)^{T} & =K,
	\label{eq:C(0) 1}\\
	-C_{1}(0)+\gamma C_{2}(0)+C_{3}(0) & =\boldsymbol{0},
	\label{eq:C(0) 2}\\
	-C_{1}(0)+\gamma C_{2}(0)^{T}+C_{3}(0) & =\boldsymbol{0},
	\label{eq:C(0) 3}\\
	-C_{2}(0)-C_{2}(0)^{T}+2\gamma C_{3}(0) & =\boldsymbol{0}.\\ \label{eq:C(0) 4}
	\end{eqnarray}
	\end{subequations}
	Note that equations (\ref{eq:C(0) 2}) and (\ref{eq:C(0) 3}) are
	equivalent by taking the transpose. Plugging (\ref{eq:C(0) 1}) into
	(\ref{eq:C(0) 4}) yields 
	\begin{equation}
	C_{3}(0)=\frac{1}{2\gamma}K.\label{eq:C3(0) result}
	\end{equation}
	Adding (\ref{eq:C(0) 2}) and (\ref{eq:C(0) 3}), together with (\ref{eq:C(0) 1})
	and (\ref{eq:C3(0) result}) leads to 
	\[
	C_{1}(0)=\frac{1}{2\gamma}K+\frac{\gamma}{2}K.
	\]
	Solving (\ref{eq:C(0) 2}) we obtain, 
	\[
	C_{2}(0)=\frac{1}{2}K,
	\]
	so that
	\begin{equation}
	C(0)=\left(\begin{array}{cc}
	\frac{1}{2\gamma}K+\frac{\gamma}{2}K & \frac{1}{2}K\\
	\frac{1}{2}K & \frac{1}{2\gamma}K
	\end{array}\right).\label{eq:C(0) result}
	\end{equation}
	Taking the $\mu$-derivative of (\ref{eq: lyap equation}) and setting
	$\mu=\nu=0$ yields 
	\begin{equation}
	A^{\mu}(0)C(0)+A(0)C^{\mu}(0)+C^{\mu}(0)A(0)^{T}+C(0)A^{\mu}(0)^{T}=\boldsymbol{0}.\label{eq:mu lyap}
	\end{equation}
	Notice that 
	\begin{align*}
	A^{\mu}(0)C(0)+C(0)A^{\mu}(0)^{T}\\
	=\left(\begin{array}{cc}
	-J & \boldsymbol{0}\\
	\boldsymbol{0} & \boldsymbol{0}
	\end{array}\right)C(0)+C(0)\left(\begin{array}{cc}
	J & \boldsymbol{0}\\
	\boldsymbol{0} & \boldsymbol{0}
	\end{array}\right)\\
	=\left(\begin{array}{cc}
	\big(\frac{1}{2\gamma}+\frac{\gamma}{2}\big)[K,J] & -\frac{1}{2}JK\\
	\frac{1}{2}KJ & \boldsymbol{0}
	\end{array}\right).
	\end{align*}
	With computations similar to those in the derivation of (\ref{eq:C(0) result})
	(or by simple substitution), equation (\ref{eq:mu lyap}) can be solved
	by 
	\begin{equation}
	C^{\mu}(0)=\left(\begin{array}{cc}
	-\big(\frac{\gamma^{2}}{4}+\frac{1}{4\gamma^{2}}+\frac{1}{4}\big)[K,J] & \frac{1}{2\gamma}JK-\frac{\gamma}{4}[K,J]\\
	-\frac{1}{2\gamma}KJ-\frac{\gamma}{4}[K,J] & -\big(\frac{1}{4\gamma^{2}}+\frac{1}{4}\big)[K,J]
	\end{array}\right).\label{eq:C^mu}
	\end{equation}
	We employ a similar strategy to determine $C^{\nu}(0)$: Taking the
	$\nu$-derivative in equation (\ref{eq: lyap equation}), setting
	$\mu=\nu=0$ and inserting $C(0)$ and $A(0)$ as in (\ref{eq:C(0) result})
	and (\ref{eq:Amatrix}) leads to the equation
	\[
	\left(\begin{array}{cc}
	\boldsymbol{0} & I\\
	-I & \gamma I
	\end{array}\right)C^{\nu}(0)+C^{\nu}(0)\left(\begin{array}{cc}
	\boldsymbol{0} & I\\
	-I & \gamma I
	\end{array}\right)=\left(\begin{array}{cc}
	\boldsymbol{0} & -\frac{1}{2}KJ\\
	\frac{1}{2}JK & -\frac{1}{2\gamma}[K,J]
	\end{array}\right),
	\]
	which can be solved by 
	\begin{equation}
	C^{\nu}(0)=\left(\begin{array}{cc}
	\big(-\frac{1}{4\gamma^{2}}+\frac{1}{4}\big)[K,J] & \frac{1}{\gamma}\big(-\frac{1}{2}KJ+\frac{1}{4}[K,J]\big)\\
	\frac{1}{\gamma}\big(\frac{1}{2}KJ-\frac{1}{4}[K,J]\big) & -\frac{1}{4\gamma^{2}}[K,J]
	\end{array}\right).\label{eq:C^nu}
	\end{equation}
	Note that $\Tr (C\bar{K})=\Tr(C_{1}K)$, and so 
	\begin{alignat*}{1}
	\partial_{\mu}\Theta\vert_{\mu,\nu=0} & =2\Tr(C_{1}^{\mu}(0)K)=\\
	& =-\big(\frac{\gamma^{2}}{4}+\frac{1}{4\gamma^{2}}+\frac{1}{4}\big)\cdot\Tr([K,J]K)=0,
	\end{alignat*}
	since clearly $\Tr([K,J],K)=\Tr(KJK)-\Tr(JK^{2})=0$. In the same
	way it follows that 
	\[
	\partial_{\nu}\Theta\vert_{\mu,\nu=0}=0,
	\]
	proving (\ref{eq:gradTheta}). 
	\\\\
	Taking the second $\mu$-derivative of (\ref{eq: lyap equation})
	and setting $\mu=\nu=0$ yields
	\[
	2A^{\mu}(0)C^{\mu}(0)+A(0)C^{\mu\mu}(0)+C^{\mu\mu}(0)A(0)^{T}+2C^{\mu}(0)A^{\mu}(0)^{T}=\boldsymbol{0},
	\]
	employing the notation $C^{\mu\mu}(0)=\partial_{\mu}^{2}C\vert_{\mu,\nu=0}$
	and noticing that $\partial_{\mu}^{2}A=0$. Using (\ref{eq:C^mu})
	we calculate
	\[
	A^{\mu}(0)C^{\mu}(0)+C^{\mu}(0)A^{\mu}(0)^{T}=\left(\begin{array}{cc}
	\big(\frac{\gamma^{2}}{4}+\frac{1}{4\gamma^{2}}+\frac{1}{4}\big)[J,[K,J]] & -\frac{1}{2\gamma}J^{2}K+\frac{\gamma}{4}J[K,J]\\
	-\frac{1}{2\gamma}KJ^{2}-\frac{\gamma}{4}[K,J]J & \boldsymbol{0}
	\end{array}\right).
	\]
	As before, we make the ansatz
	\[
	C^{\mu\mu}(0)=\left(\begin{array}{cc}
	C_{1}^{\mu\mu}(0) & C_{2}^{\mu\mu}(0)\\
	\left(C_{2}^{\mu\mu}(0)\right)^T & C_{3}^{\mu\mu}(0)
	\end{array}\right),
	\]
	leading to the equations
	\begin{subequations}
	\begin{eqnarray}
	C_{2}^{\mu\mu}(0)+C_{2}^{\mu\mu}(0)^{T} & = &-\big(\frac{\gamma^{2}}{4}+\frac{1}{4\gamma^{2}}+\frac{1}{4}\big)[J,[K,J]]\label{eq:C''1}\\
	-C_{1}^{\mu\mu}(0)+\gamma C_{2}^{\mu\mu}(0)+C_{3}^{\mu}(0) & =&\frac{1}{\gamma}J^{2}K-\frac{\gamma}{2}J[K,J]\label{eq:C''2}\\
	-C_{1}^{\mu\mu}(0)+\gamma C_{2}^{\mu\mu}(0)^{T}+C_{3}^{\mu}(0) & =&\frac{1}{\gamma}KJ^{2}+\frac{\gamma}{2}[K,J]J\label{eq:C''3}\\
	-C_{2}^{\mu\mu}(0)-C_{2}^{\mu\mu}(0)^{T}+2\gamma C_{3}^{\mu\mu}(0) & =&\boldsymbol{0}.
	\label{eq:C''4}
	\end{eqnarray}
	\end{subequations}
	Again, (\ref{eq:C''2}) and (\ref{eq:C''3}) are equivalent by taking
	the transpose. Plugging (\ref{eq:C''1}) into (\ref{eq:C''4}) and
	combing with (\ref{eq:C''2}) or (\ref{eq:C''3}) gives
	\[
	C_{1}^{\mu\mu}(0)=\big(\frac{\gamma}{4}+\frac{1}{4\gamma^{3}}+\frac{\gamma^{3}}{4}\big)(2JKJ-J^{2}K-KJ^{2})-\frac{1}{\gamma}JKJ.
	\]
	Now 
	\[
	\partial_{\mu}^{2}\Theta\vert_{\mu,\nu=0}=2\Tr(C_{1}^{\mu\mu}(0)K)=-(\gamma+\frac{1}{\gamma^{3}}+\gamma^{3})\big(\Tr(JKJK)-\Tr(J^{2}K^{2})\big)-\frac{2}{\gamma}\Tr(JKJK)
	\]
	gives the first part of (\ref{eq:HessTheta}). We proceed in the same
	way to determine $C_{1}^{\nu\nu}(0)$. Analogously, we get 
	\[
	A^{\nu}(0)C^{\nu}(0)+C^{\nu}(0)A^{\nu}(0)^{T}=\left(\begin{array}{cc}
	\boldsymbol{0} & \frac{1}{\gamma}(KJ^{2}-\frac{1}{2}[K,J]J)\\
	\frac{1}{\gamma}(JKJ-\frac{1}{2}J[K,J]) & \frac{1}{2\gamma^{2}}([K,J]J-J[K,J])
	\end{array}\right).
	\]
	Solving the resulting linear matrix system (similar to (\ref{eq:C''1})-(\ref{eq:C''4}))
	results in
	\[
	C_{1}^{\nu\nu}(0)=\big(\frac{1}{4\gamma^{3}}-\frac{1}{4\gamma}\big)(KJ^{2}+J^{2}K)-\big(\frac{1}{2\gamma^{3}}+\frac{1}{2\gamma}\big)JKJ,
	\]
	leading to 
	\[
	\partial_{\nu}^{2}\Theta\vert_{\mu,\nu=0}=2\Tr(C_{1}^{\nu\nu}(0)K)=\big(\frac{1}{\gamma^{3}}-\frac{1}{2\gamma}\big)\Tr(J^{2}K^{2})\big)-\big(\frac{1}{2\gamma^{3}}+\frac{1}{2\gamma}\big)\Tr(JKJK).
	\]
	To compute the cross term $C_{1}^{\mu\nu}(0)$ we take the mixed derivative
	$\partial_{\mu\nu}^{2}$ of (\ref{eq: lyap equation}) and set $\mu=\nu=0$
	to arrive at 
	\[
	A^{\mu}(0)C^{\nu}(0)+A^{\nu}(0)C^{\mu}(0)+A(0)C^{\mu\nu}(0)+C^{\mu\nu}(0)A(0)^{T}+C^{\mu}(0)A^{\nu}(0)^{T}+C^{\nu}(0)A^{\mu}(0)^{T}=\boldsymbol{0}.
	\]
	Using $\eqref{eq:C^mu}$ and (\ref{eq:C^nu}) we see that 
	\begin{multline*}
	A^{\mu}(0)C^{\nu}(0)+A^{\nu}(0)C^{\mu}(0)+C^{\mu}(0)A^{\nu}(0)^{T}+C^{\nu}(0)A^{\mu}(0)^{T}\\
	=\left(\begin{array}{cc}
	\big(\frac{1}{4\gamma^{2}}-\frac{1}{4}\big)[J,[K,J]] & \frac{1}{\gamma}JKJ-\frac{1}{4\gamma}J[K,J]-\frac{\gamma}{4}[K,J]J\\
	\frac{1}{2\gamma}JKJ+\frac{1}{2\gamma}KJ^{2}+\frac{\gamma}{4}J[K,J]-\frac{1}{4\gamma}[K,J]J & \big(\frac{1}{4\gamma^{2}}+\frac{1}{4}\big)[J,[K,J]]
	\end{array}\right).
	\end{multline*}
	The ensuing linear matrix system yields the solution
	\[
	C_{1}^{\mu\nu}(0)=\big(-\frac{1}{4\gamma^{3}}+\frac{\gamma}{4}-\frac{1}{4\gamma}\big)[J,[K,J]]+\frac{1}{\gamma}JKJ,
	\]
	leading to 
	\begin{equation}
	\partial_{\mu\nu}^{2}\Theta\vert_{\mu,\nu=0}=2\Tr(C_{1}^{\mu\nu}(0)K)=\big(\frac{1}{\gamma^{3}}+\frac{1}{\gamma}-\gamma\big)\Tr(J^{2}K^{2})+\big(-\frac{1}{\gamma^{3}}+\frac{1}{\gamma}+\gamma\big)\Tr(JKJK).
	\end{equation}
	This completes the proof.
	\qed 
    \end{proof}
\begin{proof}
	[Proof of Proposition \ref{thm:linear_full_J}] By (\ref{eq:linear condition})
	and (\ref{eq:Gaussian asymvar}) the function $\Theta$ satisfies
	\[
	\Theta(\mu,\nu)=\bar{l}\cdot A^{-1}\bar{l}.
	\]
	Recall the following formula for blockwise inversion of matrices using the Schur complement:
	\begin{equation}
	\left(\begin{array}{cc}
	U & V\\
	W & X
	\end{array}\right)^{-1}=\left(\begin{array}{cc}
	(U-VX^{-1}W)^{-1} & \ldots\\
	\ldots & \ldots
	\end{array}\right),\label{eq: blockwise inversion}
	\end{equation}
	provided that $X$ and $U-VX^{-1}W$ are invertible. Using this, we obtain 
	\[
	\Theta(\mu,\nu)=l\cdot\big(-\mu J+(\gamma-\nu J)^{-1}\big)l.
	\]
	Taking derivatives, setting $\mu=\nu=0$ and using the fact that $J^{T}=-J$
	leads to the desired result.
	\qed
\end{proof}

\begin{lemma}
	\label{lem:basic_inequalities}
	The following holds: 
	\begin{enumerate}[label=(\alph*)]
		\item \label{it:gaussian_lem1}$\gamma-\frac{4}{\gamma^{3}}-\gamma^{3}-\frac{1}{\gamma}<0$
		for $\gamma\in(0,\infty)$. 
		\item \label{it:gaussian_lem2} Let $J=-J^{T}$ and $K=K^{T}$. Then $\Tr(JKJK)-\Tr(J^{2}K^{2})\ge0$.
		Furthermore, equality holds if and only if $[J,K]=0.$ 
	\end{enumerate}
\end{lemma}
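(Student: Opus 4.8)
The plan is to treat the two claims separately, both by elementary manipulations.

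For part \ref{it:gaussian_lem1}, I would first multiply through by $\gamma^{3}>0$, which does not change the sign, reducing the claim to $\gamma^{4}-4-\gamma^{6}-\gamma^{2}<0$, i.e. $\gamma^{6}-\gamma^{4}+\gamma^{2}+4>0$. Setting $x=\gamma^{2}\in(0,\infty)$ this becomes $x^{3}-x^{2}+x+4>0$, which I would establish by writing $x^{3}-x^{2}+x = x\big((x-\tfrac{1}{2})^{2}+\tfrac{3}{4}\big)>0$ for $x>0$; adding $4$ then gives the conclusion (in fact the left-hand side is then bounded below by $4$). Alternatively, one can observe that the derivative $3x^{2}-2x+1$ has negative discriminant and is therefore strictly positive, so $x^{3}-x^{2}+x+4$ is strictly increasing on $[0,\infty)$ with value $4$ at the origin.

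For part \ref{it:gaussian_lem2}, the key observation is that the commutator $C:=[J,K]=JK-KJ$ is \emph{symmetric} when $J^{T}=-J$ and $K^{T}=K$: indeed $(JK)^{T}=K^{T}J^{T}=-KJ$ and $(KJ)^{T}=J^{T}K^{T}=-JK$, so $C^{T}=-KJ+JK=C$. Consequently $\Tr(C^{2})=\Tr(CC^{T})=\Norm{C}_{F}^{2}\ge 0$, with equality if and only if $C=0$, i.e. $[J,K]=0$. It then remains to relate $\Tr(C^{2})$ to the quantity of interest. Expanding $C^{2}=JKJK-JK^{2}J-KJ^{2}K+KJKJ$ and using cyclicity of the trace, namely $\Tr(JK^{2}J)=\Tr(J^{2}K^{2})$, $\Tr(KJ^{2}K)=\Tr(J^{2}K^{2})$ and $\Tr(KJKJ)=\Tr(JKJK)$, one obtains $\Tr(C^{2})=2\big(\Tr(JKJK)-\Tr(J^{2}K^{2})\big)$. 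Hence $\Tr(JKJK)-\Tr(J^{2}K^{2})=\tfrac{1}{2}\Norm{[J,K]}_{F}^{2}\ge 0$, with the stated equality condition.

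Neither step presents a genuine obstacle; the only points requiring a little care are checking that $[J,K]$ is \emph{symmetric} (rather than antisymmetric) under the given transpose conventions, and correctly tracking the cyclic permutations of the four-factor traces.
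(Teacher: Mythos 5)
Your proof is correct. Part \ref{it:gaussian_lem2} is in substance identical to the paper's argument: both rest on the observation that $[J,K]$ is \emph{symmetric} under these transpose conventions and on the expansion $\Tr([J,K]^2)=2\big(\Tr(JKJK)-\Tr(J^2K^2)\big)$; the paper phrases the nonnegativity via the sum of squares of the real eigenvalues of $[J,K]$, while you use $\Tr(CC^T)=\Norm{C}_F^2$, which is the same statement. For part \ref{it:gaussian_lem1} your route is slightly different and a bit more elementary: the paper discards the term $-1/\gamma$ and then shows by calculus that $\gamma\big(1-\tfrac{4}{\gamma^4}-\gamma^2\big)$ is negative (its maximum over $\gamma>0$ is attained at $8^{1/6}$ with value $-2$ for the bracket), whereas you clear denominators, substitute $x=\gamma^2$, and verify $x^3-x^2+x+4>0$ by completing the square. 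Both are routine verifications; yours avoids any optimisation and gives the explicit lower bound $4$ for the polynomial, which is a marginal gain in transparency.
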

\begin{proof}
	To show \ref{it:gaussian_lem1} we note that $\gamma-\frac{4}{\gamma^{3}}-\gamma^{3}-\frac{1}{\gamma}<\gamma-\frac{4}{\gamma^{3}}-\gamma^{3}=\gamma(1 - \frac{4}{\gamma^4}-\gamma^2)$.  The function $f(\gamma):=1 - \frac{4}{\gamma^4}-\gamma^2$
	has a unique global maximum on $(0,\infty)$ at $\gamma_{min}=8^{1/6}$
	with $f(\gamma_{min})=-2$, so the result follows.
	\\\\
	For \ref{it:gaussian_lem2} we note that $[J,K]^{T}=[J,K],$ and that $[J,K]^{2}$ is symmetric
	and nonnegative definite. We can write 
	\[
	\Tr([J,K]^{2})=\sum_{i}\lambda_{i}^{2},
	\]
	with $\lambda_{i}$ denoting the (real) eigenvalues of $[J,K]$. From
	this it follows that $\Tr([J,K]^{2})\ge0$ with equality if and only
	if $[J,K]=0$. Now expand 
	\[
	\Tr([J,K]^{2})=2\big(\Tr(JKJK)-\Tr(J^{2}K^{2}),
	\]
	which implies the advertised claim. \qed
\end{proof}

\section{Orthogonal Transformation of Tracefree Symmetric Matrices into a Matrix with Zeros on the Diagonal}
\label{tracefree}

Given a symmetric matrix $K\in\mathbb{R}_{sym}^{d\times d}$ with
$\Tr K=0$, we seek to find an orthogonal matrix $U\in O(\mathbb{R}^{d})$
such that $UKU^{T}$ has zeros on the diagonal. This is a crucial
step in Algorithms \ref{alg:optimal J} and \ref{alg:optimal J general}
and has been addressed in various places in the literature (see for
instance \cite{alg_zero_diag} or \cite{Bhatia1997}, Chapter 2,
Section 2). For the convenience of the reader, in the following we
summarize an algorithm very similar to the one in \cite{alg_zero_diag}.
\\\\
Since $K$ is symmetric, there exists an orthogonal matrix $U_{0}\in O(\mathbb{R}^{d})$
such that $U_{0}KU_{0}^{T}=\diag(\lambda_{1},\ldots,\lambda_{d})$.
Now the algorithm proceeds iteratively, orthogonally transforming
this matrix into one with the first diagonal entry vanishing, then
the first two diagonal entries vanishing, etc, until after $d$ steps
we are left with a matrix with zeros on the diagonal. Starting with
$\lambda_{1}$, assume that $\lambda_{1}\neq0$ (otherwise proceed
with $\lambda_{2}$). Since $\Tr(K)=\Tr(U_{0}KU_{0}^{T})=\sum\lambda_{i}=0$,
there exists $\lambda_{j}$, $j\in\{2,\ldots,d\}$ such that $\lambda_{1}\lambda_{j}<0$
(i.e. $\lambda_{1}$ and $\lambda_{j}$ have opposing signs). We now
apply a rotation in the $1j$-plane to transform the first diagonal
entry into zero. More specifically, let 
\[
U_{1}=\begin{blockarray}{ccccccccc}
 ~ & ~ & ~ & ~ & j & ~ & ~ & ~ & ~\\
\begin{block}{(cccccccc)c}
\cos\alpha & 0 & \ldots & 0 & -\sin\alpha & 0 & \ldots & 0 & ~\\
0 & 1 & ~ & ~ & 0 & ~ & ~ & \vdots & ~\\
\vdots & ~ & \ddots & ~ & ~ & ~ & ~ & ~ & ~\\
0 & ~ & ~ & 1 & 0 & ~ & ~ & ~ & ~\\
\sin\alpha & 0 & ~ & 0 & \cos\alpha & 0 & ~ & ~ & j\\
0 & ~ & ~ & ~ & 0 & 1 & ~ & \vdots & ~\\
\vdots & ~ & ~ & ~ & ~ & ~ & \ddots & 0 & ~\\
0 & 0 & \hdots & ~ & ~ & \hdots & 0 & 1 & ~ \\
\end{block}\end{blockarray} \in O(\mathbb{R}^{d})
\]
with $\alpha=\arctan\sqrt{-\frac{\lambda_{1}}{\lambda_{j}}}.$ We
then have $(U_{1}U_{0}KU_{0}^{T}U_{1}^{T})_{11}=0$. Now the same
procedure can be applied to the second diagonal entry $\text{\ensuremath{\lambda}}_{2}$,
leading to the matrix $U_{2}U_{1}U_{0}KU_{0}^{T}U_{1}^{T}U_{2}^{T}$
with 
\[
(U_{2}U_{1}U_{0}KU_{0}^{T}U_{1}^{T}U_{2}^{T})_{11}=(U_{2}U_{1}U_{0}KU_{0}^{T}U_{1}^{T}U_{2}^{T})_{22}=0
\]
Iterating this process, we obtain that $U_{d}\ldots U_{1}U_{0}KU_{0}^{T}U_{1}^{T}\ldots U_{d}^{T}$
has zeros on the diagonal, so $U_{d}\ldots U_{1}U_{0}\in O(\mathbb{R}^{d})$
is the required orthogonal transformation.

\bibliographystyle{alpha}
\newcommand{\etalchar}[1]{$^{#1}$}

\end{document}